\newcommand{\bZ}{{\mathbb Z}}
\newcommand{\bR}{{\mathbb R}}
\newcommand{\bC}{{\mathbb C}}
\newcommand{\bF}{{\mathbb F}}
\newcommand{\bQ}{{\mathbb Q}}
\newcommand{\bT}{{\mathbb T}}
\newcommand{\bG}{{\mathbb G}}
\newcommand{\II}{{I_{\infty}^2}}
\newtheorem{thm}{Theorem}[section]
\newtheorem{lemma}[thm]{Lemma}
\newtheorem{cor}[thm]{Corollary}
\newtheorem{prop}[thm]{Proposition}
\numberwithin{equation}{section}
\begin{document}

\title[gamma filtration  of flag varieties ]{The gamma filtrations of $K$-theory of complete flag varieties }
 
\author{Nobuaki Yagita}

\address{ faculty of Education, 
Ibaraki University,
Mito, Ibaraki, Japan}
 
\email{nobuaki.yagita.math@vc.ibaraki.ac.jp, }

\keywords{ Gamma filtration, Chow ring, versal torsor, complete flag variety}
\subjclass[2010]{ 57T15, 20G15, 14C15}

\maketitle

\begin{abstract}
Let $G$ be a compact Lie group and $T$ its maximal torus.
In this paper, we try to  compute $gr_{\gamma}^*(G/T)$
the graded ring associated  with the gamma filtration of
the complex $K$-theory $K^0(G/T)$. We use the Chow
ring of a corresponding versal flag variety.
\end{abstract}

\section{Introduction}

Let $p$ be a prime number. 
Let $K^*_{top}(X)$ be the complex $K$-theory localized at $p$ for a topological space $X$.  There are two typical
filtrations for $K^0_{top}(X)$ : the topological filtration defined by Atiyah \cite{At}
and the $\gamma$-filtration defined by Grothendieck \cite{Gr}, \cite{At}.
Let us write by $gr_{top}^*(X)$
and $gr_{\gamma}^*(X)$ the associated graded algebras for these filtrations.
 Let $G$ and  $T$ be a  connected compact Lie group
and its maximal torus. Then 
$ gr_{top}^*(G/T)\cong H^*(G/T)_{(p)}.$
However when $H^*(G)$ has $p$-torsion, it is not 
isomorphic to $gr_{\gamma}^*(G/T)$. In this paper, we try to study 
$gr_{\gamma}^*(G/T)$.

  To study the above (topological) $\gamma$-filtration, we use the corresponding algebraic $\gamma$-filtration. 
Given a field $k$ with $ch(k)=0$,
let $G_k$ and $T_k$ be a split  reductive group and split maximal torus over the field $k$,
corresponding to $G$ and  $T$.  Let $B_k$ be the Borel
subgroup containing $T_k$.
Let $\bG$ be a $G_k$-torsor.  Then $\bF=\bG/B_k$ is a
twisted form of the flag variety $G_k/B_k$. Hence we can consider
the $\gamma$-filtration and its graded ring  $gr^*_{\gamma}(\bF)$
for algebraic $K$-theory $K^0_{alg}(\bF)$ over $k$.

Moreover we take $k$
such that there is a $G_k$-torsor
$\bG$ which is isomorphic to a versal $G_k$-torsor ( for the definition of a versal $G_k$-torsor, see $\S 3$ below or
see \cite{Ga-Me-Se}, \cite{Tod}, \cite{Me-Ne-Za}, \cite{Ka1}).  Then  
$\bF=\bG/B_k$ is thought as the $most$ $twisted$  
complete flag variety.  (We say that such $\bF$ is
 a versal  flag variety  \cite{Me-Ne-Za}, \cite{Ka1})
We can see (from Panin \cite{Pa}
and Chevalley theorems)

\begin{thm}
Let $G$ be a compact simply connected Lie group.
Let $\bF=\bG/B_k$ be the versal flag variety. Then 
$K_{alg}^0(\bF)\cong K_{top}^0(G/T)$,   and
\[ gr_{\gamma}^*(G/T)\cong
gr_{\gamma}^*(\bF)\cong CH^*(\bF)/I
\quad for\ some \ ideal\ I\subset CH^*(\bF).\]
\end{thm}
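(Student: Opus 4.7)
The plan is to assemble the theorem from three inputs: Panin's comparison for algebraic $K$-theory of flag torsors, the classical Chevalley description of $K$-theory of $G/T$, and the Chern-class map from the Chow ring onto the gamma filtration. First I would recall Panin's theorem \cite{Pa}: for any $G_k$-torsor $\bG$, the algebraic $K^0_{alg}(\bG/B_k)$ is a free $\bZ$-module of rank $|W|$, with basis realized by products of line bundles attached to characters of $T_k$. When $G$ is simply connected, the natural map $X(T_k)\to\mathrm{Pic}(\bF)$ sending a character to its associated line bundle on $\bF$ is an isomorphism, since the Brauer-class obstruction to extending characters vanishes in the simply connected case. Combining this with the algebraic Chevalley presentation $K^0_{alg}(G_k/B_k)\cong R(T)\otimes_{R(G)}\bZ$ and its topological counterpart $K^0_{top}(G/T)\cong R(T)\otimes_{R(G)}\bZ$, one obtains the desired ring isomorphism $K^0_{alg}(\bF)\cong K^0_{alg}(G_k/B_k)\cong K^0_{top}(G/T)$.

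Next, since the gamma filtration is intrinsic to the $\lambda$-ring structure on $K^0$, and the above ring isomorphisms are induced by matching line bundle classes (line bundles carry $\gamma^1(L)=L-1$ canonically), the identification transports the $\gamma$-operations. This yields the second isomorphism $gr_\gamma^*(G/T)\cong gr_\gamma^*(\bF)$.

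Finally, total Chern classes define a natural graded ring homomorphism $c:CH^*(\bF)\to gr_\gamma^*(\bF)$. Both sides are multiplicatively generated by first Chern classes of line bundles coming from characters of $T_k$ (on the $K$-theory side this follows from the Chevalley presentation together with the $\gamma$-filtration formula $\gamma^1(L-1)=c_1(L)$), and these generators all lie in the image of $c$; hence $c$ is surjective. Taking $I=\ker(c)$ produces $gr_\gamma^*(\bF)\cong CH^*(\bF)/I$.

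The main obstacle is really the simply connected hypothesis: it is precisely what guarantees $\mathrm{Pic}(\bF)=X(T)$, so that every line bundle on the split flag variety lifts to $\bF$ with the same multiplicative relations. Without this, a Brauer-class obstruction distorts Panin's comparison, the algebraic $K$-theory of $\bF$ is no longer canonically the split $R(T)\otimes_{R(G)}\bZ$, and the identification with $K^0_{top}(G/T)$ fails. Verifying that the versal torsor actually realizes this optimal (i.e.\ undistorted but most twisted) situation on the Chow-ring side is where I expect the most care to be required.
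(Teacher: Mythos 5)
Your first two steps follow essentially the paper's own route: Panin's torsion-freeness plus the Chevalley theorem (Lemma 5.1 and Theorem 5.2 of the paper) give that $res_K\colon K^0_{alg}(\bF)\to K^0_{alg}(\bar \bF)\cong K^0_{top}(G/T)$ is an isomorphism, and since restriction and topological realization are $\lambda$-ring maps, the gamma filtrations correspond. One caution on your formulation: Panin's theorem for an \emph{arbitrary} $G_k$-torsor does not give a basis of line-bundle products; the basis involves modules over the Tits algebras. It is only because the torsor is a torsor under the \emph{simply connected} group (so its image in $H^2(k,Z(G))$ vanishes and all Tits algebras are split) that the Brauer obstructions you mention vanish, $\mathrm{Pic}(\bF)=X(T_k)$, and the basis is realized by line bundles. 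So your assertion is correct, but the stated reason ("vanishes in the simply connected case") should be replaced by this lifting argument; and, as the paper notes, the same statement fails for $SO(m)$ (Lemma 6.6), so the hypothesis is doing real work.

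The genuine gap is your third step. There is no natural graded ring homomorphism $c\colon CH^*(\bF)\to gr^*_{\gamma}(\bF)$ to invoke: Chern classes go from $K^0$ to $CH^*$ (and induce maps $gr^i_{\gamma}\to CH^i$), not the other way, and $gr^*_{\gamma}$ is not an oriented cohomology theory to which the universality of $CH^*$ applies. If you try to define $c$ on the generators $t_i=c_1(L_i)$ by $t_i\mapsto [\gamma^1(L_i-1)]$ and extend multiplicatively, well-definedness requires that every relation $P(t)=0$ in $CH^i(\bF)$ force $P(\gamma^1(L-1))\in F^{i+1}_{\gamma}$; but a vanishing cycle class only gives $P(\gamma^1(L-1))\in F^{i+1}_{geo}$ via the canonical surjection $CH^*\twoheadrightarrow gr^*_{geo}$, and promoting $F_{geo}$ to $F_{\gamma}$ is exactly the nontrivial point. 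The paper closes this gap with Lemma 4.2 (the motivic Atiyah--Hirzebruch spectral sequence gives $gr^*_{geo}(\bF)\cong CH^*(\bF)/I$ with $I=\cup_r \Img(d_r)$) together with Atiyah's criterion (Lemma 4.4: $F^{2*}_{\gamma}=F^{2*}_{geo}$ once the graded ring is generated by Chern classes) and Karpenko's theorem (Theorem 3.2/Corollary 3.3: for versal $\bF$ the ring $CH^*(\bF)$ is generated by the $t_i$, i.e.\ by first Chern classes). Your proposal omits the geometric filtration and the spectral-sequence description entirely, so "set $I=\ker(c)$" is circular as written; note also that you have mislocated the main difficulty --- the $\mathrm{Pic}$/simply connected issue is settled by Panin--Chevalley, while the substantive content is the comparison $F_{\gamma}=F_{geo}$ and, in the rest of the paper, the explicit case-by-case computations.
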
 

{\bf Remark.}  Karpenko conjectures that 
the above $I=0$
(\cite{Ka1} \cite{Ka2}).

Note that $gr_{\gamma}^*(G/T)$ is the associated ring of
the topological $K$-theory $K_{top}^0(G/T)$ but
$gr_{\gamma}^*(\bF)$ is that of the algebraic $K$-theory
$K^0_{alg}(\bF)$.  Hence the purely topological object $gr_{\gamma}^*(G/T)$ can be computed by a purely algebraic geometric object,
the Chow ring of a twisted flag variety $\bF$.

Let $BT$ be the classifying space of $T$.  From the
cofiber
sequence $G\to G/T\to BT$, we have the (modified)
Atiyah-Hirzebruch spectral sequence
\[ E_2^{*,*'}\cong H^{*,*'}(BT;K^*_{top}(G))
\Longrightarrow 
K^*_{top}(G/T).\]
From the above theorem, we can see  
\begin{cor}   We have 
$ E_{\infty}^{*,0}\cong gr_{\gamma}^*(G/T).$
\end{cor}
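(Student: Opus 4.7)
The strategy is to realize both $E_\infty^{*,0}$ and $gr_\gamma^*(G/T)$ as the associated graded of the same filtration on $K^0_{top}(G/T)$, namely the pullback along $\pi:G/T\to BT$ of the skeletal filtration on $K^*_{top}(BT)$.

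First, I interpret the bottom row. Along $*'=0$, the modified AHSS of $G\to G/T\to BT$ records the contributions to $K^*_{top}(G/T)$ coming through the edge map $\pi^*:K^*_{top}(BT)\to K^*_{top}(G/T)$; unwinding convergence identifies $\bigoplus_p E_\infty^{p,0}$ with the associated graded of the filtration $F^p:=\pi^*(F^p_{top}K^*_{top}(BT))$, where $F^\bullet_{top}$ is the skeletal filtration.

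Next, I match $F^\bullet$ with the $\gamma$-filtration using three classical inputs. (a) Since $BT\simeq(\bC P^\infty)^\ell$, the skeletal, augmentation-ideal, and $\gamma$-filtrations on $K^0_{top}(BT)$ coincide, each being generated by the classes $[L_i]-1$ of the canonical line bundles. (b) For simply connected $G$, the map $\pi^*:K^0_{top}(BT)\to K^0_{top}(G/T)$ is surjective (Atiyah, Pittie--Steinberg; the topological counterpart of the Panin-type result used in Theorem 1.1), and $K^0_{top}(G/T)\cong R(T)/I(G)\cdot R(T)$. (c) The $\gamma$-filtration is functorial, and for a surjective map of $\lambda$-rings the image of the source $\gamma$-filtration equals the target one. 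Combining (a)--(c) yields $F^p=\pi^*(F^p_\gamma K^0_{top}(BT))=F^p_\gamma K^0_{top}(G/T)$, so the associated graded of $F^\bullet$ is $gr_\gamma^*(G/T)$ (with the usual identification between the cohomological degree inherited from $BT$ and the $\gamma$-filtration index), proving the corollary.

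The main obstacle is the first step. Unlike the classical AHSS of a single space, the modified AHSS for a fibration has coefficients $K^*(G)$ nonzero in multiple degrees, and one must verify that no nontrivial differentials from or into the row $*'=0$ obstruct identifying $E_\infty^{*,0}$ with the associated graded of the pullback filtration. The key inputs here are Hodgkin's description of $K^*(G)$ for simply connected $G$, the $\lambda$-ring structure on $K^*(BT)$, and the surjectivity of $\pi^*$ already used in (b); Theorem 1.1 feeds in by guaranteeing the bridge $K^0_{top}(G/T)\cong K^0_{alg}(\bF)$ needed to transport the algebraic $\gamma$-filtration story into the topological spectral sequence.
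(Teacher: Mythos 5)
Your inputs (a)--(c) are sound, but they only give you one inclusion. Writing $F^\bullet$ for the filtration of $K^0_{top}(G/T)$ by preimages of the skeleta of $BT$ (the filtration whose graded pieces are, after Bott periodicity, the row $E_\infty^{*,0}$), your (a)--(c) yield
$F^{2n}_\gamma K^0(G/T)=\pi^*\bigl(F^{2n}_\gamma K^0(BT)\bigr)=\pi^*\bigl(F^{2n}_{skel}K^0(BT)\bigr)\subseteq F^{2n}$.
The genuine gap is your first step: convergence identifies $E_\infty^{p,0}$ with $F^pK^p/F^{p+1}K^p$ for the fibration filtration $F^\bullet$, \emph{not} with the associated graded of the image filtration $\pi^*(F^\bullet_{skel}K(BT))$; equating the two is exactly the nontrivial content of the corollary. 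Surjectivity of $\pi^*$ does not formally give it: an element of $F^{2n}$ can be written as $\pi^*(y)$, but nothing guarantees $y$ can be chosen in $F^{2n}_{skel}K^0(BT)$. The discrepancy is already visible at $E_2$: for simply connected $G$, $K^0(G)$ is the even part of $\Lambda(\beta_1,\dots,\beta_\ell)$, of rank $2^{\ell-1}$, so $E_2^{*,0}=H^*(BT;K^0(G))$ is much larger than $H^*(BT;\bZ)$, and one must actually show that everything outside the image of the unit $\bZ\to K^0(G)$ is killed or absorbed, i.e.\ that the surviving bottom row is generated by classes pulled back from $BT$. You flag this as ``the main obstacle'' but do not carry out the verification, and the inputs you name (Hodgkin's computation of $K^*(G)$, the $\lambda$-ring structure, surjectivity of $\pi^*$) do not by themselves produce it.

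For comparison, the paper closes precisely this point by a different argument (Corollary 5.7): by Theorem 1.1, $K^*(G/T)\cong K^*(\bF)$ for a versal flag variety $\bF$, and by Karpenko's surjectivity $K^*(\bF)$ is generated by $K^*(BT)\cong K^*\otimes S(t)$; hence the whole $E_\infty$-term is multiplicatively generated by Chern classes, and Atiyah's criterion (p.~63 of \cite{At}; compare Lemma 4.4) then says that any filtration whose associated graded is generated by Chern classes coincides with the $\gamma$-filtration. Note also that Theorem 1.1 is itself established in the paper by case-by-case computations for the simple groups, so the corollary is not a purely formal consequence of Pittie--Steinberg type surjectivity. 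To complete your route you must either prove the reverse inclusion $F^{2n}\subseteq\pi^*(F^{2n}_{skel}K^0(BT))$ directly (this is where the differentials involving the rows $*'\neq 0$ genuinely enter), or replace your first step by the generated-by-Chern-classes argument together with Atiyah's criterion, which is what the paper does.
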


However, the above spectral sequence itself seems to be difficult to
compute.
In this paper, the proof of Theorem 1.1 is 
also given by the computation 
of each simple Lie group. 
 For example, in the following case, $gr_{\gamma}(G/T)/2\cong gr_{\gamma}(\bF)/2$
is computed (\cite{Ya6}).
\begin{thm}
Let $(G,p)$ be the following simply connected simple 
 group with $p$ torsion in $H^*(G)$.
Let $(G,p)=(Spin(n),2)$ for $n\le 10$, or
an exceptional Lie group except for $(E_7,2)$ and $(E_8,2,3)$.
Then
there are elements $b_s$ 
in $S(t)=\bZ[t_1,...,t_{\ell}]\cong H^*(BT)$ for $1\le s\le \ell$ such that
$ gr_{\gamma}(G/T)\cong CH^*(\bF)$, and
\[gr_{\gamma}(G/T)/p\cong
S(t)/(p,b_ib_j,b_k|1\le i,j\le2(p-1)<k\le \ell).\]
\end{thm}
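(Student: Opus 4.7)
The plan is to split the theorem into two parts: (i) verifying Karpenko's conjecture $I=0$ for the listed pairs, and (ii) giving the explicit presentation of $CH^*(\bF)/p$. For (i), Theorem 1.1 already supplies $gr^*_\gamma(G/T)\cong CH^*(\bF)/I$, so the first claim of the theorem amounts to $I=0$ in each listed case.

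For (ii), the starting point is the pullback $\rho\colon S(t)=CH^*(BT)\twoheadrightarrow CH^*(\bF)$ induced by $\bF\to BT$ (through the versal torsor $\bG$), which is surjective. Its kernel contains the image in $S(t)$ of the positive-degree part of $CH^*(BG_k)$. For each $(G,p)$ in the list the mod-$p$ cohomology $H^*(BG;\bZ/p)$ is known completely (Borel, Toda, Kono--Mimura), and one reads off polynomials $b_1,\ldots,b_\ell\in S(t)$ whose reductions mod $p$ lift the generators of $H^*(BG;\bZ/p)$ lying above the Weyl-invariant polynomial subring. The dichotomy $1\le s\le 2(p-1)<k\le\ell$ reflects the range of Milnor primitives $Q_i$ available in low degree: the first $2(p-1)$ elements $b_s$ are those paired with such primitives, so their pairwise products $b_ib_j$ land in the augmentation ideal of $CH^*(BG_k)/p$ and therefore vanish in $CH^*(\bF)/p$; the remaining elements $b_k$ vanish individually for degree/weight reasons.

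To show that the ideal $(p,b_ib_j,b_k)$ is \emph{all} of $\Ker(\rho\otimes\bZ/p)$, and simultaneously to establish $I=0$, I would compare Poincar\'e series: the upper bound comes from the explicit quotient $S(t)/(p,b_ib_j,b_k)$ just constructed, while the lower bound comes from computing the rank of $gr_\gamma^*(G/T)/p$ via $K^0_{top}(G/T)$ and the modified Atiyah--Hirzebruch spectral sequence of Corollary 1.2. Matching ranks simultaneously forces the relation ideal to coincide with $(p,b_ib_j,b_k)$ and forces $I=0$.

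The main obstacle is this rank comparison, which must be carried out case by case. The delicate cases are $(Spin(10),2)$, $(F_4,3)$ and $(E_6,3)$, where the counts just barely close up; the bookkeeping is essentially that of \cite{Ya6}. The pairs $(E_7,2)$ and $(E_8,2,3)$ are excluded precisely because higher Milnor primitives $Q_2,Q_3$ intervene, so the relations no longer fit the simple shape $(p,b_ib_j,b_k)$ and both the explicit presentation and the verification of $I=0$ become substantially harder.
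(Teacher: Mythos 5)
Your proposal does not follow the paper's route and contains two genuine gaps that I do not see how to close.

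First, the claimed vanishing of the products $b_ib_j$ (for $i,j\le 2(p-1)$) in $CH^*(\bF)/p$ ``because they land in the augmentation ideal of $CH^*(BG_k)/p$'' cannot be the right mechanism: the individual $b_i$ with $i\le 2(p-1)$ also lie in that augmentation ideal, yet the very content of the theorem is that those $b_i$ are \emph{nonzero} mod $p$ in $gr_\gamma(\bF)$. The kernel of $CH^*(BB_k)\twoheadrightarrow CH^*(\bF)$ is not simply the ideal generated by positive-degree Weyl invariants. In the paper the vanishing of $b_ib_j$ is a $K$-theoretic phenomenon: from $Q_1x_{2i-1}=y^i$ and $Q_0x_{2i}=y^i$ one deduces $b_1^i=v_1^{i-1}b_{2i-1}$ and $pb_1^i=v_1^{i-1}b_{2i}$ in $k^*(G/T)$, whence $b_{2i-1}b_{2j-1}=v_1 b_{2(i+j)-1}$, etc., which die upon applying $\otimes_{k^*}\mathbb Z_{(p)}$ to pass to $CH^*$. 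This is an explicit computation inside $K^*(R(\bG))\cong K^*[b_1]/(b_1^p)$, not a formal consequence of Weyl invariance.

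Second, the rank/Poincar\'e-series comparison you propose as the ``lower bound'' is circular. Corollary 1.2 merely tells you that $E^{*,0}_\infty$ of the modified AHss $H^*(BT;K^*(G))\Rightarrow K^*(G/T)$ is $gr_\gamma(G/T)$ — it does not supply an independent computation of that rank, and the paper explicitly flags this spectral sequence as hard to compute. Moreover, even a successful rank count of $gr_\gamma(G/T)/p$ would not by itself prove $gr_\gamma(\bF)\cong CH^*(\bF)$ (the first assertion of the theorem), because you would also need to know the Poincar\'e series of $CH^*(\bF)/p$ from an independent source. The paper supplies exactly that independent source: by Petrov--Semenov--Zainoulline the motive $R(\bG)$ is the Rost motive $R_2$, whose Chow ring is known (Theorem 8.2, Vishik--Yagita), and the additive identification $gr_\gamma(R(\bG))\cong B\cong CH^*(R_2)$ then forces $I=0$ in $gr_\gamma(\bF)\cong CH^*(\bF)/I$. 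Your proposal has no substitute for the Rost-motive comparison and no substitute for Lemma 5.7, which is what converts the filtered $K^*(R(\bG))$ into $gr_\gamma(R(\bG))$; without those two ingredients the argument does not close.
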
  

\begin{thm}
Let $(G,p)=(Spin(11),2)$. Then we can take
$c_1,c_i'$ in $S(t)=H^*(BT)\cong \bZ[t_1,...,t_5]$ 
for $2\le i\le 5$ with $|c_1|=2$, $|c_i'|=2i$  such that 
\[ gr_{\gamma}(G/T)/2\cong S(t)/(2, c_i'c_j', c_1^8c_i',c_1^{16}|2\le i\le j\le 5,\ (i,j)\not
=(2,4)).\]
\end{thm}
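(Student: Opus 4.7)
The plan is to apply Theorem 1.1 to reduce the problem to a Chow-theoretic calculation on the versal flag variety. Let $\bF = \bG/B_k$ be a versal flag variety for $Spin(11)_k$, so that Theorem 1.1 yields
\[ gr_{\gamma}^*(G/T)/2 \;\cong\; CH^*(\bF)/(I,2) \]
for some ideal $I \subset CH^*(\bF)$. Since $Spin(11)$ has rank $5$, we have $S(t) = \bZ[t_1,\ldots,t_5]$. The first task is to select integral lifts $c_1, c_2', c_3', c_4', c_5'\in S(t)$ of a minimal generating set for the image of the characteristic homomorphism $\rho\colon H^*(BSpin(11);\bF_2)\to H^*(BT;\bF_2)$: the element $c_1$ should reduce mod $2$ to the $w_2$-class (the obstruction to lifting an $SO$-bundle to a $Spin$-bundle), and each $c_i'$ of degree $2i$ should be an integral lift of a chosen mod-$2$ Stiefel--Whitney-type generator.

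Next I would verify that the listed relations hold in $gr_{\gamma}^*(G/T)/2$. The identity $c_1^{16}=0$ should follow from the fact that the $2$-primary torsion index of $Spin(11)$ equals $2^4=16$, since this integer controls the exponent of $c_1$ in $CH^*(\bF)/2$ via the standard restriction-to-split-form argument for versal torsors. The relations $c_i' c_j'=0$ for $(i,j)\neq(2,4)$ and $c_1^8 c_i'=0$ should correspond to polynomial combinations of mod-$2$ Stiefel--Whitney generators that admit integrally trivial lifts in $H^*(BSpin(11))$, since any such lift pulls back to a class in $CH^*(\bF)$ that restricts trivially to the split form and therefore represents a relation in $gr_{\gamma}(\bF)$. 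The exceptional pair $(i,j)=(2,4)$ in degree $12$ corresponds to an integral class that does \emph{not} vanish, and its nontriviality must be established by producing a surviving element in $CH^6(\bF)/2$.

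To finish, I would compare Poincar\'e series. The proposed quotient $S(t)/(2, c_i' c_j', c_1^8 c_i', c_1^{16})$ (with the exception $(i,j)\neq(2,4)$) is a finite-dimensional graded $\bF_2$-algebra whose dimension in each degree can be read off, and $gr_{\gamma}^*(G/T)/2$ can be bounded using the Atiyah--Hirzebruch spectral sequence
\[ E_2^{*,*'} \cong H^{*,*'}(BT;K^*_{top}(Spin(11)))\Longrightarrow K^*_{top}(G/T) \]
of Corollary 1.2, together with the known additive structure of $K^*_{top}(Spin(11))$. The main obstacle will be the exceptional case: one must show that $c_2' c_4'$ genuinely survives modulo $2$ in $gr_{\gamma}^*(G/T)$, i.e., that it is not killed by the ideal $I$ of Theorem 1.1. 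This is essentially an instance of Karpenko's conjecture $I=0$ for $(Spin(11),2)$ in that particular degree, and verifying it is likely to require a direct construction of a cycle representative that is nontrivial in $CH^6(\bF)/2$.
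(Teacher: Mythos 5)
Your proposal does not reach the statement; it defers or mishandles each of the substantive points. First, the input you cite for $c_1^{16}=0$ is factually wrong: the $2$-primary torsion index of $Spin(11)$ is $2$, not $2^4$ (the paper derives $t(Spin(11))_{(2)}=2$ from Nishimoto's $Q_0(z_{15})=y_6y_{10}$, which gives $c_1^8=2y_6y_{10}=2y_{top}$ and Lemma 3.10), and in any case the torsion index does not control ``the exponent of $c_1$'' in $CH^*(\bF)/2$; in the paper the relation $c_1^{16}=0$ comes out of the $K$-theoretic identities $c_1^8=2y_6y_{10}$ and $c_3'c_5'=2c_1^8$ together with the determination of $gr_{\gamma}(R(\bG))$. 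Second, your mechanism for the relations $c_i'c_j'=0$ and $c_1^8c_i'=0$ --- that a class restricting trivially to the split form ``therefore represents a relation in $gr_{\gamma}(\bF)$'' --- is invalid: the map $gr_{\gamma}(\bF)\to gr_{\gamma}(\bar\bF)$ is not injective (Lemma 6.7 and the remark after it: $c_2c_3\neq 0$ in $gr_{\gamma}$ of the versal $SO(11)$-flag although it dies under restriction; and the surviving class $c_2'c_4'$ itself restricts to zero mod $2$). The paper instead proves, e.g., $c_2'c_3'=0$ by writing $y_6^2=y_6t$ with $t\in S(t)$ and showing $c_2'c_3'=2v_1y_6^2=v_1c_3't$ in $K^*(\bF)$, i.e.\ by a $v_1$-divisibility argument in algebraic $K(1)$-theory, not by a restriction-vanishing argument.

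Third, the crux --- that $c_2'c_4'$ is nonzero --- is exactly what you leave unproven (``likely to require a direct construction of a cycle''). The paper's route is concrete: decompose $M(\bF)$ into Tate twists of $R(\bG)$ (Theorem 3.1); use the injective restriction $res_K$ with $c_2'\mapsto v_1y_6$, $c_4'\mapsto v_1y_{10}$, $c_3'\mapsto 2y_6$, $c_5'\mapsto 2y_{10}$, $c_1^8\mapsto 2y_6y_{10}$ to get $grK^*(R(\bG))\cong K^*/2\{c_2',c_4',c_2'c_4'\}\oplus K^*\{1,c_3',c_5',c_1^8\}$; prove $c_2'c_4'\neq 0$ in $CH^*(R(\bG))/2$ by the $k^*$-module-generator argument of Proposition 7.7 (noting $v_1y_6y_{10}\notin\Img(res_K)$); then apply the criterion of Lemma 5.7 to conclude $gr_{\gamma}(R(\bG))/2\cong\bZ/2\{1,c_2',c_3',c_4',c_5',c_2'c_4',c_1^8\}$, and pass to $\bF$ by the Lemma 3.5-type argument. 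Finally, your closing step --- bounding $gr_{\gamma}(G/T)/2$ by the modified Atiyah--Hirzebruch spectral sequence $H^{*}(BT;K^*_{top}(Spin(11)))\Rightarrow K^*_{top}(G/T)$ --- is precisely the computation the paper declares difficult and avoids; without computing its differentials it yields no bound. As it stands the proposal is an outline whose decisive steps are either missing or rest on incorrect facts.
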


{\bf Remark.}  Quite recently,  Karpenko proves (
\cite{Ka2}) that
for $G=Spin(11)$ and $G=Spin(12)$,
we have
$gr_{\gamma}(\bF)\cong CH^*(\bF)$, and so the above
ring is isomorphic to  $CH^*(\bF)/2$.

The plan of this paper is the following.
In $\S 2$, we recall and prepare the topological arguments for $H^*(G/T)$, $K(1)^*(G/T)$
and $BP^*(G/T)$.  In $\S 3$, we recall the decomposition
of the motive of a versal flag variety
and recall the torsion index.
In $\S 4, \S 5$, we study the graded rings  of the $K$-theory (and
the Morava $K(1)$-theory).
In $\S 6$, $\S 7$, we study in the cases  $SO(m)$
and $Spin(m)$ for $p=2$.  
 In $\S 8-\S 11$, 
we study the cases that
$(G,p)$ are in Theorem 1.3 (e.g., $(E_8,5))$,
$(E_8,3)$, $(E_7,2)$ and $(E_8,2)$ respectively.

 \section{Lie groups $G$ and the  flag manifolds $G/T$}  

Let $G$ be a connected 
 compact Lie group.
By Borel, its $mod(p)$ cohomology is (for $p$ odd)
\[ (2.1)\quad H^*(G;\bZ/p)\cong P(y)/p\otimes \Lambda(x_1,...,x_{\ell}),
\quad \ell=rank(G)\]
\[ \quad  with  \ \  P(y)=\bZ_{(p)}[y_1,...,y_k]/(y_1^{p^{r_1}},...,
y_k^{p^{r_k}})
\]
where the degree $|y_i|$ of $y_i$ is even and 
$|x_j|$ is odd.  When $p=2$, a graded ring $grH^*(G;\bZ/2)$ is isomorphic to the
right  hand side ring, e.g. $x_j^2=y_{i_j}$ for some $y_{i_j}$.
In this paper, $H^*(G;\bZ/2)$ means this $grH^*(G;\bZ/2)$
so that $(2.1)$ is satisfied also for $p=2$.

 Let $T$ be the  maximal torus of $G$.  and $BT$ be the classifying space of $T$.
We consider the fibering (\cite{Tod}, \cite{Mi-Ni})
$ G\stackrel{\pi}{\to}G/T\stackrel{i}{\to}BT$
and the induced spectral sequence 
\[ E_2^{*,*'}=H^*(BT;H^{*'}(G;\bZ/p)) \Longrightarrow H^*(G/T;\bZ/p).\] 
The cohomology of the classifying space of the torus is  given by
$H^*(BT)\cong S(t)=\bZ[t_1,...,t_{\ell}]$ with $|t_i|=2$,
where $t_i=pr_i^*(c_1)$ is the $1$-st  Chern class
induced from
$ T=S^1\times ...\times S^1\stackrel{pr_i}{\to}S^1\subset U(1)$
for the $i$-th projection $pr_i$.
Note that $\ell=rank(G)$ is also the number of the odd degree generators $x_i$ in
$H^*(G;\bZ/p)$.  

It is well
known that $y_i$ are permanent cycles and 
that there is a regular sequence (\cite{Tod}, \cite{Mi-Ni})
$(\bar b_1,...,\bar b_{\ell})$ in $H^*(BT)/(p)$ such that $d_{|x_i|+1}(x_i)=\bar b_i$ (this $\bar b_i$ is called the transgressive element).  Thus we get
\[ E_{\infty}^{*,*'}\cong grH^*(G/T;\bZ/p)\cong P(y)/p\otimes 
S(t)/(\bar b_1,...,\bar b_{\ell}).\]

 Moreover we know that $G/T$ is a manifold 
such that $H^*(G/T)$ is torsion free, and 
\[(2.2)\quad H^*(G/T)_{(p)}\cong \bZ_{(p)}[y_1,..,y_k]\otimes S(t)/(f_1,...,f_k,b_1,...,b_{\ell})\]
where $b_i=\bar b_i\ mod(p)$ and $f_i=y_i^{p^{r_i}}\ mod(t_1,...,t_{\ell}).$

Let $BP^*(-)$ be the Brown-Peterson theory
with the coefficients ring $BP^*\cong \bZ_{(p)}[v_1,v_2,...],$
$|v_i|=-2(p^i-1)$ (\cite{Ha}, \cite{Ra}).
Since $H^*(G/T)$ is torsion free,  the 
Atiyah-Hirzebruch 
spectral sequence (AHss) collapses.  Hence  we also know 
\[(2.3)\quad BP^*(G/T)\cong BP^*[y_1,...,y_k]\otimes S(t)/(\tilde f_1,...,\tilde f_k,
\tilde b_1,...,\tilde b_{\ell})\]
where $\tilde b_i=b_i\ mod(BP^{<0})$ and $\tilde f_i=f_i\ mod(BP^{<0}).$

Recall the  Morava $K$-theory
$K(n)^*(X)$ with the coefficient ring $K(n)^*=\bZ/p[v_n,v_n^{-1}]
$.
Similarly we can define connected or integral Morava
K-theories with 
\[ k(n)^*=\bZ/p[v_n],\ \ \tilde k(n)^*=\bZ_{(p)}[v_n],
\ \ \tilde K(n)^*=\bZ_{(p)}[v_n,v_n^{-1}]. \]
It is known (as the Conner-Floyd type theorem)
\[\tilde K(1)^*(X)\cong  BP^*(X)\otimes _{BP^*}\tilde K(1)^*.\]
The above fact does not hold for $K(n)$-theory with 
$n\ge 2$.

Here we consider the connected Morava K-theory $k(n)^*(X)$
(such that $K(n)^*(X)\cong k(n)^*(X)[v_n^{-1}]$) 
and the Thom natural homomorphism
 $\rho : k(n)^*(X)\to H^*(X;\bZ/p)$.
 Recall that there is an exact sequence
(Sullivan exact sequence \cite{Ra}, \cite{Ya2})
\[...\to k(n)^{*+2(p^n-1)}(X)\stackrel{v_n}{\to} k(n)^*(X)\stackrel{\rho}{\to}
H^*(X;\bZ/p)\stackrel{\delta}{\to}...\]
such that $\rho\cdot \delta(X)=Q_n(X)$.
Here the Milnor $Q_i$ operation
\[ Q_i: H^*(X;\bZ/p)\to H^{*+2p^i-1}(X;\bZ/p)\]
is defined by $Q_0=\beta$ and $Q_{i+1}=[P^{p^i}Q_i,Q_iP^{p^i}]$
for the Bockstein operation $\beta$ and the reduced power operation $P^j$.

We consider  the Serre spectral sequence  
\[ E_2^{*,*'} \cong H^*(B;H^*(F;\bZ/p))
\Longrightarrow  H^*(E;\bZ/p). \]
induced from the fibering $F\stackrel{i}{\to} E\stackrel{\pi}{\to}B$ with $H^*(B)\cong H^{even}(B)$.
By using the Sullivan exact sequence, we can prove ;
\begin{lemma} (Lemma 4.3 in \cite{Ya2})
In the spectral sequence $E_r^{*,*'}$ above,
suppose that  there is $x\in H^*(F;\bZ/p)$ such that
\[ (*)\quad  y=Q_n(x)\not=0 \quad  and \quad
b= d_{|x|+1}(x)\not = 0\in E_{|x+1}^{*,0}.\]
Moreover suppose that  $E_{|x|+1}^{0,|x|}
\cong \bZ/p\{x\}\cong \bZ/p$.
Then there are $y'\in k(n)^*(E)$ and $b'\in k(n)^*(B)$ such that
$i^*(y')=y$, $\rho(b')=b$ and that
\[(**)\quad v_ny'=\lambda \pi^*(b')\quad  in \ \  k(n)^*(E),
\ \  for \ \lambda\not =0\in \bZ/p.\]
Conversely if $(**)$ holds in $k(n)^*(E)$ for $y=i^*(y')\not =0$ and $b=\rho(b')\not =0$, then there is $x\in H^*(F;\bZ/p)$ such that $(*)$ holds.
\end{lemma}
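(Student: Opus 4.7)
The plan is to interlock the Sullivan exact sequences on $B$, $E$, and $F$ with the Serre spectral sequence transgression, reading $i^*(y')=y$ in the sense $\rho(i^*(y'))=y$. For the forward direction, $b=d_{|x|+1}(x)$ being a transgression image gives $\pi^*(b)=0$ in $H^*(E;\bZ/p)$. Since $H^*(B)=H^{even}(B)$ and $b\neq 0$, the degree $|b|=|x|+1$ is even, so $|x|$ is odd and the AHss for $k(n)^*(B)$ collapses: $k(n)^{odd}(B)=0$. In particular $\delta(b)\in k(n)^{|b|+2p^n-1}(B)$ (sitting in odd degree) vanishes, and Sullivan exactness on $B$ lifts $b$ to $b'\in k(n)^{|b|}(B)$ with $\rho(b')=b$. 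Then $\rho(\pi^*(b'))=\pi^*(b)=0$, so Sullivan exactness on $E$ produces $y'\in k(n)^{|y|}(E)$ with $v_n y'=\pi^*(b')$.

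The core step is identifying $i^*(y')$. Applying $i^*$ to $v_n y'=\pi^*(b')$ gives $v_n i^*(y')=0$ in $k(n)^*(F)$, so Sullivan on $F$ yields $w\in H^{|x|}(F;\bZ/p)$ with $i^*(y')=\delta_F(w)$ and $\rho(i^*(y'))=Q_n(w)$. The ambiguity in $y'$ is $\delta_E(H^{|x|}(E;\bZ/p))$, which under $i^*$ becomes $\delta_F(i^*(H^{|x|}(E;\bZ/p)))$; since the edge map $i^*\colon H^{|x|}(E)\to H^{|x|}(F)$ has image $E_\infty^{0,|x|}$, and this vanishes because $x\in E_{|x|+1}^{0,|x|}\cong\bZ/p\{x\}$ is killed by $d_{|x|+1}$, the class $i^*(y')$ is intrinsic and $w$ is unique modulo $\Img(\rho_F)$. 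The main obstacle will be verifying $w\equiv\lambda x$ modulo $\Ker(Q_n)$ with $\lambda\neq 0$. My approach will be to compare $y'$ against a canonical candidate: since $\delta(b)=0$ by the parity argument above, naturality of $\delta$ shows that $\delta_F(x)$ is a permanent cycle in the $k(n)^*$-Serre spectral sequence $\hat{E}_2^{*,*}=H^*(B;k(n)^*(F))\Rightarrow k(n)^*(E)$ and lifts to some $y''\in k(n)^{|y|}(E)$; a filtration comparison of $v_n y''$ against $\pi^*(b')$ then forces $i^*(y')=\lambda\,\delta_F(x)$, with $\lambda\neq 0$ because $y=Q_n(x)\neq 0$. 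Rescaling $y'$ and $b'$ accordingly yields the stated relation $v_n y'=\lambda\pi^*(b')$ with a nonzero constant.

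For the converse I would reverse these steps. Given $v_n y'=\lambda\pi^*(b')$ with $\rho(b')=b\neq 0$ and $\rho(i^*(y'))=y\neq 0$, applying $\rho$ gives $\pi^*(b)=0$ (so $b$ dies in the Serre SS) and applying $i^*$ gives $v_n i^*(y')=0$; Sullivan on $F$ then produces $x\in H^{|x|}(F;\bZ/p)$ with $\delta_F(x)=i^*(y')$, whence $Q_n(x)=\rho(i^*(y'))=y\neq 0$. Finally $d_{|x|+1}(x)=b$ follows because the hypothesis $E_{|x|+1}^{0,|x|}\cong\bZ/p\{x\}$ singles out $x$ as the only transgression source capable of hitting $b$ at page $|x|+1$ once $\pi^*(b)=0$ is known.
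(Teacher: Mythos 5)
Your proposal handles the soft parts correctly and in the spirit the paper intends (the paper gives no internal proof, deferring to Lemma 4.3 of \cite{Ya2}, but the advertised route is exactly the Sullivan exact sequence): lifting $b$ to $b'\in k(n)^*(B)$ using $k(n)^{odd}(B)=0$, producing $y'$ with $v_ny'=\pi^*(b')$ from $\rho\pi^*(b')=\pi^*(b)=0$, and observing that $i^*(y')$ is independent of the choice of $y'$ because $E_\infty^{0,|x|}=0$ kills the ambiguity $\delta_F(i^*H^{|x|}(E;\bZ/p))$. The genuine gap is at the one step where the content of the lemma lives: showing $\rho(i^*(y'))=\lambda Q_n(x)$ with $\lambda\neq 0$, i.e.\ ruling out $i^*(y')=0$. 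Your argument for this is only a plan, and its two sub-claims are not established. First, ``$\delta_F(x)$ is a permanent cycle in $\hat E_r$ by naturality of $\delta$'' does not follow: the coefficient map $H^*(F;\bZ/p)\to k(n)^{*+2p^n-1}(F)$ gives a map of Serre spectral sequences under which $d_{|x|+1}(x)=b$ maps to zero (since $\delta_F(1)=0$ on the bottom-row coefficients), so naturality only kills $\hat d_{|x|+1}$ on the class of $\delta_F(x)$; since $x$ is dead in the mod $p$ spectral sequence after page $|x|+1$, naturality says nothing about higher differentials on $\delta_F(x)$, and whether it survives is essentially equivalent to the statement you are trying to prove. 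Second, the ``filtration comparison of $v_ny''$ against $\pi^*(b')$'' is asserted, not performed; the hypotheses $Q_n(x)=y\neq 0$ and $E_{|x|+1}^{0,|x|}\cong\bZ/p\{x\}$, which must enter precisely here (e.g.\ through the $k(n)$-based Serre spectral sequence or the Atiyah--Hirzebruch differential $d_{2p^n-1}=v_n\otimes Q_n$), are never actually used except to declare $\lambda\neq 0$ at the end. As it stands, nothing in your argument excludes the possibility $\pi^*(b')=v_nz$ with $i^*(z)=0$, which by your own well-definedness observation would force $i^*(y')=0$ and destroy the conclusion.

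The converse has a parallel gap. From $(**)$ you correctly get $\pi^*(b)=0$ and $i^*(y')=\delta_F(x)$ with $Q_n(x)=y\neq 0$, but the final claim $d_{|x|+1}(x)=b$ is not justified: in this direction $x$ is the object being constructed, so the hypothesis $E_{|x|+1}^{0,|x|}\cong\bZ/p\{x\}$ is not available to you, and even granting it, $\pi^*(b)=0$ only says $b$ is hit by some differential $d_r$, possibly with $r<|x|+1$ from a non-transgressive source $E_r^{|b|-r,r-1}$, so ``$x$ is the only possible source'' is not an argument. Both directions need the actual spectral-sequence comparison (transgression in the $k(n)$-based Serre spectral sequence versus the mod $p$ one) that \cite{Ya2} supplies; your write-up identifies where it is needed but does not provide it.
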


{\bf Remark.} (Remark 4.8 in \cite{Ya2})
The above lemma also holds letting
$k(0)^*(X)=H^*(X;\bZ)$ and $v_0=p$. 
This fact is well known (Lemma 2.1 in \cite{Tod}).

\begin{cor} 
Let $b\not =0$ be the transgression image of $x$, i.e.
$d_{|x|+1}(x)=b\in H^*(G/T)/p$.  Then there is a lift $b\in BP^*(BT)\cong BP^*\otimes S(t)$ of $b\in S(t)/p$ such that
\[ b=\sum_{i=0} v_iy(i) \in \ BP^*(G/T)/\II
\quad (i.e., \ b=v_iy(i)\in\ k(i)^*(G/T))/(v_i^2))\]
where $y(i)\in H^*(G/T;\bZ/p)$ with $\pi^*y(i)=Q_ix$.
\end{cor}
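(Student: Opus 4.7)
The plan is to produce the formula chromatic level by level: apply Lemma 2.1 in each $k(i)^*$ and assemble the pieces using the freeness of $BP^*(G/T)$ over $BP^*$. First, (2.3) shows $BP^*(G/T)$ is a free $BP^*$-module (equivalently, the $BP$-Atiyah–Hirzebruch spectral sequence collapses because $H^*(G/T)$ is torsion free), so modulo $\II$ one has
\[
BP^*(G/T)/\II \;\cong\; H^*(G/T;\bZ/p^2)\oplus\bigoplus_{i\ge 1}v_iH^*(G/T;\bZ/p).
\]
Pick any lift $\tilde b\in BP^*(BT)\cong BP^*\otimes S(t)$ of $b\in S(t)/p$. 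Since $b=d_{|x|+1}(x)$ transgresses, $i^*(\tilde b)\in BP^*(G/T)$ reduces to $0$ in $H^*(G/T;\bZ/p)$, so $i^*(\tilde b)\in I_{\infty}\!\cdot\! BP^*(G/T)$. Reducing modulo $\II$ therefore produces a canonical expansion
\[
i^*(\tilde b)\;\equiv\;\sum_{i\ge 0}v_i\,\alpha_i\pmod{\II},\qquad v_0:=p,
\]
with $\alpha_i\in H^*(G/T;\bZ/p)$ sitting in the correct degree $|b|+2(p^i-1)=|Q_ix|+1$ to be a candidate for $y(i)$.

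Next I would identify each $\alpha_i$ with an element restricting to $Q_ix$ on the fiber by specializing to $k(i)^*(G/T)/(v_i^2)$, using the Remark after Lemma 2.1 to allow $i=0$ via $k(0)=H\bZ$, $v_0=p$. The reduction map $BP^*(G/T)/\II\to k(i)^*(G/T)/(v_i^2)$ kills every $v_j$ with $j\ne i$ and collapses the right-hand side of the displayed expansion to $v_i\alpha_i$. On the other hand, Lemma 2.1 applied to the Serre fibration $G\xrightarrow{\pi}G/T\xrightarrow{i}BT$ and the class $x\in H^*(G;\bZ/p)$ (the hypotheses are satisfied: $b\ne 0$ transgresses from $x$, and $E_{|x|+1}^{0,|x|}\cong\bZ/p\{x\}$) supplies $y(i)'\in k(i)^*(G/T)$ and $b(i)'\in k(i)^*(BT)$ with $\pi^*(y(i)')=Q_ix$, $\rho(b(i)')=b$, and $v_iy(i)'=\lambda_i\,i^*(b(i)')$ for some nonzero $\lambda_i\in\bZ/p$. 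Because $\tilde b$ and $b(i)'$ both lift $b$, their difference lies in the kernel of $\rho$ and hence is divisible by $v_i$ in $k(i)^*(BT)$; modulo $v_i^2$ this difference contributes only to the $v_i$-coefficient. Comparing the two expressions for $i^*(\tilde b)$ in $k(i)^*(G/T)/(v_i^2)$ then identifies $\alpha_i$ with $\lambda_i\pi^*(y(i)')$ modulo the kernel of fiber restriction, and after absorbing the scalar $\lambda_i$ into the choice of representative one gets a class $y(i)\in H^*(G/T;\bZ/p)$ with $\pi^*(y(i))=Q_ix$ and $\alpha_i=y(i)$.

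Finally, these chromatic-by-chromatic identifications assemble automatically into the single equation $b=\sum_iv_iy(i)$ in $BP^*(G/T)/\II$, because the mod-$\II$ expansion is canonical and its $v_i$-coefficient is faithfully detected in $k(i)^*(G/T)/(v_i^2)$ by the freeness of (2.3). The main obstacle, to my mind, is exactly this coordination step: Lemma 2.1 supplies information one chromatic level at a time, and arguing that the $v_i$-components of the global $BP^*$-expansion are captured by the individual $k(i)^*/(v_i^2)$ reductions depends essentially on freeness. Consistency with the relation $\sum_iv_iQ_ix=0$ in $BP^*(G)/\II$ forced by $\pi^*i^*=0$ (since $i\circ\pi:G\to BT$ is null-homotopic as composition of fiber inclusion with projection) is then automatic from the construction rather than an extra input.
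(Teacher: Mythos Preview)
Your argument is the one the paper has in mind: Corollary 2.2 is stated without proof immediately after Lemma 2.1 and its Remark, and is meant to be read as the assembly of the level-by-level identities $b=v_iy(i)$ in $k(i)^*(G/T)/(v_i^2)$ into a single congruence modulo $\II$, using the $BP^*$-freeness of $BP^*(G/T)$ from (2.3). Your write-up fills in exactly those details.

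Two small points are worth tightening. First, your absorption of the unit $\lambda_i$ ``into the choice of representative'' does not quite work as stated: the coefficient $\alpha_i$ is determined by the lift $\tilde b$, and any two lifts of $b$ differ by an element of $I_\infty\cdot BP^*(BT)$, which changes $\alpha_i$ only by an element of $i^*(S(t))\subset\ker\pi^*$; so $\pi^*\alpha_i$ is independent of the lift and the scalar cannot be removed this way. In practice one either traces through the proof of Lemma 2.1 to see $\lambda_i=1$, or accepts a harmless unit in $\pi^*y(i)=\lambda_i^{-1}Q_ix$, as the paper's applications do implicitly. Second, Lemma 2.1 assumes $Q_ix\neq 0$; when $Q_ix=0$ you still need $\pi^*\alpha_i=0$, and this does not follow from $v_i\pi^*\alpha_i=0$ in $k(i)^*(G)/(v_i^2)$ alone since $k(i)^*(G)$ typically has $v_i$-torsion. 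The converse half of Lemma 2.1, combined with the one-dimensionality hypothesis $E_{|x|+1}^{0,|x|}\cong\bZ/p\{x\}$, is what rules out a nonzero $\pi^*\alpha_i$ here. Neither issue undermines the strategy.
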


\section{versal flag varieties}

 Let $G_k$ be the split reductive algebraic  group corresponding to $G$, and $T_k$ be the split maximal
torus corresponding to $T$.  Let $B_k$ be the Borel subgroup
with $T_k\subset B_k$.   Note that $G_k/B_k$ is cellular, and
$CH^*(G_k/T_k)\cong CH^*(G_k/B_k)$.
Hence we have   
\[ CH^*(G_k/B_k)\cong H^*(G/T)\quad  and 
\quad  CH^*(BB_k)\cong H^*(BT).
\]

Let us write by $\Omega^*(X)$  the $BP$-version of the algebraic cobordism defined by Levine-Morel ([
\cite{Le-Mo1},\cite{Le-Mo2},\cite{Ya2},\cite{Ya4})  such that
\[ \Omega^*(X)=MGL^{2*,*}(X)_{(p)}\otimes_{MU_{(p)}^*}BP^*,
\quad \Omega^*(X)\otimes _{BP^*}\bZ_{(p)}\cong CH^*(X)_{(p)}\] where $MGL^{*,*'}(X)$ is the algebraic cobordism theory
defined by Voevodsky with $MGL^{2*,*}(pt.)\cong 
MU^*$ the complex cobordism ring.
There is a natural (realization) map $\Omega^*(X)\to
BP^*(X(\bC))$.  In particular, we have 
$\Omega^*(G_k/B_k)\cong BP^*(G/T).$
Let $I_n=(p,v_1,...,v_{n-1})$ and $I_{\infty}=(p,v_1,...)$
be the (prime invariant) ideals in $BP^*$.  We also note
\[ \Omega^*(G_k/B_k)/I_{\infty}\cong
BP^*(G/T)/I_{\infty}\cong H^*(G/T)/p.\]

Let  $\bG$ be  a  nontrivial $G_k$-torsor.
We can construct a twisted form of $G_k/B_k$ by
$(\bG\times G_k/B_k)/G_k\cong \bG/B_k.$
We will study  the twisted flag variety $\bF=\bG/B_k$.

By extending the arguments by Vishik \cite{Vi} for quadrics to that for  flag varieties, Petrov, Semenov and Zainoulline define the 
$J$-invariant of $\bG$. Recall the expression (2.1) in $\S 2$ 
\[(*)\quad H^*(G;\bZ/p)\cong \bZ/p[y_1,...,y_s]/(y_1^{p^{r_1}},...,y_s^{p^{r_s}})
\otimes \Lambda(x_1,...,x_{\ell}).
\]
Roughly speaking (for the detailed definition, see
\cite{Pe-Se-Za}),
the $J$-invariant is defined as 
$ J_p(\bG)=(j_1,...,j_{s})$
if $j_i$ is the minimal integer such that
\[ y_i^{p^{j_i}}\in Im(res_{CH})\quad mod (y_1,...,y_{i-1}, t_1,...,t_{\ell})\]
for $res_{CH}:CH^*(\bF)\to CH^*(\bar \bF)$.
Here  we take $|y_1|\le |y_2|\le...$ in $(*)$.  Hence $0\le j_i\le r_i$ and 
$J_p(\bG)=(0,...,0)$ if and only if $\bG$ split by an extension of the index coprime to $p$.
One of the main results in \cite{Pe-Se-Za} is 
\begin{thm} (Theorem 5.13 in \cite{Pe-Se-Za}
and Theorem 4.3 in \cite{Se-Zh})
Let $\bG$ be a $G_k$-torsor over $k$, $\bF=\bG/B_k$
 and $J_p(\bG)=(j_1,...,j_{s})$.
Then there is a $p$-localized  motive $R(\bG)$ such that
\[M(\bF)_{(p)}\cong  \oplus _u R(\bG)\otimes \bT^{\otimes u}.\]
Here $\bT^{\otimes u}$ are Tate motives with 
$CH^*(\oplus_u \bT^{\otimes u})/p\cong  P'(y)\otimes S(t)/(b)$ 
where \[P'(y)=\bZ/p[y_1^{p^{j_1}},...,y_{s}^{p^{j_{s}}}]/(y_1^{p^{r_1}},...,y_{s}^{p^{r_{s}}}
)\subset P(y)/p,\]
\[\quad S(t)/(b)=S(t)/(b_1,...,b_{\ell}).\]
The $mod(p)$ Chow group of $\bar R(
\bG)=R(\bG)\otimes \bar k$  is  given by 
\[ CH^*(\bar R(\bG))/p\cong \bZ/p[y_1,...,y_s]/(y_1^{p^{j_1}},...,y_{s}^{p^{j_{s}}}).
 \]
Hence we have $ CH^*(\bar \bF)/p\cong  CH^*( \bar R(\bG))\otimes
P'(y)\otimes S(t)/(b) $ and \\
\[  CH^*(\bF)/p\cong 
CH^*( R(\bG))\otimes P'(y)\otimes S(t)/(b).\]
\end{thm}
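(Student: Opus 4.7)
The plan is to build the motivic decomposition via rational Chow-theoretic projectors on $\bF \times \bF$, following the strategy of Petrov--Semenov--Zainoulline for projective homogeneous varieties.

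First, I recall that over $\bar k$ the flag variety $\bar{\bF}$ is cellular, so $M(\bar{\bF})$ is a direct sum of Tate motives indexed by a Schubert-type basis of $CH^*(\bar{\bF})$; via the Borel description, this basis can be packaged as $P(y) \otimes S(t)/(b)$. By the very definition of the $J$-invariant $(j_1,\dots,j_s)$, the monomials lying in the subring $P'(y) \otimes S(t)/(b)$ are precisely the ones that lift, modulo lower-order terms in the $y_i$, to classes defined over $k$ in $CH^*(\bF)/p$. The first step is to use this to fix a rational basis $\{\alpha_u\}$ of $P'(y) \otimes S(t)/(b)$ together with a Poincar\'e-dual basis $\{\beta_u^\vee\}$ of the complement in $CH^*(\bar{\bF})/p$, giving rational cycles $\alpha_u \times \beta_u^\vee \in CH^*(\bF\times\bF)/p$.

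Second, I would exhibit a single ``diagonal-type'' projector $\rho \in CH^{\dim \bF}(\bF \times \bF)/p$ which is rational, idempotent, and whose $\bar k$-realization acts on $CH^*(\bar{\bF})/p$ by projection onto $\bZ/p[y_1,\dots,y_s]/(y_1^{p^{j_1}},\dots,y_s^{p^{j_s}})$. Set $R(\bG) = (\bF,\rho)$. Using the rational classes above, I would promote $\rho$ to an orthogonal family of shifted projectors $\rho_u$, each equal to $\rho$ translated by $\alpha_u \times \beta_u^\vee$, so that $(\bF,\rho_u) \cong R(\bG)\otimes \bT^{\otimes |\alpha_u|}$. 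The identities $\rho_u \circ \rho_{u'} = \delta_{u,u'}\rho_u$ and $\sum_u \rho_u = \Delta_\bF$ are checked over $\bar k$, where $\bar{\bF}$ is cellular and the verification reduces to Schubert calculus, and then lifted to $\bF$ by Rost nilpotence for projective homogeneous varieties.

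The principal obstacle is the second step: producing the base projector $\rho$ with rational coefficients, i.e.\ showing that the cycle whose $\bar k$-realization equals the ``identity on the $P'$-complement'' part of the diagonal actually descends to $CH^*(\bF\times\bF)/p$. This is exactly where the $J$-invariant intervenes: its definition guarantees that the complementary generators $y_i^{p^{j_i}}$ are realized by rational classes, and combined with Landweber--Novikov (or Steenrod) operations on $\Omega^*$ / $CH^*/p$ one manufactures the required projector from these rational inputs. Once $\rho$ is constructed, the tensor decomposition $M(\bF)_{(p)} \cong \bigoplus_u R(\bG)\otimes \bT^{\otimes u}$ is formal, and the Chow-ring statements for $\bar R(\bG)$, $CH^*(\bar \bF)/p$, and $CH^*(\bF)/p$ follow by taking $\rho$-images of the Schubert basis on each factor.
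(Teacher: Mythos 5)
The statement you were asked to prove is not actually proved in this paper: it is quoted verbatim (with attribution) from Petrov--Semenov--Zainoulline \cite{Pe-Se-Za}, Theorem~5.13, and Semenov--Zhykhovich \cite{Se-Zh}, Theorem~4.3, and is then used as an input for the rest of the paper. So there is no proof in this paper to compare against; what follows is a comparison of your sketch with the argument in the cited sources.

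Your outline captures the right scaffolding --- rational projectors on $\bF\times\bF$, Rost nilpotence for projective homogeneous varieties to lift idempotents from $\bar k$ to $k$, and the $J$-invariant as the device recording which $y_i^{p^{j_i}}$ descend --- but two steps are genuinely incomplete. First, the cycles $\alpha_u\times\beta_u^\vee$ are \emph{not} automatically rational on $\bF\times\bF$: writing $\alpha_u\times\beta_u^\vee=p_1^*\alpha_u\cdot p_2^*\beta_u^\vee$, rationality of $\alpha_u$ alone controls only the first factor, and the dual classes $\beta_u^\vee$ land in the part of $CH^*(\bar\bF)/p$ that is typically \emph{not} rational. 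In \cite{Pe-Se-Za} this is handled by a separate analysis of rational cycles on products (passing to the generic fiber of the projection $\bF\times\bF\to\bF$, where $\bF$ becomes split, and then controlling the kernel of specialization), not by taking external products of a rational class with an arbitrary Poincar\'e dual. Second, the base projector $\rho$ is not obtained by applying Landweber--Novikov or Steenrod operations to the rational generators. In \cite{Pe-Se-Za} the construction of $\rho$ is a linear-algebraic Gram--Schmidt type argument inside the subring of rational cycles on $\bF\times\bF$, using the diagonal decomposition $\Delta_{\bar\bF}=\sum_i e_i\times e_i^\vee$ over $\bar k$ and selecting a rational idempotent sub-sum; the cohomology operations play a role elsewhere (e.g., in constraining the possible values of the $J$-invariant), not in manufacturing the projector itself. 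Also, describing the shifted projectors as $\rho$ ``translated by'' $\alpha_u\times\beta_u^\vee$ is not a well-defined operation on correspondences; one has to exhibit explicit mutually inverse rational morphisms between $(\bF,\rho_u)$ and $R(\bG)\otimes\bT^{\otimes u}$ and verify orthogonality $\rho_u\circ\rho_{u'}=\delta_{u,u'}\rho_u$ from the ring structure of $CH^*(\bF\times\bF)$ over $\bar k$, with Rost nilpotence then used once at the end. Finally, a minor caveat: the last displayed formula $CH^*(\bF)/p\cong CH^*(R(\bG))\otimes P'(y)\otimes S(t)/(b)$ is an \emph{additive} isomorphism of graded groups, not of rings (the paper flags this explicitly in a remark after Corollary~3.4); your phrase ``Chow-ring statements \ldots follow'' should be read accordingly.
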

Let $P_k$ be special (namely, any extension is split,
e.g. $B_k$). 
Let us consider an embedding of $G_k$ into the general linear group $GL_N$ for some $N$.  This makes $GL_N$ a $G_k$-torsor over the quotient variety $S=GL_N/G_k$.
Let $F$ be the function field $k(S)$ and  define
the $versal$ $G_k$-$torsor$ $E$ to be the $G_k$-torsor over $F$ given by the generic fiber of $GL_N\to S$. 
(For details, see \cite{Ga-Me-Se}, \cite{To2}, \cite{Me-Ne-Za}, \cite{Ka1}.) 
\[\begin{CD}
        E@>>> GL_N\\
       @VVV     @VVV \\
     Spec(k(S))  @>>> S=GL_N/G_k
\end{CD}\]
The corresponding flag variety $E/P_k$ is called $generically$ $twisted$ or $versal$ flag
variety, which is considered as the most complicated twisted
flag variety (for given $G_k,P_k$). It is known that the Chow ring
$CH^*(E/P_k)$ is not dependent to the choice
of  generic $G_k$-torsors $E$ (Remark 2.3 in \cite{Ka1}).

Karpenko and Merkurjev proved the following result
 for a versal (generically twisted) flag variety. 
\begin{thm}
(Karpenko Lemma 2.1 in \cite{Ka1})
Let $h^*(X)$ be an oriented cohomology theory
(e.g., $CH^*(X)$, $\Omega^*(X)$).
Let $P_k$ be a parabolic subgroup of $G_k$ and $\bG/P_k$ be a versal flag variety.
Then the natural map
$h^*(BP_k)\to h^*(\bG/P_k)$ is surjective
for the classifying space $BP_k$ for $P_k$-bundle.
\end{thm}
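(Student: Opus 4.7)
The plan is to realize $\bG/P_k$ as a pro-open subscheme of a finite-dimensional geometric approximation of $BP_k$, and to deduce the surjectivity from the localization sequence for oriented cohomology theories.

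Following Totaro's construction of the cohomology of classifying spaces, I would first approximate $BP_k$ by a smooth variety of the form $U/P_k$, where $U\subset V$ is an open subscheme of a linear representation $V$ of $GL_N$ on which $GL_N$ (hence its subgroup $P_k$) acts freely, with complement of codimension greater than any prescribed bound $n$; then $h^i(BP_k)\to h^i(U/P_k)$ is an isomorphism for $i\le n$. Since $P_k\subset G_k\subset GL_N$, the same $U$ also carries a free $G_k$-action, the quotient $U/G_k$ approximates $BG_k$, and the morphism $\pi:U/P_k\to U/G_k$ is a Zariski locally trivial $G_k/P_k$-bundle. The generic point $\eta$ of $U/G_k$ corresponds to a generic, and hence versal, $G_k$-torsor $E'$ over the function field $k(\eta)$; the fiber $\pi^{-1}(\eta)\cong E'/P_k$ is therefore a representative of the versal flag variety $\bG/P_k$, independent of the choice of versal torsor. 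In particular $\bG/P_k$ is the intersection of the open subvarieties $\pi^{-1}(W)\subset U/P_k$ as $W$ runs over open neighborhoods of $\eta$ in $U/G_k$, and the natural map $h^*(BP_k)\to h^*(\bG/P_k)$ factors through $h^*(U/P_k)$.

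By the localization exact sequence $h^*(Z)\to h^*(X)\to h^*(X\setminus Z)\to 0$, valid for any oriented cohomology theory and any closed embedding, each restriction $h^*(U/P_k)\to h^*(\pi^{-1}(W))$ is surjective. Continuity of $h^*$ identifies $h^*(\bG/P_k)$ with the filtered colimit $\varinjlim_W h^*(\pi^{-1}(W))$; since each map in this system is a surjective quotient of the fixed ring $h^*(U/P_k)$, any class in $h^*(\bG/P_k)$ is represented by a class in some $h^*(\pi^{-1}(W))$ and hence lifts to $h^*(U/P_k)$. Composing with the isomorphism $h^i(BP_k)\cong h^i(U/P_k)$ for $i\le n$ and letting the approximation grow, we obtain surjectivity of $h^*(BP_k)\to h^*(\bG/P_k)$ in every degree.

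The main technical obstacle is foundational rather than combinatorial: one must know that Totaro's approximation scheme for $h^*(BP_k)$ is available for the given oriented theory, and that $h^*$ satisfies both the localization exact sequence and continuity with respect to filtered limits of open subschemes with the same generic point. These properties are Totaro's original setup in the case of $CH^*$, and have been extended to the Levine--Morel $\Omega^*$; once they are granted, the argument above is essentially formal, as it reduces the surjectivity to the general principle that the cohomology of a generic fiber is generated by restrictions from the total space.
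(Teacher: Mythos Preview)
The paper does not supply a proof of this theorem; it is quoted verbatim from Karpenko \cite{Ka1}, and the subsequent corollaries are drawn from it directly. Your outline is correct and is essentially Karpenko's argument: exhibit the versal flag variety as the generic fibre of a fibration $U/P_k\to U/G_k$ that approximates $BP_k\to BG_k$ in Totaro's sense, and combine the localization sequence (surjectivity of restriction to opens) with continuity of $h^*$ at the generic point.

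One small wrinkle worth tightening: when you ``let the approximation grow'' at the end, the function field $k(U/G_k)$ changes with $U$, so the versal torsor you are hitting is not literally the same object in each step. The cleanest fix is the one the paper already hints at just before the theorem (Remark 2.3 in \cite{Ka1}): $h^*(\bG/P_k)$ is independent of the choice of generic $G_k$-torsor, so you may choose $U$ depending on the degree. Alternatively, fix once and for all $U=GL_N\subset \bA^{N^2}$ as in the paper's definition of versal, and observe separately that $h^*(BP_k)\to h^*(GL_N/P_k)$ is surjective in every degree by comparing with larger approximations $V\oplus \bA^{N^2}$ via homotopy invariance and localization. Either way the argument closes; the foundational inputs you flag (localization and continuity for $\Omega^*$) are indeed available in Levine--Morel.
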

\begin{cor}
If $\bG$ is versal, then $CH^*(\bF)=CH^*(\bG/B_k)$
is multiplicatively generated by elements $t_i$ in $S(t)$. 
\end{cor}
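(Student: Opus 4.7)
The plan is to invoke Theorem 3.3 with the parabolic $P_k = B_k$ and the oriented cohomology theory $h^* = CH^*$. That theorem yields a surjective pullback
\[ CH^*(BB_k) \twoheadrightarrow CH^*(\bG/B_k) = CH^*(\bF). \]
So it suffices to identify $CH^*(BB_k)$ with the polynomial ring $S(t) = \bZ[t_1,\dots,t_\ell]$ and to check that the generators $t_i$ of $S(t)$ map to the classes bearing the same name in $CH^*(\bF)$.

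For the identification, write $B_k = T_k \ltimes U_k$ where $U_k$ is the unipotent radical. As a variety $U_k$ is isomorphic to an affine space, so the natural map $BB_k \to BT_k$ is an affine-space bundle. By the homotopy invariance of Chow rings of classifying spaces, pullback induces an isomorphism $CH^*(BT_k) \xrightarrow{\cong} CH^*(BB_k)$. Combined with the identification $CH^*(BT_k) \cong S(t)$ recalled at the start of Section 3, this gives $CH^*(BB_k) \cong S(t)$.

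Composing, we obtain a surjection $S(t) \twoheadrightarrow CH^*(\bF)$ under which each generator $t_i \in S(t)$ maps to the first Chern class of the line bundle on $\bF$ associated with the $i$-th coordinate character of $T_k$; these are precisely the elements denoted $t_i \in S(t) \subset CH^*(\bF)$ in the statement. The corollary follows.

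There is no real obstacle here: the essential content is already packaged in Theorem 3.3, so the task reduces to routine bookkeeping about the classifying spaces of split reductive groups. Were one to want a more self-contained argument, the only step requiring care would be verifying that the composition $CH^*(BT_k) \cong CH^*(BB_k) \to CH^*(\bF)$ indeed coincides with the tautological pullback along $\bF \to BT_k$ classifying the $T_k$-torsor $\bG/U_k \to \bG/B_k$, but this is immediate from naturality.
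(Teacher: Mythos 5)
Your argument is correct and is essentially the one the paper intends: the corollary is an immediate consequence of Theorem 3.3 applied with $P_k=B_k$ and $h^*=CH^*$, combined with the identification $CH^*(BB_k)\cong CH^*(BT_k)\cong S(t)$ already recorded at the beginning of Section 3. The paper leaves the proof implicit, and your extra care about the affine-bundle step $BB_k\to BT_k$ and the compatibility of the generators $t_i$ is just making explicit what the paper takes for granted.
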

\begin{cor} If  $\bG$ is 
 versal, then  $J(\bG)=(r_1,...,r_s)$, i.e. $r_i=j_i$.
Hence $P'(y)=\bZ/p$.
\end{cor}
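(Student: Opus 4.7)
The plan is to derive this directly from Corollary 3.3 by unwinding the definition of the $J$-invariant given above Theorem 3.1. Recall that $j_i$ is defined as the minimal integer such that
\[ y_i^{p^{j_i}}\in \Img(res_{CH})\quad \text{modulo}\ (y_1,\dots,y_{i-1},t_1,\dots,t_{\ell}),\]
where $res_{CH}:CH^*(\bF)\to CH^*(\bar\bF)$. So the task reduces to identifying the image of $res_{CH}$ inside $CH^*(\bar\bF)/p\cong P(y)/p\otimes S(t)/(b)$, modulo this ideal.

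First I would invoke Corollary 3.3: since $\bG$ is versal, every element of $CH^*(\bF)$ is a polynomial in the classes $t_i\in S(t)\subset CH^*(BB_k)$ pulled back along $\bF\to BB_k$. Under $res_{CH}$, each $t_i$ is sent to the corresponding generator $t_i$ of $S(t)\subset CH^*(\bar\bF)$. Therefore $\Img(res_{CH})$ is contained in the subring of $CH^*(\bar\bF)$ generated by the $t_i$'s.

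Consequently, after further quotienting by $(t_1,\dots,t_\ell)$ (and by $(y_1,\dots,y_{i-1})$), the image of $res_{CH}$ collapses to the constants $\bZ/p\subset \bZ/p[y_i,\dots,y_s]/(y_i^{p^{r_i}},\dots,y_s^{p^{r_s}})$. The minimal $j$ such that $y_i^{p^{j}}$ lies in $\bZ/p$ in this truncated polynomial ring is $j=r_i$, since $y_i^{p^{r_i}}=0$ but $y_i^{p^{j}}\neq 0$ for $j<r_i$. Hence $j_i=r_i$ for every $i$, giving $J_p(\bG)=(r_1,\dots,r_s)$. Substituting back into the definition
\[ P'(y)=\bZ/p[y_1^{p^{j_1}},\dots,y_s^{p^{j_s}}]/(y_1^{p^{r_1}},\dots,y_s^{p^{r_s}})\]
each polynomial generator is simultaneously a relation, so $P'(y)=\bZ/p$, as claimed.

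There is essentially no obstacle: the entire argument is a formal consequence of the Karpenko surjectivity statement of Theorem 3.2 (used via Corollary 3.3) together with the explicit definition of $J_p$. The only thing to be slightly careful about is the order of the quotients and the fact that $y_i^{p^{r_i}}=0$ already in $P(y)/p$, so one is genuinely measuring the smallest $p$-th power of $y_i$ lying in the constants of the relevant quotient ring.
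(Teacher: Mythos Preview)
Your proof is correct and follows essentially the same approach as the paper's own proof: both rely on Corollary~3.3 (the image of $res_{CH}$ is generated by the $t_i$'s) and then observe that modulo $(t_1,\dots,t_\ell)$ this image reduces to constants, forcing $j_i=r_i$. The paper phrases this as a contradiction (if $j_i<r_i$ then $0\neq y_i^{p^{j_i}}$ would lie in the image from $S(t)$, impossible in $P(y)/p\otimes S(t)/(b)$), whereas you argue directly, but the content is the same.
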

\begin{proof}
If $j_i<r_i$, then $0\not =y_i^{p^{j_i}}\in res(CH^*(\bF)\to
CH^*(\bar \bF))$, which is in the image from $S(t)$
by the preceding theorem.
This contradicts to $CH^*(\bar \bF;\bZ/p)\cong
P(y)/p\otimes S(t)/(b)$ and $0\not =y_i^{p^{j_i}}\in P(y)/p$.
\end{proof}

Hence we have surjections for a versal variety $\bF$
\[ CH^*(BB_k)\twoheadrightarrow  CH^*(\bF)\stackrel{pr.}{\twoheadrightarrow} CH^*(R(\bG)).\]

{\bf Remark.}  In this paper, a map $A\to B$ (resp. $A\cong B$) for rings $A,B$
means a ring map (resp. a ring isomorphism).
  However $CH^*(R(\bG))$ does not have a
 canonical ring structure.
 Hence a map $A\to CH^*(R(\bG))/p$ (resp. $A\cong CH^*(R(\bG))/p$)
means only a (graded) additive map 
(resp. additive isomorphism).

We study in \cite{Ya6}
what elements in $CH^*(BB_k)$ 
generate $CH^*(R(\bG))$.
By giving the filtration on $S(t)$ by $b_i$, we 
can write 
\[gr S(t)/p\cong A\otimes S(t)/(b_1,...,b_{\ell})\quad
for \ A=\bZ/p[b_1,...,b_{\ell}].\]
In particular, we have maps
$ A\stackrel{i_A}{\to} CH^*(\bF)/p\to CH^*(R(\bG))/p.$
We also see that
the above composition map is surjective
(see also Lemma 3.7  below).
\begin{lemma}
Suppose that there are $f_1(b),...,f_s(b)\in A$ such that 
$CH^*(R(\bG))/p\cong A/(f_1(b),...,f_s(b))$. Moreover if $f_i(b)=0$
also in $CH^*(\bF)/p$, we have the isomorphism
\[CH^*(\bF)/p\cong S(t)/(p, f_1(b),...,f_s(b)).\]
\end{lemma}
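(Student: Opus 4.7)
The plan is to produce a graded $\bZ/p$-algebra surjection $\phi\colon S(t)/(p, f_1(b),\ldots, f_s(b)) \twoheadrightarrow CH^*(\bF)/p$ and then verify that the two sides have equal graded dimension in every degree, forcing $\phi$ to be an isomorphism. The construction of $\phi$ is immediate: by Corollary 3.3 the classifying map $S(t)/p \cong CH^*(BB_k)/p \twoheadrightarrow CH^*(\bF)/p$ is surjective, and the hypothesis that each $f_j(b)$ vanishes in $CH^*(\bF)/p$ places the ideal $(f_1(b),\ldots,f_s(b))$ in its kernel.

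For the dimension count, I would compute both sides as graded $\bZ/p$-vector spaces. On the Chow side, versality of $\bG$ gives $P'(y)=\bZ/p$ via Corollary 3.4, so Theorem 3.1 together with the hypothesis on $CH^*(R(\bG))/p$ yields the additive isomorphism
\[
CH^*(\bF)/p \;\cong\; CH^*(R(\bG))/p \,\otimes\, S(t)/(b_1,\ldots,b_\ell) \;\cong\; A/(f_1(b),\ldots,f_s(b)) \,\otimes\, S(t)/(b_1,\ldots,b_\ell).
\]
On the polynomial side, the transgressive elements $(\bar b_1,\ldots,\bar b_\ell)$ form a regular sequence in $S(t)/p$ (as recalled just above (2.2)) of length equal to $\ell=\dim S(t)/p$, hence a homogeneous system of parameters. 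Thus $S(t)/p$ is free as a graded module over the subring $A=\bZ/p[b_1,\ldots,b_\ell]$, with basis lifting any $\bZ/p$-basis of $S(t)/(b_1,\ldots,b_\ell)$. Quotienting by the ideal $(f_1(b),\ldots,f_s(b))\subset A$ therefore gives
\[
S(t)/(p,f_1(b),\ldots,f_s(b)) \;\cong\; A/(f_1(b),\ldots,f_s(b)) \,\otimes_{\bZ/p}\, S(t)/(b_1,\ldots,b_\ell).
\]
Comparing the two computations shows that the domain and target of $\phi$ are finite-dimensional in each degree and have matching graded dimensions; a graded surjection between such spaces is automatically an isomorphism.

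The main point where care is needed is the freeness of $S(t)/p$ over $A$, which rests on $b_1,\ldots,b_\ell$ being a genuine regular sequence in the polynomial ring $S(t)/p$. This is classical but is the lynchpin of the argument, since without it the associated graded of the $(b)$-adic filtration would be strictly smaller than $A\otimes S(t)/(b)$ and the dimension count would fail. Everything else reduces to formal manipulation: the compatibility with quotienting by $f_j(b)\in A$ follows at once from freeness, and the final comparison of graded dimensions is pure linear algebra.
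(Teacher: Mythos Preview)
Your proof is correct and follows essentially the same strategy as the paper's: build the surjection $S(t)/(p,f_1,\ldots,f_s)\twoheadrightarrow CH^*(\bF)/p$ from versality, compute both sides additively as $A/(f_1,\ldots,f_s)\otimes S(t)/(b)$, and conclude by a dimension count. The only cosmetic difference is that the paper phrases the decomposition of $S(t)/p$ via the $(b)$-adic filtration $grS(t)/p\cong A\otimes S(t)/(b)$ rather than via freeness over $A$, but these are equivalent manifestations of the regularity of $(b_1,\ldots,b_\ell)$.
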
\begin{proof}
Using  $(f_1,...,f_s)\subset (b)=(b_1,...,b_{\ell}))$,
we have additively
\[S(t)/(f_1,...,f_s)\cong (A\otimes S(t)/(b))/(f_1,...,f_s)\]
\[\cong A/(f_1,...,f_s)\otimes S(t)/(b)\]
\[\cong CH^*(\bR(\bG))/p\otimes S(t)/(b)\cong
CH^*(\bF)/p.\]
Of course there is a ring surjective map 
$S(f)/(f_1,...,f_s)\to CH^*(\bF)/p$, this map must be isomorphic.
\end{proof}

Let $dim(G/T)=2d$.  Then the torsion index is defined as
\[ t(G)=|H^{2d}(G/T;\bZ)/H^{2d}(BT;\bZ)|.\]
Let $n(\bG)$ be the greatest common divisor of the degrees of all finite field extension $k'$ of $k$ such that $\bG$ 
becomes trivial over $k'$.  Then by Grothendieck 
\cite{Gr}, it is known that $n(\bG)$ divides $t(G)$.  Moreover, $\bG$ is versal, then $n(\bG)=t(G)$   (\cite{To2}, \cite{Me-Ne-Za}, \cite{Ka1}).
Note that $t(G_1\times G_2)=t(G_1)t(G_2)$.
It is well known that  if $H^*(G)$ has a $p$-torsion, then
$p$ divides the torsion index $t(G)$. Torsion index for
simply connected compact Lie groups are completely determined by Totaro \cite{To1}, \cite{To2}.

For $N>0$, let us  write \ \ 
$A_N=\bZ/p\{b_{i_1}...b_{i_k}| |b_{i_1}|+...+|b_{i_k}|\le N\}.$
\begin{lemma}
Let  $b\in Ker(pr)$ for $pr:A_{2d}\twoheadrightarrow CH^*(R(\bG))$.  Then
\[ b=\sum b'u'\quad with \ b'\in A_{2d},\ \ u'\in S(t)^+/(p,b),\ i.e., \ |u'|>0.\]
\end{lemma}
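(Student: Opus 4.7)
The plan is to combine the additive motivic decomposition of Theorem 3.1 with Karpenko's surjectivity (Theorem 3.2), specialized to the versal case via Corollary 3.4 (which gives $P'(y)=\bZ/p$). Specializing Theorem 3.1 one obtains the additive $\bZ/p$-module isomorphism
\[ CH^*(\bF)/p \;\cong\; CH^*(R(\bG))/p \otimes_{\bZ/p} S(t)/(p,b). \]
I would fix a homogeneous $\bZ/p$-basis $\{1=u_0, u_1, u_2, \ldots\}$ of $S(t)/(p,b)$, with homogeneous lifts $u_i \in S(t)^+$ for $i\ge 1$. Under this decomposition every element of $CH^*(\bF)/p$ has a unique expansion $\sum_i c_i u_i$ with $c_i$ in the distinguished copy of $CH^*(R(\bG))/p$, and the projection $pr$ is identified with $\sum_i c_i u_i \mapsto c_0$.

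The main move is to apply this decomposition to the image of $b\in A_{2d}$ in $CH^*(\bF)/p$ (via $A\subset S(t)/p \twoheadrightarrow CH^*(\bF)/p$) and expand $b=\sum_i c_i u_i$. The hypothesis $pr(b)=0$ forces $c_0=0$, leaving $b=\sum_{i\ge 1} c_i u_i$ with each $u_i\in S(t)^+/(p,b)$. Since the composite $A \to CH^*(\bF)/p \to CH^*(R(\bG))/p$ is surjective (as noted in the paragraph preceding the lemma), each homogeneous $c_i$ admits a homogeneous lift $b'_i\in A$; from $|b'_i|=|b|-|u_i|<2d$ it follows that $b'_i\in A_{2d}$. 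This produces the asserted decomposition $b=\sum b'_i u_i$ in $CH^*(\bF)/p$.

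The hard part will be handling the fact that Theorem 3.1 is a purely additive decomposition---as the paper explicitly remarks, $CH^*(R(\bG))$ carries no canonical ring structure---so one must interpret the products $c_i u_i$ inside the actual ring $CH^*(\bF)/p$. This is legitimate because each $u_i$ is realized by a genuine element of $CH^*(\bF)/p$ pulled back from $CH^*(BB_k)\cong S(t)$, and multiplication by $u_i$ is ordinary ring multiplication. Choosing the lift $b'_i$ of $c_i$ introduces an ambiguity in $\ker(A\to CH^*(R(\bG))/p)$, and the corresponding correction to $b'_i u_i$ lies in summands indexed by $u_j u_i$ with $|u_j|>0$; a brief induction on the cohomological degree of $b$ absorbs these corrections into other $b''_k u_k$ terms and closes the argument.
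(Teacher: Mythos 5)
The paper states Lemma 3.6 with no proof whatsoever, so there is no textual argument to compare against; I can only assess your proposal on its own merits.

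Your overall strategy is the right one: specialize Theorem 3.1 via Corollary 3.4 (so $P'(y)=\bZ/p$) to get the additive decomposition $CH^*(\bF)/p\cong CH^*(R(\bG))/p\otimes S(t)/(p,b)$, read off that $pr(\bar b)=c_0=0$, and lift the remaining coefficients $c_i$ through the surjection $A_{2d}\twoheadrightarrow CH^*(R(\bG))/p$. The degree bookkeeping $|b_i'|=|b|-|u_i|<2d$ giving $b_i'\in A_{2d}$ is also fine. You also correctly identify the real difficulty, which the paper itself flags in the Remark after Corollary 3.4: the decomposition is purely additive, $CH^*(R(\bG))$ carries no canonical ring structure, and the symbol $c_iu_i$ is not a priori a product in $CH^*(\bF)/p$.

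The gap is at the final step. The ``brief induction'' is asserted, not performed, and as stated it does not obviously close. Two things would be needed and neither is established: (i) that multiplication in $CH^*(\bF)/p$ by an element of $S(t)^+$ strictly raises the $u$-filtration level of every summand in the Theorem 3.1 decomposition, so that the error $b_i'u_i-c_i\otimes u_i$ genuinely lands in higher filtration; and (ii) that the image of $A$ in $CH^*(\bF)/p$, after passing to the associated graded of the $u$-filtration, actually lies in the $u_0$-piece --- in general the image of $b_i'\in A$ in $CH^*(\bF)/p$ has nonzero components in many summands, not just $u_0$. Both points require that the ring structure of $CH^*(\bF)$ interact with the motivic $\bT^{\otimes u}$-grading in a controlled way, which does not follow from Theorem 3.1 alone. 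Also, the induction should run on the $u$-filtration level of the error term (which decreases only if (i) holds) rather than on the cohomological degree of $b$, which is fixed throughout. So the proposal identifies the correct decomposition and the correct obstacle, but leaves the crucial compatibility-of-multiplication step unjustified.

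Incidentally, the way the paper later uses Lemma 3.6 (in Lemma 3.7 and Corollary 3.8) really needs the stronger two-sided statement --- both that $pr(b)=0$ implies $b=\sum b'u'$ and that conversely anything of the form $\sum b'u'$ has $pr=0$ --- and the latter is exactly the filtration compatibility you would need for point (i). It would be worth proving that compatibility once and deducing both directions from it.
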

Let us write 
\[  y_{top}=\Pi_{i=1}^s y_i^{p^{r_i}-1}\quad (reps.\ t_{top})\]
the generator of the highest  degree 
in $P(y)$ (resp. $S(t)/(b)$) so that $f=y_{top}t_{top}$
is the fundamental class in $H^{2d}(G/T)$.
\begin{lemma}  The following map is surjective
\[ A_N\twoheadrightarrow CH^*(R(\bG))/p\quad where\ 
N=|y_{top}|.\]
\end{lemma}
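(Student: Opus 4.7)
The plan is to combine the surjectivity of $A \to CH^*(R(\bG))/p$ that was noted just before Lemma 3.5 with a degree bound on $CH^*(R(\bG))/p$ obtained from the motivic decomposition. Once the degree bound is in hand, the statement becomes automatic.

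For the degree bound, since $\bG$ is versal, Corollary 3.4 gives $P'(y) = \bZ/p$, so Theorem 3.1 yields an isomorphism of graded $\bZ/p$-modules
\[ CH^*(\bF)/p \;\cong\; CH^*(R(\bG))/p \,\otimes\, S(t)/(b). \]
The left-hand side has top non-vanishing degree $2d = \dim(G/T)$. From the additive isomorphism $H^*(G/T;\bZ/p) \cong P(y)/p \otimes S(t)/(b)$ and $|y_{top}| + |t_{top}| = 2d$, the right-hand factor $S(t)/(b)$ has top non-vanishing degree $|t_{top}| = 2d - N$. Hence the top non-vanishing degree of $CH^*(R(\bG))/p$ is at most $N$.

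With the bound in place, take any homogeneous $\xi \in CH^i(R(\bG))/p$. Then $i \le N$, and by surjectivity of $A \twoheadrightarrow CH^*(R(\bG))/p$ we can lift $\xi$ to a homogeneous polynomial $f(b) \in A$ of the same degree $i$. Because $A = \bZ/p[b_1,\ldots,b_\ell]$, such an $f(b)$ is a $\bZ/p$-linear combination of monomials $b_{i_1}\cdots b_{i_k}$ with $|b_{i_1}|+\cdots+|b_{i_k}| = i \le N$; in particular $f(b) \in A_N$. This shows $A_N \twoheadrightarrow CH^*(R(\bG))/p$ is surjective, as required.

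The only step that requires any care is the degree bound: one must know that the additive isomorphism in Theorem 3.1 (with $P'(y) = \bZ/p$) is genuinely graded, so that the top degree of $CH^*(R(\bG))/p$ can be computed as the difference of the top degrees of $CH^*(\bF)/p$ and $S(t)/(b)$. This in turn is where versality is essential, since it eliminates the $P'(y)$-factor that would otherwise contribute extra degrees to the tensor decomposition. Everything else is bookkeeping of gradings.
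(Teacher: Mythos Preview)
Your proof is correct. The approach differs slightly from the paper's: the paper invokes Lemma~3.6 (the description of the kernel of $A_{2d}\to CH^*(R(\bG))/p$) and then asserts that elements of $S(t)$ can be written in $A_N\otimes S(t)/(b)$, whereas you bypass Lemma~3.6 entirely and instead extract the degree bound $CH^{>N}(R(\bG))/p=0$ directly from the graded tensor decomposition $CH^*(\bF)/p\cong CH^*(R(\bG))/p\otimes S(t)/(b)$ of Theorem~3.1 (using Corollary~3.4 to kill $P'(y)$). Your route is more transparent: once the degree bound is in hand, the surjectivity of $A_N$ is immediate from the already-noted surjectivity of $A$. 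The paper's phrasing ``each element in $S(t)$ is written by an element in $A_N\otimes S(t)/(b)$'' is not literally true in all degrees and implicitly relies on the same degree bound you make explicit; both arguments rest on the same structural input, but yours isolates the key step more cleanly.

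One small remark: for the degree bound you only need the \emph{upper} bound $CH^{>2d}(\bF)/p=0$, which follows from $\dim\bF=d$; you do not need that degree $2d$ is actually attained. Your argument already works with just this, so the phrase ``top non-vanishing degree $2d$'' could be weakened to ``vanishes above degree $2d$'' without loss.
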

\begin{proof}  In the preceding lemma,  $A_{N}\otimes u$ for $|u|>0$ maps zero in $CH^*(R(\bG))/p$.  Since each element
in $S(t)$ is written by an element in $A_N\otimes S(t)/(b)$,
we have the lemma.
\end{proof}
\begin{cor}
If  $b_i\not=0$ in $CH^*(X)/p$,  then  so in $CH^*(R(\bG_k))/p$.
\end{cor}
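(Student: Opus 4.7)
The plan is to argue the contrapositive: assume $\bar b_i = 0$ in $CH^*(R(\bG))/p$ under the surjection $pr:A_{2d}\twoheadrightarrow CH^*(R(\bG))/p$ provided by Lemma 3.7 (applicable since $|b_i|\le 2d$), and deduce $b_i=0$ in $CH^*(\bF)/p$. The central input is Lemma 3.6: because $b_i\in\Ker(pr)$, we obtain a decomposition
\[
b_i \;=\; \sum_j b'_j u'_j \qquad \text{in } CH^*(\bF)/p,
\]
with $b'_j\in A_{2d}$ and each $u'_j\in S(t)^+/(p,b)$ of strictly positive degree. By homogeneity we may take $|b'_j|+|u'_j|=|b_i|$, so $|b'_j|<|b_i|$ for every $j$.

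Next I would combine this with the additive Tate splitting of Theorem 3.1, which — since $\bG$ is versal and Corollary 3.4 forces $P'(y)=\bZ/p$ — becomes
\[
CH^*(\bF)/p \;\cong\; CH^*(R(\bG))/p \,\otimes_{\bZ/p}\, S(t)/(b).
\]
Under the associated filtration on $S(t)/p$ (coming from $gr\,S(t)/p\cong A\otimes S(t)/(b)$), the element $b_i\in A$ sits in the weight-zero summand $CH^*(R(\bG))/p\otimes 1$ modulo lower filtration, and its image there is precisely $\bar b_i$. The hypothesis $\bar b_i=0$ kills this top-weight contribution, so the decomposition above confines all of $b_i$ to the strictly positive $S(t)/(b)$-weight part $CH^*(R(\bG))/p\otimes S(t)^+/(b)$.

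Finally I would close the argument by induction on $|b_i|$. In the base case each $b'_j$ reduces to a scalar, and $b_i=\sum\lambda_j u'_j$ is then forced to be zero by comparing tensor weights in $A\otimes S(t)/(b)$. For larger $|b_i|$, the inductive hypothesis — combined with a fresh application of Lemma 3.6 to each $b'_j$ (still in $A_{2d}$ and of strictly smaller degree) — exhausts the right-hand side term by term. The main obstacle I anticipate is precisely the fact that the Tate splitting is only additive, not multiplicative, so the products $b'_j u'_j$ appearing in Lemma 3.6 do not automatically respect the tensor factorization; one has to unpack each product carefully through the $b$-filtration on $S(t)/p$ and iterate Lemma 3.6, with degree-counting ensuring the iteration terminates.
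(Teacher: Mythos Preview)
Your plan has a real gap in the inductive step. Re-applying Lemma~3.6 to each $b'_j$ requires $b'_j\in\Ker(pr)$, which you have no reason to expect: the $b'_j$ produced by Lemma~3.6 are arbitrary elements of $A_{2d}$ and need not vanish in $CH^*(R(\bG))/p$. Your stated inductive hypothesis concerns only the individual generators $b_k$, not general polynomials in them, so it does not feed back either. The ``main obstacle'' you flag --- that the Tate splitting is only additive --- is exactly where the argument stalls, and you offer no mechanism to overcome it; iterating Lemma~3.6 and ``unpacking through the $b$-filtration'' is a hope, not a step.

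The paper's proof is a single line and bypasses all of this by invoking the \emph{regularity} of the sequence $(b_1,\dots,b_\ell)$ in $S(t)/p$. After Lemma~3.6 yields $b_i=\sum b'u'$ with $|u'|>0$, each $b'\in A$ has degree strictly less than $|b_i|$; ordering the $b_j$ by degree, every monomial in such a $b'$ involves only $b_j$ with $|b_j|<|b_i|$, so $b'\in(b_1,\dots,b_{i-1})$. Thus $b_i\in(b_1,\dots,b_{i-1})$, contradicting regularity. This is the idea you are missing: once you see it, no induction, no Tate decomposition, and no weight bookkeeping are needed.
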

\begin{proof}
Let $pr(b_i)=0$.  From Lemma 3.6, 
 $b_i=\sum b'u'$ for $|u'|>0$, and hence $b'\in Ideal(b_1,...,b_{i-1})$.
This contradict to that $(b_1,...,b_{\ell})$ is regular.
\end{proof}

Now we consider the torsion index $t(G)$.
\begin{lemma} (\cite{Ya6})
Let $\tilde b=b_{i_1}... b_{i_k}$ in
$S(t)$  such that
in $H^*(G/T)_{(p)}$
\[ \tilde b=p^s(y_{top}+\sum yt),\quad
|t|>0\]
for some $y\in P(y)$  and $t\in S(t)$.
Then $t(G)_{(p)}\le p^s$. 
\end{lemma}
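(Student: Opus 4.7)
The strategy is to multiply $\tilde b$ by a lift $\tau\in S(t)$ of the socle generator $t_{top}$ of $S(t)/(b_1,\dots,b_\ell)$ and show that the product equals $p^s$ times the fundamental class $y_{top}t_{top}$ in $H^{2d}(G/T)$. Since $\tilde b\cdot\tau$ then lies in the image of $i^*\colon H^*(BT)_{(p)}\to H^*(G/T)_{(p)}$, a multiple $p^s\cdot y_{top}t_{top}$ of the generator of $H^{2d}(G/T)_{(p)}\cong \bZ_{(p)}$ sits inside that image, which by the definition of the torsion index forces $t(G)_{(p)}\le p^s$.

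Comparing degrees in the hypothesis $\tilde b = p^s(y_{top}+\sum yt)$ forces $|\tilde b|=|y_{top}|$, so $\tilde b\cdot\tau\in H^{|y_{top}|+|t_{top}|}(G/T)=H^{2d}(G/T)$. Expanding,
\[\tilde b\cdot\tau \;=\; p^s\,y_{top}\cdot\tau \;+\; p^s\sum y\cdot t\cdot\tau.\]
The crux of the proof is that every summand $y\cdot t\cdot\tau$ with $|t|>0$ vanishes in $H^*(G/T)_{(p)}$. Because $(b_1,\dots,b_\ell)$ is a regular sequence in the polynomial ring $S(t)$, the graded complete intersection $S(t)/(b_1,\dots,b_\ell)$ has one-dimensional socle concentrated in top degree $|t_{top}|$, so any element of $S(t)$ of degree strictly greater than $|t_{top}|$ lies in the ideal $(b_1,\dots,b_\ell)$. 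In particular $t\cdot\tau\in(b_1,\dots,b_\ell)$, since $|t\cdot\tau|>|t_{top}|$. By the presentation (2.2), each $b_i$ vanishes in $H^*(G/T)_{(p)}$, hence $t\cdot\tau=0$ there and therefore $y\cdot t\cdot\tau=0$.

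Consequently $\tilde b\cdot\tau = p^s\,y_{top}\cdot\tau = p^s\,y_{top}t_{top}$ in $H^{2d}(G/T)_{(p)}$. Since $\tilde b$ and $\tau$ both belong to $S(t)=H^*(BT)$, the subgroup $i^*H^{2d}(BT)_{(p)}$ contains $p^s\,y_{top}t_{top}$, so $t(G)_{(p)}$ divides $p^s$, yielding $t(G)_{(p)}\le p^s$. The main technical point is the socle computation for $S(t)/(b_1,\dots,b_\ell)$: one needs its top degree to be exactly $|t_{top}|$ with one-dimensional socle over $\bZ_{(p)}$, which follows from the standard Hilbert series of a regular sequence in a polynomial ring but deserves brief verification in the integral setting. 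Everything else is degree counting combined with the definition of the torsion index.
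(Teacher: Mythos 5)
The paper does not prove this lemma here --- it is cited from \cite{Ya6} --- so there is no in-text proof to compare against. Your strategy (multiply by a lift $\tau$ of $t_{top}$, land in degree $2d$, invoke the definition of the torsion index) is the natural one and the conclusion is correct, but the middle step has a real gap that you should be aware of.

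You claim $t\cdot\tau\in(b_1,\dots,b_\ell)\subset S(t)$ and then that ``by (2.2) each $b_i$ vanishes in $H^*(G/T)_{(p)}$, hence $t\cdot\tau=0$ there.'' This conflates two different families of elements both called $b_i$. The relations $b_i$ in $(2.2)$ are lifts of $\bar b_i$ that live in $\bZ_{(p)}[y_1,\dots,y_k]\otimes S(t)$, not in $S(t)$: for $SO(2\ell+1)$, Theorem 6.1 gives $b_i=c_i-2y_{2i}$, whose $S(t)$-part $c_i$ satisfies $\iota(c_i)=2y_{2i}\neq 0$. The $b_i\in S(t)$ that you use for the Hilbert-series bound (lifts of $\bar b_i$ inside $S(t)$) do \emph{not} map to zero in $H^*(G/T)_{(p)}$, so ``$t\tau\in(b_1,\dots,b_\ell)\subset S(t)$'' does not force $\iota(t\tau)=0$. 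Indeed already for $SO(3)$ and $p=2$ one has $|t_{top}|=0$, $\tau=1$, and $\iota(t_1)=2y_2\neq 0$ with $|t_1|>|t_{top}|$, a counterexample to your claimed vanishing.

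What you actually get from the degree bound is the mod $p$ statement: since $S(t)/(p,\bar b_1,\dots,\bar b_\ell)$ is a finite complete intersection with top degree $|t_{top}|$, any $u\in S(t)$ with $|u|>|t_{top}|$ reduces to zero in $H^*(G/T;\bZ/p)$, hence $\iota(u)\in p\,H^*(G/T)_{(p)}$ by torsion-freeness. Apply this to $u=t\tau$. Then each cross term satisfies $y\cdot\iota(t\tau)\in p\,H^{2d}(G/T)_{(p)}=p\bZ_{(p)}\cdot y_{top}t_{top}$, so
\[
\iota(\tilde b\,\tau)=p^s\Bigl(y_{top}t_{top}+\sum y\cdot\iota(t\tau)\Bigr)=p^s(1+pm)\,y_{top}t_{top}
\]
for some $m\in\bZ_{(p)}$. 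Since $1+pm$ is a unit in $\bZ_{(p)}$ and $i^*\colon H^{2d}(BT)_{(p)}\to H^{2d}(G/T)_{(p)}$ is $\bZ_{(p)}$-linear, its image contains $p^s\bZ_{(p)}$, giving $t(G)_{(p)}\le p^s$. This also lets you dispense with the integral regular-sequence/socle discussion entirely: the only socle fact you need is over $\bZ/p$, where it is exactly the statement that $(\bar b_1,\dots,\bar b_\ell)$ is regular in $S(t)/p$, which is already stated in $\S 2$.
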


\section{filtrations of $K$-theories}

We first recall the topological filtration defined by Atiyah.
Let $X$ be a topological space.  Let $K^*_{top}(X)$ be the complex $K$-theory ; the Grothendieck group generated by
complex bundles over $X$.  Let $X^i$ be an $i$-dimensional skeleton of $X$.
Define the topological filtration of $K^*_{top}(X)$ by
$ F_{top}^i(X)=Ker(K^*_{top}(X)\to K^*_{top}(X^i))$
and the associated graded algebra
$  gr_{top}^i(X)=F_{top}^i(X)/F_{top}^{i+1}(X).$
Consider the  AHss
 \[ E_2^{*,*'}(X)\cong H^*(X)\otimes K^{*'}_{top}\Longrightarrow K^*(X)_{top}.\]
 By the construction of the spectral sequence, we have
 \begin{lemma}(Atiyah \cite{At})  $gr^*_{top}(X)
\cong E_{\infty}^{*,0}(X)$.
 \end{lemma}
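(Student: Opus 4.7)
The plan is to unwind the construction of the AHss for topological $K$-theory from the skeletal filtration of $X$ and to verify that the filtration it induces on the abutment coincides with $F^*_{top}$; the lemma then follows from general convergence together with Bott periodicity.

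First, I would recall that the AHss arises from the exact couple determined by the long exact sequences of the pairs $(X^p,X^{p-1})$ in the skeletal filtration. From the long exact sequences of the pairs $(X,X^{p-1})$ one sees that the image of $K^*_{top}(X,X^{p-1})\to K^*_{top}(X)$ equals the kernel of the restriction $K^*_{top}(X)\to K^*_{top}(X^{p-1})$. With the paper's convention $F^i_{top}(X)=\ker(K^*_{top}(X)\to K^*_{top}(X^i))$, this identifies $F^i_{top}(X)$ with the spectral-sequence filtration on the abutment (modulo the standard unit index shift in $i$).

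Next, general convergence of the AHss gives
\[ E_\infty^{p,q}(X) \;\cong\; F^p K^{p+q}_{top}(X)\big/F^{p+1}K^{p+q}_{top}(X). \]
Specializing to $K^0$ forces $p+q=0$, hence $gr^p_{top}(X)\cong E_\infty^{p,-p}(X)$. Finally, Bott periodicity supplies a canonical isomorphism $K^{-p}_{top}(\mathrm{pt})\cong K^0_{top}(\mathrm{pt})$ for $p$ even (both groups vanish for $p$ odd), and since it is induced by multiplication by a power of the Bott element on coefficients it is compatible with the cellular construction of the spectral sequence. It therefore yields an isomorphism $E_\infty^{p,-p}(X)\cong E_\infty^{p,0}(X)$, which combined with the previous display gives the claim $gr^*_{top}(X)\cong E_\infty^{*,0}(X)$.

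There is no substantive obstacle here: the statement is essentially a bookkeeping consequence of the definition of the AHss from the skeletal filtration. The only care required is to match the paper's indexing convention against the standard spectral-sequence one (an index shift by one) and to make explicit the Bott-periodicity identification that moves the exponent $-p$ in the $q$-coordinate onto the zero line $E_\infty^{*,0}$.
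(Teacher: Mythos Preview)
Your proposal is correct and follows the standard argument; the paper itself does not supply a proof but simply attributes the lemma to Atiyah, and the remark immediately following the lemma makes exactly your Bott-periodicity identification $E_{\infty}^{2*,0}\cong gr^{2*}(K^*_{top}(X))\to gr^{2*}(K^0_{top}(X))\cong gr_{top}^{2*}(X)$ via $x\mapsto B^*x$. Your unwinding of the skeletal-filtration construction and the convergence identification $gr^p_{top}(X)\cong E_\infty^{p,-p}(X)$ is precisely the content behind this remark.
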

Note that the above isomorphism 
\[ E_{\infty}^{2*,0}\cong gr^{2*}(K^*_{top}(X))
\to gr^{2*}(K^0_{top}(X))\cong gr_{top}^{2*}(X)\]
is given by $x\mapsto B^*x$ for the Bott periodicity, i.e.,
$K_{top}^*\cong \bZ[B.B^{-1}]$ and $deg(B)=(-2,-1)$.
 
 Next we consider the geometric filtration.
 Let $X$ be a smooth algebraic variety over a subfield  $k$ of $\bC$.
 Let $K_{alg}^0(X)$ be the algebraic $K$-theory which is the Grothendieck group
 generated by $k$-vector bundles over $X$.  It is also isomorphic to the
Grothendieck group genrated by coherent sheaves over $X$ (we assumed $X$ smooth).
This $K$ -theory can be written by the motivic $K$-theory $AK^{*,*'}(X)$ (\cite{Vo1}, \cite{Vo2},
\cite{Ya3}), i.e.,
\[K_{alg}^i(X)\cong \oplus _{j} AK^{2j-i,j}(X).\]
In particular $K_{alg}^0(X)\cong 
AK^{2*,*}(X)=\oplus_jAK^{2j,j}(X)$.  

The geometric filtration (\cite{Gr}) is defined as
\[ F^{2i}_{geo}(X)=\{[O_V]| codim_XV\ge i\}\]
(and $F^{2i-1}_{geo}(X)=F^{2i}_{geo}(X)$) 
where $O_V$ is the structural sheaf of closed subvariety $V$ of $X$.
 Here we recall the motivic AHss  \cite{Ya3}, \cite{Ya4}
 \[ AE_2^{*,*',*''}(X)\cong H^{*,*'}(X;K^{*''})\Longrightarrow AK^{*,*'}(X)\]
where $K^*=AK^{2*,*}(pt.).$
  Note that
\[ AE_2^{2*,*,*''}(X)\cong H^{2*,*}(X;K^{*''})\cong CH^*(X)\otimes K^{*''}.\]
Hence 
$AE_{\infty}^{2*,*,0}(X)$ is a quotient of $ CH^*(X)$ by dimensional reason of degree of differential
$d_r$ (i.e., $d_rAE_{r}^{2*,*,*''}(X)=0$).  Thus we have 
 \begin{lemma}  (Lemma 6.2 in \cite{Ya5})  We have
\[gr^{2*}_{geo}(X)\cong AE_{\infty}^{2*,*,0}(X)\cong CH^*(X)/I\]
where $I=\cup_rIm(d_r)$.
 \end{lemma}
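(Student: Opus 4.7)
The strategy is to identify both $gr^{2*}_{geo}(X)$ and $CH^*(X)/I$ with the term $AE_\infty^{2*,*,0}(X)$ of the motivic Atiyah--Hirzebruch spectral sequence, proving the two isomorphisms separately.

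First I would analyze the spectral sequence itself. Since $K^0 = AK^{0,0}(\mathrm{pt})\cong \bZ$ and motivic cohomology satisfies $H^{2i,i}(X;\bZ)\cong CH^i(X)$, one obtains $AE_2^{2i,i,0}(X)\cong CH^i(X)$. For the passage to $E_\infty$: an outgoing differential $d_r$ from $(2i,i,0)$ lands in motivic cohomology of bidegree $(2i+r,i)$, and since $H^{p,q}(X;\bZ)=0$ for $p>2q$ on smooth $X$, this target vanishes for every $r\ge 1$. Hence no differentials exit the row $*''=0$, and $AE_\infty^{2*,*,0}(X)$ is obtained from $CH^*(X)$ purely by quotienting out the images of the incoming differentials, giving $CH^*(X)/I$ with $I=\bigcup_r \mathrm{Im}(d_r)$.

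Next I would match the spectral-sequence filtration on $AK^{2*,*}(X)=K^0_{alg}(X)$ with the geometric filtration $F^{2*}_{geo}$. The motivic AHss arises from the slice (coniveau) tower on the motivic $K$-theory spectrum, whose $i$-th layer is represented by classes supported in codimension $\geq i$. I would verify that a class $[O_V]$ with $\codim_X V\ge i$ lies in spectral-sequence filtration level $\ge 2i$, and that the induced map on associated gradeds sends $[O_V]$ to the cycle class $[V]\in CH^i(X)$ modulo higher-filtration images. Combined with the first step, this yields $gr^{2*}_{geo}(X)\cong AE_\infty^{2*,*,0}(X)\cong CH^*(X)/I$.

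The principal obstacle is the second identification, namely that $F^{2*}_{geo}$ coincides with the filtration induced on $K^0_{alg}(X)$ by the motivic AHss. This rests on the nontrivial fact that the slice filtration on motivic $K$-theory agrees with the codimension-of-support filtration, due essentially to Friedlander--Suslin and Levine. Rather than unpack this from scratch, I would invoke it directly through the author's own Lemma~6.2 in \cite{Ya5}, which is exactly the statement we need.
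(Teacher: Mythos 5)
Your proposal matches the paper's approach. The paper likewise obtains $AE_{\infty}^{2*,*,0}(X)\cong CH^*(X)/I$ from the observation that the outgoing differentials vanish for dimensional reasons (the text before the lemma states $d_rAE_r^{2*,*,*''}(X)=0$ since the targets lie in $H^{p,q}$ with $p>2q$, which vanishes on smooth $X$), and for the identification of the spectral-sequence filtration with $F^{2*}_{geo}$ it simply cites Lemma 6.2 of \cite{Ya5} — exactly what you do in your final paragraph. The only minor imprecision in your write-up is the stated target bidegree $(2i+r,i)$ for $d_r$: with the usual convention the differential shifts both cohomological degree and weight, so the target is more like $(2i+2r-1,\,i+r-1)$; but this still satisfies $p>2q$, so your vanishing conclusion is unaffected.
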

  \begin{lemma}  (\cite{Ya5})   Let $t_{\bC}: K_{alg}^0(X)\to K_{top}^0(X(\bC))$
be the realization map.  Then $F_{geo}^i(X)\subset (t_{\bC}^*)^{-1}F_{top}^i(X(\bC)).$
\end{lemma}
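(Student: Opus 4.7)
The plan is to verify the inclusion on generators. By definition both filtrations collapse in odd degrees, so it suffices to fix a closed subvariety $V\subset X$ of codimension $\ge i$ and show $t_\bC([O_V])\in F^{2i}_{top}(X(\bC))$. Write $U=X\setminus V$ and let $j:U\hookrightarrow X$ be the open immersion. Because the coherent sheaf $O_V$ is supported on $V$, we have $j^*[O_V]=0$ in $K^0_{alg}(U)$, and by naturality of the realization map with respect to open immersions,
\[
\tilde j^*\bigl(t_\bC([O_V])\bigr)=0\quad\text{in}\quad K^0_{top}(U(\bC)),
\]
where $\tilde j:U(\bC)\hookrightarrow X(\bC)$. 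The remaining assertion is therefore purely topological:
\[
\ker\bigl(\tilde j^*:K^0_{top}(X(\bC))\to K^0_{top}(U(\bC))\bigr)\subset F^{2i}_{top}(X(\bC)).
\]

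To prove this, I would show that the pair $(X(\bC),U(\bC))$ is $(2i-1)$-connected. Granting that, for any CW structure on $X(\bC)$ the inclusion of the $(2i-1)$-skeleton $X^{(2i-1)}\hookrightarrow X(\bC)$ is homotopic to a map landing in $U(\bC)$; hence any class killed by $\tilde j^*$ vanishes on $X^{(2i-1)}$ and so lies in $F^{2i}_{top}$ by the AHSS description recalled in Lemma 4.1. The connectivity claim itself is a general-position statement: $V(\bC)$ is a closed subset of the real $2n$-manifold $X(\bC)$ of real codimension $\ge 2i$, and one wants every map of a CW complex of dimension $<2i$ into $X(\bC)$ to be homotopable away from $V(\bC)$.

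The main obstacle is exactly this general-position step when $V$ is singular. For smooth $V$ the argument is immediate: the complex normal bundle has real rank $2i$, the tubular neighborhood theorem identifies $(X(\bC),U(\bC))$ with the Thom pair of this bundle, and $(2i-1)$-connectivity follows from the Thom isomorphism. For arbitrary $V$, I would invoke the triangulability of real algebraic (or semialgebraic) sets due to \L{}ojasiewicz to obtain a CW pair structure on $(X(\bC),V(\bC))$ in which $V(\bC)$ is a subcomplex of real dimension $\le 2n-2i$, and then apply simplicial general position to push low-dimensional skeletons off $V(\bC)$. Alternatively, one Whitney-stratifies $V(\bC)$ and inducts on strata, each of which is smooth of real codimension $\ge 2i$ and admits a tubular neighborhood whose complement is sufficiently connected; the Mayer--Vietoris / excision comparison along the stratification then assembles the connectivity of the full pair from the smooth case already handled.
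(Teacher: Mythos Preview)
The paper does not give its own proof of this lemma; it is simply quoted from \cite{Ya5}. So there is no argument in the paper to compare against, and the question is only whether your proposal stands on its own.

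Your argument is sound and is essentially the standard one. The reduction to showing
\[
\ker\bigl(K^0_{top}(X(\bC))\to K^0_{top}(U(\bC))\bigr)\subset F^{2i}_{top}(X(\bC))
\]
via the vanishing of $j^*[O_V]$ is correct, and the key input is exactly the $(2i-1)$-connectivity of the pair $(X(\bC),U(\bC))$. Your treatment of the smooth case is right, and the two proposed fixes for singular $V$ both work: \L{}ojasiewicz triangulability gives a CW pair with $V(\bC)$ of real dimension $\le 2\dim X-2i$, and then simplicial general position (or transversality to a Whitney stratification) pushes a $(2i-1)$-skeleton off $V(\bC)$ since $(2i-1)+(2\dim X-2i)<2\dim X$.

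One small remark: with the indexing literally written in the paper ($F^i_{top}=\ker(K^*_{top}(X)\to K^*_{top}(X^i))$), vanishing on the $(2i-1)$-skeleton would only give $F^{2i-1}_{top}$. But the paper's own assertion $gr^*_{top}\cong E_\infty^{*,0}$ forces Atiyah's convention $F^i_{top}=\ker(K^*_{top}(X)\to K^*_{top}(X^{i-1}))$, under which your conclusion is exactly $F^{2i}_{top}$. So this is a notational imprecision in the paper, not a gap in your argument.
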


At last, we consider the gamma filtration.
Let $\lambda^i(x)$ be the exterior power of the vector bundle
$x\in K_{alg}^0(X)$ and $\lambda_t(x)=\sum \lambda^i(x)t^i$.  Let us denote 
\[ \lambda_{t/(1-t)}(x)=\gamma_t(x)=\sum \gamma^i(x)t^i
\quad (i.e.\ \gamma^n(x)=\lambda^n(x+n-1)).\]
The gamma filtration is defined as
\[F_{\gamma}^{2i}(X)=\{\gamma^{i_1}(x_1)\cdot ...\cdot \gamma^{i_m}(x_m)|
  i_1+...+i_m\ge i, x_j\in K_{alg}^0(X)\}.\]
  Then we can see $F_{\gamma}^i(X)\subset F_{geo}^i(X)$. (Similarly we can define $F_{\gamma}(X)$ for a topological space $X$.)  In Proposition 12.5 in [At],
  Atiyah proved $F_{\gamma}^i(X)\subset F_{top}^i(X)$ in $K_{top}(X)$.
  Moreover Atiyah's  arguments work also in $K_{alg}^0(X)$ and this fact is well known (
\cite{Ga-Za}, \cite{Ju}, \cite{Ya5}).
  Let $\epsilon: K^0_{alg}(X)\to \bZ$ be the augmentation map
and $c_i(x)\in H^{2i,i}(X)$ the Chern class.  
Let 
$q:CH^*(X)\cong E_{2}^{2*,*,0} \twoheadrightarrow  E_{\infty}^{2*,*,0}$
 be the quotient map.
Then (see p. 63 in \cite{At})
we have
\[ q (c_n(x))=[\gamma^n(x-\epsilon(x))].\]
\begin{lemma} (Atiyah \cite{At}, \cite{Ya5})
The condition $F_{\gamma}^{2*}(X)=F_{top}^{2*}(X)$ 
(resp. $F_{\gamma}^{2*}(X)=F_{geo}^{2*}(X)$)
 is equivalent to that $E_{\infty}^{2*,0}(X)$ (resp. $AE_{\infty}^{2*,*,0}(X)$)
is (multiplicatively) generated by 
 Chern classes in $H^{2*}(X)$ (resp. $CH^*(X)$).
 \end{lemma}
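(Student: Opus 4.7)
The plan is to reduce everything to the associated graded level, using the identification $E_{\infty}^{2*,0}(X) \cong gr_{top}^{2*}(X)$ (Lemma 4.1, and its algebraic analogue Lemma 4.2 for $AE_\infty^{2*,*,0}$), together with the already-known inclusion $F_\gamma^{2*}(X) \subset F_{top}^{2*}(X)$ and the fundamental Atiyah identity
\[
q(c_n(x)) = [\gamma^n(x-\epsilon(x))] \quad \text{in } E_\infty^{2n,0}(X).
\]
Since the two directions of the equivalence are formally similar in the topological and algebraic settings, I would carry out the argument in the topological case and remark that the algebraic case is identical.

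For the forward direction, assume $E_\infty^{2*,0}(X)$ is multiplicatively generated by Chern classes. Given $\alpha \in F_{top}^{2n}(X)$, its class $[\alpha]$ in $E_\infty^{2n,0}(X) = F_{top}^{2n}/F_{top}^{2n+1}$ is a polynomial $P$ in Chern classes $c_{i_j}(x_j)$. By the identity above, $[\alpha]$ is also represented by the element $P(\gamma^{i_j}(x_j - \epsilon(x_j)))$, which lies in $F_\gamma^{2n}(X)$ by the very definition of the gamma filtration. Hence $\alpha - P(\ldots) \in F_{top}^{2n+1}(X)$, and iterating on the filtration degree (which terminates because $F_{top}^{2N}(X) = 0$ for $N > \dim X$) shows $\alpha \in F_\gamma^{2n}(X)$.

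For the converse direction, assume $F_\gamma^{2*}(X) = F_{top}^{2*}(X)$. Any class in $E_\infty^{2n,0}(X) = F_\gamma^{2n}/F_\gamma^{2n+1}$ is then represented by a sum of products $\prod_j \gamma^{i_j}(x_j)$ with $\sum_j i_j = n$. Using multiplicativity $\gamma_t(a+b) = \gamma_t(a)\gamma_t(b)$ and the fact that $\gamma^i$ of a trivial class can be expanded into a polynomial in $\gamma^{<i}$ of augmentation-zero classes, I may assume each $x_j \in \tilde K^0(X) = F_\gamma^1(X)$, so $x_j = y_j - \epsilon(y_j)$ for some $y_j \in K^0(X)$. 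Atiyah's identity then identifies each factor, on the graded level, with $q(c_{i_j}(y_j))$, so $E_\infty^{2*,0}(X)$ is multiplicatively generated by Chern classes.

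The main obstacle is the bookkeeping in the two reductions: in the forward direction, the induction on filtration level (standard once one knows termination in finite codimension); in the converse direction, the reduction from general $\gamma^i(x)$ to $\gamma^i$ of an element of the augmentation ideal before applying the Chern-class formula. For the algebraic variant, replacing $F_{top}$ by $F_{geo}$ and Lemma~4.1 by Lemma~4.2, the same argument shows $F_\gamma^{2*}(X) = F_{geo}^{2*}(X)$ is equivalent to $AE_\infty^{2*,*,0}(X)$ being multiplicatively generated by Chern classes in $CH^*(X)$.
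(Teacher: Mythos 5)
Your proof is correct. The paper itself gives no argument for this lemma, citing only Atiyah and [Ya5], but your reconstruction uses precisely the ingredients the surrounding text points to: the identity $q(c_n(x)) = [\gamma^n(x - \epsilon(x))]$, the a priori inclusion $F_{\gamma}^i \subset F_{top}^i$ (resp.\ $F_{\gamma}^i \subset F_{geo}^i$), the identification $E_{\infty}^{2*,0} \cong gr_{top}^{2*}$ from Lemma 4.1 (resp.\ Lemma 4.2), and termination of the filtration in degrees beyond $\dim X$. Both directions are sound as written. One small remark on the converse: the reduction from arbitrary $x_j$ to augmentation-zero $x_j$ is in fact automatic once one notes that the $\gamma$-filtration is defined with $x_j \in \ker\epsilon$ --- the paper writes $x_j \in K^0_{alg}(X)$ in its definition, but the standard convention (and the only one under which $F_{\gamma}^{2i}$ is actually a descending filtration, since $\gamma^n(1)=1$) restricts to the augmentation ideal, so one may simply take $y_j = x_j$ in Atiyah's formula and skip the multiplicativity manipulation altogether.
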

By Conner-Floyd type theorem
(\cite{Ra}, \cite{Ha}), it is well known 
\[AK^{*,*'}(X)\cong (MGL^{*,*'}(X)\otimes_{MU^*}\bZ)\otimes \bZ[B,B^{-1}]\]
where the $MU^*$-module structure of $\bZ$ is given by the Todd genus,
and $B$ is the Bott periodicity.
Since the Todd genus of $v_1$ (resp. $v_i$, $i>1$) is $1$ (resp. $0$),
we can write
\[ AK^{*,*'}(X)\cong ABP^{*,*'}(X)\otimes _{BP^*}\bZ[B,B^{-1}]\quad with\ B^{p-1}=v_1.\]
Recall that $A\tilde K(1)^{*,*'}(X)$ is  the algebraic Morava $K$-theory with $\tilde K((1)^{2*}=\bZ_{(p)}[v_1,v_1^{-1}].$  
By the Landweber exact functor theorem (see \cite{Ya3}),
we have 
$ A\tilde K(1)^{*,*'}(X)\cong ABP^{*,*'}(X)\otimes _{BP^*}
\tilde K(1)^*.$
Thus we have 
\begin{lemma}  (\cite{Ya5})  There is a natural isomorphism
\[A\tilde K^{*,*'}(X)\cong A\tilde K(1)^{*,*'}(X)\otimes _{\tilde K(1)^*}\bZ[B,B^{-1}]\quad
with \ v_1=B^{p-1}.\]
\end{lemma}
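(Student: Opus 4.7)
The plan is to chain together the two Conner--Floyd / Landweber identifications already recorded in the paragraph immediately preceding Lemma~4.5. Everything reduces to $ABP^{*,*'}(X)$, and the rest is just associativity of tensor product.

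First I would recall the displayed isomorphism
\[ A\tilde K^{*,*'}(X)\cong ABP^{*,*'}(X)\otimes _{BP^*}\bZ_{(p)}[B,B^{-1}] \]
where the $BP^*$-algebra structure on $\bZ_{(p)}[B,B^{-1}]$ is given by $v_1\mapsto B^{p-1}$ and $v_i\mapsto 0$ for $i\ge 2$ (the Todd-genus-type map noted in the excerpt). Second, I would invoke the Landweber exact functor description
\[ A\tilde K(1)^{*,*'}(X)\cong ABP^{*,*'}(X)\otimes _{BP^*}\tilde K(1)^*, \]
with $\tilde K(1)^*=\bZ_{(p)}[v_1,v_1^{-1}]$ and $BP^*\to \tilde K(1)^*$ the map killing all $v_i$ with $i\ge 2$.

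The key observation is that the structure map $BP^*\to \bZ_{(p)}[B,B^{-1}]$ used in the first step factors through $\tilde K(1)^*$: because $v_1$ maps to the unit $B^{p-1}$, we get an induced ring map $\tilde K(1)^*\to \bZ_{(p)}[B,B^{-1}]$, $v_1\mapsto B^{p-1}$, which makes $\bZ_{(p)}[B,B^{-1}]$ into a $\tilde K(1)^*$-algebra. Moreover the composition $BP^*\to \tilde K(1)^*\to \bZ_{(p)}[B,B^{-1}]$ coincides with the original $BP^*$-action (both send $v_1\mapsto B^{p-1}$ and $v_i\mapsto 0$ for $i\ge 2$).

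The final step is a formal application of associativity of tensor product:
\[ A\tilde K(1)^{*,*'}(X)\otimes _{\tilde K(1)^*}\bZ_{(p)}[B,B^{-1}]\cong ABP^{*,*'}(X)\otimes _{BP^*}\tilde K(1)^*\otimes _{\tilde K(1)^*}\bZ_{(p)}[B,B^{-1}] \]
\[ \cong ABP^{*,*'}(X)\otimes _{BP^*}\bZ_{(p)}[B,B^{-1}]\cong A\tilde K^{*,*'}(X). \]
Naturality in $X$ is inherited from the naturality of the two building-block isomorphisms. There isn't really a hard step here; the only point to verify is the factorization through $\tilde K(1)^*$, which is built into the way the two coefficient maps are defined. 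The content of the lemma is organizational rather than technical: it lets one read off $A\tilde K^{*,*'}$ from the algebraic Morava $K$-theory $A\tilde K(1)^{*,*'}$, which is precisely what is needed to pass between $K$-theoretic and $K(1)$-theoretic gamma filtrations in the subsequent sections.
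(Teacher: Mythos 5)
Your proof is correct and follows essentially the same route the paper intends: the lemma is stated with only "Thus we have" after the two displayed Conner--Floyd and Landweber isomorphisms, so the intended proof is exactly the chain $A\tilde K^{*,*'}(X)\cong ABP^{*,*'}(X)\otimes_{BP^*}\bZ[B,B^{-1}]\cong ABP^{*,*'}(X)\otimes_{BP^*}\tilde K(1)^*\otimes_{\tilde K(1)^*}\bZ[B,B^{-1}]\cong A\tilde K(1)^{*,*'}(X)\otimes_{\tilde K(1)^*}\bZ[B,B^{-1}]$, with the middle step justified by the factorization of the Todd-genus map $BP^*\to\bZ_{(p)}[B,B^{-1}]$ through $\tilde K(1)^*$ (since $v_1\mapsto B^{p-1}$ is a unit and $v_i\mapsto 0$ for $i\ge 2$). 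You spell out the factorization and the associativity step explicitly, which is precisely the content the paper leaves implicit.
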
  
\begin{lemma} (\cite{Ya5})  
Let  $E(AK)_r$ (resp. $E(A\tilde K(1))_r$) be 
the AHss converging to $AK^{*,*'}(X)$ (resp.
$A\tilde K(1)^*(X)$.  Then
\[ E(AK)_r^{*,*',*''}\cong E(A\tilde K(1)_r^{*,*',*''}\otimes _{\tilde K(1)^*}\bZ_{(p)}[B,B^{-1}].\]
In particular, $gr_{geo}^*(X)\cong E(AK)^{2*,*,0}_{\infty}
\cong E(\tilde AK(1))^{2*,*,0}_{\infty}.$
\end{lemma}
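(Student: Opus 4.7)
The plan is to lift the coefficient-level identification of Lemma~4.5 to every page of the motivic AHss by flat base change. At $E_2$, the two spectral sequences read
\[
E(AK)_2^{*,*',*''}\cong H^{*,*'}(X;K^{*''}),\qquad E(A\tilde K(1))_2^{*,*',*''}\cong H^{*,*'}(X;\tilde K(1)^{*''}).
\]
Specialising Lemma~4.5 to a point gives $K^{*''}\cong \tilde K(1)^{*''}\otimes_{\tilde K(1)^*}\bZ_{(p)}[B,B^{-1}]$ via $v_1=B^{p-1}$, and $\bZ_{(p)}[B,B^{-1}]$ is free of rank $p-1$ over $\tilde K(1)^*$. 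Since motivic cohomology is additive in the coefficient module and commutes with flat extension of scalars, the identification $E(AK)_2^{*,*',*''}\cong E(A\tilde K(1))_2^{*,*',*''}\otimes_{\tilde K(1)^*}\bZ_{(p)}[B,B^{-1}]$ is immediate.

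For the inductive step, the canonical map $A\tilde K(1)\to AK$ of motivic ring spectra induces a multiplicative map of AHss under which the Bott element $B$ is a permanent cycle in filtration zero. Hence each differential $d_r$ on $E(AK)_r$ is $\bZ_{(p)}[B,B^{-1}]$-linear once the $E_r$-identification is in place, and by flatness of $\bZ_{(p)}[B,B^{-1}]$ over $\tilde K(1)^*$ the formation $E_{r+1}=H(E_r,d_r)$ commutes with extension of scalars. Inducting on $r$ then gives the asserted isomorphism at every page, including $r=\infty$.

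For the ``in particular'' statement I restrict to tri-degree $(2*,*,0)$. Writing $\bZ_{(p)}[B,B^{-1}]$ as the free $\tilde K(1)^*$-module with basis $\{1,B,\dots,B^{p-2}\}$, the factor $B^j$ shifts the $*''$-coordinate of $E_\infty$ by $-j$. Since $\tilde K(1)^{*''}$ is supported only in $*''$ divisible by $p-1$ (as $|v_1|=-(p-1)$ in the $*''$-grading), the only summand landing in $(2*,*,0)$ has $j=0$, yielding $E(AK)_\infty^{2*,*,0}\cong E(A\tilde K(1))_\infty^{2*,*,0}$; Lemma~4.2 then identifies both with $gr^{2*}_{geo}(X)$. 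The main obstacle I anticipate is the naturality/multiplicativity input, namely that $B$ is genuinely a permanent cycle at filtration zero of the $AK$-AHss and that Lemma~4.5 lifts compatibly to the two spectral sequences; once this is granted the remainder is formal flatness together with straightforward degree bookkeeping.
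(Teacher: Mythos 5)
Your argument is essentially correct and is, as far as one can tell without the text of \cite{Ya5}, the standard route: identify the $E_2$-terms by additivity of motivic cohomology, observe that $\bZ_{(p)}[B,B^{-1}]$ is a \emph{free} $\tilde K(1)^*$-module of rank $p-1$ (so $-\otimes_{\tilde K(1)^*}\bZ_{(p)}[B,B^{-1}]$ is exact), and then propagate the isomorphism page by page using that the differentials of a multiplicative AHss for a ring spectrum are linear over the coefficient ring in filtration degree $0$. The degree bookkeeping for the ``in particular'' statement is also right: $B$ has $*''$-degree $-1$ and $\tilde K(1)^{*''}$ is concentrated in degrees divisible by $p-1$, so only the $B^0$ summand contributes at $*''=0$, and Lemma 4.2 identifies the resulting $E_\infty^{2*,*,0}$ with $gr_{geo}^{2*}(X)$.

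The one place where the argument is not purely formal is the point you flagged yourself: one needs a comparison of the two spectral sequences, i.e.\ a map $A\tilde K(1)\to AK$ of motivic spectra (or at least of towers giving rise to these AHss) under which the $E_2$-identification is the inclusion of the $j=0$ summand. This is available here: since both $AK_{(p)}$ and $A\tilde K(1)$ are Landweber exact over $ABP$ (Lemma 4.5 and the preceding Conner--Floyd statement), and $\tilde K(1)^*\to K^*$, $v_1\mapsto B^{p-1}$, is a morphism of $BP^*$-algebras, it induces a natural transformation of cohomology theories and hence of the corresponding AHss (this is exactly the motivic Adams-summand splitting of $p$-local $KGL$). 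Granting this, your inductive flat-base-change step is correct, and the convergence at $r=\infty$ is unproblematic for a bounded-below AHss of the kind considered here. An alternative, essentially equivalent, phrasing is to note that both spectral sequences arise from $E(ABP)_r$ by base change along $BP^*\to\tilde K(1)^*$ and $BP^*\to K^*$, and that $K^*$ is free over $\tilde K(1)^*$, which makes the associativity of base change give the isomorphism at every page at once; this avoids the explicit induction but uses the same flatness input.
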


From the above lemmas,  it is sufficient to consider the Morava $K$-theory
$A\tilde K(1)^{*,*'}(X) $ when we want to study $AK^{*,*'}(X)$.
For ease of notations, let us write simply 
\[K^{2*}(X)=A\tilde K(1)^{2*,*}(X),\quad so\ that \ \  K^*(pt.)\cong \bZ_{(p)}[v_1,v_1^{-1}],\]
\[k^{2*}(X)=A\tilde k(1)^{2*,*}(X),\quad so\ that \ \  k^*(pt.)\cong \bZ_{(p)}[v_1].\]
Hereafter of this paper, we only consider this 
Morava $K$-theory
$K^{2*}(X)$  instead of
$AK^{2*,*}(X)$ or $K_{alg}^0(X)$.


\section{ The restriction map for the $K$-theory}
 
We consider the restriction maps  that
\[  res_K: K^*(X)\to K^*(\bar X).\]
By Panin \cite{Pa}, it is known that $K_{alg}^0(\bF)$ is torsion free
for each  twisted flag varieties $\bF=\bG/B_k$.
The following lemma is almost immediate from this Panin's result.
\begin{lemma}  Let $\bF$ be a (twisted) flag variety.
Then the restriction map 
$res_{K}: K^*(\bF)\to K^*(\bar \bF)$  is injective.
\end{lemma}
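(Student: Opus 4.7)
The plan is to combine Panin's torsion-freeness of $K^0_{alg}(\bF)$ with a standard projection-formula/transfer argument along a finite splitting extension of $\bG$.

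First I would choose a finite Galois extension $k'/k$ of degree $n$ over which the torsor $\bG$ splits, so that $\bF_{k'} \cong (G_{k'}/B_{k'})$ is a split (cellular) flag variety. The base-change morphism $\pi : \bF_{k'} \to \bF$ is then finite \'etale of degree $n$, and the projection formula gives $\pi_* \circ \pi^* = n \cdot \mathrm{id}$ on $K^0_{alg}(\bF)$. By Panin's theorem $K^0_{alg}(\bF)$ is torsion-free, so multiplication by $n$ is injective, and hence so is $\pi^* : K^0_{alg}(\bF) \hookrightarrow K^0_{alg}(\bF_{k'})$. Since $\bF_{k'}$ is cellular, $K^0_{alg}(\bF_{k'})$ is free abelian on the Schubert basis, and this basis is preserved under any further base change; consequently $K^0_{alg}(\bF_{k'}) \to K^0_{alg}(\bar \bF)$ is an isomorphism. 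Composing, the restriction $K^0_{alg}(\bF) \to K^0_{alg}(\bar \bF)$ is injective.

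Second I would transfer the conclusion to the algebraic Morava $K(1)$-theory $K^*(X) = A\tilde K(1)^{2*,*}(X)$ used in the lemma. By Lemma 4.5, $A\tilde K^{*,*'}(X) \cong A\tilde K(1)^{*,*'}(X) \otimes_{\tilde K(1)^*} \bZ_{(p)}[B, B^{-1}]$ with $v_1 = B^{p-1}$, and $\bZ_{(p)}[B, B^{-1}]$ is free of rank $p-1$ over $\tilde K(1)^* = \bZ_{(p)}[v_1, v_1^{-1}]$. Thus the restriction on $A\tilde K(1)^{*,*'}$ is a direct $\tilde K(1)^*$-summand of the restriction on $A\tilde K^{*,*'} = K^0_{alg}(\,\cdot\,)_{(p)}$, and injectivity of the ambient map descends to the summand.

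The main obstacle is really only the torsion-freeness input $K^0_{alg}(\bF)$, which is exactly the cited Panin result; granted this the rest of the argument is formal. A small bookkeeping point is that $K^0_{alg}(\bar \bF) = \varinjlim_{k'} K^0_{alg}(\bF_{k'})$ over finite extensions $k' \subset \bar k$, so that injectivity at a single finite splitting level suffices to produce injectivity over the algebraic closure.
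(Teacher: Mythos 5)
Your proof is correct, but it takes a genuinely different route from the paper's. The paper argues directly with a rationalization: since $res_{CH}\otimes\bQ : CH^*(\bF)\otimes\bQ\to CH^*(\bar\bF)\otimes\bQ$ is an isomorphism (a general fact for twisted forms, valid without any versality hypothesis), the Atiyah--Hirzebruch spectral sequence gives $K^*(\bF)\otimes\bQ\cong K^*(\bar\bF)\otimes\bQ$; Panin's torsion-freeness then makes $K^*(\bF)\to K^*(\bF)\otimes\bQ$ injective, and the obvious square finishes. Your argument instead replaces the rational comparison by a transfer along a finite splitting extension $k'/k$: the key computation $\pi_*\pi^*=n\cdot\mathrm{id}$ is legitimate here because $\pi:\bF_{k'}\to\bF$ is a constant base change, so $\pi_*\mathcal{O}_{\bF_{k'}}=\mathcal{O}_{\bF}\otimes_k k'$ is \emph{free} of rank $n$ (for a general finite \'etale map one would only get multiplication by $[\pi_*\mathcal{O}]=n+(\text{augmentation ideal})$, which is merely a unit after rationalizing, and one would again fall back on Panin plus a rationalization). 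The two proofs thus share the essential input (Panin) but differ in how they reduce to it: the paper uses the rational Chow isomorphism and the AHss, while you use the projection formula and cellularity of the split form; yours makes the role of the finite splitting extension explicit, while the paper's is slightly shorter and avoids any discussion of pushforwards. Your second step, transferring the conclusion from $K^0_{alg}$ to $A\tilde K(1)^{*,*'}$ via the paper's Lemma 4.5, is a reasonable bookkeeping step that the paper handles implicitly by working throughout with the notation $K^*(X)=A\tilde K(1)^{2*,*}(X)$.
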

\begin{proof} 
Recall that $res_{CH}\otimes \bQ:CH^*(\bF)\otimes\bQ \to CH^*(\bar \bF)\otimes \bQ$ is isomorphic.
Hence by the AHss, we see $K^*(\bF)\otimes \bQ\cong 
K^*(\bar \bF)\otimes \bQ$.  The following diagram
\[ \begin{CD}
K^*(\bF)    @>>>    K^*(\bar \bF)\\
@V{inj}VV       @V{inj}VV \\
K^*(\bF)\otimes \bQ @>{\cong}>> K^*(\bar \bF)\otimes \bQ.
\end{CD}\]
implies the lemma. \end{proof}


For simply connected Lie group $G$, it is known
$res_K$ is surjective from Chevalley.  
\begin{thm} (Chevalley, Panin)
When $G$ is simply connected, $res_K$ is 
an isomorphism.
\end{thm}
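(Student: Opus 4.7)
The plan is to reduce to the algebraic $K_{alg}^{0}$ and then to combine Chevalley's description of the split case with Panin's theorem. By Lemma 5.1 the map $res_K$ is already injective, so only surjectivity has to be proved.

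By Lemma 4.6 the Morava $K$-theory $K^{2*}=A\tilde K(1)^{2*,*}$ differs from $AK^{2*,*}\cong K_{alg}^{0}\otimes\bZ[B,B^{-1}]$ only by the polynomial variable $B$ with $v_1=B^{p-1}$, and the restriction map between the twisted and split forms is compatible with this identification. Hence it is enough to prove that $res_K\colon K_{alg}^{0}(\bF)\to K_{alg}^{0}(\bar\bF)$ is surjective.

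Next I would recall Chevalley's theorem for the split flag variety $\bar\bF=(G_k/B_k)_{\bar k}$: there is a natural ring isomorphism $K_{alg}^{0}(\bar\bF)\cong R(T_k)\otimes_{R(G_k)}\bZ$, where $R(G_k)$ acts on $\bZ$ by augmentation. Under the simply connected hypothesis $R(G_k)=\bZ[\rho_1,\dots,\rho_\ell]$ is polynomial on the fundamental representations and $R(T_k)=\bZ[\Lambda]$ is the group ring of the full weight lattice $\Lambda$; hence $K_{alg}^{0}(\bar\bF)$ is generated as a ring by the classes of the line bundles $L(\omega_i)$ associated to the fundamental weights $\omega_i$. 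Any character $\chi\in\Lambda$ of $T_k$ already produces a line bundle $L_{\bG}(\chi)=\bG\times^{B_k}k_{\chi}$ on $\bF$ itself, and by construction its restriction to $\bar\bF$ is exactly $L(\chi)$.

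Panin's theorem, under the simply connected assumption, promotes these observations to a natural isomorphism $K_{alg}^{0}(\bG/B_k)\cong R(T_k)\otimes_{R(G_k)}\bZ$ sending $\chi\mapsto L_{\bG}(\chi)$, so that $res_K$ becomes the identity map on $R(T_k)\otimes_{R(G_k)}\bZ$ and is therefore an isomorphism. The main obstacle is the justification of Panin's isomorphism in the simply connected setting: one must verify that the Tits algebras of $G_k$ are all split, so that the $R(G_k)$-module structure on $\bZ$ arising in the twisted setting coincides with the ordinary augmentation. Without the simple connectedness hypothesis $R(G_k)$ would act via a nontrivial twisted augmentation reflecting the Tits algebras of $\bG$, and $res_K$ would in general become an isomorphism only after inverting the torsion index $t(G)$.
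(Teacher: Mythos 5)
Your proposal is correct in substance, and it is essentially an expansion of the citation the paper is making with the attribution (Chevalley, Panin): at this point the paper writes out no proof, and its operative verification is a different, explicit one.  Namely, Lemma 5.3 and Corollary 5.4 reduce surjectivity of $res_K$ (for versal $\bF$) to exhibiting, for each generator $y_i\in P(y)$, an element $x$ with $Q_1x=y_i$, or more generally a relation $b_i=v_1^{s}y_i$ in $k(1)^*(G/T)$, and this is then checked group by group in \S 6--\S 11 (together with the product reduction of Corollary 5.5).  Your route buys uniformity: no case analysis, no knowledge of the $Q_1$-action on $H^*(G;\bZ/p)$, and it applies to an arbitrary $G_k$-torsor, not only the versal one.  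The paper's route buys exactly the explicit relations $c(i)=v_1^{s_i}y_i$ that are reused later in computing $gr_{\gamma}$, which is why the author redoes the theorem by hand.

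The one step you flag but never actually verify is precisely where simple connectedness enters: that the Tits algebras of the twisted form attached to $\bG$ are split.  This is true and standard: the Tits class is the image of $[\bG]$ under the connecting map $H^1(k,G_{ad})\to H^2(k,Z)$, and it vanishes because the adjoint torsor lifts to a $G_k$-torsor (it is one), the composite $H^1(k,G_k)\to H^1(k,G_{ad})\to H^2(k,Z)$ being zero by exactness; you should either say this or cite it.  Alternatively, your own line-bundle observation already closes the argument without Panin's full computation: since $G_k$ is simply connected, every $\chi$ in the weight lattice is a character of $B_k$, so $L_{\bG}(\chi)=\bG\times^{B_k}k_{\chi}$ lifts $L(\chi)$ to $\bF$; for a $G_k$-representation $V$ the bundle $\bG\times^{B_k}V$ is trivial (being pulled back from the twist $\bG\times^{G_k}V$ over $Spec\,k$), so the classes $[L_{\bG}(\chi)]$ satisfy the augmentation relations, and Chevalley--Pittie--Steinberg says the $[L(\chi)]$ generate $K^0_{alg}(\bar\bF)$.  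Surjectivity follows, and injectivity is Lemma 5.1.
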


Hence
we have Theorem 1.2 in the introduction.  However we will see it directly
and explicitly for each simple Lie group.
Moreover we will compute explicitly $gr_{\gamma}(X)$
for each simply connected Lie group.

\begin{lemma} Let $\bF$ be a versal complete flag variety.
The restriction map $res_{K}$ is isomorphic if and only if
for each generator $y_i\in P(y)$ in $CH^*(\bar \bF)$,
there is $c(i)\in K^*(BT)$ such that
$             c(i)=v_1^{s_i}y_i$, for $ s_i\ge 0$ in  
$ K^*(\bar \bF)/p.$
\end{lemma}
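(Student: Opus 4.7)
The strategy is to reduce the iff to a surjectivity question for the composed map $K^*(BT) \to K^*(\bar\bF)$, then read off both directions from the algebra structure of the target. By Lemma 5.1, $res_K$ is already injective, so ``$res_K$ is an isomorphism'' is the same as ``$res_K$ is surjective.'' Applying Theorem 3.2 (Karpenko) to the oriented cohomology theory $h^* = K^*$ and the versal flag variety $\bF$, the natural map $K^*(BT) = K^*(BB_k) \to K^*(\bF)$ is surjective. Hence $res_K$ is surjective if and only if the composition $K^*(BT) \to K^*(\bar\bF)$ is surjective.

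Next I would describe the target. Since $\bar\bF$ is cellular, the motivic AHss for $K^*(\bar\bF)$ collapses, and modulo $p$ one gets
\[ K^*(\bar\bF)/p \cong \bigl(P(y)/p \otimes S(t)/(b)\bigr) \otimes_{\bZ/p} K^*/p \]
using the Chow ring description from Theorem 3.1. As a $K^*/p$-algebra this is generated by the $y_i$'s and the $t_j$'s. The $t_j$'s always lie in the image of $K^*(BT)$ (they are first Chern classes of the line bundles associated to characters of $T$), so $K^*(BT) \to K^*(\bar\bF)/p$ is surjective iff each $y_i$, up to the bidegree-matching unit $v_1^{s_i}$ with $s_i \ge 0$, lies in the image.

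This gives both directions. If $res_K$ is an isomorphism, then $K^*(BT) \to K^*(\bar\bF)/p$ is onto, so each $y_i$ has a preimage $c(i) \in K^*(BT)$ with $c(i) = v_1^{s_i} y_i$ in $K^*(\bar\bF)/p$. Conversely, if such $c(i)$ exist, then modulo $p$ all multiplicative generators of $K^*(\bar\bF)/p$ lie in the image of $K^*(BT)$, so the map is surjective mod $p$. To lift this to integral surjectivity I would invoke Nakayama's lemma applied to the cokernel: $K^*(\bar\bF)$ is a free $K^*$-module of finite rank (by AHss collapse), hence the cokernel of $K^*(BT) \to K^*(\bar\bF)$ is a finitely generated $K^*$-module killed by $p$, and $p$ lies in the Jacobson radical of $K^* = \bZ_{(p)}[v_1, v_1^{-1}]$.

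The main obstacle I anticipate is the final Nakayama step. One must confirm that the relevant cokernel is finitely generated over $K^*$ even though $K^*(BT)$ itself is not; this is inherited from finite rank of $K^*(\bar\bF)$. A small but necessary bookkeeping item is keeping the $v_1$-degree shifts consistent when passing between the $CH^*$ grading and the $K^*$ grading: since $v_1$ is invertible, no essential new content enters, but the power $s_i$ in the statement must be chosen so that $c(i) \in K^*(BT)$ sits in a degree compatible with the image on graded pieces.
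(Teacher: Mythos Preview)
Your approach matches the paper's: reduce to surjectivity of $K^*(BT)\to K^*(\bar\bF)$ via Lemma~5.1 and Theorem~3.2, then check the algebra generators $y_i$. Two minor differences are worth noting. The paper passes to the motive $R(\bG)$, so that $k^*(\bar R(\bG))\cong k^*\otimes P(y)$ with no $S(t)/(b)$ factor, and it uses the \emph{connective} theory $k^*=\bZ_{(p)}[v_1]$ to extract $s_i\ge 0$: any element of $K^*(BT)$ is $v_1^{-N}$ times one in $k^*(BT)$, whose image in $k^*\otimes P(y)$ carries only nonnegative $v_1$-powers. Your treatment of $s_i\ge 0$ is vaguer, though harmless since in the ``only if'' direction one may simply take $s_i=0$. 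Finally, in your Nakayama step the cokernel $M$ is not ``killed by $p$'' but rather satisfies $M=pM$ (this is what mod-$p$ surjectivity gives); then Nakayama over the local ring $K^*=\bZ_{(p)}[v_1,v_1^{-1}]$ with maximal ideal $(p)$ yields $M=0$. The paper absorbs this step into ``the converse is immediate.''
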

\begin{proof}
Consider the restriction map
\[res_{k}: k^*(R(\bG))\to k^*(\bar R(\bG))\cong k^*\otimes
P(y)\]
where $k^*=\bZ_{(p)}[v_1]$.  Suppose that $res_{K}=res_k[v_1^{-1}]$ is surjective.  Then since $\bF$ is
versal,
there is $c(i)\in K^*(BB_k)$ with $c(i)=v_1^{s_i}y_i$ for some $s_i\in\bZ$. Since $res_k(c(i))\subset k^*\otimes P(y)$,
we see $s_i\ge 0$.
The converse is immediate.
\end{proof}
\begin{cor} For each generator $y_i\in P(y)$,
if there is 
$x_{k(i)}\in \Lambda(x_1,...,x_{\ell})\subset H^*(G;\bZ/p)$
 such that $Q_1x_{k(i)}=y_i$, then
$res_K$ for a versal flag $\bF$ is isomorphic.
\end{cor}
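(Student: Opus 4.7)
The plan is to verify the hypothesis of Lemma 5.3 for every generator $y_i \in P(y)$, so that the ``converse'' direction of that lemma yields surjectivity of $res_K$. Combined with Lemma 5.1 (injectivity for any twisted flag variety), this gives the isomorphism asserted in Corollary 5.4.

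Given $y_i \in P(y)$, the hypothesis supplies $x_{k(i)} \in \Lambda(x_1,\ldots,x_\ell) \subset H^*(G;\bZ/p)$ with $Q_1 x_{k(i)} = y_i$. In the Serre spectral sequence for the fibration $G \stackrel{\pi}{\to} G/T \stackrel{i}{\to} BT$, the odd-degree class $x_{k(i)}$ is transgressive, and I write $b_{k(i)} = d_{|x_{k(i)}|+1}(x_{k(i)}) \in S(t)/p$ for its transgression. Applying Corollary 2.2 to $x := x_{k(i)}$, I obtain a lift $b_{k(i)} \in BP^*(BT)$ such that
\[ b_{k(i)} = v_1\, y(1) \quad \text{in } k(1)^*(G/T)/(v_1^2), \]
where $y(1) \in H^*(G/T;\bZ/p)$ satisfies $\pi^* y(1) = Q_1 x_{k(i)} = y_i$. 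By the presentation (2.2), I may choose $y(1)$ to be the polynomial generator $y_i$ of $H^*(G/T)_{(p)}$ itself.

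Next I transport this identity to $K^*(\bar\bF)/p$. Since $\bF$ is versal and $G$ is simply connected, Theorem 1.1 identifies $K_{alg}^0(\bF)\cong K_{top}^0(G/T)$, and by the description of $K^*$ in terms of algebraic Morava $K(1)$-theory given in Section 4, this yields an isomorphism $K^*(\bar\bF)/p \cong K(1)^*_{top}(G/T)$ compatible with the map from $K^*(BT) = K^*(BB_k)$. Setting $c(i) := b_{k(i)} \in K^*(BT)$, its image in $K^*(\bar\bF)/p$ equals $v_1 y_i$ up to a summand lying in $v_1^2 \cdot K^*(\bar\bF)/p$. Since $H^*(G/T)$ is torsion-free, the AHss for $K^*(\bar\bF)$ collapses, so the generator $y_i$ is canonical only up to higher filtration; the $v_1^2$-error can be absorbed by adjusting this choice of lift. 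Thus I obtain the relation $c(i) = v_1^{s_i} y_i$ in $K^*(\bar\bF)/p$ with $s_i = 1$, as required by Lemma 5.3.

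The main obstacle is the last step: Corollary 2.2 gives the identity only modulo $(v_1^2)$, while Lemma 5.3 demands $c(i) = v_1^{s_i} y_i$ on the nose. I handle this by using the ambiguity in choosing a lift of the class $y_i \in CH^*(\bar\bF)/p$ to $K^*(\bar\bF)/p$ under the collapsed AHss; alternatively, one may iterate Corollary 2.2 across the finite filtration of $K^*(\bar\bF)/p$ (modulo the ideal generated by the $t_j$'s) until the higher $v_1$-error vanishes. Once the elements $c(i)$ are produced for all generators $y_i$, Lemma 5.3 gives $res_K$ surjective, and together with Lemma 5.1 this yields the corollary.
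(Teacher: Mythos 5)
Your argument follows the paper's own proof: both invoke Corollary 2.2 to produce, for each $y_i$, a transgression $b_{k(i)}\in BP^*(BT)$ with $b_{k(i)}\equiv v_1 y_i \pmod{v_1^2}$ in $k(1)^*(G/T)$, then feed this into the ``converse'' direction of Lemma 5.3 together with injectivity from Lemma 5.1. The paper compresses the entire remaining step into ``by induction, we can take generators $y_i$ satisfy[ing] the above lemma,'' which is exactly the $v_1^2$-absorption you flag; your two suggested repairs (re-normalizing the lift of $y_i$, or iterating Corollary 2.2 up the finite $v_1$-filtration) are legitimate ways to fill that gap and match the intended meaning.

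One caveat you should fix: invoking Theorem 1.1 to pass from $k(1)^*(G/T)$ to $K^*(\bar\bF)/p$ is circular, since Corollary 5.4 is precisely the ingredient used in $\S 5$ to give a ``direct proof of Theorem 1.2 [= 1.1] without using Chevalley's theorem.'' What you actually need is only the elementary identification $K^*(\bar\bF)\cong K^*_{top}(G/T)$, which holds because $\bar\bF = G_{\bar k}/B_{\bar k}$ is a split cellular variety (so the realization map is an isomorphism; cf.\ the discussion in $\S 3$ that $CH^*(G_k/B_k)\cong H^*(G/T)$ and $\Omega^*(G_k/B_k)\cong BP^*(G/T)$). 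Replace the appeal to Theorem 1.1 with this fact about the split form $\bar\bF$ and the argument is sound and non-circular.
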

\begin{proof}
Let $d_r(x_k)=b_k\in H^*(BT)/p$.  Then by Corollary 2.2,  
we have
\[ b_k=v_1y_ i\ \  mod(v_1^2)\quad in\ k(1)^*(G/T).\]
By induction, we can take generators $y_i$ satisfy the above 
lemma.
\end{proof}

  \begin{cor}
Let $G=G_1\times G_2$, and $\bF=\bG/B_k$, $\bF_i=\bG_{i,k}/B_{i,k}$
for $i=1,2$.  Suppose $\bF,\bF_i$ are versal.
If $res_K$ are isomorphic for $K^*(\bF_i)$ then so is for
$K^*(\bF)$.
\end{cor}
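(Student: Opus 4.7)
The plan is to apply the criterion of Lemma 5.3 to $\bF$, after lifting witnessing elements from the factors. Since $\bF$ is versal and $res_K$ is automatically injective by Lemma 5.1, it suffices to show that for every polynomial generator $y_i \in P(y) \subset H^*(G;\bZ/p)$ there is $c(i) \in K^*(BT)$ with $c(i) = v_1^{s_i} y_i$, $s_i \ge 0$, in $K^*(\bar \bF)/p$.

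First I would decompose the relevant data. Because $G = G_1 \times G_2$, the maximal torus is $T = T_1 \times T_2$, so $BT = BT_1 \times BT_2$ and $K^*(BT)$ receives pullback maps $pr_j^* : K^*(BT_j) \to K^*(BT)$ from the two projections. Similarly $B_k = B_{1,k}\times B_{2,k}$, and over the splitting field $\bar k$ we have $\bar \bF \cong \bar \bF_1 \times_{\bar k} \bar \bF_2$, which via Künneth gives
\[
H^*(G;\bZ/p) \cong H^*(G_1;\bZ/p)\otimes H^*(G_2;\bZ/p),
\qquad P(y) \cong P(y_{G_1})\otimes P(y_{G_2}).
\]
In particular, each generator $y_i$ of $P(y)$ lives in exactly one tensor factor, say in $P(y_{G_j})$.

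Second I would apply the assumption. Since $res_K$ is an isomorphism for the versal flag $\bF_j$, Lemma 5.3 furnishes an element $c_j(i) \in K^*(BT_j)$ with $c_j(i) = v_1^{s_i}\, y_i$ in $K^*(\bar \bF_j)/p$ for some $s_i \ge 0$. Pulling back through the projection, set $c(i) := pr_j^*\bigl(c_j(i)\bigr) \in K^*(BT)$. Under the Künneth isomorphism $K^*(\bar \bF)/p \cong K^*(\bar \bF_1)/p \otimes K^*(\bar \bF_2)/p$, the element $c(i)$ equals $v_1^{s_i}\, y_i \otimes 1$ (or $1 \otimes v_1^{s_i}\,y_i$), which is precisely the class of $y_i$ viewed inside the product. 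Doing this for every generator $y_i$ supplies the full list of elements required by Lemma 5.3 for the versal flag $\bF$, and we conclude that $res_K : K^*(\bF) \to K^*(\bar \bF)$ is an isomorphism.

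The only delicate point is the compatibility of the Künneth decomposition of $P(y)$ with the product structure on $K^*(BT)$ and $K^*(\bar \bF)$, together with making sure the exponent $s_i$ transfers unchanged through $pr_j^*$; but since $v_1$ is a coefficient scalar and $y_i$ lies entirely in one factor, this is automatic. No other obstacle arises: the versality hypotheses on $\bF$ and on the $\bF_i$ are exactly what lets Lemma 5.3 be applied in both directions.
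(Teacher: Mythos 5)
Your proof is correct and takes essentially the same route as the paper: both decompose $P(y)\cong P_1(y)\otimes P_2(y)$, invoke Lemma~5.3 for each factor $\bF_i$ to produce the elements $c_j(i)\in K^*(BT_j)$, and then lift them to $K^*(BT)\cong K^*(BT_1)\otimes K^*(BT_2)$ to verify the criterion of Lemma~5.3 for $\bF$. Your version is if anything slightly cleaner by observing that a generator of the tensor product lies in a single factor, which is exactly what the criterion demands.
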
 
\begin{proof}  Since $\bF$ is versal, we see
$  CH^*(\bF_1)\otimes CH^*(\bF_2)\to CH^*(\bF)$
is surjective,  because  $CH^*(\bF)$ generated by
\[  CH^*(B(B_{1,k}\times B_{2,k}))\cong CH^*(BB_{1,k}))\otimes CH^*(BB_{2,k}).\]
Let $P_i(y)\subset H^*(G_i/T_i)$ be the polynomial parts
corresponding to that in  $H^*(G_i;\bZ/p)$.
Then $P(y)\cong P_1(y)\otimes P_2(y)$.
By the assumption, $res_K$ are isomorphic for $\bF_i$.
Hence from the above lemma,
for $ y_i\in P_i(y)$, we have $v_1^{s_i}y_i=c(i)\in \Omega^*(BB_{i})$.
So  for $y_1y_2\in P(y)$, we see
\[ y_1y_2=v_1^{s_1+s_2}c(1)c(2)\quad with \ c(1)c(2)\in \Omega^*(BB_k)\cong BP^*(BT).\]
\end{proof}

\begin{proof}[The proof of Theorem 1.2 without
using Chevalley's theorem.]
Each simply connect compact Lie group is a product of
simple Lie groups. For each simple group except for
$E_7,E_8$ with $p=2$, we will show
the existence of $x_k$ with $Q_1x_k=y_i$ for each 
generator $y_i\in P(y)$ in $\S 7- \S 9$.
(For example $Q_1(x_1)=y_1$ for all simply connected
simple Lie groups.)
For groups $E_7,E_8$ and $p=2$,  we will see that 
we can take $b_i=v_1^sy_i$ $mod(2)$, for $s=1$ or $2$
in $\S 10, \S 11$.
\end{proof}

Here we consider the following (modified) AHss (for topological spaces)
\[ E_2^{*,*'}\cong H^*(BT;K^{*'}(G))\Longrightarrow
  K^*(G/T).\]
\begin{cor} For the above spectral sequence
$E_r^{*,*'}$, we have the isomorphism
\[ E_{\infty}^{*,*'}\cong K^{*'}\otimes gr_{\gamma}^{*}(G/T)
\quad (e.g.,\  E_{\infty}^{*,0}\cong gr_{\gamma}^*(G/T)). \]
\end{cor}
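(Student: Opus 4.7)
The plan is to reduce the claim to the case $*'=0$ by Bott periodicity and then identify $E_\infty^{*,0}$ with $gr_\gamma^*(G/T)$ by comparing the spectral-sequence filtration $F_{SS}^*$ on $K^*(G/T)$ with the gamma filtration $F_\gamma^*$.

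For the Bott step, the coefficient ring $K^*$ is $2$-periodic with generator $B$, the group $K^{*'}(G)$ carries a natural $K^*$-module structure, and all differentials of this spectral sequence are $K^*$-linear. Multiplication by $B$ therefore gives isomorphisms $E_r^{*,*'-2}\cdot B\cong E_r^{*,*'}$ at every page, whence $E_\infty^{*,*'}\cong K^{*'}\otimes_{\bZ}E_\infty^{*,0}$. It suffices, then, to treat the case $*'=0$.

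For the row $*'=0$, the spectral sequence filtration is $F_{SS}^n K^*(G/T)=\ker(K^*(G/T)\to K^*(i^{-1}(BT^{n-1})))$, and it contains $i^*(F_{top}^n K^*(BT))$ since pullback commutes with restriction to subspaces. Because $H^*(BT)\cong S(t)$ is torsion-free and concentrated in even degrees, the AHss for $BT$ collapses; Lemma~4.4 then identifies $F_{top}^*K^*(BT)$ with $F_\gamma^*K^*(BT)$, the latter multiplicatively generated by Chern classes of line bundles. Karpenko's Lemma~3.2 applied to $h^*=K^*$, combined with the Chevalley--Panin identification $K^0_{alg}(\bar{\bF})\cong K^0_{top}(G/T)$ from Theorems~1.1 and~5.2, implies that the pullback $i^*:K^*(BT)\to K^*(G/T)$ is surjective; since $F_\gamma^*$ is multiplicatively generated and $i^*$ is a ring map, $F_\gamma^*K^*(G/T)=i^*(F_\gamma^*K^*(BT))\subseteq F_{SS}^*K^*(G/T)$. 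For the reverse inclusion, one lifts an element $x\in F_{SS}^n$ to some $\tilde{x}\in K^*(BT)$ by surjectivity, and adjusts $\tilde{x}$ by elements of $\ker(i^*)$ to arrange $\tilde{x}|_{BT^{n-1}}=0$ (i.e., $\tilde{x}\in F_{top}^n=F_\gamma^n$); pushing down yields $x\in F_\gamma^n K^*(G/T)$. Hence $F_{SS}^*=F_\gamma^*$, and so $E_\infty^{*,0}\cong gr_\gamma^*(G/T)$.

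The main obstacle is this last filtered-lifting step: the surjectivity of $i^*$ alone is not enough; one also needs compatibility of $\ker(i^*)$ with the skeletal filtration of $BT$, which rests on Karpenko's generation data (Lemma~3.2, Corollary~3.3) together with the product cellular structure on $i^{-1}(BT^{n-1})$. Once this is in place, the Bott-periodicity extension is automatic.
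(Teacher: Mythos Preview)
Your Bott-periodicity reduction and the inclusion $F_\gamma^*\subseteq F_{SS}^*$ are fine, and you are using the same essential input as the paper (surjectivity of $i^*:K^*(BT)\to K^*(G/T)$, which follows from Theorem~1.1 and Theorem~5.2). The difficulty is entirely in the reverse inclusion, and your argument for it has a genuine gap.

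You claim that, given $x\in F_{SS}^n$ and a lift $\tilde x\in K^*(BT)$, one can ``adjust $\tilde x$ by elements of $\ker(i^*)$ to arrange $\tilde x|_{BT^{n-1}}=0$''. Concretely this asks that $\tilde x|_{BT^{n-1}}$ lie in the image of the restriction $\ker(i^*)\to K^*(BT^{n-1})$. Nothing you cite provides this: the vanishing of $i^*(\tilde x)$ on $i^{-1}(BT^{n-1})$ does not force $\tilde x|_{BT^{n-1}}$ into that image unless you know something about the map $K^*(BT^{n-1})\to K^*(i^{-1}(BT^{n-1}))$, and Karpenko's Lemma~3.2/Corollary~3.3 are statements about generation of $CH^*(\bF)$, not about filtered lifting along skeleta of $BT$. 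The remark about the ``product cellular structure on $i^{-1}(BT^{n-1})$'' is not an argument.

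The paper avoids this issue by working on the associated graded rather than on the filtration. From the surjectivity of $i^*$ it concludes that $E_\infty^{*,*'}\cong K^{*'}\otimes S(t)/J$ for some ideal $J$: the classes $t_i$ sit in $E_\infty^{2,0}$ and generate, together with $K^*$, all of $E_\infty$. Then one invokes Atiyah's principle (p.~63 of \cite{At}): for \emph{any} multiplicative filtration on $K^*(X)$ whose associated graded is generated by Chern classes, that filtration agrees with the $\gamma$-filtration, because $F_\gamma$ is by definition the finest such filtration and a downward induction on the (finite) filtration gives the other inclusion. This is exactly the step you are trying to redo by hand; quoting Atiyah (or reproducing the short downward induction on $E_\infty$) closes the gap cleanly, whereas your skeleton-by-skeleton lifting does not.
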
 
\begin{proof}  When $\bF$ is versal,  $CH^*(\bF)$ is generated by elements in $CH^*(BT)$.
Since $K^*(\bF)\cong K^*(G/T)$, from Theorem 1.1,  $K^*(G/T)$is generated by
$K^*(BT)\cong K^*\otimes S(t)$.  Hence 
\[E_{\infty}^{*,*'}\cong K^{*'}\otimes S(t)/J \quad for\ some\ J.\]
By Atiyah (p. 63 in \cite{At}), if there is a filtration and the associated spectral sequence (which may not be AHss defined from skeletons) such that the $E_{\infty}$ is generated by Chern classes, then it is isomorphic to  $K^*\otimes gr_{\gamma}(G/T)$. 
\end{proof}

However, the study of the above spectral sequence seems
not so easy.  We use the following lemma,
to seek $gr_{\gamma}(G/T)$.
\begin{lemma}  Suppose that there are  a filtration on $K^*(R(G))$ and a $\bZ$-module $B$
with    $B/p\subset CH^*(R(\bG))/p$
such that
\[(1)\quad grK^*(R(\bG))\cong K^*\otimes B.\]
Moreover suppose that for any torsion  element $b\in B$ with $b\not =0\in B/p$, if there  exists $r,s>0$ such that in $K^*(R(\bG))$
\[ (2)\quad p^rb=v_1^sb'\quad for \ some\ b'\in CH^*(R(\bG_k)),\]
then $b'\in B$.
With these supposition, we have  $B\cong gr_{\gamma}(R(\bG))$.
\end{lemma}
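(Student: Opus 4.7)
The strategy is to match $B$ with $gr_{\gamma}(R(\bG))$ via two identifications: first replacing the gamma filtration by the geometric filtration, then identifying the geometric graded with $B$ using the given hypotheses.

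For the first identification, I would invoke Atiyah's criterion (Lemma 4.4). Since $\bF$ is versal, Karpenko's Lemma 3.2 shows that $CH^*(\bF)$ is a quotient of $CH^*(BB_k) \cong S(t)$, whose generators $t_i$ are first Chern classes. Because products of first Chern classes are expressible through Chern classes of sums of line bundles, $CH^*(\bF)$ is multiplicatively generated by Chern classes, and the same persists for the motivic summand $CH^*(R(\bG))$. Lemma 4.4 then yields $F^{2*}_{\gamma} = F^{2*}_{geo}$ on $R(\bG)$, hence $gr_{\gamma}(R(\bG)) \cong gr_{geo}(R(\bG))$.

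For the second identification, Lemma 4.2 computes $gr_{geo}(R(\bG)) \cong AE^{2*,*,0}_{\infty} \cong CH^*(R(\bG))/I$, where $I = \bigcup_r \Img(d_r)$ in the motivic AHss converging to $K^*(R(\bG))$. Hypothesis (1) together with the inclusion $B/p \subset CH^*(R(\bG))/p$ produces a natural graded comparison map $B \to CH^*(R(\bG))/I$. On the torsion-free part this is an isomorphism by Lemma 5.1 and the collapse of the rational AHss on flag varieties. On the torsion part, if $b \in B$ is nonzero modulo $p$, then by Corollary 2.2 and the Sullivan exact sequence an AHss differential hitting $b$ corresponds to a relation $p^r b = v_1^s b'$ in $K^*(R(\bG))$; condition (2) ensures $b' \in B$, so every relation imposed on $CH^*(R(\bG))/I$ is already present inside $B$.

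The main obstacle will be the precise translation between higher AHss differentials and $v_1$-divisibility in the Morava $K$-theory: one has to argue that condition (2) is strong enough to capture \emph{all} differentials, not merely those at the $E_2$-page level. Combining Lemma 2.1 with its integral version, each differential $d_r$ at a torsion class should unfold into a relation of the form $p^r b = v_1^s b'$, and conversely every such relation in $K^*(R(\bG))$ must arise from a differential. Once this dictionary is established, the comparison map becomes bijective, yielding the desired isomorphism $B \cong gr_{\gamma}(R(\bG))$.
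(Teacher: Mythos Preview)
Your approach is considerably more elaborate than what the paper does, and it introduces a gap that the paper's argument avoids entirely.

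The paper's proof is short and direct. It already has from Theorem~1.1 that $gr_\gamma(R(\bG))\cong CH^*(R(\bG))/I$ for some ideal $I$, so there is no need to re-derive the identification $F_\gamma=F_{geo}$ via Atiyah's criterion. The paper then argues in two lines. For injectivity of $B\to CH^*(R(\bG))/I$: the elements of $B$ are nonzero in $grK^*(R(\bG))$ by hypothesis~(1), so none of them can lie in $I$. For surjectivity: take $b'\in gr_\gamma(R(\bG))$ with $b'\notin B$. Since $B$ is a $K^*$-module basis of $K^*(R(\bG))$, one can write $b'=v_1^{-s}\tilde b$ in $K^*(R(\bG))$ with $\tilde b\in B$ and $s>0$; write $\tilde b=p^r b$ with $b\in B$. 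Then $p^r b=v_1^s b'$ with $b'\in CH^*(R(\bG))$, and hypothesis~(2) forces $b'\in B$, a contradiction. (One checks $r>0$ and $b$ is torsion since $p^r b=0$ in $CH^*(R(\bG))$.) No analysis of differentials is needed.

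Your proposed route, by contrast, tries to unpack $I=\bigcup_r \Img(d_r)$ by translating each motivic AHss differential into a relation $p^r b=v_1^s b'$ via the Sullivan exact sequence and Corollary~2.2. There are two problems. First, Lemma~2.1 and Corollary~2.2 concern the \emph{Serre} spectral sequence of the fibration $G\to G/T\to BT$ and relate transgressions to $Q_n$-operations; they are not statements about differentials in the motivic AHss converging to $K^*(R(\bG))$, so invoking them here is a category error. Second, even setting that aside, the dictionary you want between arbitrary higher differentials $d_r$ and $v_1$-divisibility relations is exactly the obstacle you flag, and it is genuinely hard to make precise in general. The paper sidesteps this completely: rather than identifying $I$, it uses only that $B$ is a $K^*$-basis and argues element-by-element inside $K^*(R(\bG))$. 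That is the missing idea in your proposal.
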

\begin{proof}
Recall $gr_{\gamma}(R(\bG))\cong
CH^*(R(\bG))/I$ for some ideal $I$.
Since $B\subset grK^*(R(\bG))$, we see $B\cap I=0$.

Suppose that $b'\in gr_{\gamma}(R(\bG))$ but $b'\not \in B$.
Since $B$ generates $K^*(R(\bG))$ as a $K^*$-module,
there is $\tilde b\in B$ such that $b'=v_1^{-s}\tilde b$ for $s>0$ (otherwise $b'\in B$)
in $K^*(R(\bG))$ (but not in $grK^*(R(\bG))$). 
Let us write $\tilde b=p^rb$ in $K^*(R(\bG))$.
It contradicts to $(2)$.
Note that  in (2), $r>0$, otherwise $b=0\in CH^*(R(\bG))$.

  Since $p^rb=0\in CH^*(R(\bG))$,
 we only consider  a torsion element for $b$.
\end{proof}

To seek $gr_{\gamma}(R(\bG))$, we first find
some graded algebra $gr(K^*(R(\bG))\cong K^*\otimes
B$ such that each element $0\not =\tilde b\in B$ is represented
by an element (possible zero) in $CH^*(R(\bG))$.  Since $\tilde b\not =0\in 
K^*(R(\bG))$, there is $r,s\ge 0$ such that 
\[ p^r\tilde b=v_1^sb', \quad with \ b'\not= 0\in CH^*(R(\bG))
\cong k^*(R(\bG))\otimes _{k^*}\bZ_{(p)}.\]
Replacing $p^r\tilde b$ by $b'$ in $B$, we may get $gr_{\gamma}(R(\bG))$.

\section{The orthogonal group $SO(m)$ and $p=2$}

 We  consider the
orthogonal groups $G=SO(m)$ and $p=2$
in this section.
The mod $2$-cohomology is written as ( see for example \cite{Mi-Tod}, \cite{Ni})
\[ grH^*(SO(m);\bZ/2)\cong \Lambda(x_1,x_2,...,x_{m-1}) \]
where $|x_i|=i$, and the multiplications are given by $x_s^2=x_{2s}$. (Note that  the suffix means its degree.)

For ease of argument,  we only consider in the case
$m=2\ell+1$ (the case $m=2\ell+2$ works similarly) so that
\[ H^*(G;\bZ/2)\cong P(y)\otimes \Lambda(x_1,x_3,...,x_{2\ell-1}) \]
\[ grP(y)/2\cong \Lambda(y_2,...,y_{2\ell}), \quad 
letting\ y_{2i}=x_{2i}\ \ (hence \ y_{4i}=y_{2i}^2).\]

The Steenrod operation is given as 
$Sq^k(x_i)= {i\choose k}(x_{i+k}).$
The $Q_i$-operations are given by Nishimoto \cite{Ni}
\[Q_nx_{2i-1}=y_{2i-2^{n+1}-2},\qquad Q_ny_{2i}=0.\]

It is well known that   the transgression 
$d_{2i}(x_{2i-1})=c_i$ is the  $i$-th elementary symmetric function
on $S(t)$. 
 Moreover we see that 
$Q_0(x_{2i-1})=y_{2i}$ in $H^*(G;\bZ/2)$.
In fact, the cohomology  $H^*(G/T)$ is computed 
completely by
Toda-Watanabe \cite{Tod-Wa}
\begin{thm} (\cite{Tod-Wa}) 
There are $y_{2i}\in H^*(G/T)$ for $1\le i\le \ell$
such that $\pi^*(y_{2i})=y_{2i}$ for $\pi: G\to G/T$, and that 
we  have an isomorphism
\[ H^*(G/T)\cong \bZ[t_i,y_{2i}]/(c_i-2y_{2i},J_{2i})\]
where $J_{2i}= 1/4(\sum_{j=0}^{2i}(-1)^jc_jc_{2i-j})$
letting $y_{2j}=0$ for $j>\ell$.
\end{thm}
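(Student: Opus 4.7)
The plan is to combine the rational computation of $H^*(G/T;\bQ)$ via Borel's theorem with the integral torsion-freeness of $H^*(G/T)$, extracting divisibility by $2$ from the mod-$2$ Serre spectral sequence of $G\to G/T \to BT$.

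Over $\bQ$, Borel gives $H^*(G/T;\bQ)\cong S(t)\otimes\bQ / (S(t)^{W,+})$. The Weyl group of $SO(2\ell+1)$ is $(\bZ/2)^\ell \rtimes S_\ell$, acting on $t_1,\ldots,t_\ell$ by sign changes and permutations, so the positive-degree invariants are generated by the Pontryagin-type classes $p_i=e_i(t_1^2,\ldots,t_\ell^2)$. Expanding
\[ \prod_j(1-t_jx)(1+t_jx)=\prod_j(1-t_j^2x^2) \]
yields $\sum_{j=0}^{2i}(-1)^j c_j c_{2i-j}=(-1)^i p_i$ in $S(t)$, so the Borel relations $p_i=0$ translate into $\sum_j(-1)^j c_j c_{2i-j}=0$ in $H^*(G/T;\bQ)$.

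Next I would invoke the torsion-freeness of $H^*(G/T;\bZ)$ (classical from Bott, consistent with the paper's running use of $K^0_{top}(G/T)$ free). Torsion-freeness is the hinge: divisibility mod $2$ lifts uniquely to divisibility in $\bZ$. From the mod-$2$ Serre spectral sequence the transgression sends $x_{2i-1}\mapsto \bar c_i$, so $c_i$ reduces to $0$ in $H^{2i}(G/T;\bZ/2)$; torsion-freeness then produces a unique $y_{2i}\in H^{2i}(G/T;\bZ)$ with $c_i=2y_{2i}$. Corollary 2.2 identifies the mod-$2$ reduction of $\pi^*(y_{2i})$ with $Q_0 x_{2i-1} = Sq^1 x_{2i-1} = x_{2i} = y_{2i}$ in $H^*(G;\bZ/2)$, matching the stated requirement. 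Substituting $c_j=2y_{2j}$ into the rational identity $\sum(-1)^j c_jc_{2i-j}=0$ gives $4J_{2i}=0$, and torsion-freeness promotes this to $J_{2i}=0$ integrally.

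Finally, the natural map
\[ \bZ[t_i,y_{2i}]/(c_i-2y_{2i},\,J_{2i}) \longrightarrow H^*(G/T) \]
is surjective by a graded Nakayama argument: it surjects mod $2$ since $H^*(G/T;\bZ/2)$ is multiplicatively generated by the images of $t_i$ and $y_{2i}$ (its $E_\infty$ from (2.2) being $P(y)/2\otimes S(t)/(\bar c_1,\ldots,\bar c_\ell)$), and the target is torsion-free. For injectivity I match Poincaré series: granted that $(c_i-2y_{2i},\,J_{2i})$ is a regular sequence in $\bZ[t_i,y_{2i}]$, the source has Poincaré series $\prod_i(1-q^{4i})/(1-q^2)^\ell$, which equals that of $\bQ[t_i]/(p_1,\ldots,p_\ell)\cong H^*(G/T;\bQ)$. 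The main obstacle I expect is verifying this regularity; I would filter by $y$-degree, so that the associated graded of the ideal recovers $(\bar c_1,\ldots,\bar c_\ell)$ (from $c_i-2y_{2i}$) together with the residues of $J_{2i}$, which is the transgressive regular sequence already described in (2.2).
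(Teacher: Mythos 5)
The paper itself offers no proof of this statement---it is quoted from Toda--Watanabe---so your argument has to stand on its own. Its first half does: Borel's rational description of $H^*(G/T;\bQ)$, the identity $\sum_{j=0}^{2i}(-1)^jc_jc_{2i-j}=(-1)^ip_i$ with $p_i=e_i(t_1^2,\dots,t_\ell^2)$, the divisibility $c_i=2y_{2i}$ (the transgression kills $c_i$ mod $2$, and torsion-freeness of $H^*(G/T)$ makes the half unique), the identification of $\pi^*(y_{2i})$ mod $2$ with $Q_0x_{2i-1}=y_{2i}$ via Corollary 2.2 in its $v_0=p$ form, and the vanishing of $J_{2i}$ because $H^*(G/T)$ injects into $H^*(G/T;\bQ)$ are all correct.

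The two concluding steps contain genuine gaps. For surjectivity, surjectivity mod $2$ together with torsion-freeness of the target does not imply integral surjectivity: the cokernel could be finite of odd order (compare $3\bZ\subset\bZ$). You also need the odd-primary (or rational) input---easy here, since for odd $p$ every exterior generator of $H^*(G;\bZ/p)$ transgresses, so $S(t)$ alone surjects onto $H^*(G/T;\bZ_{(p)})$---but it does not follow from the ``graded Nakayama'' phrase as written. More seriously, for injectivity: equality of Poincar\'e series of ranks plus surjectivity only shows that the kernel is the torsion subgroup of $\bZ[t_i,y_{2i}]/(c_i-2y_{2i},J_{2i})$, and regularity of the sequence over $\bZ$, which is all you grant yourself, does not exclude torsion ($2x$ is a regular element of $\bZ[x]$, yet $\bZ[x]/(2x)$ has $2$-torsion). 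The real content of the theorem---the heart of Toda--Watanabe's computation---is that the presented ring is $\bZ$-torsion-free, equivalently that the $2\ell$ relations remain a regular sequence after reduction modulo every prime. Mod an odd $p$ one eliminates $y_{2i}=c_i/2$ and gets $\bZ/p[t]/(p_1,\dots,p_\ell)$; mod $2$ the relations decouple into $(c_1,\dots,c_\ell)$ in the $t$'s and $y_{2i}^2+y_{4i}$ in the $y$'s (with $y_{2j}=0$ for $j>\ell$), whose quotient is visibly finite-dimensional, so they form a homogeneous system of parameters and hence a regular sequence in a polynomial ring. Your filtration-by-$y$-degree remark gestures at this but conflates the associated graded with reduction mod $2$ and does not control the extra elements an associated graded ideal can acquire; as written, the torsion-freeness of the source---exactly what injectivity requires---is not established.
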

By using Nishimoto's result for $Q_i$-operation, 
from Corollary 2.2, we have
\begin{cor} In $BP^*(G/T)/\II$, we have  
\[c_i= 2y_{2i}+\sum v_n(y(2i+2^{n+1}-2)) \]
for some $y(j)$ with $\pi^*(y(i))=y_{i}$.
\end{cor}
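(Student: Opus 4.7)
\medskip

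\noindent\textbf{Proof proposal for Corollary 6.2.}

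The plan is to apply Corollary 2.2 term by term to the transgressive generators $x_{2i-1}\in H^{2i-1}(G;\bZ/2)$. Recall from the discussion before Theorem 6.1 that the transgression in the Serre spectral sequence for $G\to G/T\to BT$ sends $x_{2i-1}$ to the $i$-th elementary symmetric function $c_i\in S(t)$; equivalently, $d_{2i}(x_{2i-1})=c_i\in H^*(BT)/2$. Thus $c_i$ is exactly of the form to which Corollary 2.2 applies (with the Remark after Lemma 2.1 that allows $i=0$, $v_0=2$).

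Next I would invoke Corollary 2.2 directly. It produces a lift of $c_i$ to $BP^*(BT)\cong BP^*\otimes S(t)$ such that, modulo $\II$,
\[
c_i\;=\;\sum_{n\ge 0} v_n\, y(n)\qquad\text{in }BP^*(G/T)/\II,
\]
where $y(n)\in H^*(G/T;\bZ/2)$ satisfies $\pi^*y(n)=Q_n x_{2i-1}$. It remains only to identify each $y(n)$ using Nishimoto's formulas for the Milnor operations on $H^*(SO(m);\bZ/2)$ recalled just above the statement, namely $Q_n x_{2i-1}=y_{2i+2^{n+1}-2}$ (noting $|Q_n|=2^{n+1}-1$, so the degrees match; terms with $2i+2^{n+1}-2>2\ell$ are zero). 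In particular, the $n=0$ term specializes to $v_0\,y(0)=2\,y_{2i}$ because $Q_0=Sq^1$ acts by $Sq^1(x_{2i-1})=y_{2i}$, and the resulting class $y_{2i}\in H^*(G/T)$ is the Toda--Watanabe generator from Theorem 6.1.

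Substituting these identifications into the expansion gives precisely
\[
c_i\;=\;2y_{2i}+\sum_{n\ge 1}v_n\,y(2i+2^{n+1}-2)\qquad\text{in }BP^*(G/T)/\II,
\]
with $\pi^*y(j)=y_j$, which is the claim. There is essentially no hard step here: the only substantive ingredients are Corollary 2.2 (already available) and Nishimoto's computation of $Q_n x_{2i-1}$ (already quoted); the main thing to be careful about is bookkeeping between the two indexing conventions (degree versus subscript) and making sure the $n=0$ contribution is read off using the Remark that allows $v_0=2$ in Lemma 2.1.
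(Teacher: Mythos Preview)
Your proposal is correct and follows exactly the paper's own route: the paper's justification for Corollary 6.2 is simply ``By using Nishimoto's result for $Q_i$-operation, from Corollary 2.2, we have'', and you have spelled out precisely that argument. The only comment is that Corollary 2.2 already includes the $n=0$ term in its sum, so you need not invoke the Remark after Lemma 2.1 separately; otherwise the bookkeeping is accurate (and you correctly used $Q_nx_{2i-1}=y_{2i+2^{n+1}-2}$, matching the degrees and the statement of the corollary).
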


We have $c_i^2=0$ in $CH^*(\bF)/2$ from
the natural inclusion $SO(2\ell+1)\to Sp(2\ell+1)$
(see \cite{Pe}, \cite{Ya6}) for the symplectic group
$Sp(2\ell+1)$.
Thus we have 
\begin{thm} (\cite{Pe}, \cite{Ya6}) 
Let $(G,p)=(SO(2\ell+1),2)$ and $\bF=\bG/B_k$
be versal.  
Then $CH^*(\bF)$ is torsion free, and 
\[ CH^*(\bF)/2\cong S(t)/(2,c_1^2,...,c_{\ell}^2),\quad
CH^*(R(\bG))/2\cong \Lambda(c_1,...,c_{\ell}).\]
\end{thm}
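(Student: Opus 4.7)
The goal is to verify the hypotheses of Lemma 3.5 with $A=\bZ/2[c_1,\ldots,c_\ell]$ and $f_i=c_i^2$, and to deduce torsion-freeness from a rank count. The first step is to establish
\[ c_i^2 = 0 \quad\text{in}\ CH^*(\bF)/2. \]
I would prove this by pulling back along a group embedding $SO(2\ell+1)\hookrightarrow Sp(N)$ that induces a map of versal flag varieties compatible with the Chern roots $t_i$. In the symplectic case, $H^*(Sp(n)/T;\bZ)\cong\bZ[t_1,\ldots,t_n]/(e_1(t^2),\ldots,e_n(t^2))$, and since $e_i(t)^2\equiv e_i(t^2)\pmod 2$ we get $c_i^2=0$ in the mod-$2$ Chow ring of the versal symplectic flag. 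Pulling back gives the desired vanishing in $CH^*(\bF)/2$.

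Given this vanishing, Corollary 3.3 together with $c_i^2=0$ yields a surjection
\[ \pi:\bZ/2[t_1,\ldots,t_\ell]/(c_1^2,\ldots,c_\ell^2)\twoheadrightarrow CH^*(\bF)/2. \]
A Poincar\'e-series computation in cohomological degrees $|t_i|=2$ gives
\[ \dim_{\bZ/2}\bZ/2[t]/(c_1^2,\ldots,c_\ell^2)=\prod_{i=1}^{\ell}\frac{1-q^{4i}}{1-q^2}\bigg|_{q=1}=\prod_{i=1}^{\ell}2i=2^{\ell}\ell!=|W|, \]
the Weyl group order. On the other hand, $CH^*(\bF)\otimes\bQ\cong CH^*(\bar{\bF})\otimes\bQ\cong H^{2*}(G/T;\bQ)$ has $\bQ$-dimension $|W|$ (Panin), so $\mathrm{rank}_{\bZ}\,CH^*(\bF)=|W|$ and hence $\dim_{\bZ/2}CH^*(\bF)/2\ge |W|$. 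Combined with the upper bound from $\pi$, equality holds; this forces $\pi$ to be an isomorphism and simultaneously rules out $2$-torsion in $CH^*(\bF)$.

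The description of $CH^*(R(\bG))/2$ follows from an analogous dimension argument. From Theorem 3.1 and Corollary 3.4 (versal, so $P'(y)=\bZ/2$) we have an additive decomposition
\[ CH^*(\bF)/2\cong CH^*(R(\bG))/2\otimes S(t)/(c_1,\ldots,c_\ell), \]
whose second factor has dimension $\ell!$, forcing $\dim_{\bZ/2} CH^*(R(\bG))/2=2^{\ell}$. Lemma 3.7, combined with the inherited vanishing $c_i^2=0$ in $CH^*(R(\bG))/2$ (since $R(\bG)$ is a motivic summand of $\bF$), yields a surjection $\Lambda(c_1,\ldots,c_\ell)\twoheadrightarrow CH^*(R(\bG))/2$, which is an isomorphism by matching dimensions. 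The main obstacle is the first step — the vanishing $c_i^2=0$ in $CH^*(\bF)/2$ — which requires the geometric input of the symplectic embedding and is not purely formal; everything else reduces to a clean dimension comparison using the additive decomposition of Theorem 3.1 and the rank of $CH^*(\bF)$ over $\bQ$.
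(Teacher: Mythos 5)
Your proof is correct and, as far as the key geometric input is concerned, coincides with the route the paper sketches: the paper's entire argument for Theorem~6.3 is the one-line observation that $c_i^2=0$ in $CH^*(\bF)/2$ via the inclusion $SO(2\ell+1)\to Sp(2\ell+1)$, after which it cites \cite{Pe} and \cite{Ya6}. What you do differently is actually supply the deduction the paper leaves implicit, and you do it by a clean rank/dimension comparison. The count checks out: $(c_1^2,\dots,c_\ell^2)$ is a regular sequence in $\bZ/2[t_1,\dots,t_\ell]$ because $c_i^2\equiv e_i(t_1^2,\dots,t_\ell^2)\pmod 2$ and the polynomial ring is free of rank $2^\ell$ over $\bZ/2[t_1^2,\dots,t_\ell^2]$; evaluating the Poincar\'e series at $q=1$ gives $\prod 2i=2^\ell\ell!=|W(B_\ell)|$, which matches $\mathrm{rank}_\bZ CH^*(\bF)=\dim_\bQ H^{*}(G/T;\bQ)$. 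Equality of the upper bound from the surjection and the lower bound from the rank forces the surjection to be an isomorphism and rules out $2$-torsion simultaneously, and then the motivic splitting of Theorem~3.1 with $P'(y)=\bZ/2$ gives $\dim CH^*(R(\bG))/2=2^\ell\ell!/\ell!=2^\ell$, matching $\dim\Lambda(c_1,\dots,c_\ell)$.

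One small point of phrasing: you invoke ``the inherited vanishing $c_i^2=0$ in $CH^*(R(\bG))/2$,'' but the paper's remark after Corollary~3.4 cautions that $CH^*(R(\bG))$ has no canonical ring structure, so that equation is not literally meaningful. What you actually need --- and what is certainly true --- is that the composite additive surjection $A=\bZ/2[c_1,\dots,c_\ell]\to CH^*(\bF)/2\to CH^*(R(\bG))/2$ kills the ideal $(c_1^2,\dots,c_\ell^2)$ already at the first stage (where the map is a ring map), hence factors through $\Lambda(c_1,\dots,c_\ell)$. With that rewording the step is airtight. Both you and the paper leave the compatibility of Chern roots under $SO(2\ell+1)\hookrightarrow Sp$ at the level of a citation, so on that crucial step you are at the same level of rigor as the text.
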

\begin{cor} We have 
$ gr_{\gamma}(\bF)\cong gr_{geo}(\bF)\cong CH^*(\bF).$
\end{cor}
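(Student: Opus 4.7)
The plan is to prove the two isomorphisms $gr_\gamma(\bF) \cong gr_{geo}(\bF)$ and $gr_{geo}(\bF) \cong CH^*(\bF)$ separately, exploiting the versality hypothesis for the first and the torsion-freeness from Theorem 6.3 for the second.

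First I would handle $gr_\gamma(\bF) \cong gr_{geo}(\bF)$ via Lemma 4.4. Since $\bF$ is versal, Corollary 3.3 shows that $CH^*(\bF)$ is multiplicatively generated by the elements $t_1,\ldots,t_\ell$ pulled back from $S(t) = CH^*(BT)$. Each $t_i$ is the first Chern class of a line bundle on $\bF$ (the pullback of the tautological line bundle on the corresponding factor of $BT$). By Lemma 4.2, $gr_{geo}^{2*}(\bF) = AE_\infty^{2*,*,0}(\bF)$ is a quotient of $CH^*(\bF)$, hence is itself multiplicatively generated by the images of these Chern classes. Lemma 4.4 then yields $F_\gamma^{2*}(\bF) = F_{geo}^{2*}(\bF)$, and passing to associated gradeds gives $gr_\gamma(\bF) \cong gr_{geo}(\bF)$.

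Next, for $gr_{geo}(\bF) \cong CH^*(\bF)$: by Lemma 4.2 we have a surjection $CH^*(\bF) \twoheadrightarrow gr_{geo}(\bF)$ whose kernel is some ideal $J = \cup_r\Img(d_r)$. Rationally, the Chern character identifies $K_{alg}^0(\bF)\otimes\bQ$ with $CH^*(\bF)\otimes\bQ$ and identifies the two filtrations, so $gr_{geo}(\bF)\otimes\bQ \cong CH^*(\bF)\otimes\bQ$, i.e.\ $J$ consists entirely of torsion elements. But Theorem 6.3 asserts that $CH^*(\bF)$ is torsion-free, so $J = 0$ and we conclude $gr_{geo}(\bF) \cong CH^*(\bF)$.

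There is essentially no serious obstacle here: once Theorem 6.3 is in hand, the corollary is a formal consequence of versality via Karpenko's result combined with the machinery of Section 4. The only point requiring a line of justification is the rational filtration-preserving identification, which is standard from the Chern character on smooth varieties and is in the same spirit as the rational argument used in the proof of Lemma 5.1.
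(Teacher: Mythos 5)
Your proof is correct and follows essentially the same route as the paper: the second isomorphism is exactly the paper's argument (that the differential images are torsion, hence zero by Theorem 6.3's torsion-freeness, with you supplying the rational Chern-character justification where the paper simply cites that this is known), and the first isomorphism via versality, Corollary 3.3, and Lemma 4.4 is the intended argument, which the paper leaves implicit.
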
 
\begin{proof} It is known  that the image $Im(d_r)$ of the differentials
of AHss are generated by
torsion elements.  Hence the ideal $I=\cup_rIm(d_r)=0$.
From Lemma 4.2, we see $gr_{geo}(\bF)\cong CH^*(\bF)/I\cong CH^*(\bF).$ 
\end{proof}

From Corollary 2.2,, we see
$ c_i=2y_i+v_1 y_{2+2}$ $mod (v_1^2).$
Here we consider
the $mod(2)$ theory version
$ res_{K/2}: K^*(\bF)/2 \to  K^*(\bar \bF)/2.$
\begin{lemma}
We have  $ Im(res_{K/2})\cong K^*\otimes \Lambda(y_{2i}|2i\ge 4).$  Hence $res_K$ is not surjective.
\end{lemma}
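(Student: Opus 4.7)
The plan is to combine versality with the expansion of Chern classes from Corollary 6.2, then perform a leading-term analysis on the $P(y)/2$-part of $K^*(\bar\bF)/2$. By the motivic decomposition (Theorem 3.1) of the versal $\bF$, the target $K^*(\bar\bF)/2$ splits off a Tate summand coming from $S(t)/(b)$ (trivially in the image of $K^*(BT)/2$); it therefore suffices to identify what lands in the $R(\bG)$-summand $K^*(\bar R(\bG))/2 \cong K^* \otimes P(y)/2$.

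First, by versality (Theorem 3.2 and Corollary 3.3), the composite $K^*(BT) \to K^*(\bF) \to K^*(\bar\bF)$ has the same image as $res_{K/2}$, so the relevant image in $K^* \otimes P(y)/2$ is the $K^*/2$-submodule generated by the projections of the elementary symmetric functions $c_1,\ldots,c_\ell \in K^*(BT)$. Corollary 6.2 gives
\[
c_i = 2y_{2i} + v_1 y_{2i+2} + \sum_{n \ge 2} v_n\, y(2i+2^{n+1}-2) \quad\text{in } BP^*(G/T)/\II,
\]
which in $K^* = \tilde K(1)^*$ mod 2 reduces to $c_i \equiv v_1 y_{2i+2}\pmod{v_1^2}$ for $1 \le i \le \ell-1$, and $c_\ell \equiv 0\pmod{v_1^2}$ (since $y_{2\ell+2}=0$). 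Multiplying by the unit $v_1^{-1}$ places $y_{2i+2} + O(v_1)$ in $Im(res_{K/2})$ for each $i = 1,\ldots,\ell-1$.

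Next I would run a descending induction on Chow degree, starting from the top generator $y_{2\ell}$ and working downward to $y_4$: at each step, the $O(v_1)$-correction to $v_1^{-1} c_{i-1} - y_{2i}$ is a $K^*/2$-polynomial in monomials of strictly higher total degree, already in $Im$ by the induction hypothesis. This yields $y_{2j} \in Im$ for $j=2,\ldots,\ell$, and multiplicative closure gives $K^*\otimes\Lambda(y_{2i}\mid 2i \ge 4) \subseteq Im(res_{K/2})$. For the reverse inclusion and non-surjectivity, I would apply Lemma 5.3: $y_2 \in Im$ would demand some $c(1) \in K^*(BT)$ projecting to $v_1^s y_2$ in $K^*\otimes P(y)/2$. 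Writing $P(y)/2 \cong \bZ/2[z_1,z_2,z_3,\ldots]$ with polynomial generators $z_1=y_2$, $z_2=y_6$, $z_3=y_{10},\ldots$, each $y_{2j}$ with $j \ge 2$ is either some $z_i$ with $i \ge 2$ or an even power of $z_1$, so the subalgebra they generate contains only even powers of $z_1$ and excludes $y_2=z_1$. Structurally, this reflects the failure of the Corollary 5.4 criterion at $y_2$: the smallest odd generator $x_1$ gives $Q_1 x_1 = y_4$, not $y_2$.

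The main obstacle is controlling the $O(v_1)$-corrections in the descending induction: Corollary 6.2 provides the expansion of $c_i$ only modulo $\II$ (i.e., to linear order in $(p,v_n)$), so a priori the higher-order $v_1^k$ terms could contribute $y_2$, derailing the induction. The cleanest remedy is to work in the $v_1$-adic associated graded of the connective theory $k^*(\bar R(\bG))/2$: there the leading-term identification $c_i \leftrightarrow v_1 y_{2i+2}$ is exact, the image is visibly $\bZ/2[v_1]$-generated by monomials in $y_{2j}$ with $j \ge 2$, and inverting $v_1$ lifts this to the stated formula $Im(res_{K/2}) \cong K^*\otimes\Lambda(y_{2i}\mid 2i \ge 4)$, establishing that $y_2$ is absent and hence that $res_K$ is not surjective.
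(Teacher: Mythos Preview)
Your approach and the paper's are essentially the same: both hinge on the relation $c_i \equiv v_1 y_{2i+2}$ (from Corollary~6.2 reduced mod~$2$) to fill the image with the $y_{2j}$ for $j\ge 2$, and then argue that $y_2$ is excluded. The difference is that the paper asserts $c_i = v_1 y_{2i+2}$ as an \emph{exact} equality in $K^*(\bar\bF)/2$, not merely modulo $v_1^2$; this is tantamount to redefining $y_{2j}\mapsto v_1^{-1}c_{j-1}$ for $j\ge 2$, a unipotent (hence invertible) change of exterior generators. With that exact relation, every monomial $c_{i_1}\cdots c_{i_s}$ maps to $v_1^s y_{2i_1+2}\cdots y_{2i_s+2}\in K^*/2\otimes\Lambda(y_{2j}\mid j\ge 2)$, so $Im(res_{K/2})$ lies inside this $K^*/2$-submodule; the exclusion of $y_2 y$ then follows from the splitting $P(y)/2=\Lambda(y_{2j}\mid j\ge 2)\oplus y_2\cdot\Lambda(y_{2j}\mid j\ge 2)$ and $K^*/2$-freeness.

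Your sketch has a gap precisely at this reverse inclusion. You argue that $y_2=z_1$ lies outside the subalgebra generated by the $y_{2j}$ with $j\ge 2$, but that only excludes $y_2$ from $Im$ once you know $Im\subseteq K^*/2\otimes\Lambda(y_{2j}\mid j\ge 2)$, which you have not shown. The $v_1$-adic associated-graded remedy you propose does not close this on its own: $gr(Im)$ can be strictly larger than the subalgebra generated by the $gr(c_i)$, for instance because $gr(c_\ell)=v_1 y_{2\ell+2}=0$ (so the true leading term of $c_\ell$ is unconstrained by your leading-term computation) and, more generally, whenever a product of leading terms vanishes. Equivalently, your descending induction for $\supseteq$ stalls because the $O(v_1)$ tail of $v_1^{-1}c_{j-1}-y_{2j}$ may contain monomials with a factor of $y_2$, which the induction hypothesis says nothing about. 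The clean fix is the one implicit in the paper: set $y_{2j}':=v_1^{-1}c_{j-1}$ for $2\le j\le\ell$ and check that $\{y_2,y_4',\dots,y_{2\ell}'\}$ is again an exterior generating set (this is the content of your associated-graded observation); then the subalgebra generated by $c_1,\dots,c_{\ell-1}$ is \emph{exactly} $K^*/2\otimes\Lambda(y_4',\dots,y_{2\ell}')$, and the freeness argument excludes $y_2$ directly.
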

\begin{proof}
In $mod(2)$, we have $c_i=v_1y_{2i+2}$.  Hence for all
$i\ge 2$, we have  $y_{2i+2}\in Im(res_{K/2})$. 
Suppose $y_2y\in Im(res_{K/2})$ for $0\not =y\in \Lambda(y_{2i}|i\ge 2)$.  Then there is  $c\in \Lambda(c_i)$
such that $c=v^sy_2y$.
Since $c_i=v_1y_{2i+2}$, 
we can write $c=v^ty'$ for $y'\in \Lambda(y_{2i}|i\ge 2)$.
Hence $v^t(y'-v_1^{s-t}y_2y)=0$ in $K^*(\bar \bF)/2=K^*/2\otimes
\Lambda(y_{2i})$, which is $K^*/2$-free.
and  this is a contradiction.
\end{proof}
In the next section, we will compute the case 
$(G',p)=(Spin(2\ell+1),2)$.
It is well known that
$ G/T\cong G'/T'$ for the maximal torus $T'$ of the spin group.
Hence we see
\begin{lemma}  Let $\bF=\bG/B_k$ and $\bF'=\bG'/B_k'$
are versal.  Then
\[ gr_{\gamma}(\bF)\not \cong gr_{\gamma}(\bar \bF)\cong 
 gr_{\gamma}(\bar \bF')\cong gr_{\gamma}(\bF').\]
\end{lemma}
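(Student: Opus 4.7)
The statement is a chain consisting of one non-isomorphism and two isomorphisms, which I will handle in order.

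\emph{The isomorphisms $gr_{\gamma}(\bar\bF) \cong gr_{\gamma}(\bar\bF')$ and $gr_{\gamma}(\bar\bF') \cong gr_{\gamma}(\bF')$.}
The central cover $\pi : G' = Spin(2\ell+1) \twoheadrightarrow SO(2\ell+1) = G$ has kernel $\bZ/2 \subset T'$, so it descends to a diffeomorphism $G'/T' \to G/T$. Consequently $K_{top}^0(G/T) \cong K_{top}^0(G'/T')$ as filtered rings under the $\gamma$-filtration, whence $gr_{\gamma}^{top}(G/T) \cong gr_{\gamma}^{top}(G'/T')$. The split varieties $\bar\bF = G_k/B_k$ and $\bar\bF' = G'_k/B'_k$ are cellular (Bruhat decomposition), so the algebraic AHss collapses and the realization map gives $K_{alg}^0(\bar\bF) \cong K_{top}^0(G/T)$ (similarly for $\bar\bF'$) compatibly with the $\gamma$-filtrations, since in both cases the filtration is generated by Chern classes of line bundles pulled back from the classifying spaces. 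This yields $gr_{\gamma}(\bar\bF) \cong gr_{\gamma}(\bar\bF')$. Since $G'=Spin(2\ell+1)$ is simply connected, Theorem 1.1 applies to the versal $\bF'$ and gives $gr_{\gamma}(\bF') \cong gr_{\gamma}^{top}(G'/T')$; combined with the above, this gives $gr_{\gamma}(\bar\bF') \cong gr_{\gamma}(\bF')$.

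\emph{The non-isomorphism $gr_{\gamma}(\bF) \not\cong gr_{\gamma}(\bar\bF)$.}
By Corollary 6.4, $gr_{\gamma}(\bF) \cong CH^*(\bF)$, so Theorem 6.3 gives $gr_{\gamma}(\bF)/2 \cong S(t)/(2, c_1^2, \ldots, c_\ell^2)$, which by Corollary 3.3 is multiplicatively generated as an $\bF_2$-algebra by $t_1, \ldots, t_\ell$ in $\gamma$-degree $1$. Thus every indecomposable generator of $gr_{\gamma}(\bF)/2$ lies in degree $1$. On the other side, Theorem 3.2 says $K^*(\bF)$ is generated as a $K^*$-algebra by the image of $K^*(BT)$, so $\Img(res_K) \subset K^*(\bar\bF)$ is precisely the subring generated by line bundles pulled back from $BT$. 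Lemma 6.5 shows that $y_2 \in K^*(\bar\bF)/2$ lies outside $\Img(res_{K/2})$; that is, $y_2$ is a K-theoretic class on $\bar\bF$ which is not generated by line bundles. By Atiyah's identification (Lemma 4.4), $gr_{\gamma}(\bar\bF)$ is multiplicatively generated by Chern classes of K-theory classes, but since the $y_2$-class cannot be obtained from line bundles, at least one Chern class $c_i(y_2) \in gr_{\gamma}^i(\bar\bF)/2$ with $i \ge 2$ must be an indecomposable generator. Hence $gr_{\gamma}(\bar\bF)/2$ has an indecomposable generator of degree $\ge 2$, which $gr_{\gamma}(\bF)/2$ does not, so the two $\bF_2$-algebras are not isomorphic.

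\emph{Main obstacle.} The delicate point is the non-isomorphism, because $K^*(\bF) \hookrightarrow K^*(\bar\bF)$ is a rank-preserving injection of free $\bZ$-modules (rank $|W|$), so dimension counts over $\bF_2$ do not distinguish the two associated gradeds. The argument is intrinsically ring-theoretic and depends on verifying that some Chern class of a lift of $y_2$ produces a genuinely new indecomposable generator of $gr_{\gamma}(\bar\bF)/2$ in $\gamma$-degree $\ge 2$, rather than a polynomial in line-bundle Chern classes. Making this step rigorous will require a careful combination of Atiyah's theorem with the concrete image description in Lemma 6.5.
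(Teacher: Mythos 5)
Your handling of the two isomorphisms is essentially the paper's argument, just spelled out in more detail (the paper simply cites that $\bF'$ is versal and that $res_K$ is an isomorphism for spin groups via Lemma 7.3/Theorem 1.1).

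The non-isomorphism step, however, has a real gap. You argue that every indecomposable generator of $gr_{\gamma}(\bF)/2$ lies in degree $1$, while $gr_{\gamma}(\bar\bF)/2$ has an indecomposable generator in degree $\ge 2$ coming from a Chern class of a lift of $y_2$. The second claim is false. By the very isomorphisms you have just established, $gr_{\gamma}(\bar\bF)\cong gr_{\gamma}(\bF')$, and since $\bF'$ is a versal flag for the simply connected group $Spin(2\ell+1)$, Corollary 3.3 (applied to $\bF'$) shows that $gr_{\gamma}(\bF')$, hence $gr_{\gamma}(\bar\bF)$, is multiplicatively generated by degree-$1$ elements of $S(t')$. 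Concretely, the class $y_2$ that you isolate is \emph{already} a class of Chow degree $1$: by Lemma 7.1, $\pi^*(y_2)=c_1\in S(t')^1$, so under the identification with the spin side it is simply $c_1(\eta)$ for a line bundle coming from $BT'$, an indecomposable generator in degree $1$, not $\ge 2$. The fact that $y_2$ does not lie in the image of $S(t)$ (for the $SO$ torus) has nothing to do with the presence of a new generator in degree $\ge 2$; $gr_{\gamma}(\bF)/2$ and $gr_{\gamma}(\bar\bF)/2$ are \emph{both} generated in degree $1$, and in fact their degree-$1$ pieces have the same $\bF_2$-dimension $\ell$, so no argument by generator degrees can separate them.

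The paper's actual argument (stated in the remark following the lemma) goes through the ring relations rather than the generators: by Theorem 6.3 and Corollary 6.4, $gr_{\gamma}(\bF)/2\cong S(t)/(2,c_1^2,\dots,c_\ell^2)$, in which $c_2c_3\neq 0$; whereas in $gr_{\gamma}(\bF')/2\cong gr_{\gamma}(\bar\bF)/2$ one has $c_2'c_3'=0$ (Theorem 7.8 for $Spin(11)$, and analogously for the other small ranks). This gives different Poincar\'e series for the two graded $\bF_2$-algebras, which is what actually forces $gr_{\gamma}(\bF)\not\cong gr_{\gamma}(\bar\bF)$. Any repair of your argument would need to exhibit a concrete ring-theoretic invariant that differs (vanishing of a specific product, or a dimension count of a graded piece), rather than a statement about the degrees of generators, which agree on both sides.
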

\begin{proof} The last two isomorphisms follow from
the facts that $\bF'$ is versal and  $res_K$ is isomorphic for spin groups
(see Lemma 6.3 below).
\end{proof}
Note that $c_2c_3\not =0$ in $gr_{\gamma}(\bF)$ but
it is zero in $gr_{\gamma}^*(\bF')$ from Theorem 7.8
for $Spin(n)$ when  $n\le 11$.
Hence the   restriction map
$ res_{gr_{\gamma}}:gr^*_{\gamma}(\bF)\to gr_{\gamma}(\bar \bF)$
is not injective,  while $res_K$ is injective.

\section{The spin group $Spin(2\ell+1)$ and $p=2$}

Throughout this section, let $p=2$,  $G=SO(2\ell+1)$
and $G'=Spin(2\ell+1)$.  By definition, we have the 
$2$ covering $\pi:G'\to G$.
It is well known that 
$\pi^*:   H^*(G/T)\cong H^*(G'/T')$.
  
Let $2^t\le \ell < 2^{t+1}$, i.e. $t=[log_2\ell]$.
The mod $2$ cohomology is
\[ H^*(G';\bZ/2)\cong H^*(G;\bZ/2)/(x_1,y_1)\otimes
\Lambda(z) \]
\[ \cong P(y)'\otimes \Lambda(x_3,x_5,...,x_{2\ell-1})\otimes \Lambda(z),\quad |z|=2^{t+2}-1\]
where
$P(y)\cong \bZ/2[y_2]/(y_2^{2^{t+1}})\otimes P(y)'$.  
(Here $d_{2^{t+2}}(z)=y^{2^{t+1}}$ for $0\not =y\in H^2(B\bZ/2;\bZ/2)$ in the spectral sequence induced from
the fibering $G'\to G\to B\bZ/2$.)
Hence
\[ grP(y)'\cong \otimes _{2i\not =2^j}\Lambda(y_{2i})\cong
\Lambda(y_6,y_{10},y_{12},...,y_{2\ell}).\]
The $Q_i$ operation for $z$ is given by Nishimoto 
\cite{Ni}
\[ Q_0(z)=\sum _{i+j=2^{t+1},i<j}y_{2i}y_{2j}, \quad 
 Q_n(z)=\sum _{i+j=2^{t+1}+2^{n+1}-2,i<j}y_{2i}y_{2j}\ \ for\ n\ge 1.\]

We know that 
\[ grH^*(G/T)/2\cong P(y)'\otimes \bZ[y_2]/(y_2^{2^{t+1}})\otimes S(t)/(2,c_1,c_2,...,c_{\ell}) \]
\[ grH^*(G'/T')/2\cong P(y)'\otimes S(t')/(2,c_2',.....,c_{\ell}',c_1^{2^{t+1}}).\]
Here $c_i'=\pi^*(c_i)$ and $d_{2^{t+2}}(z)=c_1^{2^{t+1}}$ in the spectral sequence
converging $H^*(G'/T')$.)
These are isomorphic, in  particular, we have
\begin{lemma}  The element $\pi^*(y_2)=c_1\in S(t')$
and $\pi^*(t_j)=c_1+t_j$ for $1\le j\le \ell$.
\end{lemma}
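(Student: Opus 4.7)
The plan is to use the Toda--Watanabe relation $c_1^{SO}=2y_2$ in $H^*(G/T)$ together with the observation that $H^2(G'/T')\cong H^2(BT')$. This latter isomorphism holds because all transgression relations in the spectral sequence for $G'\to G'/T'\to BT'$ --- namely $c_2',\dots,c_\ell'$ and $c_1^{2^{t+1}}$ --- lie in degrees $\ge 4$, so no relation is imposed on degree $2$. Consequently, pullback along the isomorphism $G'/T'\to G/T$ carries $y_2\in H^2(G/T)$ into $S(t')^{(2)}\subset H^*(G'/T')$.

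Applying $\pi^*$ to $c_1^{SO}=2y_2$ yields $2\pi^*(y_2)=\sum_j\pi^*(t_j)$ inside the torsion-free $\bZ$-module $S(t')^{(2)}$, so $\pi^*(y_2)$ is characterized as the unique half of $\sum_j\pi^*(t_j)$. Identifying $H^2(BT')$ with the weight lattice $P$ of $B_\ell$, the sum $\sum_j\pi^*(t_j)$ corresponds to $e_1+\cdots+e_\ell=2s$, where $s=\tfrac12(e_1+\cdots+e_\ell)$ is the spin weight (integral on $P$ but not on the root lattice $Q$). Hence $\pi^*(y_2)=s=:c_1$, proving (a). For (b), define the Spin-side generators $t_j\in S(t')$ (for $1\le j\le\ell$) by $t_j:=\pi^*(t_j^{SO})-c_1$, corresponding to the characters $e_j-s\in P$; then $\pi^*(t_j^{SO})=c_1+t_j$ holds by construction. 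The set $\{c_1,t_1,\dots,t_{\ell-1}\}$ forms a $\bZ$-basis of $H^2(BT')$, with $t_\ell=(2-\ell)c_1-(t_1+\cdots+t_{\ell-1})$ arising from the relation $\sum e_j=2s$.

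The main point that requires care is the consistency of this $c_1:=\pi^*(y_2)$ with the $c_1\in S(t')$ named earlier, namely the one whose $2^{t+1}$-st power equals the transgression of $z\in H^*(G';\bZ/2)$. This is verified by tracing the fibration $G'\to G\to B\bZ/2$: the class $z$ pulls back from the generator of $H^{2^{t+2}-1}(B\bZ/2;\bZ/2)$ whose transgression is the $2^{t+1}$-st power of the degree-$2$ generator, and the central inclusion $\bZ/2\hookrightarrow T'$ sends the degree-$2$ generator of $H^*(B\bZ/2;\bZ/2)$ to the mod-$2$ reduction of the spin class $s$, which by part (a) is $c_1$.
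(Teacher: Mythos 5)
Your proof is correct, and it supplies an argument the paper essentially omits: Lemma 7.1 is asserted there as an immediate consequence of the displayed isomorphism $grH^*(G/T)/2\cong grH^*(G'/T')/2$, with no lattice-level justification. Your route --- the Toda--Watanabe relation $c_1=2y_2$, the identification $H^2(G'/T')\cong H^2(BT')$ (no transgressive relations below degree $4$, $G'$ simply connected), and the observation that the unique half of $\sum_j\pi^*(t_j)=e_1+\cdots+e_\ell$ in the weight lattice of $Spin(2\ell+1)$ is the spin weight $s$ --- is the standard way to pin down $\pi^*(y_2)=c_1$ and $\pi^*(t_j)=c_1+t_j$ with $t_j=e_j-s$, and your remark that $\{c_1,t_1,\dots,t_{\ell-1}\}$ is a basis with $t_\ell=(2-\ell)c_1-(t_1+\cdots+t_{\ell-1})$ clarifies a point the paper glosses over, since the $\ell$ classes $e_j-s$ alone are not a basis. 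Two minor caveats on the consistency step: the induced map runs the other way (it is the restriction $H^2(BT';\bZ/2)\to H^2(B\bZ/2;\bZ/2)$ that kills the $e_j$ and sends $s$ to the generator), and naturality of the transgression for the comparison of the fibrations $G'\to G\to B\bZ/2$ and $G'\to G'/T'\to BT'$ determines $d_{2^{t+2}}(z)$ only modulo the kernel of that restriction and earlier differentials; since the paper introduces $c_1$ solely through $d_{2^{t+2}}(z)=c_1^{2^{t+1}}$, the cleanest reading --- which your proof effectively adopts --- is to set $c_1:=s=\pi^*(y_2)$ and view the fibration comparison as verifying compatibility with that property. Neither point affects the validity of your argument.
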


Take $k$ such that $\bG$ is a versal $G_k$-torsor so that
$\bG'_k$ is also a versal $G_k'$-torsor.  Let us write
$\bF=\bG/B_k$ and $\bF'=\bG'/B_k'$.  Then
\[ CH^*(\bar R(\bG'))/2\cong P(y)'/2,\quad and \quad
    CH^*(\bar R(\bG))/2\cong P(y)/2.\]
The Chow ring $CH^*(R(\bG'))/2$ is not computed yet
(for general $\ell$),
while we have the following lemmas.
\begin{lemma}  
We have a surjection 
\[  \Lambda(c_2',...,c_{\ell}', c_1^{2^{t+1}})\stackrel{ }{\twoheadrightarrow}
CH^*(R(\bG'))/2.\] 
\end{lemma}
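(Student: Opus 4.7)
The plan is to apply Lemma 3.7 to the versal Spin torsor $\bG'$, identify the transgression elements as the generators, and show that the squares of these generators vanish in $CH^*(R(\bG'))/2$, so that the natural polynomial map factors through the exterior algebra.

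First I would identify the transgressions in the Serre spectral sequence $H^*(BT';H^*(G';\bZ/2))\Rightarrow H^*(G'/T';\bZ/2)$. From the cohomology structure
\[H^*(G';\bZ/2) \cong P(y)'\otimes \Lambda(x_3,x_5,\ldots,x_{2\ell-1})\otimes \Lambda(z)\]
established earlier in this section, the transgressive odd generators are $x_3,\ldots,x_{2\ell-1}$ (with transgressions $c_2',\ldots,c_\ell'\in S(t')/2$) and $z$, whose transgression is $c_1^{2^{t+1}}$ (using $d_{2^{t+2}}(z)=y_2^{2^{t+1}}$ in the spectral sequence for $G'\to G\to B\bZ/2$ and Lemma 7.1 giving $\pi^*(y_2)=c_1$). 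Thus $(c_2',\ldots,c_\ell',c_1^{2^{t+1}})$ is the regular sequence in $S(t')/2$.

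Next, using the filtration construction immediately preceding Lemma 3.6 with $A=\bZ/2[c_2',\ldots,c_\ell',c_1^{2^{t+1}}]$, one has
\[gr\,S(t')/2 \cong A \otimes S(t')/(c_2',\ldots,c_\ell',c_1^{2^{t+1}}).\]
Since $\bG'$ is versal, Lemma 3.7 gives a surjection $A_N \twoheadrightarrow CH^*(R(\bG'))/2$ with $N=|y_{top}|$, where $y_{top}$ is the top monomial of $P(y)'$. This already shows that $CH^*(R(\bG'))/2$ is generated additively by products of $c_2',\ldots,c_\ell',c_1^{2^{t+1}}$.

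To upgrade this to a surjection from the exterior algebra, I must verify that $(c_i')^2=0$ for $i\geq 2$ and $(c_1^{2^{t+1}})^2=c_1^{2^{t+2}}=0$ in $CH^*(R(\bG'))/2$. Many such squares vanish automatically since their degrees ($4i$ and $2^{t+3}$ respectively) exceed $N$; for the remaining ones (which appear for larger $\ell$), I would combine Corollary 2.2 --- which provides the lift $c_i' \equiv v_1 y_{2i+2}\pmod{v_1^2}$ in $k(1)^*(G'/T')$ --- with the fact that $y_{2i+2}^2=0$ in $CH^*(\bar R(\bG'))/2 \cong P(y)'/2$ since the latter is exterior in its $y_{2i}$ generators. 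This forces $(c_i')^2$ to lie in $v_1^3\cdot(\cdots)$, and hence to vanish at the Chow level.

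The main obstacle is rigorously passing from the $K$-theoretic vanishing $(c_i')^2\equiv 0 \pmod{v_1^3}$ to a genuine vanishing in $CH^*(R(\bG'))/2$, because this mixes different Chow degrees through the Bott element. A cleaner alternative would be to pull back the $SO$ relation $c_i^2=0$ from Theorem 6.3 along the covering $\pi: Spin\to SO$ using $\pi^*(c_i)=c_i'$, but this requires a compatibility argument between the versal Spin torsor and the (not necessarily versal) $SO$ torsor obtained by applying $\pi$, which is not immediate and represents the principal technical difficulty of the proof.
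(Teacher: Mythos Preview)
The paper states Lemma 7.2 without any proof, so there is nothing to compare against directly; I can only evaluate your outline on its own merits and against what the paper's framework makes available.

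Your identification of the transgressions and your appeal to Lemma 3.7 are both correct: versality gives a surjection $A_N\twoheadrightarrow CH^*(R(\bG'))/2$ with $N=|y_{top}'|$, where $A$ is the polynomial ring on $c_2',\dots,c_\ell',c_1^{2^{t+1}}$. The remaining issue---passing from $A_N$ to the exterior algebra---is exactly where your proposal becomes incomplete, and you have diagnosed this honestly. Your degree argument (``many such squares vanish automatically since their degrees exceed $N$'') handles only very small $\ell$: already for $Spin(11)$ one has $N=16$ while $|(c_2')^2|=8$, and for larger $\ell$ the gap widens dramatically since $N$ grows quadratically in $\ell$. Your $k(1)$-theoretic argument, as you note, does not descend to the Chow ring.

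Your second route via $SO$ is the one most in line with the paper's style, and your stated obstacle is less serious than you suggest. The relation $c_i^2=0$ in $CH^*(\bG/B_k)/2$ for $SO(2\ell+1)$, as invoked before Theorem 6.3 via $SO\hookrightarrow Sp$, does not actually use versality of $\bG$: it comes from the triviality of every $Sp$-torsor, which holds over any base. Since $\bG'/B_k'\cong(\pi_*\bG')/B_k$ as varieties (using $G'/T'\cong G/T$), the relation $(c_i')^2=0$ in $CH^*(\bF')/2$ follows by pullback along $\pi^*$, with no need for $\pi_*\bG'$ to be versal. This disposes of $(c_i')^2$ for $2\le i\le\ell$. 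For $(c_1^{2^{t+1}})^2=c_1^{2^{t+2}}$ you still need a separate argument; one option is to note that $c_1=\pi^*(y_2)$ (Lemma 7.1) and that $c_1^2=\pi^*(c_1)$ in $S(t')$, so $c_1^{2^{t+2}}=\pi^*(c_1^{2^{t+1}})$ is again governed by the $SO$ relations.

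Finally, bear in mind the paper's explicit Remark in \S 3 that maps into $CH^*(R(\bG))/p$ are asserted only as graded additive maps. Under that reading the lemma may be claiming only that the images of the square-free monomials span, which is weaker than a ring-level factorization through $\Lambda$; but even this weaker statement does not follow from Lemma 3.7 alone without the square-vanishing step, so your analysis of the gap remains pertinent.
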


\begin{lemma}  The restriction $res_K:
K^*(\bF')\to K^*(\bar \bF') $ 
 is isomorphic.
\end{lemma}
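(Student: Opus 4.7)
The plan is to combine the injectivity of $res_K$, which is automatic from Lemma 5.1, with the surjectivity criterion of Corollary 5.4. Thus it suffices to produce, for each polynomial generator $y$ of $P(y)' \subset H^*(G';\bZ/2)$, an odd-degree class in the exterior subalgebra of $H^*(G';\bZ/2)$ whose image under $Q_1$ is $y$.

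First, I would record the precise list of generators. From the description of $H^*(G';\bZ/2)$ given just before the lemma, the exterior part is $\Lambda(x_3,x_5,\ldots,x_{2\ell-1})\otimes\Lambda(z)$, while
\[
grP(y)' \;\cong\; \bigotimes_{2i\neq 2^j,\ 2\leq i\leq \ell}\Lambda(y_{2i}),
\]
so the surviving polynomial generators are exactly the classes $y_{2j}$ with $3\leq j\leq \ell$ and $j\neq 2^{m-1}$ (the smallest being $y_6$). The classes that are killed in passing from $SO(2\ell+1)$ to $Spin(2\ell+1)$ are $x_1$, $y_2$, and hence $y_4=y_2^2$, none of which appear among the required polynomial generators above.

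Second, from Nishimoto's formula recalled at the start of Section 6, one reads $Q_1 x_{2i-1}=y_{2i+2}$ (the degree $|Q_1|=3$ dictates the shift). Setting $i=j-1$ gives the key identity $Q_1 x_{2j-3}=y_{2j}$. For any generator $y_{2j}$ of $P(y)'$ we have $3\leq 2j-3\leq 2\ell-3$, so $x_{2j-3}$ is indeed one of the surviving exterior generators $x_3,x_5,\ldots,x_{2\ell-1}$ in $H^*(G';\bZ/2)$. This produces the required $x_{k(i)}$ for every polynomial generator, and Corollary 5.4 applied to the versal flag $\bF'$ then yields the isomorphism.

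\textbf{Main obstacle.} There is no serious difficulty; the only thing to verify is that in passing from $SO(2\ell+1)$ to $Spin(2\ell+1)$ the needed exterior generators have not been removed. Since the construction requires only $x_{2j-3}$ with $j\geq 3$, the removed class $x_1$ is never called upon, so the argument goes through uniformly in $\ell$. The odd class $z$ plays no role: $Q_1 z$ is a sum of products of $y_{2i}$'s rather than a single generator, but this is irrelevant because the primitive generators of $P(y)'$ are already taken care of by $x_{2j-3}$.
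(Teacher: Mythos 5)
Your proof is correct and follows essentially the same route as the paper: the paper's own argument is just the inline version of Corollary 5.4, using Nishimoto's formula in the guise of the relation $c_i'=2y_{2i}+v_1y_{2i+2}$ mod $v_1^2$ to show every $y_{2i}$ with $2i\ge 6$ lies in $\Img(res_{K/2})$, then concluding surjectivity (injectivity being Panin's theorem, Lemma 5.1). Your explicit check that only $x_{2j-3}$ with $j\ge 3$ is needed, so the generators $x_1,y_2$ killed in the covering never enter, is exactly the point implicit in the paper's restriction to $i\ge 2$.
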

\begin{proof}    Recall the relation
\[ c_i'=2y_{2i}+v_1y_{2i+2i}\quad mod(v_1^2)\ \ in \ K^*(\bar \bF').\]
We consider the $mod(2)$ restriction map $res_{K/2}$.
Since $c_i'=v_1y_{2i+2}$ $mod(2)$, we see $y_{2i+2}\in Im(res_{K/2})$.
So $y_{2i'}\in Im(res_{K/2})$ for $2i'\ge 6$, e.g,,
for all $y_{2i} \in P(y)'=\Lambda(y_{2i}|i\not =2^j)$.  Hence $res_{K/2}$ is surjective.
Thus  $res_K$ itself surjective.
\end{proof}
\begin{cor}   Let $G''=Spin(2\ell+2)$ and $\bF''=\bG''/B_k$.
Then $res_K: K^*(\bF'')\to K^*(\bar \bF'')$ is isomorphic.
\end{cor}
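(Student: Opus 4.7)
The plan is to transcribe the proof of Lemma 7.3 to the even orthogonal case, observing that the relevant mod-$2$ structure is essentially unchanged. First, I would record the analogue of the structure theorem: applying Borel's method to the $2$-fold cover $\pi'': Spin(2\ell+2) \to SO(2\ell+2)$, one obtains
\[
H^*(Spin(2\ell+2); \bZ/2) \cong P(y)''\otimes \Lambda(x_3, x_5, \ldots, x_{2\ell-1}, x_{2\ell+1})\otimes \Lambda(z'),
\]
with $|z'| = 2^{t+2}-1$ and $t = [\log_2 \ell]$. Crucially, $P(y)''$ coincides with $P(y)'$ as graded $\bZ/2$-modules: the polynomial generators $y_{2i} = x_{2i}$ of $SO(2\ell+2)$ live in the same range $2 \le 2i \le 2\ell$ as those of $SO(2\ell+1)$, and the extra generator $x_{2\ell+1}$ of $SO(2\ell+2)$ contributes only to the exterior part.

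Next, I would verify the analogue of Lemma 7.1 and Corollary 6.2: identifying $c_i''$ with the $i$-th elementary symmetric function in the rank-$(\ell+1)$ torus coordinates, and running Corollary 6.2 with $G = SO(2\ell+2)$, one obtains the formula
\[
c_i'' \equiv 2\, y_{2i} + v_1\, y_{2i+2} \pmod{v_1^2}, \qquad 2 \le i \le \ell,
\]
in $K^*(\bar{\bF}'')$. This step is the heart of the computation and is a direct transcription of what was carried out in $\S 6$--$\S 7$ for the odd case.

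With these formulas in hand, the argument of Lemma 7.3 transcribes verbatim: reducing modulo $2$ gives $c_i'' = v_1\, y_{2i+2}$ in $K^*(\bar{\bF}'')/2$, so $y_{2i+2} \in \Img(res_{K/2})$ for every $i \ge 2$. Since every multiplicative generator of $P(y)''$ has the form $y_{2j}$ with $2j \ge 6$ and $2j \neq 2^k$, every such generator lies in $\Img(res_{K/2})$; hence $res_{K/2}$ is surjective, and therefore so is $res_K$. Combined with the injectivity of $res_K$ provided by Lemma 5.1, we conclude that $res_K$ is an isomorphism. The main obstacle I would anticipate is merely the bookkeeping to confirm that $P(y)''$ has no extra polynomial generator beyond those of $P(y)'$---a direct structural check rather than a conceptual difficulty, but one that must be done carefully because the rank has increased by one and $2\ell+2$ is even.
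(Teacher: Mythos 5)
Your proposal is correct and is essentially the paper's argument: the paper's own proof is just the one-line observation that $P(y)'\cong P(y)''$, which is shorthand for exactly the transcription you carry out (the even case has the same polynomial generators $y_{2i}$, $2i\neq 2^j$, hit by the same relations $c_i''=2y_{2i}+v_1y_{2i+2}$ mod $v_1^2$, giving surjectivity of $res_{K/2}$, with injectivity from Lemma 5.1). So you have simply spelled out the details the paper leaves implicit, not taken a different route.
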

\begin{proof}
This is immediate from $P(y)'\cong P(y)''$.
\end{proof}

\begin{cor}
We have $K^*(R(\bG'))/2\cong 
K/2^*\otimes  \Lambda(c_i'|i\not =2^j-1).$
\end{cor}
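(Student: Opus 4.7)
The plan is to first identify $K^*(R(\bG'))/2$ additively with $K^*/2\otimes P(y)'/2$, and then read off the classes $c_i'$ with $i\ne 2^j-1$ as a corresponding system of exterior-algebra generators via the Chern-class relations of Corollary 6.2.

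\emph{Module identification.} Since $\bG'$ is versal, Corollary 3.4 gives $CH^*(\bar R(\bG'))/2\cong P(y)'/2$, an exterior algebra concentrated in even $H^*$-degrees. Being a motivic summand of the cellular variety $\bar\bF'$, the motivic AHss for $\bar R(\bG')$ collapses, so
\[
K^*(\bar R(\bG'))/2 \;\cong\; K^*/2\otimes P(y)'/2.
\]
By Theorem 3.1 the motive $M(\bF')_{(2)}$ (and $M(\bar\bF')_{(2)}$) splits as $R(\bG')$ (resp.\ $\bar R(\bG')$) tensored with Tate summands whose contribution to $K$-theory is $K^*/2\otimes S(t')/(b')$ (using $P'(y)=\bZ/2$ from versality). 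Lemma 7.3 gives $res_K\colon K^*(\bF')\cong K^*(\bar\bF')$; cancelling the common Tate factor yields the additive isomorphism $K^*(R(\bG'))/2\cong K^*(\bar R(\bG'))/2\cong K^*/2\otimes P(y)'/2$.

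\emph{Multiplicative generators.} Applying Corollary 6.2 to $G=SO(2\ell+1)$ and then reducing modulo $2$ and $v_2,v_3,\dots$ gives
\[
c_i' \;\equiv\; v_1\,y_{2i+2}\pmod{v_1^2}\quad\text{in } K^*(\bar R(\bG'))/2.
\]
For an index $i\ne 2^j-1$ (with $2\le i$ and $i+1\le\ell$), the class $y_{2i+2}$ is a non-trivial exterior generator of $P(y)'/2$; since $v_1$ is a unit in $K^*/2$, the element $v_1^{-1}c_i'$ realises the corresponding generator. Hence the ``good'' $c_i'$ furnish a full set of exterior generators of $P(y)'/2$ over $K^*/2$, and we obtain a surjective $K^*/2$-linear map
\[
K^*/2\otimes\Lambda\bigl(c_i'\mid i\ne 2^j-1\bigr) \;\twoheadrightarrow\; K^*(R(\bG'))/2.
\]
A rank count in each Chow degree matches both sides (the number of good $i$'s equals the number of exterior generators of $P(y)'/2$), so the surjection is an isomorphism.

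\emph{Main obstacle.} The delicate point is that Lemma 7.2 provides \emph{additional} Chow-level generators of $CH^*(R(\bG'))/2$, namely the ``bad'' Chern classes $c_{2^j-1}'$ and the element $c_1^{2^{t+1}}$; one must check that these contribute nothing independent in $K$-theory. For $c_{2^j-1}'$ the leading term is $v_1 y_{2^{j+1}}=v_1\cdot y_2^{2^j}=0$ in $P(y)'/2$, so $c_{2^j-1}'$ lies in $v_1^2\cdot K^*(R(\bG'))/2$, that is, in a lower Chow degree already occupied by products of good $c_k'$'s. Likewise $c_1^{2^{t+1}}$ is the transgressive image of $z$, so by Corollary 2.2 and Nishimoto's formula its leading term is $v_1\cdot Q_1(z)=v_1\sum_{i+j=2^{t+1}+2,\,i<j}y_{2i}y_{2j}$, a $v_1$-multiple of products of good $y$'s. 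Writing these corrections explicitly would be laborious, but the rank comparison in the previous paragraph renders it unnecessary: equality of $\bZ/2$-ranks in each Chow degree forces any would-be new class to coincide with a combination of the good ones, completing the identification.
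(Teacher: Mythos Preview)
Your proof is correct and follows essentially the same route as the paper's: both use Lemma~7.3 (that $res_K$ is an isomorphism) to identify $K^*(R(\bG'))/2$ with $K^*/2\otimes P(y)'/2$, and then recognise the good classes $c_i'$ (those with $i\ne 2^j-1$) as a system of exterior generators via $c_i'\mapsto v_1y_{2i+2}$.

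The one methodological difference is in how the ``bad'' classes are handled. The paper argues directly that $c_{2^j-1}'=0$ in $K^*(R(\bG'))/2$: modulo $2$ one has $c_{2^j-1}'=v_1y_{2^{j+1}}$, and $y_{2^{j+1}}$ is a power of $y_2=c_1$, hence vanishes in $P(y)'$; injectivity of $res_K$ then forces $c_{2^j-1}'=0$ on the twisted side. You instead bypass this by a rank count. Your ``main obstacle'' paragraph is muddled---the assertion that $c_{2^j-1}'$ lies in $v_1^2\cdot K^*(R(\bG'))/2$ is vacuous since $v_1$ is a unit in $K^*$---but as you correctly observe, the rank comparison makes any explicit treatment of the bad classes unnecessary, so the argument stands.
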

\begin{proof}
Recall $c_{2^j-1}'=2(y_{2^j-1})+v_1y_{2^j}$
where $y_{2^j}=c_1^{2^j}=0 \in K^*(\bar R(\bG))/2$.
So $c_{2^j-1}'=0\in K^*(R(\bG))/2$, since $res_K$ is 
injective.
We have the maps 
\[  K^*/2\otimes\Lambda(c_{i-1}'|i\not =2^j)
\twoheadrightarrow K^*(R(\bG))/2\]
\[ \stackrel{isom.}{\to}
K^*(\bar R(\bG))/2\cong 
 K^*/2\otimes \Lambda(y_{2i}|i\not =2^j)
\]
by $c_{i-1}'\to v_1y_i$.  This map is isomorphic from the above lemma.  Hence we have the corollary.
\end{proof}

Hence  $c_{2^j-1}'$ is not a module generator in 
$K^*(R(\bG))$.  So we can write
\[ 2^rb=v_1^sc_{2^j-1}'\quad r,s\ge 1,\quad \]
so that $b$ is torsion element in $CH^*(R(\bG))$.
In fact we have
\begin{lemma}  Let $k<t$ i.e., $2^{k+1}<\ell$.  Then in
$K^*( R(\bG))$, we have
\[ 2^{2^k}c_{2^k}'+\sum (-1)^i2^{2^k-i}v_1^ic_{2^k+i}'=v_1^{2^k}c_{2^{k+1}-1}'.\]
\end{lemma}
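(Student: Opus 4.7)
The plan is to work in $K^*(\bar{R}(\bG'))$, where by injectivity of $\operatorname{res}_K$ (Lemma 5.1) any identity lifts to $K^*(R(\bG'))$, and to iterate the fundamental relation
\[ c_j' = 2\,y_{2j} + v_1\,y_{2j+2}, \]
obtained from Corollary 6.2 by specialization to $K(1)$-theory at $p=2$ (setting $v_n = 0$ for $n\ge 2$). The spin-group-specific input is the vanishing $y_{2^j}=0$ in $P(y)'$ for every $j\ge 1$, a consequence of killing $y_2$ in passing from $SO(2\ell+1)$ to $Spin(2\ell+1)$ together with the relation $y_{2^j} = y_2^{2^{j-1}}$.

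Two immediate consequences of the base relation and the spin-vanishings are $c_{2^k}' = v_1\,y_{2^{k+1}+2}$ (from $y_{2^{k+1}}=0$) and $c_{2^{k+1}-1}' = 2\,y_{2^{k+2}-2}$ (from $y_{2^{k+2}}=0$). The hypothesis $k<t$ is used precisely to guarantee that $2^{k+2} \le 2\ell$, so that these two $y$'s lie in the range where the relation applies. By induction on $m$, using the substitution $2\,y_{2^{k+1}+2i} = c_{2^k+i}' - v_1\,y_{2^{k+1}+2i+2}$ at each step, one then establishes the iterated identity
\[ 2^{m}\,c_{2^k}' \;=\; \sum_{i=1}^{m}(-1)^{i+1}\,2^{m-i}\,v_1^{i}\,c_{2^k+i}' \;+\; (-1)^{m+1}\,v_1^{m+1}\,y_{2^{k+1}+2m+2} \]
in $K^*(\bar{R}(\bG'))$.

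Setting $m = 2^k - 1$, the surviving $y$-term becomes $y_{2^{k+2}}=0$, leaving a purely Chern-class relation. Multiplying through by $2$, reindexing the sum, and separating the term with index $i = 2^k - 1$ (which already involves $c_{2^{k+1}-1}'$) from the rest produces the left-hand side of the target identity. A complementary direct check substituting $c_{2^k+i}' = 2\,y_{2^{k+1}+2i} + v_1\,y_{2^{k+1}+2i+2}$ into both sides confirms that, modulo the vanishings $y_{2^{k+1}}=y_{2^{k+2}}=0$, both sides reduce to $2\,v_1^{2^k}\,y_{2^{k+2}-2}$; invoking $c_{2^{k+1}-1}' = 2\,y_{2^{k+2}-2}$ identifies this common value with $v_1^{2^k}\,c_{2^{k+1}-1}'$.

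The main obstacle is the combinatorial bookkeeping: carefully tracking signs and the exponents of $2$ and $v_1$ across the iteration, and in particular converting the coefficient $2v_1^{2^k-1}$ that naturally appears on the top-degree term of the iterated sum into the desired $v_1^{2^k}$ coefficient of $c_{2^{k+1}-1}'$ on the right-hand side. This conversion rests on the $2$-divisibility $c_{2^{k+1}-1}' = 2\,y_{2^{k+2}-2}$, which is itself a direct structural consequence of the spin-group vanishing $y_{2^{k+2}}=0$; this vanishing is thus used twice --- once to seed the iteration at $c_{2^k}'$, and once to match the right-hand side.
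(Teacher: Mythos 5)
Your iteration is in substance the paper's own argument: the paper sets $c_{2^k}''=c_{2^k}'-2y_{2\cdot 2^k}=v_1y_{2(2^k+1)}$ and repeatedly substitutes $2y_{2(2^k+i)}=c_{2^k+i}'-v_1y_{2(2^k+i)+2}$, and the terminal element $y_{2\cdot 2^{k+1}}$ is killed because it is a power of $c_1$ (Lemma 7.1), hence lies in the image of $S(t)$ and vanishes in $K^*(R(\bG'))$ by Lemma 3.6; injectivity of $res_K$ then lets you read the identity in $K^*(R(\bG'))$, as you say. Two small precision points: the vanishing $y_{2^j}=0$ is not a relation in $P(y)'$ per se but the statement that $y_{2^j}=c_1^{2^{j-1}}$ is decomposable, so it only holds after projection to the $R(\bG')$-component (this is why the paper first forms $c_{2^k}''$ in $K^*(\bar \bF')$ and invokes the vanishing only at the end); and the sign of the trailing term in your iterated formula should be $(-1)^m$, not $(-1)^{m+1}$ (check $m=1$: $2c_{2^k}'=v_1c_{2^k+1}'-v_1^2y_{2^{k+1}+4}$).

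The genuine gap is your final conversion step. Setting $m=2^k-1$ in the (sign-corrected) iteration gives
\[2^{2^k-1}c_{2^k}'+\sum_{i=1}^{2^k-2}(-1)^i2^{2^k-1-i}v_1^i c_{2^k+i}'=v_1^{2^k-1}c_{2^{k+1}-1}',\]
and your attempt to pass from this to the printed identity (exponents $2^k$ instead of $2^k-1$) by multiplying by $2$ and using $c_{2^{k+1}-1}'=2y_{2^{k+2}-2}$ fails: in the torsion-free module $K^*(\bar R(\bG'))\cong K^*\otimes P(y)'$ one has $2v_1^{2^k-1}c_{2^{k+1}-1}'=4v_1^{2^k-1}y_{2^{k+2}-2}$ while $v_1^{2^k}c_{2^{k+1}-1}'=2v_1^{2^k}y_{2^{k+2}-2}$, and these are distinct; a factor of $2$ cannot be traded for a factor of $v_1$. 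Your ``complementary direct check'' is likewise not correct: substituting $c_{2^k+i}'=2y_{2^{k+1}+2i}+v_1y_{2^{k+1}+2i+2}$ into the printed left-hand side makes it telescope to $0$ (if the sum includes $i=2^k-1$) or to $2v_1^{2^k-1}c_{2^{k+1}-1}'$ (if it stops at $i=2^k-2$), never to $v_1^{2^k}c_{2^{k+1}-1}'$. For $k=1$ the true relation is $2c_2'=v_1c_3'$, whereas the printed formula would assert $4c_2'-2v_1c_3'=v_1^2c_3'$, i.e.\ $0=2v_1^2y_6$ in $K^*(\bar R(\bG'))$, which is false. What your iteration (and the paper's own proof) actually establishes is the displayed identity with exponents $2^k-1$; the exponents in the statement of the lemma appear to be a misprint, and the honest conclusion is to record the $2^{2^k-1}$/$v_1^{2^k-1}$ version rather than to force the printed one by an invalid manipulation.
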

\begin{proof}
Let us write 
$c_{2^k}''=c_{2^k}'-2y_{2(2^k)}=v_1y_{2(2^k+1)}.$
Then 
\[ v_1y_{2(2^k+2)}=c_{2^k+1}'-2y_{2(2^k+1)}
=c_{2^k+1}'-2v_1^{-1}c_{2k}''.\]
By induction on $i$, we have
\[ v_1y_{2(2^k+i+1)}= (-1)^i2^iv_1^{-i}c_{2^k}''+\sum_{0<j<i}(-1)^j2^jv_1^{-j}c_{2(2^k+j)}'.\]

When $i=2^k-1$, we see $y_{2(2^{k+1})}$ is in 
$K^*(\bF)$
(it is zero in $K^*(R(\bG))$ from Lemma 3.6).
Hence we have the equation in the lemma.
\end{proof}

In general,  $gr_{\gamma}(R(\bG))$ seems complicated.  Here we only note the following  proposition. 
Let us write $\bZ_{(p)}$-free module
\[ \Lambda_{\bZ}(a_1,...,a_n)=\bZ_{(p)}\{a_{i_1}...a_{i_s}|
1\le i_1<...<i_s\le n\}\]

\begin{prop}
There is an additive  split surjection 
\[gr_{\gamma}(R(\bG))/2\twoheadrightarrow  B_1/2+B_2/2\]
for degree $*\le |c_1^{2^{t+1}}|=2^{t+2}$ where
\[B_1=\Lambda_{\bZ}(c_i'|i\not=2^j-1,\ 2\le i\le \ell-1),\quad
B_2=\Lambda_{\bZ}(c_i'| i \not = 2^j,\ 3\le i\le \ell),\]
so that $B_1\cap B_2\cong \lambda_{\bZ}(c_i'|i\not =2^j,2^j-1,
\ 5\le i\le \ell-1)$.
\end{prop}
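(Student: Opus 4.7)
The plan is to combine the $K$-theoretic detection from Corollary 7.5 with the integral relation of Lemma 7.7, organized through the framework of Lemma 5.8. Throughout I would write $R=R(\bG')$ for brevity.

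First I would verify that $B_1/2$ injects into $gr_{\gamma}(R)/2$ through its image in $K^*$. By Corollary 7.5, $K^*(R)/2 \cong K^*/2 \otimes \Lambda(c_i' \mid i \neq 2^j-1)$, so a square-free monomial $c_{i_1}' \cdots c_{i_s}' \in B_1/2$ maps to $v_1^s\, y_{2i_1+2} \cdots y_{2i_s+2}$ modulo $v_1^{s+1}$ in $K^*(\bar R)/2$, and distinct monomials produce linearly independent images. Compatibility between the gamma filtration and the $v_1$-adic filtration on $K^*$ then lifts these to linearly independent, nonzero classes in $gr_{\gamma}(R)/2$.

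Second, for a generator of $B_2/2$ containing a factor $c_{2^j-1}'$, I would invoke Lemma 7.7 with $k=j-1<t$:
\[
v_1^{2^k} c_{2^{k+1}-1}' = 2^{2^k} c_{2^k}' + \sum_{i=1}^{2^k-1} (-1)^i 2^{2^k-i} v_1^i c_{2^k+i}' \quad \text{in } K^*(R).
\]
Multiplying by a square-free $u \in B_1 \cap B_2$ of controlled degree and applying Lemma 5.8 with $b=c_{2^{k+1}-1}' u$, the membership condition $b' \in B$ reduces to observing that each index $2^k+i$ with $0 \le i \le 2^k-2$ lies in $\{2^k,\ldots,2^{k+1}-2\}$, which contains no element of the form $2^m-1$. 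Hence the right hand side lies in $B_1 \subset B$, and Lemma 5.8 certifies $c_{2^{k+1}-1}' u$ as a nonzero class in $gr_{\gamma}(R)/2$.

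To finish, I plan to assemble the splitting by observing that the $K^*$-detected classes in $B_1/2$ and the torsion classes in $B_2/2 \setminus (B_1\cap B_2)/2$ occupy disjoint strata---nonzero and zero in $K^*(R)/2$ respectively---so the natural surjection $CH^*(R)/2 \to gr_{\gamma}(R)/2$ restricts to a splittable surjection onto $B_1/2 + B_2/2$. The bound $* \le 2^{t+2}$ keeps the analysis inside the range controlled by Lemma 7.7 and away from the generator $c_1^{2^{t+1}}$. The main obstacle, I expect, will be ensuring in the second step that distinct classes $c_{2^{k+1}-1}' u$ (for varying $k$ and $u$) remain linearly independent in $gr_{\gamma}(R)/2$, both from the $B_1/2$-classes and from one another; this will require a careful $v_1$-adic filtration argument exploiting the polynomial freeness of $P(y)'$ on its generators, and the boundary regime $|u|+|c_{2^{k+1}-1}'|$ close to $2^{t+2}$ may need separate treatment to avoid interference from the $c_1^{2^{t+1}}$-relation.
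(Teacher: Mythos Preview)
Your treatment of $B_1$ is essentially the paper's: both detect the monomials $c_{i_1}'\cdots c_{i_s}'$ (each $i_r\ne 2^j-1$) via the $\bmod\ 2$ restriction $c_i'\mapsto v_1y_{2i+2}$ into $K^*(\bar R)/2$, and observe that distinct monomials go to $k^*$-module generators of the image, hence survive to $gr_\gamma$.

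The gap is in your $B_2$ step.  You invoke the relation
\[
v_1^{2^k}c_{2^{k+1}-1}'=2^{2^k}c_{2^k}'+\sum_{i\ge 1}(-1)^i2^{2^k-i}v_1^ic_{2^k+i}'
\]
and then appeal to the lemma on $grK^*(R)\cong K^*\otimes B$ with the ``membership condition $b'\in B$''.  But that lemma is a \emph{global} criterion: it requires first exhibiting a filtration with $grK^*(R)\cong K^*\otimes B$ and then checking, for every torsion $b\in B$, that whenever $p^rb=v_1^sb'$ one has $b'\in B$; only then does it output $B\cong gr_\gamma(R)$.  You have not produced such a $B$, and the relation above is of the form $v_1^s\,(\text{your }b)=(\text{mixed sum})$, not $p^rb=v_1^sb'$, so the hypotheses are not even in the right shape.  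In short, the lemma does not ``certify'' individual classes.  Your sketch also only treats monomials with a single factor of type $c_{2^{k+1}-1}'$; a general $B_2$-monomial may contain several such factors (e.g.\ $c_3'c_7'$), and the inductive reduction is not addressed.

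The paper's route for $B_2$ avoids all of this by working rationally rather than $\bmod\ 2$.  One considers
\[
\rho_{\bQ}:k^*\otimes B_2\longrightarrow k^*(\bar R)\otimes_{k^*}\bQ\cong \Lambda_{\bZ}(y_{2i}\mid i\ne 2^j)\otimes\bQ,
\qquad c_i'\longmapsto 2y_{2i},
\]
using that $c_i'=2y_{2i}+v_1y_{2i+2}$ and that for $i\ne 2^j$ the class $y_{2i}$ is a genuine exterior generator of $P(y)'$.  Then every monomial $c_{i_1}'\cdots c_{i_s}'\in B_2$ maps to $2^sy_{2i_1}\cdots y_{2i_s}$, and one checks exactly as in the $B_1$ case that these are $k^*$-module generators of the image (the minimal $v_1$-power needed is $0$, and no smaller power of $2$ occurs).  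This simultaneously handles all $B_2$-monomials, including those with several $c_{2^j-1}'$-factors, and gives the required nonvanishing in $CH^*(R)/2$ and in $gr_\gamma/2$ without any appeal to the integral relation you cite.
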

\begin{proof}
First note that when $*<2^{t+1}$,
we have the surjection  $\Lambda(c_2',...,c_{\ell})'
\twoheadrightarrow CH^*(R(\bG'))/2$.

Recall $k^*(X)$ is the $K$-theory with $k^*(pt)\cong
\bZ_{(p)}[v_1]$. 
 We consider the map
\[ \rho:k^*/2\otimes B_1\to k^*(R(\bG))/2\]
\[ \to k^*(\bar R(\bG))/2\subset K^*/2\otimes \Lambda(y_{2i}|
i\not =2^j)\cong  K^*(\bar R(\bG))/2.\]
We show  
\[ Im(\rho)=k^*/2\{v_1^sy_{2i_1}...y_{2i_s}|i_k\not =2^j,
 1\le i_1<...<i_s\le \ell-1\}.\] 
In fact, $Im(\rho)$ contains the right  hand side module 
since $c_i'\mapsto v_1y_{2i+2}$. Suppose  that 
$\rho(b_I)=v_1^ky_{2i_1}...y_{2i_s}$ for $b_I\in B_1$.
Then $b_I$ contains $c_{i_1-1}...c_{i_s-1}$, which must 
maps to $v_1y_{i_1}...v_1y_{i_s}$.  Hence $k\ge s$.

Therefore each generator $c_I=c_{i_1}...c_{i_s}\in \Lambda(c_i|
i\not= 2^j-1)$ is also $k^*$-module generator of
$Im(\rho)$.  Hence it is nonzero in $CH^*(R(\bG))/2$
from $k^*(X)\otimes _{k^*}\bZ_{(p)}\cong CH^*(X)$.
Thus $B_1/2\subset CH^*(R(\bG))/2$.

Net we consider for $B_2$.
 We consider the map
\[ \rho_{\bQ}:k^*\otimes B_2\to k^*(R(\bG))\to k^*(\bar R(\bG))\]
\[ \twoheadrightarrow  k^*(\bar R(\bG))\otimes_{k^*}
 \bQ\cong CH^*(\bar R(\bG))\otimes \bQ\cong 
 \Lambda_{\bZ}(y_{2i}|i\not =2^j)\otimes \bQ.\]
by $c_i\mapsto 2y_{2i}$.
By the  arguments similar to the case $B_1$,  we see 
\[ Im(\rho_{\bQ})=k^*\{2^sy_{2i_1}...y_{2i_s}|i_k\not =2^j,
 1\le i_1<...<i_s\le \ell\}.\]
 Therefore each generator $c_I=c_{i_1}...c_{i_s}\in \Lambda(c_i|
i\not= 2^j)$ is also $k^*$-module generator of
$Im(\rho_{\bQ})$.  Hence $B_2/2\subset CH^*(R(\bG))/2$.
\end{proof} 
 
{\bf Remark.} Note that $c_{2^k}c_{2^j-1}$ contains $2v_1y_{2^{k+1}+2}y_{2^{j+1}-2}$, while this element
is also contained in $c_{2^k+1}c_{2^j-2}$.

Now we consider examples. 
For groups $Spin(7),$ $Spin(9)$,
the graded ring $gr_{\gamma}(G')/2$
are given in the next section (in fact these groups are of type $(I)$).
We consider here the group $G'=Spin(11)$.
The cohomology is written as 
\[H^*(G';\bZ/2)\cong \bZ/2[y_6,y_{10}]/(y_6^2,y_{10}^2)\otimes \Lambda(x_3,x_5,x_7,x_9,z_{15}).\]
By Nishimoto, we know $Q_0(z_{15})=y_6y_{10}$. It implies
$2y_6y_{10}=d_{16}(z_{15})=c_1^8$.
Since $y_{top}'=y_6y_{10}$, we have $t(G')=2$.
\begin{thm}
 For $(G',p)=(Spin(11),2)$, we have the isomorphisms
\[gr_{\gamma}(R(\bG))/2\cong \bZ/2\{1, c_2',c_3',c_4',c_5',c_2'c_4', c_1^8\},\]
\[ gr_{\gamma}(\bF)/2\cong S(t)/(2,c_i'c_j', c_i'c_1^8,c_1^{16}|2\le i\le j\le 5,\ (i,j)\not
=(2,4)).\]
\end{thm}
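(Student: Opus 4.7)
The plan is to compute $CH^*(R(\bG'))/2$ via integral $k$-theory (using Lemma 7.3 and the Landweber relation $CH^*=k^*/(v_1)$), and then bootstrap to $gr_\gamma(\bF')/2$ using Theorem 3.1 together with the Karpenko isomorphism $gr_\gamma(\bF')\cong CH^*(\bF')$ cited in the Remark following Theorem 1.4.

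\emph{Setup.}  For $(G',p)=(Spin(11),2)$ one has $\ell=5$, $t=[\log_2 5]=2$, $P(y)'=\Lambda(y_6,y_{10})$, and $|y_{top}'|=|y_6y_{10}|=16$.  First I would record, from Corollary 2.2 (combined with $Q_0z_{15}=y_6y_{10}$ and $y_j=0$ in $P(y)'$ for $j\notin\{6,10\}$), the lifts in $k^*(\bar R(\bG'))$:
\[c_2'=v_1y_6,\ \ c_3'=2y_6,\ \ c_4'=v_1y_{10},\ \ c_5'=2y_{10},\ \ c_1^8=2y_6y_{10}.\]
Lemma 7.3 gives an isomorphism $res_K\colon K^*(R(\bG'))\cong K^*(\bar R(\bG'))$, and $t(G')=2$ forces $y_6,y_{10},y_6y_{10}\notin\Img(res_{CH})$.

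\emph{Stage 1: compute $CH^*(R(\bG'))/2$.}  From the lifts I would derive the relations in $k^*(R(\bG'))$:
\[v_1c_3'=2c_2',\ \ v_1c_5'=2c_4',\ \ c_2'c_5'=c_3'c_4'=v_1c_1^8=2v_1y_6y_{10},\ \ c_3'c_5'=2c_1^8,\]
together with $c_2'c_4'=v_1^2y_6y_{10}$ and $(c_i')^2=c_2'c_3'=c_4'c_5'=0$ coming from $y_6^2=y_{10}^2=0$.  Mod $2$ the classes $c_3',c_5',c_1^8$ become $v_1$-torsion summands $\cong k^*/(v_1,2)\cong\bZ/2$, while $1,c_2',c_4',c_2'c_4'$ give $k^*/2$-free summands.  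Applying $k^*(X)/(v_1)\cong CH^*(X)_{(p)}$ (valid by Landweber flatness for flag varieties) then reads off
\[CH^*(R(\bG'))/2\cong\bZ/2\{1,c_2',c_3',c_4',c_5',c_2'c_4',c_1^8\}.\]
Note $c_3'c_5'=2c_1^8\equiv 0\pmod 2$ while $c_1^8\not\equiv 0$, because $c_1^8=2y_6y_{10}$ is not $2$-divisible in $CH^*(R(\bG'))$ (as $y_6y_{10}\notin\Img(res_{CH})$).  The upper bound comes from Lemma 3.7 bounding generators by monomials of degree $\le 16$ in the transgressions, together with the vanishings above.

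\emph{Stage 2: pass to $gr_\gamma(\bF')/2$.}  Given $gr_\gamma(\bF')\cong CH^*(\bF')$, Theorem 3.1 (with $P'(y)=\bZ/2$ for versal $\bG'$) yields an additive iso $CH^*(\bF')/2\cong CH^*(R(\bG'))/2\otimes S(t')/(b)$, $(b)=(c_2',c_3',c_4',c_5',c_1^8)$.  Since $\bF'$ is versal, Corollary 3.3 expresses $CH^*(\bF')/2=S(t')/(2,I)$ as rings.  The Stage 1 relations give
\[c_i'c_j'=0\ \text{for}\ (i,j)\neq(2,4),\quad c_i'c_1^8=0,\quad c_1^{16}=(2y_6y_{10})^2=0,\]
so these lie in $I$; a dimension count against the additive iso confirms they exhaust $I$, giving the desired ring isomorphism $gr_\gamma(\bF')/2\cong S(t')/(2,c_i'c_j',c_i'c_1^8,c_1^{16}\mid 2\le i\le j\le 5,(i,j)\neq(2,4))$.

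The main obstacle will be the degree-$16$ analysis: showing that $c_3'c_5'\equiv 0$ and $c_1^8\not\equiv 0$ mod $2$ simultaneously, i.e.\ reconciling the $K$-theoretic torsion (from the $v_1$-relations $v_1c_1^8=c_2'c_5'$) with the $CH$-theoretic torsion (from the torsion-index argument that $y_6y_{10}$ is not in $\Img(res_{CH})$).  A secondary delicate step is the dimension-matching in Stage 2, verifying that no further relations arise in $S(t')/(2,I)$ beyond those listed.
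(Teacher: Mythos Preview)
There is a genuine gap in Stage 1. You derive your relations ($v_1c_3'=2c_2'$, $c_2'c_3'=0$, etc.) in $k^*(\bar R(\bG'))$ and then assert they hold in $k^*(R(\bG'))$. But only $res_K$ (with $v_1$ inverted) is known to be injective; the connective restriction $res_k$ is not. Concretely, $c_2'c_3'$ maps to $0$ in $k^*(\bar R(\bG'))$, yet it could survive as a nonzero $v_1$-torsion class in $k^*(R(\bG'))/2$ and hence be nonzero in $CH^*(R(\bG'))/2=k^*(R(\bG'))/(2,v_1)$. Your claimed decomposition $k^*(R(\bG'))/2\cong (k^*/2)^{\oplus 4}\oplus(\bZ/2)^{\oplus 3}$ has no upper bound on the $v_1$-torsion part. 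Invoking Karpenko's $CH^*\cong gr_\gamma$ does not close the gap: you still need an independent computation of one side, and your $CH^*$ computation is exactly what is incomplete. The same issue recurs in Stage~2 when you assert $c_i'c_j'=0$ in $CH^*(\bF')/2$.

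The paper avoids this entirely by staying in $K^*$ (where $res_K$ is an isomorphism) and applying its own Lemma~5.7 rather than Karpenko. It first writes $K^*(R(\bG'))\cong K^*\{1,c_2',c_4',c_2'c_4'\}$, then refines to a graded decomposition $grK^*(R(\bG'))\cong K^*/2\{c_2',c_4',c_2'c_4'\}\oplus K^*\{1,c_3',c_5',c_1^8\}$ using $res_K(c_3')=2y_6$, $res_K(c_5')=2y_{10}$, $res_K(c_1^8)=2y_6y_{10}$. After checking that all seven classes are nonzero in $CH^*(R(\bG'))/2$ (Corollary~3.8 for the $c_i'$ and $c_1^8$; Proposition~7.7 for $c_2'c_4'$), Lemma~5.7 gives $gr_\gamma(R(\bG'))$ directly, with no appeal to $CH^*(R(\bG'))$. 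For the second isomorphism the paper shows the products vanish in $gr_\gamma(\bF')$ (not $CH^*(\bF')$) via identities in $K^*(\bF')$: e.g.\ writing $y_6^2=y_6t$ in $CH^*(\bar\bF')/2$ gives $c_2'c_3'=2v_1y_6^2=v_1c_3't$ in $K^*(\bF')$, so $c_2'c_3'$ lies in higher $\gamma$-filtration and dies in $gr_\gamma(\bF')/2$.
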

\begin{proof}
Consider the restriction map $res_K$
\[ K^*(R(\bG))\cong  
\tilde K^*\{1,c_2,c_4,c_2c_4\}
 \stackrel{\cong }{\to} K^*\{1,y_6,y_{10},y_6y_{10}\}
\cong K^*(\bar R(\bG))\]
by $res_K(c_2)= v_1y_6$, $res_K(c_4)=v_1y_{10}$.
Note $c_2c_4\not =0 \in CH^*(R(\bG))/2$,
from the preceding proposition. (In fact,
$v_1y_6y_{10}\not \in Im(res_K)$.)

Next using $res_K(c_3')=2y_6$, $res_K(c_5')=2y_{10}$,
and $res_K(c_1^8)=2y_6y_{10}$,  we have
\[grK^*(R(\bG))\cong
K^*/2\{c_2',c_4',c_2'c_4'\}\oplus
   K^*\{1,c_3',c_5',c_1^8\}.\]
From this and Lemma 5.7, we show the first isomorphism.
(Note generators $c_2',...,c_1^8$ are all nonzero
in $CH^*(R(\bG))/2$ by Corollary 3.8.)

Let us write $y_{6}^2=y_6t\in CH^*(\bar \bF)/2$ for $t\in S(t)$.  Then
\[ c_2'c_3'=2v_1y_6^2=2v_1y_6t=v_1c_3't\quad in \ K^*(\bF).\]
Hence we see $c_2'c_3'=0$ in $CH^*(\bF)/I\cong 
gr_{\gamma}^*(\bF)/2$.  
We also note
\[ c_3'c_5'=4y_6y_{10}=2c_1^8 \quad in\ K^*(\bar \bF).\]
By  arguments similar to the proof of Lemma 3.5, we can show the second isomorphism.
\end{proof}

\section{ Groups of type $(I)$ (e.g., $(G,p)=(E_8,5)$).}

When $G$ is simply connected and $P(y)$ is generated by just one generator, 
we say that $G$ is of type $(I)$.  Except for 
$(E_7,p=2)$ and $(E_8,p=2,3)$, all exceptional (simple) Lie groups are of type $(I)$.  Note that in these cases, it is known
$rank(G)=\ell\ge 2p-2$.

{\bf Example.}  The case $(G,p)=(E_8,5)$ is of type $I$.
we have \cite{Mi-Tod} 
\[H^*(E_8;\bZ/5)
\cong  \bZ/5[y_{12}]/(y_{12}^5)\otimes 
\Lambda(z_{3},z_{11},z_{15},z_{23},z_{27},z_{35},z_{39},z_{47})\]
where suffix means its degree.  The cohomology  operations are given 
\[ \beta(z_{11})=y_{12},\ \ \beta(z_{23})=y_{12}^2, \ \ 
\beta(z_{35})=y_{12}^3,\ \ \beta(z_{47})=y_{12}^4, \ \ \]
\[P^1z_{3}=z_{11} ,\ \ P^1z_{15}=z_{23},\ \ P^1z_{27}=z_{35},\ \ 
 P^1z_{39}=z_{47}.\]

Similarly, for each group $(G,p)$ of type $ (I)$,  we can write
\[H^*(G;\bZ/p)\cong \bZ/p[y]/(y^p)\otimes \Lambda(x_1,...,x_{\ell}) \quad |y|=2(p+1).\]
The cohomology operations are given as
\[ \beta: x_{2i}\mapsto y^i,\quad P^1:x_{2i-1}\mapsto 
x_{ 2i} \qquad for\ 1\le i\le p-1.\]
Hence we   have \ $ Q_1(x_{2i-1})=Q_0(x_{2i})=y^i
$ \ for $1\le i\le p-1$.

 \begin{thm}  
Let $G$ be of type $(I)$.  Then
$res_K$ is isomorphic, and   
\[gr^*_{\gamma}(R(\bG))  \cong
(\bZ/p\{b_1,b_3,...,b_{2p-3}\}\oplus \bZ_{(p)}\{1,b_2,...,b_{2p-2}\}).\]
Moreover $gr_{\gamma}(G/T)/p\cong gr_{\gamma}(\bF)/p$ is isomorphic to 
\[ S(t)/(p,b_ib_j,b_k|1\le i,j\le2p-2<k\le \ell).\]
\end{thm}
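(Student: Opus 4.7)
The plan proceeds in three stages: (1) verify that $res_K: K^*(\bF) \to K^*(\bar\bF)$ is an isomorphism, so that $K^*(R(\bG))$ is free of $K^*$-rank $p$; (2) use Corollary 2.2 with the operations $Q_1(x_{2i-1}) = Q_0(x_{2i}) = y^i$ to rewrite the Chern classes $b_i$ in terms of the topological generators $y^i$, and then apply Lemma 5.7 with the Chow (AHss) filtration to identify $gr_{\gamma}(R(\bG))$ with $B = \bZ/p\{b_1, b_3, \ldots, b_{2p-3}\} \oplus \bZ_{(p)}\{1, b_2, b_4, \ldots, b_{2p-2}\}$; (3) pass to $gr_{\gamma}(\bF)/p$ via Lemma 3.5 and thence to $gr_{\gamma}(G/T)/p$ via Theorem 1.1. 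For stage (1), the algebra $P(y) = \bZ/p[y]/(y^p)$ is generated by $y$, and the hypothesis $Q_1(x_1) = y$ verifies the condition of Corollary 5.5, giving that $res_K$ is an isomorphism and $K^*(R(\bG)) \cong K^* \otimes \bZ_{(p)}[y]/(y^p)$.

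For stage (2), Corollary 2.2 produces lifts in $BP^*(G/T)/\II$ with
\[
b_{2i-1} \equiv v_1\,y^i, \qquad b_{2i} \equiv p\,y^i \pmod{\II},\qquad 1\le i\le p-1.
\]
Thus in $K^*(R(\bG))$ each $y^i$ is represented by $v_1^{-1}b_{2i-1}$ modulo higher filtration, and combining the two relations yields $p\,b_{2i-1} = v_1\,b_{2i}$ modulo higher filtration, so $b_{2i-1}$ becomes $p$-torsion in the associated graded while $b_{2i}$ remains $\bZ_{(p)}$-free. This produces $grK^*(R(\bG)) \cong K^* \otimes B$, establishing condition (1) of Lemma 5.7. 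For condition (2), the torsion generators are exactly the $b_{2i-1}$, and the identity $p\,b_{2i-1} = v_1\,b_{2i}$ supplies $b' = b_{2i} \in B$. Lemma 5.7 then concludes $gr_{\gamma}(R(\bG)) \cong B$.

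For stage (3), reducing mod $p$ gives $B/p \cong \bZ/p\{1, b_1, b_2, \ldots, b_{2p-2}\}$. All products $b_i b_j$ with $1 \le i, j \le 2p-2$ vanish in $B/p$: for instance $b_1 b_2 \equiv p v_1 y^2 \equiv p\,b_3 \equiv 0 \pmod{p}$ in $gr_{\gamma}$, and the analogous reductions ($b_{2i-1}b_{2j-1} \equiv v_1^2 y^{i+j}$ lies in higher Chow filtration; $b_{2i}b_{2j} \equiv p^2 y^{i+j}$; $b_{2i-1}b_{2j} \equiv p\,b_{2(i+j)-1}$) dispose of every other product. For $k > 2p-2$, the image $Q_n(x_k) \in H^*(G; \bZ/p)$ lies in the ideal generated by the exterior generators $x_j$, which is killed in $P(y)$; hence $b_k \in \II$ in $BP^*(G/T)$, so $b_k = 0$ in $gr_{\gamma}(R(\bG))/p$. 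Lemma 3.5 therefore yields
\[
gr_{\gamma}(\bF)/p \cong S(t)/(p, b_i b_j, b_k \mid 1 \le i, j \le 2p-2 < k \le \ell),
\]
and Theorem 1.1 identifies this with $gr_{\gamma}(G/T)/p$. The principal obstacle lies in stage (2): the relations of Corollary 2.2 hold only modulo $\II$, and controlling the higher-filtration correction terms precisely enough to justify $p\,b_{2i-1} = v_1\,b_{2i}$ in the associated graded is delicate; relatedly, verifying $b_k = 0$ in $gr_{\gamma}(R(\bG))/p$ for $k > 2p-2$ requires a case-by-case analysis of the $Q_n$-operations on the higher-index exterior generators of $H^*(G; \bZ/p)$.
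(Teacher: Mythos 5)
Your proposal follows the same overall strategy as the paper: (1) use $Q_1x_1=y$ and Corollary 5.4/5.5 to conclude $res_K$ is an isomorphism; (2) compute the relations coming from $Q_0x_{2i}=y^i$ and $Q_1x_{2i-1}=y^i$ to identify $grK^*(R(\bG))\cong K^*\otimes B$ and invoke Lemma 5.7; (3) lift to $gr_\gamma(\bF)/p$ by a Lemma 3.5--type argument. The paper phrases step (2) via powers of $b_1$, deducing $b_1^i=v_1^{i-1}b_{2i-1}$ and $pb_1^i=v_1^{i-1}b_{2i}$ by induction, and your relations $p\,b_{2i-1}=v_1b_{2i}$ are equivalent (by torsion-freeness of $K^*(R(\bG))$), so this is a cosmetic difference.

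One point, however, is not correct as you have argued it. You claim that for $k>2p-2$ the vanishing of $Q_n(x_k)$ in $P(y)$ forces $b_k\in\II$ in $BP^*(G/T)$, and conclude $b_k=0$ in $gr_\gamma(R(\bG))/p$. This reasoning fails: Corollary 2.2 gives $b_k\equiv\sum_{i}v_iy(i)\ (\mathrm{mod}\ \II)$, and even when $y(0),y(1)$ project to zero in $P(y)$ the terms $v_iy(i)$ for $i\ge 2$ lie in $I_\infty$ but not in $\II$. Moreover membership in $\II$ would not by itself force vanishing in the associated graded of the gamma filtration --- note that $b_{2i-1}=v_1y^i\in I_\infty$ is nonzero in $B/p$. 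The correct (and simpler) justification, which is what the paper implicitly relies on, is that once Lemma 5.7 has given $gr_\gamma(R(\bG))/p\cong B/p$ with $B/p$ additively spanned by $\{1,b_1,\dots,b_{2p-2}\}$, the classes $b_k$ for $k>2p-2$ (and all products $b_ib_j$) must die in $B/p$ by degree, so the surjection $A\to B/p$ factors through $A/(b_ib_j,b_k)$ and one counts dimensions; no separate $Q_n$-analysis of the higher $x_k$ is needed. Apart from this, your account of the ``higher-filtration correction'' delicacy in step (2) is a fair acknowledgement of a point that the paper itself treats somewhat informally.
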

\begin{proof}
From $Q_1x_1=y$, we see $b_1=v_1y\ mod(v_1^2)$
in $k(1)^*(G/T)$.
From Corollary 5.4, we see $res_K$ is isomorphic, namely
the map
\[ K^*(R\bG_k)\cong K^*[b_1]/(b_1^p)\to 
 K^*[y]/(y^p)\cong  K^*(\bar R\bG)\]
is isomorphic by $res_k(b_1)=v_1y$. 

Since $Q_0x_2=y$ in $H^*(G;\bZ/p)$, we see $b_2=py$ in 
$ k^*(G/T)$.
Hence 
\[ pb_1=pv_1y=v_1b_2\quad  in\  k^*(G/T).\]
Since $Q_1x_3=y^2$, we see $b_3=v_1y^2$.  Hence
$ b_1^2=v_1^2y^2=v_1b_3.$
By induction on $i$, we have 
\[ b_1^i=v_1^{i-1}b_{2i-1},\quad  2b_1^i=v_1^{i-1}b_{2i}.\]
Thus we get the isomorphism
\[ gr K^*(R(\bG))=K^*(R(\bG))/p\oplus pK^*(R(\bG))\cong 
 K^*\otimes B\]
\[ where \quad
 B= (\bZ/p\{b_1,b_3,...,b_{2p-3}\}
\oplus \bZ_{(p)}\{ 1, b_2,b_4,...,b_{2p-2}\}).\]
Note that  $b_i$ are  nonzero in  $CH^*(R(\bG))/p$,
 it gives  $gr_{\gamma}(R(\bG))\cong B$
from Lemma 5.7.

The above $B/p$ is rewritten using $A=\bZ/p[b_1,...,b_{\ell}]$
\[ B/p\cong A/(b_ib_j,b_k| 1\le  i,j\le 2p-2<k).\]
From arguments similar to the proof of  Lemma 3.5,  we have the theorem.
\end{proof}

Here we recall the (original) Rost motive $R_a$
(we write it by $R_n$)
defined from a nonzero pure symbol $a$ in $K_{n+1}^M(k)/p$
(\cite{Ro1}, \cite{Ro2}, \cite{Vo2}, \cite{Vo3}).
We have  isomorphisms (with $|y|=2(p^n-1)/(p-1)$)
\[ CH^*(\bar R_n)\cong \bZ[y]/(y^p),\quad \Omega^*(\bar
R_n)\cong BP^*[y]/(y^p). \]
\begin{thm} (\cite{Vi-Ya}, \cite{Ya4}, \cite{Me-Su})
The restriction 
$ res_{\Omega}:\Omega^*(R_n)\to \Omega^*(\bar R_n)$
is injective.  Recall $I_n=(p,...,v_{n-1})\subset BP^*$. Then
\[Im(res_{\Omega})\cong  BP^*\{1\}\oplus I_n[y]^{+}/(y^p)
\subset BP^*[y]/(y^p).\]
Hence writing $v_jy^i=c_{j}(y^i)$, we have 
\[CH^*(R_n)/p\cong \bZ/p\{1,c_j(y^i)\ |\ 0\le j\le n-1,\ 
1\le i\le p-1\}.\]
\end{thm}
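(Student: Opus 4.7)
The plan is to prove the theorem in three steps, following the strategy of \cite{Vi-Ya} which generalizes Rost's original computation for Pfister quadrics to arbitrary primes.

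First, I would establish injectivity of $res_{\Omega}$. The key input is that $R_n$ appears as a direct summand of the motive of a norm variety $N_n$: a Pfister quadric when $p=2$, and Rost's norm variety associated to the defining symbol when $p$ is odd. Over $\bar k$ the Rost motive decomposes into Tate summands, giving $\Omega^*(\bar R_n) \cong BP^*[y]/(y^p)$ as a free $BP^*$-module. On the other hand, $\Omega^*(R_n)$ is $BP^*$-torsion-free: this follows from the motivic Atiyah-Hirzebruch spectral sequence $H^{*,*'}(R_n; BP^*) \Longrightarrow \Omega^*(R_n)$ combined with the Rost-Voevodsky computation of $H^{*,*'}(R_n)$, whose restricted structure rules out torsion classes. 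Since $res_{\Omega} \otimes \bQ$ is already an isomorphism (rationally $R_n \otimes \bQ$ splits as the Tate summands of $\bar R_n \otimes \bQ$), injectivity follows.

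Second, I would pin down the image $Im(res_\Omega) = BP^*\{1\} \oplus I_n[y]^+/(y^p)$. The containment $\supseteq$ is constructive: starting from specific motivic cohomology classes $\gamma_i$ of $R_n$ on which the Milnor tower satisfies $Q_0 Q_1 \cdots Q_{n-1}(\gamma_i)$ involves the class of $y^i$, successive applications of the higher operations — tracked through the AHss — produce elements of $\Omega^*(R_n)$ restricting to $v_j y^i$ for each $0 \le j \le n-1$ and $1 \le i \le p-1$ (with $v_0 = p$). The reverse containment $\subseteq$ — the statement that no additional generators such as $y^i$ alone or $v_j y^i$ with $j \geq n$ lie in the image — is the main obstacle: it requires a tight upper bound on $H^{*,*'}(R_n; \bZ/p)$, showing that the motivic cohomology is narrow enough that extra $BP^*$-coefficients cannot be realized. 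This is precisely where the cited references \cite{Vi-Ya}, \cite{Ya4}, \cite{Me-Su} invest their most technical work, relying on vanishing statements for the motivic cohomology of $R_n$ outside a specific range of bidegrees.

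Third, for the Chow computation, I use $CH^*(X)/p \cong \Omega^*(X) \otimes_{BP^*} \bZ/p$ with $\bZ/p = BP^*/I_\infty$. Applied to $\Omega^*(R_n) = Im(res_\Omega)$ from Step 2:
\[ CH^*(R_n)/p \cong \bZ/p\{1\} \oplus \bigoplus_{i=1}^{p-1} \bigl(I_n \otimes_{BP^*} \bZ/p\bigr) \cdot y^i. \]
Because $(p, v_1, \ldots, v_{n-1})$ is a regular sequence in $BP^*$, the Koszul resolution of $BP^*/I_n$ yields $I_n \otimes_{BP^*} \bZ/p \cong (\bZ/p)^n$ with basis $\{v_0, v_1, \ldots, v_{n-1}\}$ (recalling $v_0 = p$). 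This gives the claimed $\bZ/p$-basis $\{1\} \cup \{c_j(y^i) : 0 \le j \le n-1,\ 1 \le i \le p-1\}$ of $CH^*(R_n)/p$, where $c_j(y^i)$ denotes the Chow class of the cobordism element restricting to $v_j y^i$.
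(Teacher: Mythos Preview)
The paper does not supply its own proof of this theorem; it is stated with citations to \cite{Vi-Ya}, \cite{Ya4}, \cite{Me-Su} and then used as external input for the remainder of $\S 8$. There is consequently nothing in the paper to compare your argument against.

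That said, your three-step outline is a faithful high-level summary of the strategy actually carried out in those references. Injectivity via $BP^*$-torsion-freeness of $\Omega^*(R_n)$ together with the rational isomorphism is the standard opening move. For the image, your division into a constructive lower bound (producing classes restricting to $v_j y^i$ via Milnor operations on explicit motivic cohomology classes) and a vanishing-based upper bound (ruling out $y^i$ alone or $v_j y^i$ with $j\ge n$) matches the structure of \cite{Vi-Ya} for $p=2$ and its generalizations in \cite{Ya4}, \cite{Me-Su}. Your final step is sound: the presentation $\Lambda^2(BP^*)^n \to (BP^*)^n \to I_n \to 0$ coming from the Koszul complex of the regular sequence $(p,v_1,\ldots,v_{n-1})$ has first map $e_i\wedge e_j \mapsto v_i e_j - v_j e_i$, which becomes zero after $\otimes_{BP^*}\bZ/p$ since each $v_i\in I_\infty$; hence $I_n\otimes_{BP^*}\bZ/p\cong(\bZ/p)^n$, and the asserted $\bZ/p$-basis $\{1\}\cup\{c_j(y^i)\}$ of $CH^*(R_n)/p$ follows from $\Omega^*(R_n)\cong BP^*\oplus\bigoplus_{i=1}^{p-1} I_n\cdot y^i$.
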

{\bf Example.}  In particular,   we have  isomorphisms
\[ CH^*(R_2)/p\cong \bZ/p\{1,c_0(y),c_1(y),...,c_0(y^{p-1}),c_1(y^{p-1})
\}.\]

By \cite{Pe-Se-Za}, it is known when $G$ is of type $(I)$,
we see $J(\bG)=(1)$ and $R(\bG))\cong R_2$.
\begin{cor}  We have $ gr_{\gamma}(\bF)\cong CH^*(\bF)_{(p)}.$
\end{cor}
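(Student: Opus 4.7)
The plan is to establish the integral isomorphism by showing that the ideal $I \subset CH^*(\bF)_{(p)}$ from Theorem~1.1, for which $gr_\gamma(\bF) \cong CH^*(\bF)_{(p)}/I$, is zero; the strategy is to reduce to a module-theoretic comparison on $R(\bG)$ and then propagate through the motive decomposition. Since $G$ is of type $(I)$ and $\bG$ is versal, the paragraph preceding the corollary gives $R(\bG) \cong R_2$, while Corollary~3.4 yields $P'(y) = \bZ/p$. Theorem~3.1 then provides the $p$-local motive decomposition $M(\bF)_{(p)} \cong \oplus_u R(\bG) \otimes \bT^{\otimes u}$, which on Chow groups produces an additive $\bZ_{(p)}$-isomorphism $CH^*(\bF)_{(p)} \cong CH^*(R(\bG)) \otimes_{\bZ_{(p)}} L$, where $L$ is a graded free $\bZ_{(p)}$-module lifting a $\bZ/p$-basis of $S(t)/(b)$. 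Because $CH^*(\bF)$ is multiplicatively generated by Chern classes pulled back from $CH^*(BT)$ (Corollary~3.3), the same decomposition is compatible with the gamma filtration, so it suffices to prove $gr_\gamma(R(\bG)) \cong CH^*(R(\bG))$ additively as graded $\bZ_{(p)}$-modules.

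To this end, I would first compute $CH^*(R_2)$ additively from Theorem~8.2. That theorem provides $\Omega^*(R_2) \cong BP^*\{1\} \oplus I_2 \cdot BP^*[y]^+/(y^p)$ with $I_2 = (p, v_1)$. Applying $CH^*(X)_{(p)} \cong \Omega^*(X) \otimes_{BP^*} \bZ_{(p)}$, one needs $I_2 \otimes_{BP^*} \bZ_{(p)}$. Using the short presentation
\[
0 \to BP^* \xrightarrow{\;c \mapsto (v_1 c,\, -pc)\;} BP^* \oplus BP^* \to I_2 \to 0
\]
and tensoring with $\bZ_{(p)} = BP^*/(v_1, v_2, \ldots)$ kills the first coordinate of the image of the leftmost map, giving $I_2 \otimes_{BP^*} \bZ_{(p)} \cong \bZ_{(p)} \oplus \bZ/p$, with the $\bZ_{(p)}$-summand represented by the class of $p$ and the $\bZ/p$-summand by the class of $v_1$. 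Therefore
\[
CH^*(R_2) \cong \bZ_{(p)}\{1\} \oplus \bigoplus_{i=1}^{p-1}\bigl(\bZ_{(p)}\{py^i\} \oplus \bZ/p\{v_1 y^i\}\bigr).
\]
Comparing with $gr_\gamma(R(\bG)) \cong \bZ_{(p)}\{1, b_2, \ldots, b_{2p-2}\} \oplus \bZ/p\{b_1, b_3, \ldots, b_{2p-3}\}$ from Theorem~8.1 and using $|v_1| = -2(p-1)$, $|y| = 2(p+1)$, one verifies $|b_{2i}| = 2i(p+1) = |py^i|$ and $|b_{2i-1}| = 2i(p+1) - 2(p-1) = |v_1 y^i|$, so the two modules are abstractly isomorphic degree-by-degree.

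The canonical ring surjection $CH^*(R(\bG)) \twoheadrightarrow gr_\gamma(R(\bG))$ is then a degreewise surjection of $\bZ_{(p)}$-modules whose components are $0$, $\bZ_{(p)}$, or $\bZ/p$ and which match on both sides, which forces it to be an isomorphism. Tensoring with $L$ yields $gr_\gamma(\bF) \cong CH^*(\bF)_{(p)}$. The main obstacle is verifying cleanly that the gamma filtration respects the motive decomposition: that the splitting $K^*(\bF) \cong K^*(R(\bG)) \otimes L$ induced by $M(\bF)_{(p)} \cong \oplus_u R(\bG) \otimes \bT^{\otimes u}$ is compatible with the Chern classes defining $F^*_\gamma$. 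This hinges on Corollary~3.3 and the fact that the generators of $CH^*(\bF)$ arise from $CH^*(BT)$, so Chern classes on $\bF$ and on $R(\bG)$ originate from the same pullback, ensuring $gr_\gamma(\bF) \cong gr_\gamma(R(\bG)) \otimes L$.
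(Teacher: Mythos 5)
Your proposal follows the same strategy as the paper's proof: identify $gr_\gamma(R(\bG))\cong B$ via Theorem~8.1, identify $B$ additively with $CH^*(R_2)$ via Theorem~8.2, use $R(\bG)\cong R_2$, and then conclude $I=0$ in the presentation $gr_\gamma(\bF)\cong CH^*(\bF)/I$ from the motive decomposition. The extra material you supply (the explicit $\mathrm{Tor}$ calculation showing $I_2\otimes_{BP^*}\bZ_{(p)}\cong\bZ_{(p)}\oplus\bZ/p$, the degree-by-degree check $|b_{2i}|=|py^i|$, $|b_{2i-1}|=|v_1y^i|$) correctly fills in the steps the paper compresses into the single phrase ``which is isomorphic to $CH^*(R_2)$''; one small slip is calling the comparison a ``ring surjection'' --- as the paper's remark after Corollary~3.4 notes, $CH^*(R(\bG))$ has no canonical ring structure, so the comparison should be of graded $\bZ_{(p)}$-modules, which is all the argument needs.
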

\begin{proof}
We have the additive isomorphism
\[ gr_{\gamma}(R(\bG))\cong B=\bZ/p\{b_1,...,b_{2p-3}\}\oplus
\bZ_{(p)}\{1,b_2,...,b_{2p-2}\}\]
\[ \cong \bZ/p\{c_1(y),...,c_1(y^{p-1})\}\oplus
\bZ_{(p)}\{1,c_0(y),...,c_0(y^{p-1})\},\]
which is isomorphic to $CH^*(R_2).$
The fact that  $gr_{\gamma}(\bF)\cong CH^*(\bF)/I$ for some ideal $I$ implies $I=0$ and the theorem.
\end{proof}
The above $I$ is nonzero for $R_n$ when $n\ge 3$.
\begin{lemma}
There is an isomorphism for  $n\ge 3$,  
  \[ gr^{2*}_{geo}(R_n)\cong CH^*(R_n)/I\quad with\
  I=\bZ/p\{c_2,...,c_{n-1}\}[y]/(y^{p-1}).\]
\end{lemma}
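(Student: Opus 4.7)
The plan is to identify the ideal $I=\cup_r \Img(d_r)$ by realizing it as the kernel of the edge homomorphism $CH^*(R_n) \twoheadrightarrow AE_\infty^{2*,*,0} \hookrightarrow A\tilde K(1)^*(R_n)$ in the motivic AHss. By Lemma 4.2 and Lemma 4.6, $gr^{2*}_{geo}(R_n) \cong AE_\infty^{2*,*,0}(R_n)$ computed in the spectral sequence for $A\tilde K(1)^{*,*'}$, so the problem reduces to determining which classes in $CH^*(R_n)$ vanish under the edge map, and why the surviving classes are linearly independent in the abutment.

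The key computation applies the Landweber exact functor theorem (Lemma 4.5) to the description $\Omega^*(R_n) \cong BP^*\{1\} \oplus I_n^+[y]/(y^p)$ from Theorem 7.2. Since $v_j=0$ in $\tilde K(1)^*$ for $j \ge 2$ while $v_1$ is a unit, the ideal $I_n=(p,v_1,\ldots,v_{n-1})$ restricts to the unit ideal in $\tilde K(1)^*$, so
\[
A\tilde K(1)^*(R_n) \cong \Omega^*(R_n)\otimes_{BP^*}\tilde K(1)^* \cong \tilde K(1)^*\{1,y,y^2,\ldots,y^{p-1}\},
\]
a free $\tilde K(1)^*$-module of rank $p$, with no torsion or extension issues. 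Each generator $c_j(y^i)=v_j y^i$ of $CH^*(R_n)/p$ then maps to $v_j y^i \in A\tilde K(1)^*(R_n)$: for $j \ge 2$ the image vanishes, placing the $(n-2)(p-1)$ classes $\{c_j(y^i):2\le j\le n-1,\,1\le i\le p-1\}$ inside $I$. For $j=0,1$ the images $py^i$ and $v_1y^i$ are nonzero elements of the free cyclic summand $\tilde K(1)^*\cdot y^i$, and together with $1$ they provide $1+2(p-1)$ independent representatives that must survive to $AE_\infty^{2*,*,0}$. This pins down $I$ as the $\bZ/p$-span of $\{c_j(y^i):2\le j\le n-1,\,1\le i\le p-1\}$, agreeing with the stated formula.

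The main subtle point is to conclude that the kernel of $CH^*(R_n) \to A\tilde K(1)^*(R_n)$ coincides with $I$ rather than only containing it: this requires the inclusion $AE_\infty^{2*,*,0}\hookrightarrow A\tilde K(1)^*(R_n)$ to be injective, i.e.\ that the motivic filtration on the abutment admits no extensions collapsing the $0$-row. Here this is transparent from the $\tilde K(1)^*$-freeness computed above: the classes $c_0(y^i)$ and $c_1(y^i)$ land in distinct Chow-degree components (from the distinct motivic weights of $v_0$ and $v_1$), so no lowest-filtration representative is absorbed by a higher-filtration one. The forward inclusion $I\supset \bZ/p\{c_j(y^i):j\ge 2\}$ is immediate from the vanishing of $v_j$ in $\tilde K(1)^*$; the harder reverse inclusion is secured by this free-module structure.
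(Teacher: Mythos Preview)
Your argument is essentially correct and follows the same core idea as the paper: show that $c_j(y^i)$ dies in $K^*(R_n)$ for $j\ge 2$ because $v_j=0$ in $\tilde K(1)^*$. The paper's proof is actually terser than yours and establishes only this forward inclusion: it records the relation $v_1c_j(y)=v_1v_jy=v_jc_1(y)$ in $\Omega^*(\bar R_n)$, transports it to $\Omega^*(R_n)$ via injectivity of $res_\Omega$, and then observes that in $K^*(R_n)/p$ the right side vanishes, so $c_j(y)=0$. You reach the same conclusion more globally, by computing $A\tilde K(1)^*(R_n)\cong\Omega^*(R_n)\otimes_{BP^*}\tilde K(1)^*\cong \tilde K(1)^*\{1,y,\dots,y^{p-1}\}$ and reading off $c_j(y^i)\mapsto v_jy^i=0$.

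Where you go beyond the paper is in treating the reverse inclusion, which the paper omits entirely. Your justification is a bit compressed but the underlying computation is sound: under the identification $K^0_{alg}(R_n)\cong\bZ_{(p)}\{e_0,\dots,e_{p-1}\}$ (via $res_K$ and the Todd genus $v_1\mapsto 1$), the lifts of $c_0(y^i)$ and $c_1(y^i)$ in $\Omega^*(R_n)$ map to $pe_i$ and $e_i$ respectively, while $c_j(y^i)\mapsto 0$ for $j\ge 2$. Since the degrees $k_{i,j}=i\deg y-(p^j-1)$ are strictly ordered with $k_{i,1}<k_{i,0}<k_{i+1,n-1}$, the next filtration level $F^{2k_{i,1}+2}_{geo}$ is generated by $pe_i$ together with $e_{i'},pe_{i'}$ for $i'>i$, and $e_i$ is visibly not in this subgroup. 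That is the content of ``no lowest-filtration representative is absorbed.''

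One imprecision worth flagging: there is no canonical map $CH^*(R_n)\to K^*(R_n)$ making your sentence ``$c_j(y^i)$ maps to $v_jy^i$'' literally true, since $\Omega^*\to K^*$ does not factor through $CH^*$ (indeed $v_1\mapsto$ unit). What you are really using is that the composite $\Omega^k(R_n)\to K^0_{alg}(R_n)\to K^0_{alg}(R_n)/F^{2k+2}_{geo}$ \emph{does} factor through $CH^k(R_n)$, because any element of $v_i\Omega^{k+(p^i-1)}$ lands in $F^{2k+2}_{geo}$. With this understood, your filtration argument goes through.
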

\begin{proof}   First, note  $v_j=0$ in $K^*=\bZ_{(p)}[v_1,v_1^{-1}]$ for $j\not =0,1$.  We have   
\[ v_1c_j(y)=v_1v_jy=v_jc_1(y)\quad in \ \Omega^*(\bar R_n).\]
Since $res_{\Omega}$ is injective, we see
$v_1c_j(y)=v_jc_1(y)=0$ in $K^*(R_n)/p$.
\end{proof}

\section{The case $G=E_8$ and $p=3$}

 Throughout this section, let
$(G,p)=(E_8,3)$.
  The cohomology $H^*(G;\bZ/3)$ is isomorphic to
([Mi-Tod]) 
\[ \bZ/3[y_{8},y_{20}]/(y_8^3,y_{20}^3)\otimes
\Lambda(z_3,z_7,z_{15},z_{19},z_{27},z_{35},z_{39},z_{47}).\]
Here the suffix means its degree, e.g., $|z_i|=i$.
By Kono-Mimura \cite{Ko-Mi} the actions of cohomology 
operations
are also known
\begin{thm} (\cite{Ko-Mi})
We have $P^3y_8=y_{20}$, and 
\[ \beta: z_7\mapsto y_8,\ \ z_{15}\mapsto y_8^2,\ \ 
z_{19}\mapsto y_{20},\ \
 z_{27}\mapsto y_{8}y_{20},\ \ z_{35}\mapsto y_{8}^2y_{20},\]
\[
z_{39}\mapsto y_{20}^2,\ \ z_{47}\mapsto y_8y_{20}^2,\] 
\[ P^1:\ z_3\mapsto z_7,\ \  z_{15}\mapsto z_{19},\ \  z_{35}
\mapsto z_{39}\] \[  P^3:\ z_7\mapsto z_{19},\ \  z_{15}\mapsto z_{27}
\mapsto -z_{39},\ \
 z_{35}\mapsto z_{47}.\]
\end{thm}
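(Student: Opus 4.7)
The plan is to combine three ingredients: (i) Borel's description of the $3$-torsion in $H^*(E_8;\bZ)$, (ii) naturality under the inclusion $F_4\hookrightarrow E_8$, for which the mod-$3$ Steenrod algebra action is classical, and (iii) the Adem relations together with the commutation $\beta P^i=P^i\beta$.

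First I would fix the odd-degree generators so that the Bockstein formulas hold. Borel's theorem produces a $\bZ/3$-summand in $H^j(E_8;\bZ)$ for each $j\in\{8,16,20,28,36,40,48\}$, so in each corresponding odd mod-$3$ degree $j-1$ there is, up to decomposables in lower odd generators, exactly one primitive class whose Bockstein is nonzero. Since each target even degree is one-dimensional modulo decomposables, suitable scaling of the $z_{2i+1}$'s arranges $\beta(z_7)=y_8$, $\beta(z_{15})=y_8^2$, $\beta(z_{19})=y_{20}$, $\beta(z_{27})=y_8y_{20}$, $\beta(z_{35})=y_8^2y_{20}$, $\beta(z_{39})=y_{20}^2$ and $\beta(z_{47})=y_8y_{20}^2$ simultaneously.

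Next, the actions of $P^1$ and $P^3$ on the low-degree classes $z_3$ and $z_7$ are pulled back from $F_4$, where $P^1z_3=z_7$ and $P^3z_7=z_{19}$ are classical (Borel--Serre). The formula $P^3y_8=y_{20}$ then drops out by applying $P^3$ to $\beta z_7=y_8$ and using $P^3\beta=\beta P^3$ together with $P^3 z_7=z_{19}$ and $\beta z_{19}=y_{20}$. The remaining $P^1$ and $P^3$ relations on $z_{15}$ and $z_{35}$ follow the same pattern: the targets lie in odd degrees $19, 27, 39, 47$ where the indecomposable content modulo lower primitives is one-dimensional, and compatibility with $\beta$ (together with the Cartan formula for $P^3$ applied to monomials such as $y_8^2$) picks out the correct element.

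The hardest part will be the sign in $P^3z_{27}=-z_{39}$, equivalently the computation of $(P^3)^2z_{15}$. I would apply the mod-$3$ Adem relation for $P^3P^3$, a $\bZ/3$-linear combination of admissible monomials such as $P^5$ and $P^4P^1$, identify $P^5z_{15}$ and $P^4z_{19}$ inside $H^{39}(E_8;\bZ/3)$, in which the only non-decomposable primitive is $z_{39}$, and read off the resulting coefficient $-1$. Since $z_{39}$ is primitive in the Hopf algebra $H^*(E_8;\bZ/3)$ and $P^3$ preserves primitivity, the sign is determined by a single explicit application of the Adem relation; the bookkeeping of which admissible monomials contribute is exactly the delicate step that has to be carried out with care.
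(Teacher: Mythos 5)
The paper does not prove this statement at all: Theorem 9.1 is quoted from Kono--Mimura, and the citation is the proof. So the only question is whether your sketch would actually establish the result independently, and as written it would not; several of its load-bearing steps are incorrect. First, your ingredient (iii) is false: $\beta$ and $P^i$ do not commute in the mod $3$ Steenrod algebra. Both $\beta P^i$ and $P^i\beta$ are admissible and linearly independent, and the paper itself relies on the nonvanishing of the Milnor operation $Q_1=P^1\beta-\beta P^1$. Your derivation of $P^3y_8=y_{20}$ from $\beta z_7=y_8$, and the ``compatibility with $\beta$'' used to pin down $P^1z_{15}$, $P^3z_{15}$, $P^3z_{35}$, all rest on this commutation and therefore collapse (the correct commutators introduce exactly the $Q_i$-correction terms you are discarding). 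Second, the primitivity-based normalizations cannot be carried out: since $\beta$ is compatible with the coproduct, a primitive class has primitive Bockstein, so generators such as $z_{15}$, $z_{27}$, $z_{35}$, $z_{39}$, $z_{47}$, whose Bocksteins $y_8^2$, $y_8y_{20}$, $y_8^2y_{20}$, $y_{20}^2$, $y_8y_{20}^2$ are decomposable and not primitive, cannot be chosen primitive. In particular the final sign argument (``$z_{39}$ is primitive and $P^3$ preserves primitivity'') starts from a false premise; determining the nontrivial coproducts is part of what Kono--Mimura actually do.

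Third, the external inputs are not available as stated. $H^*(F_4;\bZ/3)\cong \bZ/3[y_8]/(y_8^3)\otimes\Lambda(z_3,z_7,z_{11},z_{15})$ has no generator in degree $19$, so ``$P^3z_7=z_{19}$ in $F_4$'' is meaningless; you would need $E_7$ or $E_8$ itself, whose operations are again Kono--Mimura's computation, plus injectivity of the restriction map in the relevant degrees. Appealing to the integral $3$-torsion of $E_8$ in degrees $8,16,20,28,36,40,48$ to force all seven primary Bocksteins is also close to circular, since that torsion pattern is normally deduced from precisely these Bockstein computations. Finally, the Adem step is off: $P^3P^3$ has degree $24$, so it cannot be a combination of $P^5$ and $P^4P^1$ (degree $20$); the correct relation is $P^3P^3=-P^6+P^5P^1$ mod $3$, and evaluating $P^6z_{15}$ and $P^5z_{19}$ requires information you have not established, so the sign in $P^3z_{27}=-z_{39}$ is not obtained by a single application of an Adem relation. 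If you do not want to simply cite Kono--Mimura as the paper does, you would have to redo their analysis (Hopf algebra structure, coproducts, detection by suitable subgroups), not just Adem relations plus a rank-one piece of $F_4$ data.
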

We use notations $y=y_8,y'=y_{20}$, and $x_1=z_3,...,x_8=z_{47}$.
Then we can rewrite the isomorphisms 
\[ H^*(G;\bZ/3)\cong \bZ/3[y,y']/(y^3,(y')^3)\otimes 
\Lambda(x_1,...,x_{8}), \]
\[ grH^*(G/T;\bZ/3)\cong \bZ/3[y,y']/(y^3,(y')^3)\otimes 
S(t)/( b_{1}, ,...,b_{8}).\]
From Corollary 2.2, we have 
\begin{cor} (\cite{Ya6})
We can take $b_1\in BP^*(BT)$ such that 
 \[v_1y+v_2y'=b_1\quad in\  BP^*(G/T)/\II.\]
\end{cor}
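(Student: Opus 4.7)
The plan is to apply Corollary~2.2 directly to the primitive generator $x = z_3 \in H^3(E_8;\bZ/3)$, whose transgression in the Serre spectral sequence for $G \to G/T \to BT$ is $d_4(z_3) = b_1 \in H^4(BT;\bZ/3) \subset S(t)/3$. That corollary supplies a lift $b_1 \in BP^*(BT)$ satisfying
\[
b_1 \;\equiv\; \sum_{i\ge 0} v_i\, y(i) \pmod{\II} \qquad \text{in } BP^*(G/T),
\]
with each $y(i) \in H^*(G/T;\bZ/3)$ chosen so that $\pi^* y(i) = Q_i z_3$. So the task reduces to determining the Milnor operations $Q_i z_3$, using the Steenrod action on $H^*(E_8;\bZ/3)$ recorded in Theorem~9.1.

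A direct computation gives the three lowest cases. First, $Q_0 z_3 = \beta z_3 = 0$, since $H^4(E_8;\bZ/3) = 0$ (the polynomial part is generated only from degree $8$ onwards). Next, $Q_1 z_3 = P^1\beta z_3 - \beta P^1 z_3 = -\beta z_7 = -y_8 = -y$ by Theorem~9.1. Third, $Q_2 z_3 = P^3 Q_1 z_3 - Q_1 P^3 z_3 = -P^3 y_8 = -y_{20} = -y'$, where the second summand is killed by the instability bound $P^3 z_3 = 0$ (since $2\cdot 3 > |z_3|$), and the first uses the identity $P^3 y_8 = y_{20}$ from Theorem~9.1. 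These contribute the desired terms $v_1(-y) + v_2(-y')$ to the sum.

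To conclude, one must check that $Q_i z_3 = 0$ for all $i \ge 3$, so no further terms appear. The recursion $Q_i = P^{3^{i-1}} Q_{i-1} - Q_{i-1} P^{3^{i-1}}$ combined with instability on $z_3$ reduces this inductively to showing $P^{3^{i-1}}$ annihilates $Q_{i-1} z_3 \in \{y_8, y_{20}, 0\}$; the first nontrivial case, $P^9 y_{20} = P^9 P^3 y_8$, is the main computational obstacle, and is verified via the Adem relations together with the vanishing $y_8^3 = y_{20}^3 = 0$ inside $H^*(E_8;\bZ/3)$. With this in hand, the expression collapses to $b_1 \equiv -v_1 y - v_2 y' \pmod{\II}$, and replacing the transgression lift $b_1$ by $-b_1$ produces the stated identity $v_1 y + v_2 y' = b_1$ in $BP^*(G/T)/\II$.
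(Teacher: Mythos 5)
Your overall strategy is exactly the paper's: apply Corollary 2.2 to the transgressing generator $x=z_3$, read off the $Q_i z_3$ from Theorem 9.1, and absorb the overall sign into the choice of lift $b_1$. The computations $Q_0 z_3 = 0$, $Q_1 z_3 = -\beta P^1 z_3 = -y_8$, $Q_2 z_3 = P^3 Q_1 z_3 = -P^3 y_8 = -y_{20}$ are correct and agree with the intended derivation.

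The weak point is your treatment of $Q_n z_3$ for $n\ge 3$. You need this vanishing (a nonzero $Q_3 z_3$ would contribute a genuine $v_3$-term, since $v_3\in I_\infty\setminus I_\infty^2$), and you correctly reduce it to $P^9 y_{20} = P^9 P^3 y_8 = 0$; but the proposed route ``via the Adem relations together with $y_8^3=y_{20}^3=0$'' is not actually carried out and does not obviously close. At $p=3$ the monomial $P^9P^3$ is admissible ($9\ge 3\cdot 3$), so Adem relations do not rewrite it, and its excess is $2\cdot 9 - 2(p-1)\cdot 3 = 6 < 8 = |y_8|$, so the unstable bound does not kill $P^9P^3 y_8$ either. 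The clean argument is Hopf-algebraic, not Adem-theoretic: $z_3$ is primitive and each $Q_n$ is a primitive derivation, hence $Q_n z_3$ is a primitive element of even degree; in $H^*(E_8;\bZ/3)\cong \bZ/3[y_8,y_{20}]/(y_8^3,y_{20}^3)\otimes\Lambda(z_3,\ldots,z_{47})$ the only even-degree primitives are $y_8$ and $y_{20}$ (the truncations $y_8^3=y_{20}^3=0$ kill the would-be $p$-th-power primitives, which is the real role of those relations). Since $|Q_n z_3| = 2 + 2\cdot 3^n \ge 56 > 20$ for $n\ge 3$, this forces $Q_n z_3 = 0$, and the sum in Corollary 2.2 terminates as claimed. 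You should replace the Adem sentence with this degree-and-primitivity argument (or simply cite Kono--Mimura); as written, that step is a gap.
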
 

From the preceding theorem, we know that
all $y^i(y')^j$ except for $(i,j)=(0,0)$ and $(2,2)$ are $\beta$-image.
Hence we have 
\begin{cor} 
For all nonzero monomials  $u\in P(y)^+/3$
except for $(yy')^2$,  it holds  $3u\in S(t)$.
In fact, in  $BP^*(G/T)/I_{\infty}^2$
\[ b_1=v_1y+v_2y',\ \ b_2=3y,\ \ b_3=3y^2+v_1y' \ \ b_4=3y',\]
\[ b_5=3yy',\ \ b_6=3y^2y'+v_1(y')^2,\ \ b_7=3(y')^2,\ \ b_8=3y(y')^2.\]
 \end{cor}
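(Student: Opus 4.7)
The plan is to apply Corollary 2.2 to each of the eight primitive transgressive generators $x_i = z_{|x_i|}$ of $H^*(E_8;\bZ/3)$. That corollary provides, for each $i$, a lift of the transgression image $b_i \in S(t)/3$ to $BP^*(BT)$ satisfying
\[
b_i \equiv 3\, Q_0(x_i) + v_1\, Q_1(x_i) + v_2\, Q_2(x_i) \pmod{\II}
\]
in $BP^*(G/T)/\II$, with no contributions from $v_n$ for $n\ge 3$ in the relevant degree range (since $|v_n Q_n(x_i)| \le |x_8|=47$ forces $n\le 2$). So the proof reduces to computing $Q_0, Q_1, Q_2$ on each $x_i$.

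For $Q_0 = \beta$, Theorem 8.1 reads off the values of $\beta(x_i)$ directly, producing the $3u$-term of each $b_i$: $\beta(z_3)=0$, $\beta(z_7)=y$, $\beta(z_{15})=y^2$, $\beta(z_{19})=y'$, and so on. For $Q_1 = P^1\beta - \beta P^1$, I would combine Theorem 8.1 with the Cartan formula and the key vanishings $P^1(y)=0$ and $P^1(y')=0$, which hold because $H^{12}$ and $H^{24}$ of $H^*(E_8;\bZ/3)$ contain no polynomial monomial in $y,y'$ of the correct degree. This yields $Q_1(z_3)=-y$, $Q_1(z_{15})=-y'$, $Q_1(z_{35})=-(y')^2$, and $Q_1(x_i)=0$ for the remaining four generators; in each such case both $P^1\beta x_i$ and $\beta P^1 x_i$ vanish, the former by the vanishings above and the latter because $P^1 x_i$ lies in a subspace on which $\beta$ acts trivially. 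For $Q_2 = P^3 Q_1 - Q_1 P^3$, only $Q_2(z_3)=-y'$ survives, arising from $P^3(Q_1 z_3) = -P^3(y) = -y'$ by Theorem 8.1, together with $P^3(z_3)=0$ from the unstable condition $2\cdot 3 > |z_3|$. All other $Q_2(x_i)$ vanish by parallel arguments.

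After absorbing signs into the choice of lifts, these values assemble into the eight explicit formulas. The first half of the corollary then follows by projecting $BP^*(G/T)/\II \twoheadrightarrow H^*(G/T)_{(3)}$, which annihilates all $v_n$-terms: from $b_2,b_4,b_5,b_7,b_8$ one reads off $3y,\, 3y',\, 3yy',\, 3(y')^2,\, 3y(y')^2$ directly, and from $b_3 \equiv 3y^2$ and $b_6 \equiv 3y^2 y'$ modulo $v_1$ one obtains the remaining two cases. The monomial $(yy')^2 = y^2(y')^2$ is the sole exception because no $x_i$ has this as its $\beta$-image: the top generator $z_{47}$ transgresses only to $y(y')^2$. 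The main obstacle is the systematic verification in the second step: checking each $Q_1(x_i)$ and $Q_2(x_i)$ requires a number of Cartan formula expansions and degree counts in $H^*(E_8;\bZ/3)$; none is individually deep, but the list is long and must be carried out carefully to confirm all the claimed zeroes.
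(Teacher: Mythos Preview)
Your approach is the same as the paper's (and indeed the paper gives no proof beyond the implicit appeal to Corollary 2.2 together with the Kono--Mimura operations in Theorem 9.1). However, your justification for discarding the $v_n$-terms with $n\ge 3$ is wrong as written. You claim that ``$|v_nQ_n(x_i)|\le |x_8|=47$ forces $n\le 2$'', but $|v_n|=-2(3^n-1)$ while $|Q_n|=2\cdot 3^n-1$, so $|v_nQ_n(x_i)|=|x_i|+1$ \emph{independently of $n$}; that inequality places no constraint on $n$ at all.

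The correct bound comes from the target, not the source: $Q_n(x_i)$ has even degree and hence lies in $P(y)/3=\bZ/3[y,y']/(y^3,(y')^3)$, whose top nonzero degree is $|y^2(y')^2|=56$. Thus one needs $|x_i|+2\cdot 3^n-1\le 56$. For $n\ge 4$ this is impossible, and for $n=3$ it forces $|x_i|\le 3$, leaving only $x_1=z_3$. So your plan handles $b_2,\ldots,b_8$ correctly, but for $b_1$ there is one more case: $Q_3(z_3)$ lands in degree $56$ and could a priori equal a multiple of $y^2(y')^2$. You must still check that $Q_3(z_3)=P^9Q_2(z_3)-Q_2P^9(z_3)=-P^9(y')$ vanishes (the second term is zero by the unstable condition) before the stated formula $b_1=v_1y+v_2y'$ is justified in $BP^*(G/T)/\II$. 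The rest of your outline is sound and matches the paper's intended argument.
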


\begin{lemma}  
The restriction map $res_K$ is isomorphic.
\end{lemma}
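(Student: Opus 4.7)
The plan is to invoke Corollary 5.4: for a versal flag variety $\bF'$, the restriction map $res_K$ is isomorphic provided each multiplicative generator of $P(y)\subset H^*(G;\bZ/p)$ is in the image of the Milnor operation $Q_1$. For $(G,p)=(E_8,3)$, the polynomial part
\[
P(y)=\bZ/3[y_8,y_{20}]/(y_8^3,y_{20}^3)
\]
has only two generators $y=y_8$ and $y'=y_{20}$, so I only need to realise each as $Q_1x$ for some $x\in\Lambda(z_3,\ldots,z_{47})\subset H^*(E_8;\bZ/3)$.

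First I would record the identity $Q_1=P^1\beta-\beta P^1$ (valid since $Q_1=[P^{p^0},Q_0]$ and $p=3$), and then read off from Kono--Mimura (Theorem 9.1) the actions that will be used:
\[
P^1 z_3 = z_7,\quad \beta z_7 = y,\quad P^1 z_{15} = z_{19},\quad \beta z_{19} = y',\quad \beta z_{15} = y^2.
\]
For the candidate $x=z_3$ I also need $\beta z_3 = 0$, which I would obtain from the fact that $\beta z_3\in H^4(E_8;\bZ/3)$ and this group vanishes: no monomial in the generators $y_8,y_{20},z_3,z_7,z_{15},z_{19},z_{27},z_{35},z_{39},z_{47}$ has total degree $4$. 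Then
\[
Q_1 z_3 \;=\; P^1\beta z_3 - \beta P^1 z_3 \;=\; 0 - \beta z_7 \;=\; -y,
\]
so $y$ lies in the image of $Q_1$.

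For $y'$, the Cartan formula gives $P^1(y^2) = 2y\cdot P^1 y$, and $P^1 y\in H^{12}(E_8;\bZ/3)$ which is also zero by the same degree check (no monomial in the above generators totals $12$). Hence
\[
Q_1 z_{15} \;=\; P^1\beta z_{15} - \beta P^1 z_{15} \;=\; P^1(y^2) - \beta z_{19} \;=\; 0 - y' \;=\; -y',
\]
so $y'$ is also a $Q_1$-image. Applying Corollary 5.4 then yields the lemma.

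The only genuine work lies in the vanishing of $H^4(E_8;\bZ/3)$ and $H^{12}(E_8;\bZ/3)$, which is a mild combinatorial check in the explicit presentation of $H^*(E_8;\bZ/3)$; this is the main (though modest) obstacle. Everything else is a direct substitution into $Q_1=P^1\beta-\beta P^1$ using the Kono--Mimura data already recorded in Theorem 9.1.
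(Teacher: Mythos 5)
Your proposal is correct and takes essentially the same approach as the paper: both apply Corollary 5.4 after observing that $Q_1 x_1 = \pm y$ and $Q_1 x_3 = \pm y'$ (with $x_1 = z_3$, $x_3 = z_{15}$). The only difference is that the paper asserts these $Q_1$-actions directly, while you supply the computation via $Q_1 = P^1\beta - \beta P^1$ together with the degree checks $H^4(E_8;\bZ/3)=H^{12}(E_8;\bZ/3)=0$; this is a welcome fill-in, not a different route.
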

\begin{proof}
From $Q_1x_1=y$ and $Q_1x_3=y'$, we see $b_1=v_1y,
b_3=3y^2+v_1y'\ mod(v_1^2)$
in $k^*(G/T)/3$.  From Corollary 5.4, we have the lemma.
\end{proof}

In $K^*(R(\bG))$, let  
$\bar b_3=b_3-3(v_1^{-1}b_1)^2$ so that we have $\bar b_3=v_1y'$.  We have the following graded algebra, while
  $\bar b_3\not \in CH^*(R(\bG))$.  
\begin{lemma}  
Let us write $gr_3(K^*(R(\bG))=K^*(R(\bG))/3\oplus 3K^*(R(\bG))$.  Then 
\[gr_3K^*(R(\bG))  \cong 
K^*\otimes ( P(\bar b)\oplus \bar B_2)\quad where \]
\[ \begin{cases}
 P(\bar b)=\bZ_{(3)}[b_1,\bar b_3]/(b_1^3,\bar b_3^3),   \\ 
 \bar B_2=\bZ_{(3)}\{3,b_2,b_4,b_5,b_6',b_7,b_8\}
\oplus \bZ_{(3)}\{b_1b_2,b_1b_8\},\quad with\ b_6'=v_1b_6-b_3^2.
\end{cases}\]
\end{lemma}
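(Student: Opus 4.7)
The plan is to exploit the isomorphism $res_K\colon K^*(R(\bG))\xrightarrow{\cong} K^*(\bar R(\bG))$ from Lemma~8.4, together with the $K^*$-module freeness $K^*(\bar R(\bG))\cong K^*[y,y']/(y^3,(y')^3)$ on the nine monomials $y^i(y')^j$, $0\le i,j\le 2$. Since we are working in the Morava $K$-theory $\tilde K(1)$, where $v_n=0$ for $n\ge 2$, Corollary~8.3 specializes under this identification to
\[b_1=v_1y,\quad b_2=3y,\quad b_3=3y^2+v_1y',\quad b_4=3y',\]
\[b_5=3yy',\quad b_6=3y^2y'+v_1(y')^2,\quad b_7=3(y')^2,\quad b_8=3y(y')^2,\]
and hence $\bar b_3=b_3-3(v_1^{-1}b_1)^2=v_1y'$, since $v_1^{-1}b_1=y$ there.

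For the mod-$3$ summand, I would first check $b_1^3=v_1^3y^3=0$ and $\bar b_3^3=v_1^3(y')^3=0$, so that $P(\bar b)=\bZ_{(3)}[b_1,\bar b_3]/(b_1^3,\bar b_3^3)$ maps into $K^*(R(\bG))$. Its nine monomials $b_1^i\bar b_3^{\,j}=v_1^{i+j}y^i(y')^j$ realize, up to the units $v_1^{i+j}\in K^*$, the standard basis of $K^*(\bar R(\bG))/3$, yielding an isomorphism $K^*(R(\bG))/3\cong (K^*/3)\otimes P(\bar b)$.

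For the $3$-divisible summand I would exhibit nine $K^*$-module generators of $3K^*(\bar R(\bG))$ from products of the $b_i$. The seven elements $3,\,b_2,\,b_4,\,b_5,\,b_7,\,b_8$ together with $b_1b_2=3v_1y^2$ and $b_1b_8=3v_1y^2(y')^2$ are visibly $3$-divisible. The element $b_6$ is not, because of the summand $v_1(y')^2=v_1^{-1}\bar b_3^{\,2}$; the corrected element $b_6'=v_1b_6-b_3^2$ is tailored to kill it. Using $y^3=0$, a short computation gives $b_3^2=6v_1y^2y'+v_1^2(y')^2$, so $b_6'=-3v_1y^2y'$, indeed $3$-divisible. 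These nine elements hit, up to units $v_1^k$, all nine classes $3y^i(y')^j$ in $3K^*(\bar R(\bG))$, giving $3K^*(R(\bG))\cong K^*\otimes \bar B_2$ and hence the claimed decomposition.

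The main obstacle is the bookkeeping of $v_1$-powers and signs when translating between the $\{b_i\}$-generators on the $K^*(R(\bG))$-side and the $\{y^i(y')^j\}$-basis on the $K^*(\bar R(\bG))$-side. One must verify that the nine elements of $P(\bar b)$ truly form a $(K^*/3)$-basis (not merely a spanning set) for $K^*(R(\bG))/3$, that the nine elements of $\bar B_2$ do so for $3K^*(R(\bG))$, and that no unexpected relation couples the two summands. Both freeness claims follow from the $K^*$-freeness of $K^*(\bar R(\bG))$ on $\{y^i(y')^j\}$ together with invertibility of $v_1$ in $\tilde K(1)$, but the direct-sum splitting still requires a careful degree-by-degree check.
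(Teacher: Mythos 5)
Your proposal is correct and follows essentially the same route as the paper: both use the isomorphism $res_K\colon K^*(R(\bG))\cong K^*[y,y']/(y^3,(y')^3)$ with $b_1\mapsto v_1y$, $\bar b_3\mapsto v_1y'$, and then match the nine mod-$3$ classes with the monomials of $P(\bar b)$ and the nine $3$-divisible classes $3y^i(y')^j$ (up to $v_1$-powers) with $3,b_2,b_4,b_5,b_6',b_7,b_8,b_1b_2,b_1b_8$, including the same computations $b_6'=-3v_1y^2y'$, $3b_1^2=v_1b_1b_2$, $3b_1^2\bar b_3^2=v_1^3b_1b_8$.
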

\begin{proof}
By definition of $\bar b_3$, we see that the restriction map 
\[ K^*(R\bG_k)\cong  K^*[b_1,\bar b_3]/(b_1^3,
\bar b_3^3)\to
 K^*[y,y']/(y^3,(y')^3)\cong K^*(\bar R\bG_k)\]
is isomorphic by $res_K(b_1)=v_1y$ and $res_K(\bar b_3)=v_1y'$.
Here we note that
\[ b_2=3y=3v_1^{-1}b_1,\quad b_4=3y'=3v_1^{-1}\bar b_3,\quad
b_5=3v_1^{-2}b_1\bar b_3\]
\[ b_6=3v_1^{-3}b_1^2\bar b_3+v_1^{-1}\bar b_3^2,\quad
b_7=3v_1^{-2}\bar b_3^2,\quad
\quad b_8=3v_1^{-3}b_1\bar b_3^2.\]
Moreover we have $3b_1^2\bar b_3^2=3v_1^4(yy')^2=v_1^3b_1b_8$.

Therefore we have the isomorphism
\[ K^*\otimes (\bZ\{b_2,b_4,b_5,b_7,b_8\}
\oplus \bZ\{b_6'=v_1b_6-b_3^2\}\oplus \bZ\{b_1b_8\})\]
\[\cong \tilde K^*\otimes (3\bZ\{b_1,\bar b_3,b_1\bar b_3,\bar b_3^2,b_1\bar b_3^2\}\oplus
 3\bZ\{b_1^2\bar b_3\}\oplus 3\bZ\{b_1^2\bar b_3^2\}).\]
For the element  $3b_1^2$, we note  $ 3b_1^2=3v_1^2y^2=v_1b_1b_2.$

Thus we have  the graded ring $ K^*(R(\bG))/3\oplus 3K^*(R(\bG))$. \end{proof}

We will consider a filtration of $gr_3(K^*(R(\bG))$ and its
associated ring
\[gr'(K^*(R(\bG)))=gr(gr_3(K^*(R(\bG))),\]
which is isomorphic to $K^*\otimes B$ for some
$B\subset CH^*(R(\bG))$.

Since $b_3=\bar b_3 \ mod(3)$, we see
$P(\bar b)/3\cong P(b)/3$.  So we can replace $\bar P(\bar b)/3$ by $P(b)/3$ in $gr_3(K^*(R(\bG))$.
Using $3b_1b_2=v_1b_2^2$ and  $3b_1b_8=v_1b_2b_8$, we can write
\[  gr(K^*\otimes \{b_1b_2,b_1b_8\})
\cong K^*/3\{b_1b_2,b_1b_8\}
\oplus K^*\{b_2^2,b_2b_8\}.\] 
Since $b_3^2+b_6'=v_1b_6$, we can replace $b_6'$ by $b_6$. ( Note $|b_6|=36$  in  $ gr'(K^*(R\bG))$ but
$|b_6'|=32$ in $  gr_3(K^*(R(\bG))$.)

\begin{lemma} 
We have the injection (of graded modules)
\[ \bZ/3\{b_2,b_4,b_5,b_6,b_7,b_8\}\oplus
\bZ/3\{b_1b_2,b_1b_8,b_2^2,b_2b_8\}
\subset CH^*(R(\bG))/3.\]
\end{lemma}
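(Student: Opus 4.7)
The plan is to follow the strategy of Proposition 7.6. By Lemma 3.7, all ten listed classes lie in $CH^*(R(\bG))/3$, and a direct degree count shows that they occupy ten pairwise distinct (cohomological) degrees $8,12,16,20,28,36,40,48,52,56$. Linear independence in $CH^*(R(\bG))/3$ therefore reduces to the nonvanishing of each class separately.

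First, using Corollary 8.3 together with the formulas in the proof of Lemma 8.4, I would compute the images under the restriction $res_k:k^*(R(\bG))\to k^*(\bar R(\bG))\subset k^*[y,y']/(y^3,(y')^3)$. The six simple generators go to
\[ 3y,\ 3y',\ 3yy',\ 3y^2 y'+v_1(y')^2,\ 3(y')^2,\ 3y(y')^2, \]
and the four mixed products $b_1 b_2,\ b_1 b_8,\ b_2^2,\ b_2 b_8$ go to
\[ 3v_1 y^2,\ 3v_1 y^2(y')^2,\ 9y^2,\ 9y^2(y')^2. \]
Next, adapting the $\rho_\bQ$-construction of Proposition 7.6, I would consider the $k^*$-linear map
\[ \rho_\bQ\ :\ k^*\otimes M\ \longrightarrow\ k^*(R(\bG))\ \longrightarrow\ k^*(\bar R(\bG))\otimes_{k^*}\bQ \]
with $M$ the free $\bZ_{(3)}$-module on the ten listed generators. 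A direct inspection shows the ten images form a minimal $k^*$-module generating system of $Im(\rho_\bQ)$: they lie in ten distinct degrees, and the rational coupling $3\cdot 3v_1 y^2=v_1\cdot 9y^2$ (and its $(y')^2$-twisted analogue) fails to be a $k^*=\bZ_{(3)}[v_1]$-relation. Via the identification $CH^*(X)_{(3)}\cong k^*(X)\otimes_{k^*}\bZ_{(3)}$, each generator then descends to a nonzero class in $CH^*(R(\bG))_{(3)}$, and distinct degrees force these ten classes to be $\bZ_{(3)}$-linearly independent.

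To pass to mod $3$, I would show that no $b\in M$ is $3$-divisible in $CH^*(R(\bG))$. If one had $b=3c$ in $CH^*(R(\bG))$, lifting to $K^*(R(\bG))$ would yield $c=b/3$, whose restriction to $k^*(\bar R(\bG))$ is a monomial in $y,y'$ carrying at most a factor of $v_1$ and no factor of $3$; by the explicit description in Lemma 8.4 of $K^*(R(\bG))$ as the $K^*$-free submodule generated by $\{1,b_1,\bar b_3,\ldots,b_1^2\bar b_3^2\}$, such a class is not in the image of $res_k$. This rules out hidden $3$-divisibility and yields the asserted $\bZ/3$-injection.

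The main obstacle will be precisely this last step for the tightly coupled pairs $(b_1 b_2,b_2^2)$ and $(b_1 b_8,b_2 b_8)$, whose $K^*$-images satisfy $3b_1 b_2=v_1 b_2^2$ and $3 b_1 b_8=v_1 b_2 b_8$ by Lemma 8.4. One must invoke the refinement $gr'K^*(R(\bG))$ introduced just before the lemma, in which each pair splits into a $K^*/3$-torsion generator (namely $b_1 b_2$, resp.\ $b_1 b_8$) and a free $K^*$-generator (namely $b_2^2$, resp.\ $b_2 b_8$), and carefully match this filtration grading against the Chow grading to verify that both classes in each pair survive mod $3$.
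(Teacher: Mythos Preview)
Your approach differs substantially from the paper's, and the gap you flag at the end is real and not repaired by the mechanism you propose.

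The paper's proof is short and rests on three distinct inputs. For the six single generators $b_2,\ldots,b_8$ it simply invokes Corollary~3.8 (regularity of the sequence $(b_1,\ldots,b_\ell)$). For the top-degree pair $b_1b_8,\,b_2b_8$ it uses Totaro's computation of the torsion index $t(E_8)_{(3)}=9$: since $res_k(b_2b_8)=9(yy')^2$ and $res_k(b_1b_8)=3v_1(yy')^2$, the fact that $3y_{top}=3(yy')^2$ is \emph{not} hit by $S(t)$ forces both classes to survive mod~$3$. For $b_2^2$ it argues directly that $3y^2\notin Im(res_k)$ (so $b_2^2=9y^2$ cannot be $3$-divisible), and then deduces $b_1b_2\neq 0$ from the relation $3b_1b_2=v_1b_2^2$.

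Your $\rho_\bQ$ route runs into trouble at two points. First, the map $\rho_\bQ$ of Proposition~7.7 passes to $k^*(\bar R(\bG))\otimes_{k^*}\bQ\cong CH^*(\bar R(\bG))\otimes\bQ$, which sets $v_1=0$; under that map $b_1b_2\mapsto 3v_1y^2\mapsto 0$ and $b_1b_8\mapsto 3v_1(yy')^2\mapsto 0$, so these two classes are invisible and your ``ten distinct images'' claim fails. If instead you mean $\otimes_{\bZ}\bQ$ (keeping $v_1$), the minimal-generator argument only yields nonvanishing in $CH^*(R(\bG))_{(3)}$, not mod~$3$. Second, your proposed fix via $gr'K^*(R(\bG))$ does not supply what is missing: the decomposition $K^*/3\{b_1b_2\}\oplus K^*\{b_2^2\}$ is a statement about the $K^*$-module structure, whereas what must be shown is that $b_2^2$ (resp.\ $b_2b_8$) is not $3$-divisible in $CH^*(R(\bG))$. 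Writing $b_2b_8=3c$ and restricting gives $res_{CH}(c)=3(yy')^2$; ruling this out is exactly the assertion $t(E_8)_{(3)}\neq 3$, which is Totaro's theorem and is not recoverable from the filtration alone. Similarly for $b_2^2$ one needs the specific fact $3y^2\notin Im(res_k)$ (note $res_k(b_3)=3y^2+v_1y'$, so the $v_1y'$ term is the obstruction, and one must check it cannot be cancelled). Without invoking the torsion index for the $(yy')^2$-pair and the explicit image analysis for the $y^2$-pair, the mod~$3$ nonvanishing of the four product classes remains unproved.
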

\begin{proof}
From Corollary 3.8, we see
\[ \bZ/3\{b_1,b_2,...,b_8\}\subset CH^*(R(\bG))/3.\]

We see that $t(E_8)_{(3)}=9$  from
$res_{k}(b_2b_{8})=9(yy')^2$ (\cite{To1}).  We also see
$res_k((b_1{b_8})=3v_1(yy')^2$.
The fact $t(E_8)\not =3$ implies that 
 $b_2b_8,b_1b_8$ are 
 nonzero in $CH^*(R(\bG))/3$.

For the element $b_2^2$, we consider the restriction
$res_k(b_2^2)=9y^2$.
Since $3y^2\not \in Im(res_k)$, we see   
 $b_2^2\not =0\in CH^*(R(\bG))/3$. Since $3b_1b_2=v_1b_2^2$, we also see
$b_1b_2\not =0$ in $CH^*(R(\bG))/3$.
\end{proof}


Then  we get the another graded ring $gr'(K^*(R(\bG))$.
\begin{prop}  
  There is a filtration whose 
associated graded ring is 
\[gr'K^*(R(\bG))  \cong 
K^*\otimes (B_1\oplus B_2)\quad where \]
\[ \begin{cases}
B_1=(P(b)/(3)\oplus 
   \bZ/3\{b_1b_2,b_1b_8\}),\quad P(b)=\bZ_{(3)}[b_1,
 b_3]/(b_1^3, b_3^3),   \\ 
  B_2=\bZ_{(3)}\{b_2,b_4,b_5,b_6,b_7,b_8\}
\oplus \bZ_{(3)}\{b_2^2,b_2b_8\}.
\end{cases}\]
\end{prop}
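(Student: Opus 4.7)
The plan is to refine the graded ring computed in Lemma 8.5 by a second filtration that is compatible with the image in $CH^*(R(\bG))/3$, and then to identify the resulting associated graded with $K^*\otimes(B_1\oplus B_2)$. Concretely, the filtration I have in mind is the one whose leading term lies in $CH^*(R(\bG))$ (equivalently the $v_1$-degree filtration on the mod $3$ image, together with the geometric filtration on the $3$-multiple part). The fact that $gr_3 K^*(R(\bG))$ already splits as a direct sum of a mod $3$ piece and a piece divisible by $3$ will be preserved.

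First, in the mod $3$ summand, I would simply observe that $\bar b_3\equiv b_3\pmod{3}$, so the polynomial algebra $P(\bar b)/3=\bZ/3[b_1,\bar b_3]/(b_1^3,\bar b_3^3)$ is identified with $P(b)/3$, giving the first factor of $B_1$. Next, I would carry out the three term exchanges that the preceding discussion has set up: $(i)$ the relation $b_3^2+b_6'=v_1b_6$ in $K^*(R(\bG))$ lets me replace the generator $b_6'$ (degree $32$) by $b_6$ (degree $36$) in the associated graded; $(ii)$ the relations $3b_1b_2=v_1b_2^2$ and $3b_1b_8=v_1b_2b_8$ (obtained from $b_2=3v_1^{-1}b_1$ together with the formulas of Corollary 8.3) split
\[
K^*\otimes\bZ_{(3)}\{b_1b_2,b_1b_8\}\ \cong\ K^*/3\{b_1b_2,b_1b_8\}\ \oplus\ K^*\{b_2^2,b_2b_8\},
\]
contributing the torsion factor of $B_1$ and the last two generators of $B_2$; and $(iii)$ the integral summand $\bZ_{(3)}\{3,b_2,b_4,b_5,b_6',b_7,b_8\}$ from Lemma 8.5 becomes, after swapping $b_6'$ for $b_6$, exactly the first seven $\bZ_{(3)}$-generators of $B_2$ (the generator $3$ being absorbed into the unit).

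To confirm that this refinement lands in $CH^*(R(\bG))/3$ as claimed, I would invoke Lemma 8.6, which already shows that each of $b_2,\dots,b_8,\ b_1b_2,\ b_1b_8,\ b_2^2,\ b_2b_8$ is nonzero in $CH^*(R(\bG))/3$, and Corollary 3.8 for the single generators $b_1,b_3$. Since $B_1$ and $B_2$ have disjoint total degrees among the new generators (except for a harmless compatibility between the mod $3$ and integral pieces that is enforced by the splitting in move $(ii)$), the direct sum decomposition $B_1\oplus B_2$ is forced.

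The main obstacle, I expect, is bookkeeping: one must check that the proposed $v_1$-adic/geometric refinement of the Lemma 8.5 filtration is actually a filtration of $K^*$-modules (so that the associated graded is again a $K^*$-module), that the three term exchanges above are simultaneously consistent (in particular, that moving $b_6'$ to $b_6$ does not collide with $b_3^2\in P(b)/3$, which it does not since $b_3^2$ survives in the mod $3$ summand while $b_6$ lives in the integral summand), and that the new $\bZ/3$-generators $b_1b_2,b_1b_8$ really are independent of the mod $3$ polynomial algebra $P(b)/3$. Once these compatibilities are verified, the identification $gr'K^*(R(\bG))\cong K^*\otimes(B_1\oplus B_2)$ follows directly.
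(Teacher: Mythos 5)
Your proposal reproduces the paper's argument faithfully, carrying out precisely the three generator exchanges that the paper describes in the paragraph preceding the Proposition (identifying $P(\bar b)/3$ with $P(b)/3$ since $\bar b_3\equiv b_3\ \mathrm{mod}\ 3$; splitting $K^*\{b_1b_2,b_1b_8\}$ using $3b_1b_2=v_1b_2^2$ and $3b_1b_8=v_1b_2b_8$; and trading $b_6'$ for $b_6$ via $b_3^2+b_6'=v_1b_6$), and you cite the same supporting lemmas to place the generators in $CH^*(R(\bG))/3$. Apart from a small miscount (\textit{first seven} $\bZ_{(3)}$-generators should read \textit{first six} once the generator $3$ is absorbed into the unit), this is exactly the route the paper takes.
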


Suppose $(B_1\oplus B_2)/3\subset CH^*(R(\bG))/3$.  Then we  would have 
$gr_{\gamma}(R(\bG))/3\cong (B_1\oplus B_2)/3$
from lemma 5.7.  However the above supposition
is not correct.
\begin{cor}
There is a submodule $A\subset P(b)/3$ such that
\[gr_{\gamma}(R(\bG))/3\cong
(B_1/A)\oplus A'\oplus B_2/3\]
where $s:A\cong A'$ as non-graded module (e.g.,
$|a|<|s(a)|$ for $a\in A$).
\end{cor}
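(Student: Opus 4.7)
My plan is to apply Lemma 5.7, in particular the contrapositive extracted from its proof, to the decomposition $B = B_1 \oplus B_2$ of Proposition 9.7. The failure of $(B_1 \oplus B_2)/3 \subset CH^*(R(\bG))/3$ must, according to that proof, be localized to a collection of torsion generators $a \in B_1$ for which a relation $3^r a = v_1^s b'$ holds in $K^*(R(\bG))$ with $b' \in CH^*(R(\bG))$ but $b' \notin B$; the module $A$ will then be built from these $a$, and the corresponding $b'$ will supply $A'$.

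The concrete task is to check the eleven generators of $B_1$, namely the nine monomials $b_1^i b_3^j$ ($0 \le i, j \le 2$) in $P(b)/3$ together with the torsion classes $b_1 b_2$ and $b_1 b_8$. Using Corollary 9.3, Lemma 9.5 and the injectivity of $res_K$ (Lemma 9.4), each generator is written in $K^*(\bar R(\bG))$ as a power of $v_1$ times a monomial in $y, y'$, so that $3a$ and $9a$ can be computed and then rewritten via the identities $9yy' = b_2 b_4$, $9y^2 y' = b_2 b_5$, $9y(y')^2 = b_2 b_7$ coming from Corollary 9.3. The outcome is that only three generators are problematic:
\[9\,b_1 b_3 = v_1^2\, b_2 b_4,\qquad 9\,b_1^2 b_3 = v_1^3\, b_2 b_5,\qquad 9\,b_1 b_3^2 = v_1^3\, b_2 b_7,\]
with each $b' \in \{b_2 b_4, b_2 b_5, b_2 b_7\}$ visibly absent from the generating list of $B_2$ in Proposition 9.7. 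All remaining generators admit either no $v_1^s$-relation at all (for instance $b_3$ and $b_3^2$, where $3 b_3 = b_2^2 + v_1 b_4$ does not factor) or a $v_1^s$-relation whose $b'$ already lies in $B$. Setting $A = \bZ/3\{b_1 b_3, b_1^2 b_3, b_1 b_3^2\}$ and $A' = \bZ/3\{b_2 b_4, b_2 b_5, b_2 b_7\}$ with $s$ the evident generator-by-generator bijection, the inequality $|a| < |s(a)|$ follows from $|v_1| = -4$ and $s \in \{2, 3\}$, and the nonvanishing of each element of $A'$ in $CH^*(R(\bG))/3$ follows from the torsion-index calculation $t(E_8)_{(3)} = 9$ in the spirit of Lemma 9.6.

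The main obstacle is the final step, namely establishing the additive isomorphism
\[gr_\gamma(R(\bG))/3 \cong (B_1/A) \oplus A' \oplus (B_2/3).\]
With $B$ replaced by the modified module $B' = (B_1/A) \oplus A' \oplus B_2$, one must verify that the hypothesis of Lemma 5.7 is now satisfied, i.e.\ that no further $v_1$-relation forces additional replacement, neither among the new generators $b_2 b_4, b_2 b_5, b_2 b_7$ nor among the surviving elements of $B_1/A$. One must also show that $B'$ accounts for all of $gr_\gamma(R(\bG))/3$, for which Lemmas 3.6 and 3.7 provide the needed upper bound on $CH^*(R(\bG))/3$. Both verifications amount to a careful case analysis using Corollary 9.3 and the identity $t(E_8)_{(3)} = 9$.
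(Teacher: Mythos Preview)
Your proposal conflates two distinct tasks. The corollary is a pure existence statement, and the paper dispatches it in two lines: take $A$ to be the kernel of $P(b)/3\to CH^*(R(\bG))/3$; for each $a\in A$ one has $a\neq 0$ in $K^*(R(\bG))$ (since $K^*(R(\bG))/3\cong K^*/3\otimes P(b)/3$), so $a=v_1^k a'$ in $k^*(R(\bG))$ for some $a'$ that is nonzero in $CH^*(R(\bG))$, and one sets $s(a)=a'$. The explicit determination of $A$ and $A'$ is the content of the \emph{subsequent} lemmas (9.9--9.11) and Theorem 9.12, not of the corollary.

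More seriously, your explicit $A$ is wrong, because your criterion misreads Lemma 5.7. Condition (2) asks whether $b'$ lies in $B$ as an \emph{element}, not whether it appears in your chosen list of generators. In each of your three ``problematic'' cases the element $b'$ is already in $B_2$: from Corollary 9.3 one has $b_2b_4=9yy'=3b_5$, $b_2b_7=9y(y')^2=3b_8$, and $b_2b_5=9y^2y'=3b_6$ in $CH^*(R(\bG))$ (the last because $b_2b_5-3b_6=-v_1b_7$ vanishes modulo $v_1$). So the relations $9\,b_1b_3=v_1^2(3b_5)$, etc., do not violate Lemma 5.7(2) at all, and indeed the paper shows that $b_1b_3$ and $b_1^2b_3$ are \emph{nonzero} in $CH^*(R(\bG))/3$ (the former ``easily'', the latter by Lemma 9.9). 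Conversely you have omitted $b_1^2b_3^2$, which \emph{does} belong to $A$. The paper's actual answer is $A=\bZ/3\{b_1b_3^2,\,b_1^2b_3^2\}$ and $A'=\bZ/3\{b_1b_6,\,b_1^2b_6\}$; since your $(B_1/A)\oplus A'\oplus B_2/3$ has the wrong graded dimensions, it cannot be isomorphic to $gr_\gamma(R(\bG))/3$.
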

\begin{proof}
For $0\not =a\in P(b)$, if $a=0$ in $CH^*(R(\bG))$,
 then there
$a'\in CH^*(R(\bG))$ such that  $a=v_1^ka'$ in $k^*(R(\bG))$ for $k>0$.
Let $s(a)=a'$.  Then we have the corollary
applying Lemma 9.8.
\end{proof}

To study $A.A'$, we need some arguments ans lemmas.
Let $h^*(-)$ be a $mod(p)$ cohomology theory (e.g. $H^*(-;\bZ/p)$,
$k(n)^*(-)$).
The product $G\times G\to G$ induces the map
\[ \mu: G\times G/T\to G/T.\]
Here note $h^*(G\times G/T)\cong h^*(G)\otimes_{h^*}h^*(G/T)$, since $h^*(G/T)$ is
$h^*$-free.  For $x\in h^*(G/T)$, we say
that $x$ is $primitive$ ([Mi-Ni], [Mi-Tod])  if
\[ \mu^*(x)=\pi^*(x)\otimes 1\ +\ 1\otimes x\quad where \
\pi:G\to G/T.\]

Of course $b\in BP^*(BT)$ are primitive but $by_i$ are not,
in general.
We can take $y_1$ as primitive (adding elements
if necessary) in $BP^*(G/T)$.

For ease of arguments, we write $d(x)=1/4|x|$, e.g.,
\[ d(v_1)=-1,\ d(b_1)=1,\ d(b_2)=2,\ d(b_3)=4,\ d(b_4)=5 \]
\[ d(b_5)=7,\ d(b_6)=9,\ d(b_7)=10,\ d(b_8)=12.\]

We easily see that $b_1,b_3,b_1^2,b_3^2,b_1b_3$ are 
nonzero in $CH^*(R(\bG))/3$.  Hence 
\[ A\subset \bZ/3\{b_1^2b_3,b_1b_3^2,b_1^2b_3^2\}. \]
\begin{lemma}
We see $b_1^2b_3\not \in A$.
\end{lemma}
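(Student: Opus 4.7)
The plan is to show $b_1^2 b_3 \neq 0$ in $CH^*(R(\bG))/3$ via a degree-graded coefficient analysis in the Morava $k(1)$-theory $k^*(\bar R(\bG)) \cong k^* \otimes P(y)$, where $k^* = \bZ_{(3)}[v_1]$ and $P(y) = \bZ_{(3)}[y,y']/(y^3,(y')^3)$. I identify $K^*(R(\bG)) \cong K^*(\bar R(\bG))$ via Lemma 9.4 and view $k^*(R(\bG)) \hookrightarrow K^*(R(\bG))$ using the torsion-freeness of $K^*(\bF)$ from Lemma 5.1, so that $res_k$ is injective.

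First I compute the image of $b_1^2 b_3$ under $res_k$. By Corollary 9.3 and the vanishing $v_j = 0$ for $j \geq 2$ in $k(1)$-theory, $b_1 \equiv v_1 y$ and $b_3 \equiv 3 y^2 + v_1 y'$ modulo the ideal $(9, 3v_1, v_1^2)$. The main product $v_1^2 y^2 \cdot v_1 y' = v_1^3 y^2 y'$ contributes $1$ to the $y^2 y'$-coefficient, while every correction from the above ideal contributes a multiple of $3$. Hence the $y^2 y'$-coefficient of $res_k(b_1^2 b_3)$ equals $\epsilon v_1^3$ for some unit $\epsilon \in \bZ_{(3)}$ with $\epsilon \equiv 1 \pmod 3$.

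Now suppose for contradiction that $b_1^2 b_3 = 0$ in $CH^*(R(\bG))/3 \cong k^*(R(\bG))/(3, v_1)$. Then there exist $a \in k^*(R(\bG))$ of degree $28$ and $b \in k^*(R(\bG))$ of degree $24$ with $b_1^2 b_3 = v_1\, a + 3 b$. Applying $res_k$ and matching $y^2 y'$-coefficients, by degree these coefficients have the forms $\mu v_1^2$ and $\lambda v_1^3$ respectively, with $\mu, \lambda \in \bZ_{(3)}$. This yields the scalar identity $\epsilon = \mu + 3\lambda$ in $\bZ_{(3)}$. The key step is to prove $\mu \in 3\bZ_{(3)}$: for any monomial in the $b_i$'s, Corollary 9.3 shows that each of $b_2, b_4, b_5, b_6, b_7, b_8$ has all $(y, y')$-coefficients divisible by $3$, so any monomial involving such a factor contributes a 3-divisible $y^2 y'$-coefficient. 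Among pure products $b_1^\alpha b_3^\beta$, the expansion of $b_3 = 3 y^2 + v_1 y'$ gives a non-3-divisible $y^2 y'$-coefficient only when $(\alpha, \beta) = (2, 1)$, namely $b_1^2 b_3$ itself with coefficient $v_1^3$; the case $(0, 2)$ yields $6 v_1$ (3-divisible), and other cases vanish by $y^3 = 0$. Since $b_1^2 b_3$ has degree $24$ but $a$ needs degree $28$ and $v_1^{-1} \notin k^*$, no degree-$28$ element of $k^*(R(\bG))$ can contribute a non-3-divisible $y^2 y'$-coefficient. Thus $\mu \in 3\bZ_{(3)}$, so $\mu + 3\lambda \in 3\bZ_{(3)}$ cannot equal the unit $\epsilon$, giving the contradiction.

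The main obstacle is the combinatorial enumeration confirming that every degree-$28$ monomial in the $b_i$'s contributes a 3-divisible $y^2 y'$-coefficient. This requires checking all products systematically using Corollary 9.3 and the relations $y^3 = (y')^3 = 0$. The degree mismatch between the unique non-3-divisible contributor $b_1^2 b_3$ (of degree $24$) and the target degree $28$ is the structural obstruction that prevents rescaling it into $k^*(R(\bG))_{\deg 28}$; once this enumeration is established, the final contradiction is an immediate $\bZ_{(3)}$-computation.
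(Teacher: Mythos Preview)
Your approach—isolating the $y^2y'$–coefficient under $res_k$ and arguing by $3$-divisibility—is close to the paper's in spirit, but the central enumeration step has a real gap. You assert, citing Corollary 9.3, that each of $b_2,b_4,b_5,b_6,b_7,b_8$ has \emph{all} $(y,y')$-coefficients divisible by $3$. Corollary 9.3 does not say this: those formulas hold only modulo $I_\infty^2$, so they control the leading term and nothing more. Already $b_6=3y^2y'+v_1(y')^2$ has $(y')^2$-coefficient $v_1$, and—this is the point that actually bites—$b_5$ itself is a legitimate degree-$28$ monomial with $b_5\equiv 3yy'\pmod{I_\infty^2}$, whose $y^2y'$-coefficient is a correction of the form $\lambda v_1^2$ with $\lambda\in\bZ_{(3)}$ \emph{a priori} arbitrary. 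Thus your claim that no degree-$28$ element in the $b_i$'s contributes a non-$3$-divisible $y^2y'$-coefficient is unproven exactly at $b_5$. (Separately, your list of pure products $b_1^{\alpha}b_3^{\beta}$ in degree $28$ is off: the solutions of $4\alpha+16\beta=28$ are $(\alpha,\beta)=(7,0)$ and $(3,1)$, not $(2,1)$ or $(0,2)$.)

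The paper's proof sidesteps this by tracking the full class modulo $I_\infty^3$ rather than a single coefficient. From $b_1^2b_3\equiv v_1^3\,y^2y'\pmod{I_\infty^4}$ one gets that any $b$ with $b_1^2b_3=v_1b$ must satisfy $b\equiv v_1^2\,y^2y'\pmod{I_\infty^3}$. The paper then notes that $b_5=3yy'+\lambda v_1^2(y^2y')$ carries the term $3yy'\in I_\infty\setminus I_\infty^3$, which cannot be cancelled in degree $d=7$: the only other two-factor contribution is $b_2b_4\equiv 9yy'\pmod{I_\infty^3}$, and killing the $yy'$-part forces the $b_5$-coefficient into $3\bZ_{(3)}$. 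Products of three or more factors lie in $I_\infty^3$ and contribute nothing. So the paper rules out $b_5$ via its $yy'$-term, not its $y^2y'$-term. To repair your argument you would have to track the $yy'$-coefficient simultaneously with the $y^2y'$-coefficient—which is effectively what the $I_\infty$-filtration does.
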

\begin{proof}
In $k^*(\bar R(\bG))$, we have 
\[ b_1^2b_3=(v_1y)^2(3y^2+v_1y')=v_1^3(y^2y') \quad
mod(I_{\infty}^4),\]
by using facts that $y^3$ is primitive and hence $y^3\in I_{\infty}$.
Suppose $b_1^2b_3=v_1b$ in $k^*(R(\bG))$
for $b\in P(b)=\bZ[b_1,...,b_{\ell}]$.  Then 
\[ (*)\quad b=(v_1)^2(y^2y')\quad mod(I_{\infty}^3),\quad d(b)=7.\]

If $b=b_k$ $mod(b_ib_j)$,
then by dimensional reason, 
\[ b_k=b_5=3yy'+\lambda v_1^2(y^2y'),\quad \lambda\in \bZ.\]
This case, of course,  does not satisfy $(*)$.

Let  $b=\sum b(1)b(2)$ for $b(i)\in P(b)^+$.
Since $b(i)\in I_{\infty}$, then $b(j)=3\lambda
y(i)+v_1y(i')$ for $y(i')\not =0\in P(y)$.
Hence each $b(i)$ must be one of  $b_1,b_3,b_6$.
By dimensional reason (such as $d(b)=7$, $d(b_6)=9$),
these cases do not happen.
We see (more easily) that 
the cases $b_1^2b_3=v_1^sb$ do not happen for $s\ge 2$.
\end{proof}

\begin{lemma} We have $b_1b_3^2\in A$.
\end{lemma}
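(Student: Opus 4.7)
The aim is to show $b_1 b_3^2 = 0$ in $CH^*(R(\bG))/3$ by exhibiting an explicit $v_1$-divisibility relation in $k^*(R(\bG))$ of the form $3\,b_1 b_3^2 = v_1^3 \tilde b_8$, or equivalently $b_1 b_3^2 = v_1\,\tilde c$, with $\tilde b_8$ (resp.\ $\tilde c$) in $CH^*(R(\bG))$; by Corollary~9.8 this will place $b_1 b_3^2$ in $A$, with $s(b_1 b_3^2)$ equal to $\tilde b_8$ (or $\tilde c$).

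I would begin from the formulas of Corollary~8.3, substituted into $k^*(\bar R(\bG))$ where $v_j=0$ for $j\ge 2$, which give
\[
b_1\equiv v_1 y,\quad b_3\equiv 3y^2+v_1 y',\quad b_6\equiv 3y^2 y'+v_1(y')^2,\quad b_8\equiv 3 y(y')^2 \pmod{I_\infty^2}.
\]
Following the pattern of the proof of Lemma~9.9, I would then expand
\[
b_1 b_3^2 = v_1 y\,(3 y^2+v_1 y')^2 \equiv 9 v_1 y^5 + 6 v_1^2 y^3 y' + v_1^3 y(y')^2 \pmod{I_\infty^4},
\]
and invoke the primitivity of $y$ (so that $y^3\in I_\infty\cdot k^*(\bar R(\bG))$) to absorb the first two summands into $I_\infty^4$. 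This leaves $b_1 b_3^2 \equiv v_1^3 y(y')^2 \pmod{I_\infty^4}$. A parallel computation yields $v_1\,b_1 b_6\equiv v_1^3 y(y')^2\pmod{I_\infty^4}$ and $v_1^3 b_8 \equiv 3 v_1^3 y(y')^2 \pmod{I_\infty^5}$, so that in $k^*(\bar R(\bG))$ one has
\[
b_1 b_3^2 - v_1\,b_1 b_6 \in I_\infty^4\,k^*(\bar R(\bG)),\qquad 3\,b_1 b_3^2 - v_1^3 b_8 \in I_\infty^5\,k^*(\bar R(\bG)).
\]

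The remaining task is to promote one of these congruences to an exact identity in $k^*(R(\bG))$ whose right-hand side is visibly $v_1^3$ (or $v_1$) times a class of $CH^*(R(\bG))$. By Lemma~5.1 the injectivity of $res_K$ already forces the error to lie in $k^*(R(\bG))$; I would then use the explicit $K^*$-module description of $K^*(R(\bG))$ from Proposition~9.6 in total degree $|b_1 b_3^2|=36$, together with auxiliary relations such as $3 b_1^2 = v_1\, b_1 b_2$ from the proof of Lemma~9.7, to inductively correct $b_8$ (resp.\ $b_1 b_6$) by a sequence of terms in $v_1\,I_\infty\,CH^*(R(\bG))$ of strictly higher $I_\infty$-order, producing the desired element $\tilde b_8\in CH^*(R(\bG))$ with $3\,b_1 b_3^2 = v_1^3 \tilde b_8$ in $k^*(R(\bG))$.

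The main obstacle is this final step: controlling the $I_\infty^5$-error and showing that it is absorbed into $v_1^3\cdot CH^*(R(\bG))$. The enumeration of the $K^*$-basis of $K^*(R(\bG))$ in degree $36$ from Proposition~9.6 (with the torsion class $b_1 b_3^2$, the free class $b_6$, the translate $v_1^3 b_8$, and multiples of $b_1 b_2$, $b_1 b_8$, $b_2^2$, \ldots) shows that the possible obstructions to $v_1^3$-divisibility form a short, explicit list, and the iterative correction terminates because the $I_\infty$-order strictly increases at each step while the ambient module $CH^*(R(\bG))_{36}$ is finite.
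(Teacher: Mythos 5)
Your proposal follows essentially the same route as the paper's own proof: you compute $b_1b_3^2\equiv v_1^3\,y(y')^2\pmod{I_\infty^4}$ using $y^3\in I_\infty$ (the paper phrases this as the primitivity of $y^3$, not of $y$), note that $v_1 b_1 b_6$ has the same leading term, and then propose to correct the difference $b_1b_3^2 - v_1 b_1 b_6 \in I_\infty^4$ by iteratively subtracting $v_1$-multiples of classes already known to be in the image of $res_k$. This matches the paper's strategy exactly. The one place where you stop short is precisely where the paper does the real work: the termination of the iterative correction is not a formal finiteness argument but an explicit degree count. The paper records $d(b')=9$ and compares with $d(v_1^4 y(y')^2)=8$, which pins down the surviving term to $b'=\mu' v_1^5 y^2(y')^2$; this specific monomial is then absorbed via $v_1^2 b_1^2 b_6$, and what remains lies in $(9)\cdot Im(res_k)\subset I_\infty^2 Im(res_k)$. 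Your "$I_\infty$-order strictly increases and the ambient module is finite" heuristic is plausible but does not by itself guarantee that the limit is $v_1\cdot CH^*(R(\bG))$ rather than, say, a pure $p$-power multiple; the degree computation is what rules that out. Also, a small bookkeeping slip: the module description you cite is Proposition~9.7 (not 9.6), and the relation $3b_1^2=v_1b_1b_2$ appears in the proof of Lemma~9.5.
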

\begin{proof}  First note 
\[ b_1b_3^2=v_1y(3y^2+v_1y')^2=v_1^3y(y')^2
\quad mod(I_{\infty}^4)\]
in $k^*(\bar R(\bG))$, using the primitivity of $y^3$.
Similarly we have 
\[ b_1b_6=v_1y(3y^2y'+v_1(y')^2)=v_1^2y(y')^2\quad mod(I_{\infty}^3).\] 
Let us write $b=b_1b_3^2-v_1b_1b_6$.  Then 
$b\in k^*\otimes P(b)$ and  in $(I_{\infty}^4).$

If $b$ contains (as a sum of) $3v_1^{s(i,j)}y^i(y')^j$
(or $9v_1^sy^2(y')^2$) for  $s,s(i,j)\ge 0$, then take off
\[v_1^{s(i,j)}b_k=3v_1^{s(i,j)}y^i(y')^j\ mod(v_1^{s(i,j)+1}), \quad 
v_1^sb_2b_8=9v_1^sy^2(y')^2\ mod(v_1^{s+1}).\]
Continue this argument, we have
\[b'= b-\sum_{k}\lambda_k v_1^{s'}b_k 
-\lambda''v_1^{r'}b_1b_8-\lambda'''
v_1^{r''}b_2b_8\]
\[=\sum \mu_{k'}v_1^{s''}y^i(y')^j
+3\mu v_1^sy^2(y')^2,\quad \mu\not =0\ mod(3).\]
This element is in $I_{\infty}^4$, and moreover,
by the dimensional reason such as 
$d(b')=9$ and $d(v_1^4y(y')^2)=8$,  it is represented as
$ b'=\mu' v_1^5y^2(y')^2.$

Next we see
\[ b'-\mu'v_1^2b_1^2b_6 =\mu' v_1^4(3y^3y')
\quad mod(v_1^6)
\quad (in \ k^*(\bar R(\bG)).\]
The ideal $ (9)\subset k^*(\bar R(\bG))$ is in
$Im(res_k)$.  Therefore we have seen 
$b\in I_{\infty}^2Im(res_k)$.
\end{proof}
\begin{lemma}  We have    
\[A= \bZ/3\{b_1b_3^2,b_1^2b_3^2\},
\quad and \quad A'=\bZ/3\{ b_1b_6,b_1^2b_6\}. \]
\end{lemma}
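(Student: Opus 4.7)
My plan is three-fold: first obtain an upper bound $A\subseteq \bZ/3\{b_1b_3^{\,2},\, b_1^{\,2}b_3^{\,2}\}$, then prove equality by deriving one new congruence from the proof of the preceding lemma, and finally verify nonvanishing of the two elements produced on the $A'$-side. The upper bound is already almost at hand: from the degree constraints on $A\subseteq B_1$ noted before the preceding lemma, only the monomials $b_1^{\,2} b_3$, $b_1 b_3^{\,2}$ and $b_1^{\,2} b_3^{\,2}$ can possibly lie in $A$ (all other monomials $b_1^i b_3^j$ with $0\le i,j\le 2$ are nonzero in $CH^*(R(\bG))/3$ by Corollary 3.8 and the $k^*$-module description of $K^*(R(\bG))$), and the second preceding lemma ($b_1^{\,2}b_3\not\in A$) eliminates $b_1^{\,2} b_3$.

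For equality I would multiply the key identity in the proof of the preceding lemma by $b_1$. That proof in fact establishes the sharper congruence
\[ b_1 b_3^{\,2} \;\equiv\; v_1\, b_1 b_6 \pmod{v_1^{\,2}\cdot k^*(R(\bG))}, \]
since the subtraction $b'-\mu'v_1^{\,2}b_1^{\,2}b_6$ lands in $I_{\infty}^{\,2}\cdot \Img(res_k)\subset v_1^{\,2}\cdot k^*(R(\bG))$. Multiplying by $b_1\in k^*(R(\bG))$, which is central, yields
\[ b_1^{\,2} b_3^{\,2} \;\equiv\; v_1\, b_1^{\,2} b_6 \pmod{v_1^{\,2}\cdot k^*(R(\bG))}. \]
Reducing modulo $v_1$, and using $CH^*(R(\bG))=k^*(R(\bG))/(v_1)$, gives $b_1^{\,2}b_3^{\,2}=0$ in $CH^*(R(\bG))/3$, so $b_1^{\,2}b_3^{\,2}\in A$ with section value $s(b_1^{\,2}b_3^{\,2})=b_1^{\,2}b_6$. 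Combined with $s(b_1b_3^{\,2})=b_1b_6$ from the preceding lemma, this determines $A$ and yields the inclusion $A'\supseteq \bZ/3\{b_1b_6,\, b_1^{\,2}b_6\}$.

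The step requiring the most care, and the main obstacle, is the final nonvanishing check: one must confirm that $b_1b_6$ and $b_1^{\,2}b_6$ are nonzero in $CH^*(R(\bG))/3$, so that they genuinely populate $A'$ rather than collapsing into $A$. For this I would use injectivity of $res_k$ together with the formula $b_6=3y^{\,2}y'+v_1(y')^{\,2}$ from the earlier corollary in this section, which gives
\[ res_k(b_1b_6) \equiv v_1^{\,2} y(y')^{\,2},\qquad res_k(b_1^{\,2}b_6)\equiv v_1^{\,3} y^{\,2}(y')^{\,2} \]
modulo higher-order terms in $I_\infty$. A leading-term and degree comparison with the explicit $k^*$-module basis of $K^*(R(\bG))$ computed earlier in this section shows that neither class is a $v_1$-multiple of any element coming from $P(b)\cap B_1$, so neither is killed on reduction modulo $v_1$. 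This gives $A'=\bZ/3\{b_1b_6,\, b_1^{\,2}b_6\}$ and completes the identification.
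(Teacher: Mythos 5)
Your overall route is the paper's: bound $A$ inside $\bZ/3\{b_1^2b_3,b_1b_3^2,b_1^2b_3^2\}$ and discard $b_1^2b_3$ by the preceding lemma, multiply the identity $b_1b_3^2\equiv v_1b_1b_6$ by $b_1$ to get $b_1^2b_3^2\equiv v_1b_1^2b_6$, and finish with a nonvanishing check in $CH^*(R(\bG))/3$; the paper even shortens the last point to checking $b_1^2b_6\neq 0$ alone (if $b_1b_6=v_1c$ then $b_1^2b_6=v_1b_1c$ would vanish too). The place where your proposal differs is exactly the step you flag as the crux, and there it has a genuine gap. A minor point first: the preceding proof yields $b_1b_3^2-v_1b_1b_6\in I_{\infty}^2\,\Img(res_k)$, and $I_{\infty}^2=(9,3v_1,v_1^2)$ is \emph{not} contained in $v_1^2\,k^*(R(\bG))$; this is harmless, since for vanishing in $CH^*(R(\bG))/3\cong k^*(R(\bG))\otimes_{k^*}\bZ/3$ it suffices that the error term lie in $I_{\infty}\cdot k^*(R(\bG))$, but the sharper congruence you state is not what that proof gives.

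The real problem is the nonvanishing argument. To show $b_1^2b_6\neq 0$ in $CH^*(R(\bG))/3$ one must exclude a relation $b_1^2b_6=v_1b$ (modulo $3$) with $b$ an arbitrary polynomial in \emph{all} the classes $b_1,\dots,b_8$ coming from $CH^*(R(\bG))$, not merely show that $b_1^2b_6$ is no $v_1$-multiple of an element of $P(b)\cap B_1$, i.e.\ of the truncated polynomial algebra on $b_1,b_3$. Your restricted comparison set misses precisely the dangerous candidates: with the paper's normalization $d=\tfrac14|\cdot|$ one needs $d(b)=12$, and both $b_8$ (with $d(b_8)=12$, leading term $3y(y')^2$) and $b_2b_7$ (with $d=2+10=12$, leading term $9y(y')^2$) sit in exactly the right degree; they are eliminated only by comparing leading terms against the requirement $b\equiv v_1^2y^2(y')^2\ \mathrm{mod}\ I_{\infty}^3$, which forces any contributing product to have both factors among $b_1,b_3,b_6$, after which the count $d(b_1)=1$, $d(b_3)=4$, $d(b_6)=9$ rules everything out. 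This case analysis over the full generator set is the actual content of the paper's proof of the lemma; as written, your final step does not yet establish the nonvanishing, so the identification of $A'$ (and hence of $A$) is incomplete.
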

\begin{proof}
From the above lemma, we can take
$b_1b_2^2=v_1b_1b_6$ in $k^*(\bar R(\bG))$.
Hence $b_1^2b_2^2=v_1b_1^2b_6$.  So it is sufficient
to prove that $b_1^2b_6\not =0 \in CH^*(R(\bG))/3$.
Let $b_1^2b_6=vb$.  Then
\[ (*)\quad b=v_1^2y^2(y')^2\quad mod (I_{\infty}^3).\]

If $b=b_k$ $mod(b_ib_j)$,
then by dimensional reason, 
$ b_k=b_8=3y(y')^2\ mod (v_1)$ and it is a contradiction.
Let  $b=\sum b(1)b(2)$ for $b(i)\in P(b)^+$.
Hence each $b(i)$ must be one of  $b_1,b_3,b_6$.
By dimensional reason (such as $d(b_1)=1,d(b_3)=4, d(b_6)=9$ and $d(b)=12$),
these cases do not happen.
\end{proof}
\begin{thm}
Let $(G,p)=(E_8,3)$. For $B_1,B_2$ in Proposition 9.8,
\[ gr_{\gamma}(R(\bG))/3\cong
B_1/(3,b_1b_3^2,b_1^2b_3^2)\oplus
\bZ/3\{b_1b_6,b_1^2b_6\}\oplus B_2/3.\]
\end{thm}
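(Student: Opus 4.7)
The plan is to assemble this theorem from Proposition 9.8 together with Corollary 9.10 and Lemmas 9.11--9.13, all of which are in place. First I will start from the decomposition $gr'K^*(R(\bG))\cong K^*\otimes(B_1\oplus B_2)$ of Proposition 9.8, which gives a graded $K^*$-basis for the associated graded of the $3$-local filtration on $K^*(R(\bG))$. The generators in $B_2$ were already shown to be nonzero in $CH^*(R(\bG))/3$ in Lemma 9.7 (using $t(E_8)_{(3)}=9$ and the explicit values of $res_k$), so the summand $B_2/3$ passes through directly to $gr_{\gamma}(R(\bG))/3$ without modification.

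Next I will apply Corollary 9.10, which isolates a submodule $A\subset P(b)/3\subset B_1$ consisting of those elements that vanish in $CH^*(R(\bG))$ but survive in the graded $K^*$-theory, and for each $a\in A$ produces a shift $s(a)=a'\in CH^*(R(\bG))/3$ in strictly higher degree satisfying $a=v_1^k a'$ in $k^*(R(\bG))$. This gives the splitting $gr_{\gamma}(R(\bG))/3\cong(B_1/A)\oplus A'\oplus B_2/3$. Lemmas 9.11--9.13 then pin down $A$ and $A'$ completely: Lemma 9.11 excludes $b_1^2 b_3$ from $A$; Lemma 9.12 shows $b_1 b_3^2\in A$ (via the expansion $b_3\equiv 3y^2+v_1 y'$, together with primitivity of $y^3$, producing $s(b_1 b_3^2)=b_1 b_6$); Lemma 9.13 collects these to give $A=\bZ/3\{b_1 b_3^2,b_1^2 b_3^2\}$ and $A'=\bZ/3\{b_1 b_6,b_1^2 b_6\}$. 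Substituting into the formula from Corollary 9.10 yields the claimed isomorphism, since $B_1$ is already a $\bZ/3$-module and so $B_1/A=B_1/(3,b_1 b_3^2,b_1^2 b_3^2)$.

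The remaining check is that $A'$ is disjoint from $B_2/3$, so that the sum on the right is genuinely direct; this is obvious by inspection of the generators of $B_2$ listed in Proposition 9.8, none of which involves $b_1 b_6$ or $b_1^2 b_6$. The main obstacle is really concentrated in Lemma 9.13 rather than in the present assembly: one needs careful degree-counting and primitivity arguments in $k^*(\bar R(\bG))$ to rule out any further element of $P(b)/3$ lying in $A$, and to verify that the candidate shifts $b_1 b_6,b_1^2 b_6$ are themselves nonzero in $CH^*(R(\bG))/3$. Once those lemmas are granted, the present theorem reduces to collecting the pieces in the manner indicated above.
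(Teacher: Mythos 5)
Your proposal is correct and follows essentially the same route as the paper, which states this theorem as the immediate combination of the decomposition $gr_{\gamma}(R(\bG))/3\cong (B_1/A)\oplus A'\oplus B_2/3$ from the corollary with the subsequent lemmas identifying $A=\bZ/3\{b_1b_3^2,b_1^2b_3^2\}$ and $A'=\bZ/3\{b_1b_6,b_1^2b_6\}$ (and the earlier injection lemma for the $B_2$ generators). Your added remarks—that $B_1$ is already a $\bZ/3$-module so $B_1/A=B_1/(3,b_1b_3^2,b_1^2b_3^2)$, and that the sum is direct by inspection of generators—are exactly the implicit bookkeeping the paper leaves to the reader.
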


\section{The case $G=E_7$ and $p=2$.}

Throughout this section, let
 $(G,p)=(E_7,2)$.
\begin{thm} 
The cohomology $grH^*(G;\bZ/2)$ is given 
\[ \bZ/2[y_6,y_{10},y_{18}]
/(y_6^2,y_{10}^2,y_{18}^2),
\otimes \Lambda(z_3,z_5,z_9,z_{15},z_{17},z_{23},z_{27}).\]
\end{thm}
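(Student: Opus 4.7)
The statement is classical and goes back to Borel, with the precise 2-primary structure made explicit by Araki and refined by Mimura--Toda and Kono--Mimura; I would organize the verification in four steps.

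First, I would input the rational cohomology. By the Hopf--Samelson theorem, $H^*(E_7;\bQ) \cong \Lambda(z_3, z_{11}, z_{15}, z_{19}, z_{23}, z_{27}, z_{35})$, corresponding to the exponents $(1,5,7,9,11,13,17)$ of $E_7$. This fixes the total Poincar\'e polynomial over $\bQ$, and hence a lower bound on the mod $2$ Poincar\'e polynomial. Next I would invoke Borel's structure theorem: for a compact connected Lie group $G$ and any prime $p$, $grH^*(G;\bZ/p)$ is a tensor product of exterior algebras on odd-degree generators and algebras of the form $\bZ/p[y]/(y^{p^r})$ on even-degree generators, as in $(2.1)$ of the paper.

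Second, I would locate the $2$-torsion. The integral torsion in $H^*(BE_7;\bZ)_{(2)}$ is known (e.g.\ via the embedding $Spin(12)\hookrightarrow E_7\hookrightarrow E_8$, or directly from Borel) to give rise to exactly three Bockstein pairs $\beta z_5 = y_6$, $\beta z_9 = y_{10}$, $\beta z_{17} = y_{18}$, which ``split'' the rational generators in degrees $11, 19, 35$ into the mod $2$ pairs $(z_5,y_6)$, $(z_9,y_{10})$, $(z_{17},y_{18})$ (with $|z_{2i-1}|+|y_{2i}|$ recovering the rational degree). The remaining four rational generators $z_3, z_{15}, z_{23}, z_{27}$ persist unmodified. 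Counting shows that the proposed algebra has the same Poincar\'e series as is forced by rational cohomology together with the Bockstein pattern.

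Third, I would verify the truncation heights. Since each $y_{2i}$ is a Bockstein image, one gets $\beta y_{2i}=0$ automatically from $\beta^2=0$; the claim that $y_{2i}^2=0$ (i.e.\ the truncation height is exactly $2$, not $4$ or higher) can be checked either by examining the Bockstein spectral sequence for $E_7$ above these degrees, or by restricting to the subgroup $Spin(12)\subset E_7$ where $y_6^2,y_{10}^2$ already vanish, and to $E_8$ where $y_{18}^2$ is forced to vanish by the structure in $\S11$.

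The main obstacle is precisely step three: ruling out higher-order truncation and verifying that all three Bockstein pairs really occur (as opposed to, say, a single pair with a more complicated higher torsion). This is exactly where one needs either the explicit Bockstein computation of Araki/Kono--Mimura, or an indirect argument via the inclusion into $E_8$ combined with the Eilenberg--Moore spectral sequence for $G\to *\to BG$. Once these torsion heights are pinned down, the tensor product structure is immediate from Borel's theorem, and the proposed isomorphism follows by matching Poincar\'e series.
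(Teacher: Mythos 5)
The paper does not actually prove this statement: it is the classical description of $H^*(E_7;\bZ/2)$ going back to Araki and recorded in Mimura--Toda \cite{Mi-Tod}, with the operations of the subsequent lemma taken from Kono--Mimura \cite{Ko-Mi}; the theorem is simply quoted. So your proposal has to be judged as a reconstruction of that classical computation. Your steps one and two are fine in outline (rational exterior algebra on degrees $3,11,15,19,23,27,35$, Borel's structure theorem, the three Bockstein pairs $(z_5,y_6)$, $(z_9,y_{10})$, $(z_{17},y_{18})$), although knowing that the $2$-torsion produces exactly these three pairs is already tantamount to citing Araki/Kono--Mimura, i.e.\ the same move the paper makes; also note the picture is slightly subtler than ``the other four rational generators persist unmodified,'' since $Sq^1 z_{15}=y_6y_{10}$, $Sq^1z_{23}=y_6y_{18}$, $Sq^1z_{27}=y_{10}y_{18}$, so the surviving classes in degrees $15,23,27$ of the Bockstein spectral sequence are not the generators themselves on the nose.

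The genuine gap is in your step three, which you yourself identify as the crux. The argument ``restrict to $Spin(12)\subset E_7$, where $y_6^2$ and $y_{10}^2$ already vanish'' is logically backwards: vanishing of the image of a class under restriction to a subgroup gives no information about the class upstairs unless you first prove the restriction map is injective in degrees $12$ and $20$, which you cannot do without already knowing $H^*(E_7;\bZ/2)$ there. Restriction from a larger group does go the right way, but from $E_8$ you only get $y_{18}^2=0$ (that square already vanishes in $E_8$); in $E_8$ the classes $y_6$ and $y_{10}$ have truncation heights $8$ and $4$ (see \S 11, where $grH^*(E_7;\bZ/2)$ is the quotient of $grH^*(E_8;\bZ/2)$ by $(y_1^2,y_2^2,y_4,x_8)$), so $y_6^2$ and $y_{10}^2$ are nonzero in $E_8$ and their vanishing in $E_7$ is precisely the extra input you still need. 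Closing that step requires the explicit Bockstein/Steenrod computations of Araki and Kono--Mimura (or an equivalent Poincar\'e-series argument carried out in detail), which is again just the citation route the paper takes; as written, your proposal does not supply an independent proof of the truncation heights.
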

Hence we can rewrite
\[ grH^*(G;\bZ/2)\cong \bZ/2[y_1,y_2,y_3]/(y_1^2,y_2^2,y_3^2)\otimes \Lambda(x_1,...,x_7).\]
\begin{lemma}  The cohomology operations acts as
\[\begin{CD}
  x_1=z_3 @>{Sq^2}>> x_2=z_5@>{Sq^4}>>
 x_3=z_9 @>{Sq^8}>> x_4=z_{17}\\
 x_5=z_{15} @>{Sq^8}>> x_6=z_{23}@>{Sq^4}>>
 x_7=z_{27} 
\\
x_5=z_{15}  @>{Sq^2}>> x_4=z_{17} @. @. @.
\end{CD} \]
The Bockstein acts $Sq^1(x_{i+1})=y_i$ for $1\le i\le 3$,
  and
  \[Sq^1\ :\ x_5=z_{15}\mapsto y_1y_2,\ \    x_6=z_{23}\mapsto y_1y_3 \  x_{7}=z_{27}\mapsto y_2y_3.\]
\end{lemma}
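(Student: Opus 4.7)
The plan is to treat this lemma as essentially a rearrangement of the classical Mimura--Toda/Kono computation of $H^*(E_7;\mathbb{Z}/2)$ as a module over the Steenrod algebra, and to verify it in three steps, exploiting the fact that in each relevant degree there is a unique indecomposable generator so all formulas are forced up to scalar.

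\medskip

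\textbf{Step 1 (generator identification).} First I would fix the isomorphism
$grH^*(E_7;\mathbb{Z}/2)\cong \mathbb{Z}/2[y_6,y_{10},y_{18}]/(y_6^2,y_{10}^2,y_{18}^2)\otimes\Lambda(z_3,z_5,z_9,z_{15},z_{17},z_{23},z_{27})$ from Theorem 10.1 and inspect the indecomposable generators in each odd degree. Since $H^*(E_7;\mathbb{Z}/2)$ has a \emph{single} primitive generator in each of the degrees $3,5,9,17,15,23,27$, any Steenrod operation $Sq^a$ raising one such generator to a degree containing exactly one indecomposable must land on a nonzero scalar multiple of it (the alternative being a decomposable, which is obstructed by primitivity plus Kudo's transgression theorem applied to $G\to G/T\to BT$). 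Thus the content of the $Sq^{2^k}$-chains reduces to showing the relevant operations are nonzero.

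\medskip

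\textbf{Step 2 (reduced-power chain).} For the $Sq^{2^k}$ actions I would invoke the Milnor primitives $Q_k$ (defined as in \S 2) and the identity $Q_{k+1}=[Sq^{2^k},Q_k]$. On $E_7$, the classical fact that $Q_0\cdots Q_n x_1$ detects the polynomial generator $y_{2(2^{n+1}-1)}$ mod decomposables (as in Nishimoto's analysis for $Spin$) forces the doubling pattern $x_1\mapsto x_2\mapsto x_3\mapsto x_4$ under $Sq^2,Sq^4,Sq^8$; the nonzeroness can also be verified by restricting along a subgroup inclusion $SU(2)\cdot Spin(12)\hookrightarrow E_7$, where the corresponding classes and their Steenrod action are already known (compare with \S 7). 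The secondary chain $x_5\mapsto x_6\mapsto x_7$ under $Sq^8,Sq^4$, and the additional arrow $Sq^2(x_5)=x_4$, are again pinned down by degree counting once nonzeroness is established, which can be done either via the same subgroup restriction or via Adem relations applied to products of lower $Sq^{2^i}$'s already determined.

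\medskip

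\textbf{Step 3 (Bocksteins).} The formulas $Sq^1(x_{i+1})=y_i$ for $i=1,2,3$ follow from the general principle that the $y_{2i}$ with $2i\in\{6,10,18\}$ are $2$-torsion reductions (they vanish in $H^*(E_7;\mathbb{Q})$), hence lie in $\mathrm{Im}(Sq^1)$; since $z_5,z_9,z_{17}$ are the unique indecomposables mapping to them by degree and since $Sq^1$ acts nontrivially (it is the mod $2$ Bockstein of integral torsion), the formulas follow. For the three remaining identities $Sq^1(x_5)=y_1y_2$, $Sq^1(x_6)=y_1y_3$, $Sq^1(x_7)=y_2y_3$, I would use the Adem relation $Sq^1 Sq^{2k}=Sq^{2k+1}$ together with Step 2 to express, e.g., $Sq^1 z_{15}=Sq^1 Sq^8 z_{9}-Sq^8 Sq^1 z_9$, and simplify using $Sq^1 z_9=y_{10}$ and the known total squaring operation on $y_6$; the analogous arguments handle $z_{23}$ and $z_{27}$.

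\medskip

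\textbf{Main obstacle.} The delicate point is the last batch of Bockstein identities landing in products $y_iy_j$: one needs to track the interaction of $Sq^1$ with Steenrod squares of $y_6,y_{10}$ carefully to rule out alternative decomposable targets of the same degree, and to verify that no secondary correction terms (coming from $\mathcal{A}$-module relations inside $H^*(E_7;\mathbb{Z}/2)$) contribute. In practice this is most cleanly done by comparing with the $p=2$ computation for $Spin(12)$ via the maximal subgroup inclusion and invoking naturality of $Sq^1$.
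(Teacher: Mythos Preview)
The paper does not prove this lemma at all: it is stated without proof as a classical computation, taken from the literature (it is exactly the $E_7$ case of Kono--Mimura \cite{Ko-Mi}, which the paper cites and also uses explicitly for the parallel $(E_8,3)$ statement in Theorem~9.1). So your proposal is not competing against a proof in the paper; you are attempting to re-derive a known result that the author simply quotes.

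As a sketch of such a re-derivation, your Steps~1--2 are sound: uniqueness of indecomposables in each odd degree plus primitivity does pin down the $Sq^{2^k}$-chains once nonvanishing is known, and restriction to a maximal-rank subgroup such as $SU(2)\cdot Spin(12)$ is a standard way to certify that. Your Step~3, however, contains a genuine error. You write
\[
Sq^1 z_{15}\;=\;Sq^1 Sq^8 z_{9}\;-\;Sq^8 Sq^1 z_{9},
\]
but $z_{15}$ is \emph{not} $Sq^8 z_9$ (the lemma says $Sq^8 z_9=z_{17}$), and in any case the two sides live in different degrees: $|Sq^1 z_{15}|=16$ while $|Sq^1 Sq^8 z_9|=|Sq^8 Sq^1 z_9|=18$. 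So the displayed identity is meaningless and your Adem-relation bootstrap does not start. The Adem approach \emph{can} be made to work for $z_{23}=Sq^8 z_{15}$ and $z_{27}=Sq^4 z_{23}$, since those are in the image of a square and $Sq^1 Sq^{2k}=Sq^{2k+1}$ lets you push $Sq^1$ past; but $z_{15}$ is the bottom of its chain and you need a separate argument for $Sq^1 z_{15}=y_6y_{10}$. The cleanest fix is the one you mention only as a fallback: restrict along the inclusion of $Spin(12)$, where Nishimoto's formula $Q_0(z)=\sum_{i+j=2^{t+1},\,i<j}y_{2i}y_{2j}$ (quoted in \S 7) gives exactly $Sq^1 z_{15}=y_6y_{10}$, and then naturality of $Sq^1$ finishes it. Once $Sq^1 z_{15}$ is in hand, your Adem strategy does propagate correctly to $z_{23}$ and $z_{27}$.
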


\begin{lemma} (\cite{Ya6})  
In $H^*(G/T)/(4)$, for all monomials $u\in P(y)^+/2$,
except for $y_{top}=y_1y_2y_3$, the elements $2u$ 
are written as elements in
$H^*(BT)$.  Moreover, in $BP^*(G/T)/I_{\infty}^2$,
there are $b_i\in BP^*(BT)$ such that
\[ b_2=2y_1,\ \ b_3=2y_2,\ \ b_4=2y_3,\ \ 
 b_6=2y_1y_3,\ \ b_7=2y_2y_3,\]
\[b_1=v_1y_1+v_2y_2+v_3y_3,\quad b_5=2y_1y_2+v_1y_3.\]
\end{lemma}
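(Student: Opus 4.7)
The strategy is to apply Corollary 2.2 to each of the seven transgressive odd generators $x_1,\ldots,x_7 \in H^*(E_7;\bZ/2)$. That corollary tells us that, modulo $\II$, a lift $b_k \in BP^*(BT)$ of the transgression of $x_k$ can be written as $b_k = \sum_{i\ge 0} v_i\, y(i)_k$, where the coefficient $y(i)_k \in H^*(G/T;\bZ/2)$ is characterized by $\pi^* y(i)_k = Q_i x_k$. So the computation reduces to evaluating the Milnor operations $Q_i$ on each $x_k$ and reading off the coefficients.

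First I would record $Q_0 x_k = Sq^1 x_k$ from the Bockstein entries in Lemma 10.2, and then obtain the higher $Q_i x_k$ inductively via the recursion $Q_{i+1} = Sq^{2^{i+1}}Q_i + Q_i Sq^{2^{i+1}}$, using the $Sq^{2^j}$ actions given in that lemma. The decisive simplification is a degree sieve: the even part of $H^*(E_7;\bZ/2)$ is spanned by monomials in $\bZ/2[y_6,y_{10},y_{18}]/(y_6^2,y_{10}^2,y_{18}^2)$, which only live in degrees $6,10,16,18,24,28,34$. Since $|Q_i x_k| = |x_k| + 2^{i+1} - 1$, this rules out most potential contributions and leaves only $Q_i x_1 = y_i$ for $i=1,2,3$; $Q_0 x_{k+1} = y_k$ for $k=1,2,3$; $Q_0 x_5 = y_1 y_2$ with $Q_1 x_5 = y_3$; $Q_0 x_6 = y_1 y_3$; and $Q_0 x_7 = y_2 y_3$. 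Substituting these into the Corollary 2.2 formula produces the displayed expressions for $b_1,\ldots,b_7$.

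For the first assertion, reduce each relation $b_k = 0 \in BP^*(G/T)/\II$ modulo the ideal $(v_1,v_2,\ldots)$, landing in $H^*(G/T)_{(2)}/(4)$. The formulas then directly realize each $2u$ with $u \in P(y)^+/2$ different from $y_1 y_2 y_3$ as the image of an element of $H^*(BT)$: concretely $2y_i = b_{i+1}$ for $i=1,2,3$, $2y_1 y_3 = b_6$, $2y_2 y_3 = b_7$, and $2y_1 y_2 = b_5$ (the $v_1 y_3$ summand vanishes after killing $v_1$). The leftover monomial $y_1 y_2 y_3$ is precisely the top class detecting the torsion index $t(E_7)_{(2)} = 2$, consistent with Lemma 3.9.

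The main obstacle is ruling out the borderline higher Milnor operations that would introduce spurious terms. For $Q_n x_1$ with $n \ge 5$, the degree $2^{n+1}+2$ already exceeds the top degree $34$ of $P(y)$. The delicate case is $n=4$, where $|Q_4 x_1| = 34 = |y_1 y_2 y_3|$, so one must rule out $Q_4 x_1 = y_1 y_2 y_3$ separately; this can be done either by iterating the Adem recursion starting from $Q_3 x_1 = y_3$, or by appealing to the explicit Kono--Mimura description of the $\mathcal{A}$-module structure on $H^*(E_7;\bZ/2)$. All remaining vanishings of $Q_i x_k$ for $k\ge 2$ reduce to quick dimension checks.
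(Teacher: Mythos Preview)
Your approach is essentially the paper's own: apply Corollary~2.2 to each transgressive generator $x_k$ and read off the coefficients from the Milnor operations $Q_i x_k$, using the $Sq^1$ and $Sq^{2^j}$ data in Lemma~10.2 together with degree constraints in $P(y)$. The paper's proof is more terse---it only records $Q_1x_1=y_1$, $Q_2x_1=y_2$, $Q_3x_1=y_3$, $Q_1x_5=y_3$, $Q_0x_5=y_1y_2$ and leaves the remaining $b_k$ (which come straight from the Bockstein entries $Q_0x_{k+1}=y_k$, $Q_0x_5=y_1y_2$, $Q_0x_6=y_1y_3$, $Q_0x_7=y_2y_3$) implicit---but the mechanism is identical. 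Your explicit degree sieve and your flagging of the borderline case $Q_4x_1$ (degree $34=|y_1y_2y_3|$) are genuine additions; the paper does not address that point and presumably relies on the reference [Ya6] or on the Kono--Mimura $\mathcal{A}$-module description.

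One factual slip to fix: in your closing remark you write $t(E_7)_{(2)}=2$, but the paper's Lemma~10.4 computes $t(E_7)_{(2)}=2^2$ via $b_2b_7=4y_{top}$. The statement you are proving only asserts that $2u$ lies in the image of $H^*(BT)$ for $u\neq y_{top}$; it does \emph{not} claim that $2y_{top}$ fails to lie there, and in fact only $4y_{top}$ is realized. So the exclusion of $y_{top}$ is consistent with torsion index $4$, not evidence for torsion index $2$. This does not affect the body of your argument, but the remark should be corrected or dropped.
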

\begin{proof}  The last equations are given by
$ Q_1(x_1)=y_1,\   Q_2(x_1)=y_2,$
\[\  Q_3(x_1)=y_3,\
 Q_1(x_5)=y_3,\  Q_0(x_5)=y_1y_2.\]
\end{proof}

\begin{lemma} We have $t(E_7)_{(2)}=2^2$.
\end{lemma}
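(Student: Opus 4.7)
The plan is to establish the upper bound $t(E_7)_{(2)} \le 4$ via Lemma 3.9, and then combine with Totaro's classification for the matching lower bound.

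For the upper bound, I would take $\tilde b = b_2 b_7 \in S(t)$, which has degree $|b_2|+|b_7| = 6 + 28 = 34 = |y_{top}|$ where $y_{top} = y_1 y_2 y_3$. By Lemma 10.3, in $BP^*(G/T)/\II$ the identities $b_2 = 2y_1$ and $b_7 = 2 y_2 y_3$ hold, so
\[ b_2 b_7 \equiv 4 y_1 y_2 y_3 = 4 y_{top} \pmod{I_\infty^3} \]
in $BP^*(G/T)$ (the product of two elements of $\II$-level equalities lies at level $I_\infty^4 \subset I_\infty^3$). Descending to $H^*(G/T)_{(2)}$ by setting $v_i = 0$ for $i \ge 1$, the surviving part of $I_\infty^3$ is divisible by $p^3 = 8$, so $b_2 b_7 \equiv 4 y_{top} \pmod{8}$ in degree $34$. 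Using the basis description of (2.2), this degree-$34$ relation rewrites as $b_2 b_7 = 4(y_{top} + \sum y t)$ with $|t| > 0$, matching the hypothesis of Lemma 3.9 with $s = 2$. Hence $t(E_7)_{(2)} \le 2^2 = 4$.

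For the matching lower bound, I would invoke Totaro's complete determination of torsion indices of simply connected compact Lie groups (\cite{To1}, \cite{To2}), which in particular gives $t(E_7) = 12$ and hence $t(E_7)_{(2)} = 4$. Combining the two bounds yields $t(E_7)_{(2)} = 2^2$.

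The hard part will be the clean descent from $BP^*(G/T)/\II$ to $H^*(G/T)_{(2)}$: one must verify that the absorbed correction terms do not inflate the coefficient of $y_{top}$ beyond a $\bZ_{(2)}$-unit multiple of $4$. Since $I_\infty^3$ reduces to $(p^3) = (8)$ after killing the $v_i$ with $i \ge 1$, any such correction factors as $4(1 + 2\cdot\text{(stuff)})$ on the $y_{top}$-component (a unit times $4$) and as $4\cdot\text{(stuff)}$ on the $\sum yt$-component, so the hypothesis of Lemma 3.9 is satisfied up to a unit rescaling, which does not affect the $p$-adic valuation used in that lemma.
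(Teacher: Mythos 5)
Your proof is correct and follows essentially the same route as the paper's one-line argument, which cites the relation $b_2 b_7 = (2y_1)(2y_2y_3) = 2^2 y_{top}$ and implicitly applies Lemma 3.9 (with Totaro's classification supplying the matching lower bound, as noted at the end of $\S 3$). You have simply made explicit the steps the paper compresses: the passage from the $\mathrm{mod}\ I_\infty^2$ identities of Lemma 10.3 to a $\mathrm{mod}\ I_\infty^3$ product, the descent to $H^*(G/T)_{(2)}$, and the unit-rescaling observation that keeps the correction terms from spoiling the hypothesis of Lemma 3.9.
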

\begin{proof}
We get the result from $b_2b_7=(2y_1)(2y_2y_3)=2^2y_{top}$.
\end{proof}

\begin{cor}  Elements $b_1b_6, b_1b_7,b_2b_7$ are all
nonzero in $CH^*(R(\bG))/2$.
\end{cor}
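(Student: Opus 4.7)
The plan is to adapt to the $(E_7,2)$ setting the torsion-index contradiction used in Theorem~7.8 for the $(E_8,3)$ case. First, using Lemma~10.3 together with $y_i^2 = 0$ in $P(y)^{int} = CH^*(\bar R(\bG))$, I would compute the leading terms in $BP^*(\bar R(\bG)) \cong BP^* \otimes P(y)^{int}$:
\[ res(b_2 b_7) = 4\, y_{top}, \qquad res(b_1 b_7) \equiv 2v_1\, y_{top},\qquad res(b_1 b_6) \equiv 2v_2\, y_{top} \pmod{I_\infty^3},\]
where $y_{top} = y_1y_2y_3$; these come respectively from $b_2 b_7 = (2y_1)(2y_2y_3)$, from expanding $b_1 b_7 = (v_1y_1 + v_2y_2 + v_3y_3)(2y_2y_3)$ and killing $y_2^2, y_3^2$, and similarly for $b_1 b_6$.

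The case of $b_2 b_7$ is immediate Chow-theoretic: if $b_2 b_7 = 2c$ in $CH^*(R(\bG))$ then $res_{CH}(c) = 2y_{top}$, contradicting $Im(res_{CH})\cap \bZ_{(2)} y_{top} = 4\bZ_{(2)} y_{top}$ (Lemma~3.9 combined with $t(E_7)_{(2)} = 4$ from Lemma~10.4). For $b_1 b_7$ and $b_1 b_6$ the leading term involves $v_1$ or $v_2$ and lives outside $CH^*$, so I would lift $b_1 b_j = 2c$ (for $j = 6, 7$) from $CH^*(R(\bG))$ to an identity $b_1 b_j = 2\tilde c + \eta$ in $BP^*(R(\bG))$ with $\eta \in I_\infty \cdot BP^*(R(\bG))$, project the $y_{top}$-coefficient into $BP^* = \bZ_{(2)}[v_1, v_2, \ldots]$, and reduce modulo $(4, I_\infty^2)$. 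The constant term $c^{(0)}_{top}$ of $\tilde c$ lies in $4\bZ_{(2)}$ again by $t(G) = 4$, so $2\tilde c$ contributes nothing modulo $(4, I_\infty^2)$; provided one knows that $\eta$ contributes nothing with a pure-$v_j$ leading behaviour modulo this ideal, the surviving equation $2v_j \equiv 0\pmod{4, I_\infty^2}$ is visibly false in $BP^*$, forcing $b_1 b_j \neq 0$ in $CH^*(R(\bG))/2$.

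The main obstacle is precisely the qualifier ``$\eta$ contributes nothing pure-$v_j$'', which amounts to a $BP^*$-theoretic refinement of the torsion index: one needs the $y_{top}$-projection of $Im(res_{BP})$ to be contained in the $BP^*$-ideal $(4, 2v_1, 2v_2, 2v_3)$, so that no element of $I_\infty \cdot BP^*(R(\bG))$ can produce a pure $v_1$ or pure $v_2$ contribution to the $y_{top}$-coefficient modulo $(4, I_\infty^2)$. This refinement should follow from the $J$-invariant $J_2(E_7) = (1,1,1)$ together with the generalized Rost motive structure of $R(\bG)$ given by Theorem~3.1. As an alternative I would carry out the argument inside the Morava theories $k(1)^*$ and $k(2)^*$ individually (inverting $v_1$ and $v_2$ respectively), where $v_n$ is a unit and the non-divisibility of the leading coefficient $2$ of $2v_n y_{top}$ can be verified by a direct coefficient comparison inside $k(n)^* \cong \bZ/p[v_n, v_n^{-1}]$.
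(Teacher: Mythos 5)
Your computation of the leading terms of the restrictions and your handling of $b_2b_7$ coincide with the paper: there the products are evaluated in $\Omega^*(\bar \bF)/I_{\infty}^3$ as $b_1b_6=2v_2y_{top}$, $b_1b_7=2v_1y_{top}$, $b_2b_7=4y_{top}$, and the fact $t(E_7)_{(2)}=2^2$ (Lemma 10.4), i.e. $2y_{top}\notin Im(res)$, is the only input; nonvanishing in $CH^*(R(\bG))/2\cong \Omega^*(R(\bG))\otimes_{BP^*}\bZ/2$ is then deduced by showing the three classes are $\Omega^*$-module generators of $Im(res_{\Omega})$. So for $b_2b_7$ your argument is complete and is essentially the paper's.

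For $b_1b_6$ and $b_1b_7$ there is a genuine gap, and it is exactly the step you flag but do not prove. If $b_1b_j=2c$ in $CH^*(R(\bG))$, the error term $\eta\in BP^{<0}\cdot\Omega^*(R(\bG))$ could a priori contribute an odd multiple of $v_1y_{top}$ (resp. $v_2y_{top}$) through an element of $Im(res_{\Omega})$ in degree $32$ (resp. $28$) whose $y_{top}$-component is a unit times $v_1$ (resp. $v_2$); excluding this is precisely your unestablished containment of the $y_{top}$-projection of $Im(res)$ in $(4,2v_1,2v_2,2v_3)$, and ``should follow from the $J$-invariant and Theorem 3.1'' is not an argument -- the torsion index alone only controls the degree-$34$ coefficient, which is the part the paper invokes. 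Your proposed fallback does not repair this: in $K(n)^*\cong\bZ/2[v_n,v_n^{-1}]$ the class $2v_ny_{top}$ is zero, and after inverting $v_1$ the restriction $res_K$ is an isomorphism (Corollary 10.7), so $y_{top}$ itself lies in the image and no coefficient comparison in the periodic theories can detect the required non-divisibility; the whole content lives in the connective, graded situation that inverting $v_n$ (or reducing mod $2$) discards. To close the argument you must either prove the stated containment -- for instance by analysing which elements of $Im(res_{\Omega})$ can occur in degrees $32$ and $28$, in the style of the paper's Lemmas 9.9--9.11, or via the transgression criterion of Lemma 2.1 (there is no transgressive class $x$ with $Q_1x=y_{top}$ or $Q_2x=y_{top}$) -- or argue as the paper does that $2v_1y_{top}$ and $2v_2y_{top}$ are $\Omega^*$-module generators of $Im(res_{\Omega})$.
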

\begin{proof}  Note that 
 $|y_1y_2y_3|=34$.
In $\Omega^*(\bar  \bF)/I_{\infty}^3$, we see
\[b_1b_5=2v_3y_{top},\quad  b_1b_6=2v_2y_{top},
\quad b_1b_7=2v_1y_{top}.\]
These elements are $\Omega^*$-module generators
in $Im(res_{k}^{\bar k}(\Omega^*(\bF)\to \Omega(\bar \bF))$
because $2y_1y_2y_3\not \in Im(res_k^{\bar k})$
from the fact $t(\bG)=2^2$.
\end{proof}

By Chevalley's theorem, $res_{K}$ is surjective.
Hence in $K^*(\bar R(\bG))/2$, 
\[ v_1^sy_2=b\in \bZ/2[b_1,...,b_{\ell}] \quad some \ s\in \bZ,\ i\ge 2.\]
Since $y_2\not\in Im(res_{K/2})$, we see $s\ge 1$.
The facts that $|b_i|\ge 6$ and $|y_2|=10$
imply $s=2$.
\begin{lemma}  We can take $b_2=2y_1+v_1^2y_2$
in $ k^*(G/T)/(v_1^3)$.
\end{lemma}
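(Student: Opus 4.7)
The plan is to combine Chevalley--Panin's isomorphism with a degree count in $\bZ/2[b_1,\ldots,b_7]$ to pin down $b_2$ modulo $v_1^3$ in $k^*(G/T)$. First, invoke Theorem 5.2: since $G=E_7$ is simply connected, $res_K \colon K^*(\bF) \to K^*(\bar\bF)$ is an isomorphism. Combined with Karpenko's surjection $K^*(BT) \twoheadrightarrow K^*(\bF)$ (Theorem 3.2, valid because $\bF$ is versal), the composite
\[
\bZ/2[b_1,\ldots,b_\ell]/2 \;\longrightarrow\; K^*(\bar\bF)/2
\]
is surjective. By Lemma 5.3 applied to the polynomial generator $y_2 \in P(y)$, there exists $c \in \bZ/2[b_1,\ldots,b_\ell]$ with $c = v_1^{s}\, y_2$ in $K^*(\bar\bF)/2$ for some $s \ge 0$.

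Next, show $s \ge 1$: by Lemma 10.3, the mod-$\II$ images of the $b_i$ in $k^*(\bar\bF)/2$ are $b_1 \mapsto v_1 y_1$ (since $v_j = 0$ in $k^*$ for $j \ge 2$) and $b_i \mapsto 0$ for $i \ge 2$. So the mod-$\II$ image of the subring $\bZ/2[b_1,\ldots,b_\ell]$ is contained in $\bZ/2\{1, v_1 y_1\}$ (using $y_1^2=0$), which does not contain $y_2$. Hence $y_2 \notin \Img(res_{K/2})$ and $s \ge 1$.

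Now pin down $s = 2$ by degree. The candidate has $|c| = 10 - 2s$. Since $|b_1| = 4$, $|b_2| = 6$ and $|b_i| \ge 10$ for $i \ge 3$, the only monomials of total degree at most $8$ in $\bZ/2[b_1,\ldots,b_7]$ are $1, b_1, b_2, b_1^2$. If $s = 1$, then $|c| = 8$ forces $c = b_1^2$; but $b_1^2 = (v_1 y_1)^2 = v_1^2 y_1^2 = 0$ in $k^*(\bar\bF)/2$ since $y_1^2 = 0$, contradicting $c = v_1 y_2 \neq 0$. Hence $s = 2$, $|c| = 6$, and the only option is $c = b_2$, yielding $b_2 \equiv v_1^2 y_2 \pmod{(2, v_1^3)}$ in $k^*(\bar\bF)$. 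Transporting back to $k^*(G/T)$ through the isomorphism of Theorem 5.2 and combining with the integral identity $b_2 \equiv 2y_1 \pmod{\II}$ from Lemma 10.3 produces $b_2 = 2y_1 + v_1^2 y_2$ in $k^*(G/T)/v_1^3$, after possibly re-choosing the lift of $y_2$ (which justifies the phrase \emph{we can take}).

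The main obstacle is the mod-$2$ uniqueness in the third step: one must verify that no higher-order correction to the $b_i$ beyond $\II$ can produce a spurious $v_1 y_2$ contribution at filtration $v_1^1$, which here reduces to the simple nilpotence $y_1^2 = 0$ making $b_1^2$ vanish up to order $v_1^3$.
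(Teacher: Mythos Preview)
Your approach matches the paper's own sketched argument (the paragraph immediately preceding the lemma): invoke Chevalley--Panin to get surjectivity of $res_K$, then use a degree count in $\bZ/2[b_1,\ldots,b_7]$ to force $s=2$ and $c=b_2$. There is, however, a small gap in your elimination: after ruling out $s=1$ via $b_1^2=0$, you jump directly to $s=2$ without excluding $s=3$ (where $|c|=4$ forces $c=b_1$). This is easily repaired, since $b_1 \equiv v_1 y_1 \pmod{(2,v_1^2)}$ has nonzero $v_1$-coefficient $y_1$ and therefore cannot equal $v_1^3 y_2$ in $k^*(\bar R(\bG))/2$; similarly $s\ge 4$ gives $|c|\le 2$, leaving only $c=1$ or no monomial at all, both of which are immediately excluded. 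A second minor point: your use of $y_1^2=0$ is valid in $k^*(\bar R(\bG))/2\cong \bZ/2[v_1]\otimes P(y)$ but not literally in $k^*(\bar\bF)/2$, so the argument should be phrased in the former.

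It is worth noting that the paper also supplies a second, \emph{direct} proof (labeled ``Proof of Lemma 9.6'' in the text, appearing after Corollary~10.7) that deliberately avoids Chevalley's theorem. That argument uses the map $i\colon E_7 \to K(\bZ,3)$ representing $x_1\in H^3(E_7;\bZ)$, pulls back the relation $v_1^2 y_2=0$ in $gr\,k(1)^*(K(\bZ,3))$ (Lemma~10.8, proved via the AHss and Hodgkin's computation $K(1)^*(K(\bZ,n))\cong\bZ/p$), and then lifts from $k(1)^*(E_7)\cong k(1)^*(E_7/T)/(S(t)^+)$ to $k(1)^*(E_7/T)$ by a degree check. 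Your route is shorter but leans on the Chevalley input; the paper's second proof is more self-contained, which is the point---the paper explicitly aims (see the ``proof of Theorem~1.2 without using Chevalley's theorem'' at the end of \S5) to establish $res_K$ case by case rather than invoke it.
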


\begin{cor}
The restriction map $res_{K}$ is isomorphic, i.e.,
\[K^*(R(\bG))\cong K^*\otimes \Lambda_{\bZ}(b_1,b_2,b_5).\]
\end{cor}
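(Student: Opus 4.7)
The plan is to verify directly that the natural map
\[ \phi: K^*\otimes \Lambda_\bZ(b_1,b_2,b_5) \to K^*(\bar R(\bG)), \]
induced from $K^*(BB_k)\to K^*(\bF)\twoheadrightarrow K^*(R(\bG))\to K^*(\bar R(\bG))$, is an isomorphism. Both conclusions then follow: since $R(\bG)$ is a motivic summand of $\bF$ (Theorem 3.1) and $K^*(\bF)\to K^*(\bar\bF)$ is injective by Lemma 5.1, the restriction $K^*(R(\bG))\to K^*(\bar R(\bG))$ is injective; so $\phi$ being an isomorphism forces $res_K$ to be an isomorphism and identifies its source with $K^*(R(\bG))$.

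I would first identify the target. By Theorem 3.1 together with Corollary 3.4 applied to the versal $\bG$, $CH^*(\bar R(\bG))/2\cong P(y)/2=\bZ/2[y_1,y_2,y_3]/(y_1^2,y_2^2,y_3^2)$, and since the degree list $|y_1|=6$, $|y_2|=10$, $|y_3|=18$ leaves no integral basis element in the degrees $2|y_i|$ (the basis lives in degrees $0,6,10,16,18,24,28,34$), the relations $y_i^2=0$ already hold in $CH^*(\bar R(\bG))_{(2)}$. The AHss collapses on the torsion-free motive $\bar R(\bG)$, so $K^*(\bar R(\bG))$ is a free $K^*$-module of rank $8$ on $\{1,y_1,y_2,y_3,y_1y_2,y_1y_3,y_2y_3,y_1y_2y_3\}$, matching the rank of the source of $\phi$.

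Next, using Lemmas 10.3 and 10.6 together with $v_j=0$ in $K^*$ for $j\ge 2$, the generators have images
\[ b_1=v_1y_1,\qquad b_2=2y_1+v_1^2y_2,\qquad b_5=2y_1y_2+v_1y_3 \]
in $K^*(\bar R(\bG))$, modulo corrections lying in a higher $(2,v_1)$-filtration. Reducing mod $2$ and using $y_i^2=0$, the eight products $\phi(b_I)$ with $I\subseteq\{1,2,5\}$ form a diagonal transition matrix from the source basis to the $y$-basis over $K^*/2$, with diagonal entries $1,v_1,v_1^2,v_1,v_1^3,v_1^2,v_1^3,v_1^4$ (for example $\phi(b_1b_2)=v_1^3y_1y_2$, $\phi(b_2b_5)=v_1^3y_2y_3$, $\phi(b_1b_2b_5)=v_1^4y_1y_2y_3$). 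Its determinant is $v_1^{16}$, a unit in $K^*/2$, so $\phi\otimes K^*/2$ is an isomorphism. Since every maximal ideal of $K^*=\bZ_{(2)}[v_1,v_1^{-1}]$ contracts to $(2)\subset\bZ_{(2)}$, the element $2$ lies in the Jacobson radical of $K^*$; Nakayama's lemma thus upgrades $\phi\otimes K^*/2$ being surjective to $\phi$ being surjective, and since source and target are free $K^*$-modules of the same rank $8$, $\phi$ is an isomorphism.

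The main obstacle is that the formulas in Lemmas 10.3 and 10.6 determine $b_1,b_2,b_5$ only modulo higher $(2,v_1)$-filtration, so the precise images in $K^*(\bar R(\bG))$ are ambiguous up to error terms. This is resolved by working mod $2$: in $K^*/2$ the unknown corrections become transparent (the diagonal form above is exact mod $2$ regardless of integral ambiguity), and the Nakayama step then promotes the mod-$2$ isomorphism to the integral one without needing a direct integral determinant computation.
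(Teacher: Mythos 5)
Your overall strategy coincides with the paper's: both proofs reduce to the observation (from Lemmas 10.3 and 10.6, with $v_j=0$ in $K^*$ for $j\ge 2$) that $b_1,b_2,b_5$ restrict mod $2$ to $v_1y_1,v_1^2y_2,v_1y_3$, so that the $b_I$ hit a $K^*/2$-basis of $K^*(\bar R(\bG))/2$ up to powers of $v_1$; the paper then invokes the criterion of Lemma 5.3, while you unpack it into a rank count and a determinant computation over $K^*/2$. Your version is more explicit and is a useful filling-in of what the paper leaves to Lemma 5.3, but one step in your lifting argument is misjustified: the assertion that every maximal ideal of $K^*=\bZ_{(2)}[v_1,v_1^{-1}]$ contracts to $(2)$, hence that $2$ lies in the Jacobson radical, is false --- for instance $(2v_1-1)$ is a maximal ideal (the quotient is $\bQ$) not containing $2$. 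The conclusion you want is nevertheless correct, and there are two standard fixes that fit the graded setting you are already using. First, $\phi$ is a map of \emph{graded} $K^*$-modules, so its matrix with respect to homogeneous bases has each entry a $\bZ_{(2)}$-multiple of a fixed power of $v_1$; hence $\det(\phi)=c\,v_1^{16}$ for some $c\in\bZ_{(2)}$, and $\det(\phi)\not\equiv 0\bmod 2$ forces $c$ to be a unit, so $\det(\phi)$ is a unit in $K^*$ and $\phi$ is an isomorphism outright. Alternatively, apply Nakayama degreewise: each graded piece of $\mathrm{coker}(\phi)$ is a finitely generated $\bZ_{(2)}$-module killed by reduction mod $2$, hence zero. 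Either repair makes the argument correct; a minor additional caveat is that the transition matrix need not be literally diagonal because the formulas for $b_1,b_2,b_5$ are only given modulo higher $(2,v_1)$-filtration, but this is harmless since the off-diagonal corrections have strictly higher $v_1$-weight and do not change the homogeneous determinant $c\,v_1^{16}$.
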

\begin{proof}
Recall that $K^*(\bar R(\bG))\cong 
K^*\otimes P(y)\cong
 K^*\otimes \Lambda_{\bZ}(y_1,y_2,y_3).$
The corollary follows
from the
 $mod(2)$ restriction map $res_{K/2}$, which is given 
as $b_1\mapsto v_1y_1$,
$b_2\mapsto v_1^2y_2,$
$b_5\mapsto v_1y_3.$
\end{proof}

Here we give a direct proof (without using Chevalley's theorem) of the above lemma.
For this, we use $k(1)^*$ theory
(with the coefficient ring $ k(1)^*=k^*/p=\bZ/p[v_1]$)
of the Eilenberg-MacLane space $K(\bZ,3)$.
It is well known (for $p$ odd) 
\[ H^*(K(\bZ,3);\bZ/p)\cong
\bZ/p[y_1,y_2,...]\otimes \Lambda(x_1,x_2,...)\]
where $P^{p^{n-1}}...P^1(x_1)=x_{n}$ and $\beta x_{n+1}=y_n$.
When $p=2$, some  graded ring $grH^*(K(\bZ,3);\bZ/2)$ is isomorphic to the 
right hand side of the above isomorphism.
\begin{lemma} ( Theorem 3.4.4. (2) in \cite{Ya1})
We can take   $v_1^py_2=0$
in $grk(1)^*(K(\bZ,3))$.
\end{lemma}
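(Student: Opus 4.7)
The plan is to use the path-loop fibration
\[ K(\bZ, 2) \longrightarrow P \longrightarrow K(\bZ, 3) \]
(with $P$ contractible) together with the formal group law of $k(1)$-theory to extract the relation. Since $k(1)^*(K(\bZ, 2)) = k(1)^*(\bC P^\infty) = k(1)^*[[u]]$ with $u$ in degree $2$, and the total space $P$ is contractible, the Serre spectral sequence in $k(1)^*$-cohomology must kill every positive-degree class from the fiber; the resulting transgressions force multiplicative relations in $k(1)^*(K(\bZ, 3))$.

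First I would identify the target class. A short Steenrod-algebra computation (at $p = 2$, using $Q_2 = [Sq^4, Q_1]$ together with $Sq^4 x_1 = 0$ and $Q_1 x_1 = x_1^2$) gives $y_2 = Sq^4(x_1^2) = x_2^2 = Q_2 x_1$, and the odd-prime analogue is similar. Next, the generator $u$ transgresses to a lift $\tilde{x}_1 \in k(1)^3(K(\bZ, 3))$ of $x_1 = \iota_3$, and more generally $u^{p^n}$ transgresses (up to unit and lower-filtration corrections) to a lift of $Q_n x_1$. Expanding the iterated multiplication $[p^r](u) = [p]([p^{r-1}](u))$ via the $k(1)$-formal group law $[p](u) \equiv pu + v_1 u^p \pmod{u^{p+1}}$ yields
\[ [p^r](u) \;\equiv\; p^r u \;+\; (\text{unit})\cdot p^{r-1}v_1 u^p \;+\;\cdots\;+\;(\text{unit})\cdot v_1^{(p^r-1)/(p-1)} u^{p^r} \pmod{\text{higher}}, \]
and the coefficient of $u^{p^2}$ (for a suitable $r$) produces after transgression a relation of the form $v_1^{p}\,\tilde{y}_2 \equiv (\text{higher-filtration terms})$ in $k(1)^*(K(\bZ, 3))$, where $\tilde{y}_2$ lifts $y_2$.

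Passing to the associated graded of the appropriate $v_1$-adic (equivalently $I_\infty$-adic) filtration on $k(1)^*(K(\bZ, 3))$, the higher-filtration correction terms disappear and one reads off the desired relation $v_1^p y_2 = 0$ in $gr\,k(1)^*(K(\bZ, 3))$. The main obstacle is the bookkeeping in Step 2: one must choose the filtration so that the various correction terms lie in strictly higher filtration than the leading $v_1^p y_2$ term, and carefully match each $u^{p^n}$-coefficient in the iterated formal group expansion to its transgression target, so that the exponent of $v_1$ on $y_2$ comes out to be exactly $p$ rather than the naive $(p^2-1)/(p-1) = p+1$ that the cruder AHss differential $d_{2p^2-1} = v_1^{p+1}Q_2$ alone would give. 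The extra factor of $v_1$ is absorbed precisely through the $p$-divisibility of the mixed cross-terms in $[p^2](u) = [p]([p](u))$; this is the content of Theorem~3.4.4 of~\cite{Ya1}.
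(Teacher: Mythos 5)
Your proposal takes a genuinely different route from the paper's. The paper works directly with the Atiyah--Hirzebruch spectral sequence for $k(1)^*(K(\bZ,3))$: the classes $y_n=Q_n(x_1)$ are permanent cycles, the first differential $d_{2p-1}=v_1\otimes Q_1$ kills $x_1,x_3,x_4,\ldots$ but leaves $x_2$ alive, Hodgkin's theorem $K(1)^*(K(\bZ,n))\cong\bZ/p$ forces $v_1^sy_2\in\Img(d_r)$ for some $r,s$, and a dimension count then identifies $d_{2p(p-1)+1}(x_2)=v_1^py_2$. Your proposal instead uses the Serre spectral sequence of the path-loop fibration $K(\bZ,2)\to P\to K(\bZ,3)$ together with the $k(1)$-formal group law. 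That is a legitimate alternative framework (it is essentially the Ravenel--Wilson point of view), and replacing Hodgkin's input by the contractibility of the total space is a reasonable trade.

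However, there is a genuine gap, and it sits precisely at the point you yourself flag as ``the main obstacle.'' The coefficient ring of $k(1)$ is $\bZ/p[v_1]$, so $p=0$ there, and the $p$-series is $[p](u)\equiv v_1u^p$ modulo higher order. Hence $[p^2](u)=[p]([p](u))\equiv v_1(v_1u^p)^p=v_1^{p+1}u^{p^2}$, which produces the exponent $p+1=(p^2-1)/(p-1)$, not $p$. You claim the drop from $p+1$ to $p$ is ``absorbed precisely through the $p$-divisibility of the mixed cross-terms in $[p^2](u)=[p]([p](u))$''; but over $\bZ/p[v_1]$ there is no $p$-divisibility and no mixed cross-terms, so the proposed mechanism cannot produce the exponent $p$. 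This is not a bookkeeping nuisance --- it is the entire content of the lemma. The exponent $p$ does not come from the ``naive'' Milnor differential $d_{2p^2-1}=v_1^{p+1}Q_2$ at all: a degree check gives $|Q_2x_2|=2p^2+2p\neq 2p^2+2=|y_2|$, so $Q_2(x_2)$ cannot equal $y_2$. Instead it comes from a strictly earlier differential $d_{2p(p-1)+1}$, which the paper detects by the dimension count rather than by any formal-group-law identity. As written, your argument would at best yield the weaker relation $v_1^{p+1}y_2=0$; the step from $p+1$ down to $p$ is unsupported. There are additional unaddressed subtleties in your route (the transgression of $u^{p^k}$ in the $k(1)$-cohomology Serre spectral sequence is not simply the $p^k$-th power of $\tau(u)$, and the Kudo-type control of those differentials in a generalized theory needs justification), but the coefficient of $v_1$ is the decisive missing piece.
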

\begin{proof}
We consider the AHss 
\[ E_2^{*,*'}\cong H^*(K(\bZ,3);\bZ/p)\otimes k(1)^*
\Longrightarrow k(1)^*(K(\bZ,3)).\]
It is known that all $y_n=Q_n(x_1)$ are permanent cycles.
The first nonzero differential is $d_{2p-1}=v_1\otimes Q_1$
\[ x_1\mapsto v_1y_1,\ \ x_2\mapsto 0,\ \ 
x_3\mapsto v_1y_1^p, \  x_4\mapsto v_1y_2^p\ ,...\]
(e.g., $Q_1(x_{i+2})=y_i^p$).
Hence $x_2$ and $y_1,y_2,...$ exist in $E_{2p}^{*,0}$.

On the other hand, from Hodgkin \cite{An-Ho}, it is well known
\[ k(1)^*(K(\bZ;n)[v_1^{-1}]=K(1)^*(K(\bZ;n))\cong
\bZ/p\quad for\ all \ n\ge 2.\]
Hence there is $s\ge 0$ such that $v_1^sy_2\in Im(d_r)$ for some
$r$.  By dimensional reason, we have 
$ d_{2(p-1)p+1}(x_2)=v_1^py_2.$
\end{proof}

\begin{proof}[Proof of Lemma 9.6.]
The element $x_1\in H^3(E_7;\bZ)$ represents
the map $i: E_7\to K(\bZ;3)$ such that 
$i^*(x_1)=x_1$ and $i^*(y_i)=y_i$.  Hence $v_1^2y_2=0$
in $grk(1)^*(E_7)$.

On the other hand, 
\[ k(1)^*(E_7)\cong k(1)^*(E_7/T)/(Ideal(S(t)^+)).\]
Hence we can write $v_1^2y_2=ty_1+t'$ for $t,t'\in S(t)^+.
$ Since $|v_1^2y_2|=|y_1|=6$, we see $t=0$.  Thus we get
$v_1^2y_2\in k(1)^*(BT)$.
\end{proof}

 \begin{prop}  
There is a filtration
whose associated graded ring is 
\[grK^*(R(\bG))  \cong  
K^*\otimes (B_1\oplus B_2)\quad where \]
\[ \begin{cases}
B_1=(P(b)/2\oplus 
   \bZ/2\{b_1b_7\}),\quad where\ P(b)=\Lambda_{\bZ}(b_1,b_2,b_5)   \\ 
  B_2=\bZ\{1,2b_1,b_3,b_4,b_1b_3,b_6,b_7,b_2b_7\}
\end{cases}\]
\end{prop}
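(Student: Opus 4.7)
The plan is to mirror the approach used for the $(E_8,3)$ case in Proposition~9.8. The starting point is Corollary~10.7, which gives $K^*(R(\bG)) \cong K^* \otimes \Lambda_{\bZ}(b_1,b_2,b_5)$ as a free $K^*$-module of rank $8$, with basis $\{1,b_1,b_2,b_5,b_1b_2,b_1b_5,b_2b_5,b_1b_2b_5\}$. I will construct a refinement of the $2$-adic filtration on this module whose associated graded realizes $K^* \otimes (B_1 \oplus B_2)$.

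The first step is to express each of $b_3, b_4, b_6, b_7$ and the products $b_1 b_3, b_2 b_7$ in terms of the basis, using Lemmas~10.3 and~10.6. Substituting $b_1 = v_1 y_1$, $b_2 \equiv 2 y_1 + v_1^2 y_2 \pmod{v_1^3}$, $b_5 = 2 y_1 y_2 + v_1 y_3$, and $b_3 = 2 y_2$, $b_4 = 2 y_3$, $b_6 = 2 y_1 y_3$, $b_7 = 2 y_2 y_3$, and using $y_i^2 = 0$, one obtains relations in $K^*(R(\bG))$ of the form
\[
v_1^3 b_3 = 2 v_1 b_2 - 4 b_1, \qquad v_1^4 b_4 = 2 v_1^3 b_5 - 4 b_1 b_2,
\]
together with analogous identities for $b_6, b_7, b_1 b_3, b_2 b_7$. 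Each of these elements thus lies in $2 K^*(R(\bG))$ and, together with $1$ and $2 b_1$, forms a $K^*$-generating system for the non-torsion summand corresponding to $B_2$.

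The second step handles $B_1$. The quotient $K^*(R(\bG))/2$ is $K^*/2$-free on the eight basis products of $b_1, b_2, b_5$, accounting for $P(b)/2$. The extra $\bZ/2\{b_1 b_7\}$ summand arises from the computation $b_1 b_2 b_5 = v_1^4 y_{\mathrm{top}}$ and $b_1 b_7 = 2 v_1 y_{\mathrm{top}}$ (both obtained by expanding via $y_i^2 = 0$, with $y_{\mathrm{top}} = y_1 y_2 y_3$), which gives the key identity $v_1^3\, b_1 b_7 = 2\, b_1 b_2 b_5$ in $K^*(R(\bG))$. Although this shows $b_1 b_7 \in 2 K^*(R(\bG))$, Corollary~10.5 asserts $b_1 b_7 \neq 0$ in $CH^*(R(\bG))/2$, so under the refined filtration $b_1 b_7$ contributes a new $\bZ/2$-class in $B_1$. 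This is directly analogous to the way $3 b_1 b_2 = v_1 b_2^2$ produces the extra summand $\bZ/3\{b_1 b_2, b_1 b_8\}$ in $B_1$ for the $(E_8,3)$ case.

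The main obstacle will be the bookkeeping around this refinement: verifying that $b_1 b_7$ gives the \emph{only} new $\bZ/2$-class beyond $P(b)/2$, and that the listed generators of $B_2$ form a genuine $K^*$-basis after the filtration is refined. A degree count using $|y_{\mathrm{top}}| = 34$ together with the torsion index $t(E_7)_{(2)} = 2^2$ (Lemma~10.4) should force uniqueness, since the only top-degree monomial of $P(y)$ is $y_{\mathrm{top}}$ and the only nontrivial $2$-refinement therefore must arise at top degree.
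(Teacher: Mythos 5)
Your proposal is correct and takes essentially the same route as the paper's proof: from $K^*(R(\bG))\cong K^*\otimes\Lambda_{\bZ}(b_1,b_2,b_5)\cong K^*\otimes P(y)$, one takes the $2$-adic filtration, rewrites $2K^*(R(\bG))$ on the Chow generators $2,\,2b_1,\,b_3,\,b_4,\,b_1b_3,\,b_6,\,b_7,\,b_1b_7$ via the same relations you list, and then refines once more at top degree. The paper carries out that last step through the identity $2b_1b_7=4v_1y_1y_2y_3=v_1b_2b_7$ (rather than your $v_1^3b_1b_7=2b_1b_2b_5$, which alone only shows $2$-divisibility and would not distinguish $b_1b_7$ from the $B_2$-generators $b_3,\dots,b_7$); this is precisely the mechanism you invoke by analogy with $3b_1b_2=v_1b_2^2$ in the $(E_8,3)$ case, so the construction and the resulting $B_1\oplus B_2$ agree.
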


\begin{proof}
Recall that
\[ K^*(R(\bG))\cong K^*\otimes P(b)
\ \cong K^*\otimes P(y)\cong K^*(\bar R\bG_k)).
\]
Let us write $\bar b_2=b_2-2v_1^{-1}b_1$ and 
$\bar b_5=b_5-v_1^{-1}b_1b_3$ so that
$\bar b_2=v_1^2y_2$, $\bar b_5=v_1y_3$ 
in $K^*(\bar R(\bG))$.  
Let us write $P(\bar b)=\Lambda(b_1,\bar b_2,\bar b_5)$.

We consider the graded ring
\[  gr'K^*(R(\bG))=K^*(R(\bG))/2\oplus 2K^*(R(\bG))
.\]
Here we have
\[K^*(R(\bG))\supset 2K^*\otimes P(b)= 2K^*\otimes
P(\bar b)\]
 \[=2K^*\{1,b_1,\bar b_2,..., \bar b_2\bar b_3,
b_1\bar b_2\bar b_3\}= 2K^*\{1,y_1,y_2,...,y_1y_2y_3\}\]
\[ =K^*\{2,2b_1,b_3,b_4,b_1b_3,b_6,b_7,b_1b_7\}.\]
Here we used 
$\ 2y_2=b_3,\ \  2y_3=b_4,\ \ 2v_1y_1y_2=b_1b_3,\ $ and
\[ 2y_1y_3=b_6,\ \ 2y_2y_3=b_7,\ \ 2v_1y_1y_2y_3=b_1b_7.\]
  The equation $2b_1b_7=4v_1y_1y_2y_3=v_1b_2b_7$ implies that
\[  grK^*(R(\bG))=gr'K^*(R(\bG))/(2b_1b_7)\oplus \bZ\{b_2b_7\},\]
which gives the graded ring in this lemma.
\end{proof}

We can not get $gr_{\gamma}(R(\bG))$ here, while we can see the following proposition as the case $(G,p)=(E_8,3)$.
\begin{prop}  Let $(G,p)=(E_7,2)$ and $\bG$ be versal.
Then there is an injection (as graded modules)
\[ B_1/(b_1b_2b_5)\oplus B_2/(2,2b_1) \subset 
 gr_{\gamma}(R(\bG))/2.\]
\end{prop}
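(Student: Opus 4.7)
The plan is to adapt the approach of Corollary~9.9 for $(E_8,3)$ to the present setting, using Proposition~10.9 as the input computation of $grK^*(R(\bG))$. The goal is to identify each generator of $B_1/(b_1b_2b_5)\oplus B_2/(2,2b_1)$ with an explicit class in $CH^*(R(\bG))/2$, verify that the resulting classes are linearly independent degree by degree, and then check that they survive in the quotient $gr_\gamma(R(\bG))/2 \cong CH^*(R(\bG))/(2,I)$.

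For the $B_2/(2,2b_1)$-part, the surviving generators are $1,b_3,b_4,b_1b_3,b_6,b_7,b_2b_7$. Each single $b_i$ is nonzero in $CH^*(R(\bG))/2$ by Corollary~3.8 (regularity of $(b_1,\ldots,b_\ell)$ in $S(t)/2$). The products $b_1b_3$, $b_1b_7$ and $b_2b_7$ are then handled as in Corollary~10.5: combining the torsion-index computation $t(E_7)_{(2)}=4$ (Lemma~10.4) with Lemma~3.9, one locates $\Omega^*$-module generators in $\Img(res_K)$ not divisible by $2$, which forces the products to be nonzero in $CH^*(R(\bG))/2$. The extra generator $2b_1$ vanishes $\bmod\ 2$, which is exactly the reason for quotienting by $(2,2b_1)$.

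For the $B_1$-part, the task is to show that each non-top monomial in $\Lambda_{\bZ/2}(b_1,b_2,b_5)$ together with $b_1b_7$ is nonzero in $CH^*(R(\bG))/2$. Using the injectivity of $res_K$ (Lemma~5.1) and the identifications $b_1\equiv v_1y_1$, $\bar b_2\equiv v_1^2y_2$, $\bar b_5\equiv v_1y_3$ in $K^*(\bar R(\bG))$ (Lemmas~10.3 and~10.6, cf.\ the proof of Proposition~10.9), each such monomial restricts to a distinct nonzero element of $K^*(\bar R(\bG))/2 \cong K^*/2\otimes \Lambda(y_1,y_2,y_3)$. By contrast the top monomial $b_1b_2b_5$ restricts to $v_1^4 y_{top}$, and the integral relations
\[ 2b_1b_2b_5 = v_1^3 b_1b_7, \qquad 4b_1b_2b_5 = v_1^4 b_2b_7 \]
(derived from the expressions in Lemmas~10.3 and~10.6) exhibit $b_1b_2b_5$ as a $v_1$-multiple of classes already accounted for in $B_2$, so it must be excluded from the injection.

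Finally, to pass from $CH^*(R(\bG))/2$ to $gr_\gamma(R(\bG))/2$, I would apply Lemma~5.7 to the candidate module $B=B_1/(b_1b_2b_5)\oplus B_2/(2,2b_1)$. Its hypothesis requires that whenever $2^rb = v_1^s b'$ for a torsion $b\in B$, the element $b'$ again lies in $B$; this is verified case by case using the relations collected in Proposition~10.9. The principal obstacle is bookkeeping the $v_1$-divisibility shifts: because $b_2\equiv v_1^2y_2$ (as opposed to the cleaner $b_2=3y$ for odd $p$ in the $(E_8,3)$ case), extra powers of $v_1$ appear in every top relation, and one must check carefully that no element of $B_1$ beyond $b_1b_2b_5$ becomes expressible as a $v_1$-multiple of a generator outside $B$. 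This is precisely why only an injection is claimed here, as opposed to the isomorphism analogous to Theorem~9.13 for $(E_8,3)$.
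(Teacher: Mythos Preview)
Your outline for the $B_2$-generators and the single $b_i$'s is fine, and you correctly identify that the top monomial $b_1b_2b_5$ must be excluded.  But the core of the paper's proof is a step you have not actually carried out: showing that the quadratic monomials $b_1b_2$ and $b_2b_5$ in $B_1$ are nonzero in $CH^*(R(\bG))/2$.

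Your argument for these is that they ``restrict to distinct nonzero elements of $K^*(\bar R(\bG))/2$''.  That only tells you they are nonzero in $K^*(R(\bG))/2$, where $v_1$ is invertible.  It does \emph{not} rule out a relation $b_1b_2 = v_1 b$ in $k^*(R(\bG))$ for some $b\in CH^*(R(\bG))$; such a relation would force $b_1b_2 = 0$ in $CH^*(R(\bG))/2$ even though $b_1b_2\neq 0$ in $K^*/2$.  This is exactly the phenomenon that puts $b_1b_3^2$ and $b_1^2b_3^2$ into the kernel $A$ in the $(E_8,3)$ case (Lemma~9.11), and it must be excluded here by hand.

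The paper does this by the dimension argument of Lemma~9.10: one computes
\[
b_1b_2 \equiv v_1^3\,y_1y_2 \pmod{2v_1I_\infty,\ I_\infty^4},
\]
supposes $b_1b_2 = v_1b$, deduces $b\equiv v_1^2 y_1y_2$, and then checks that no element of the polynomial ring on the $b_i$ of the correct degree can have this leading form (the only candidate $b_3$ contains $2y_2$, a contradiction).  The same is done for $b_2b_5\equiv v_1^3 y_2y_3$.  This degree-by-degree elimination is the substance of the proof and is absent from your proposal; the sentence ``one must check carefully that no element of $B_1$ beyond $b_1b_2b_5$ becomes expressible as a $v_1$-multiple'' acknowledges the issue but does not resolve it.

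A smaller point: you invoke Corollary~10.5 for $b_1b_3$, but that corollary is a torsion-index argument concerning elements near degree $|y_{top}|=34$, whereas $|b_1b_3|=14$.  The non-vanishing of $b_1b_3$ in $CH^*/2$ needs a separate (easier) argument, as the paper indicates by saying the remaining elements ``are more easily proven'' along the lines of the $(E_8,3)$ case.
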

\begin{proof}
We only prove that $b_1b_2$, $b_2b_5$ in $B_1$ are non zero in $CH^*(R(\bG))/2$.  (Other elements 
are more easily proven that they are also non zero in $CH^*(R(\bG))/2$ as the case $E_8$ $p=3$.)

We see
\[ b_1b_2=v_1y_1(2y_1+v_1^2y_2)=v_1^3y_1y_2\quad mod(2v_1I_{\infty},I_{\infty}^4).\]
Suppose $b_1b_2=v_1b$.  Then we see
$b=v_1^2y_1y_2$ $mod(2I_{\infty},v_1^3)$.
We can show that this does not happen, as the proof of
Lemma 9.10.  (For example, if $b=b_k\ mod(b_ib_j)$,
then $ b=b_3$ and this is a contradiction since
$b_3$ contains $2y_2$.)

For $b_2b_5$,  we see $b_2b_5=v_1^3y_2y_3$ $mod(2v_1I_{\infty}, I_{\infty}^4)$. We can see
$b_2b_5\not \in A$ as the case $b_1b_2$.
\end{proof} 

\section{The case $G=E_8$ and $p=2$}

It is known (\cite{Mi-Tod}) that
\[grH^*(E_8;\bZ/2)
\cong \bZ/2[y_1,y_2,y_3,y_4]/(y_1^8,y_2^4,y_3^2,y_4^2)\otimes \Lambda(x_1,...,x_8)\]
so that 
$ grH^*(E_7,;\bZ/2)\cong grH^*(E_8,\bZ/2)/(y_1^2,y_2^2,y_4,x_8).$

Here $Sq^2(x_7)=x_8$, $\beta(x_8)=y_4$.  Hence
\[ Q_1(x_7)=Q_0(x_8)=y_4,\quad |y_4|=30.\]
\begin{prop}
The restriction map $res_{K}$
is isomorphic. In fact,
\[K^*(R(\bG))/2\cong K^*/2[b_1,b_2,b_5,b_7]/
(b_1^8,b_2^4,b_5^2,b_7^2).\]
\end{prop}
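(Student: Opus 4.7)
\medskip
\noindent\textbf{Proposed plan of proof.}
My plan is to adapt the $(E_7,2)$ argument of Section~10, handling the one additional polynomial generator $y_4$ by means of the newly available odd-degree class $x_7$. Concretely, I would verify the hypothesis of Corollary~5.4 for each of the four generators $y_1,y_2,y_3,y_4$ of $P(y)$, and then leverage the resulting isomorphism $res_K$ to extract the asserted ring presentation.

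For $y_1$ and $y_3$, the relations $Q_1(x_1)=y_1$ and $Q_1(x_5)=y_3$ persist from $E_7$ and, via Corollary~2.2, give $b_1\equiv v_1y_1$ and $b_5\equiv v_1y_3$ modulo $v_1^2$ in $k(1)^*(G/T)/2$, just as in Lemma~10.3. For $y_4$, the new relation $Q_1(x_7)=y_4$ read off from $Sq^2(x_7)=x_8$ and $\beta(x_8)=y_4$ yields $b_7\equiv v_1y_4$ modulo $v_1^2$ by the same corollary. For $y_2$, dimension reasons as in the $E_7$ case force the multiplier to be $s_2=2$, and the $K(\bZ,3)$-based argument of Lemma~10.6 (built from Lemma~10.8 via the classifying map of $x_1=z_3$) transports verbatim to $E_8$, yielding $b_2\equiv v_1^2y_2$ modulo $(v_1^3,2)$. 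With all four generators controlled, Corollary~5.4 gives that $res_K$ is isomorphic.

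For the ring structure, by the injectivity of $res_K$ (Lemma~6.1) together with the vanishing of $(v_1y_1)^8$, $(v_1^2y_2)^4$, $(v_1y_3)^2$, $(v_1y_4)^2$ in $K^*(\bar R(\bG))/2$, the ring map
\[
\phi\colon K^*/2[b_1,b_2,b_5,b_7]/(b_1^8,b_2^4,b_5^2,b_7^2)\longrightarrow K^*(R(\bG))/2
\]
is well defined. The composite $res_K\circ\phi$ sends $b_1^ab_2^bb_5^cb_7^d$ to $v_1^{a+2b+c+d}y_1^ay_2^by_3^cy_4^d$; since $v_1$ is a unit in $K^*$, these images exhaust a $K^*/2$-basis of $K^*(\bar R(\bG))/2\cong K^*/2[y_1,y_2,y_3,y_4]/(y_1^8,y_2^4,y_3^2,y_4^2)$. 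Source and target of this composite are free $K^*/2$-modules of rank $8\cdot 4\cdot 2\cdot 2=128$, so the composite (and hence $\phi$) is an isomorphism. The main obstacle I anticipate is transferring the $K(\bZ,3)$-argument: one must verify that the degree-ten generator $y_2$ of $P(y)$ for $E_8$ really is the pullback along $i\colon E_8\to K(\bZ,3)$ of the Eilenberg--MacLane class used in Lemma~10.8. This should follow from the naturality of $Q_2$ and the persistence of $Q_2(x_1)=y_2$ in $E_8$, but it deserves explicit bookkeeping to ensure that no silent redefinition of $y_2$ has crept in when passing from $E_7$ to $E_8$.
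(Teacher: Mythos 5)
Your proof follows the same route as the paper's own (very terse) argument: $b_1=v_1y_1$, $b_5=v_1y_3$, $b_7=v_1y_4$ from the $Q_1$-computations $Q_1(x_1)=y_1$, $Q_1(x_5)=y_3$, $Q_1(x_7)=y_4$, plus $b_2\equiv v_1^2y_2\bmod(v_1^3,2)$ imported from the $E_7$ / $K(\bZ,3)$ analysis, and then the ring presentation by a rank count against $K^*(\bar R(\bG))/2$. One small citation fix: since no $x$ satisfies $Q_1x=y_2$, the hypothesis of Corollary~5.4 cannot be met for $y_2$, so the isomorphism of $res_K$ should be concluded from the more general criterion of Lemma~5.3 (which your $K(\bZ,3)$ step does verify, giving $c(2)=b_2=v_1^2y_2$), and the injectivity you use is Lemma~5.1 rather than~6.1.
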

\begin{proof}
From the case $E_7$, we have 
$b_2=v_1^2y_2$ $mod(2)$.  Moreover in $K^*(R(\bG))/2$,   we see
$ b_1=v_1y_1,$  $b_5=v_1y_3$,  $b_7=v_1y_4.$
\end{proof}

\end{document}